%% LyX 2.3.6.1 created this file.  For more info, see http://www.lyx.org/.
%% Do not edit unless you really know what you are doing.
\documentclass[12pt]{amsart}
\usepackage[T1]{fontenc}
\usepackage[latin9]{inputenc}
\usepackage{geometry}
\geometry{tmargin=2.5cm,bmargin=2.5cm,lmargin=2.5cm,rmargin=2.5cm}
\usepackage{mathrsfs}
\usepackage{amstext}
\usepackage{amsthm}
\usepackage{amssymb}

\makeatletter
%%%%%%%%%%%%%%%%%%%%%%%%%%%%%% Textclass specific LaTeX commands.
\numberwithin{equation}{section}
\numberwithin{figure}{section}
\theoremstyle{plain}
\newtheorem{thm}{\protect\theoremname}[section]
\theoremstyle{definition}
\newtheorem{defn}[thm]{\protect\definitionname}
\theoremstyle{definition}
\newtheorem{rem}[thm]{\protect\remarkname}
\theoremstyle{plain}
\newtheorem{cor}[thm]{\protect\corollaryname}
\theoremstyle{plain}
\newtheorem{lem}[thm]{\protect\lemmaname}
\theoremstyle{plain}
\newtheorem{prop}[thm]{\protect\propositionname}
\theoremstyle{definition}

%%%%%%%%%%%%%%%%%%%%%%%%%%%%%% User specified LaTeX commands.
\usepackage[svgnames]{xcolor}
\usepackage[bookmarksnumbered=true]{hyperref} 
\hypersetup{
	colorlinks = true,
	linkcolor = Blue,
	anchorcolor = blue,
	citecolor = Green,
	filecolor = blue,
	urlcolor = FireBrick
}
\usepackage{bbm}

\MakeRobust{\ref}
\newcommand{\labeltext}[2]{
\@bsphack
\csname phantomsection\endcsname
\def\@currentlabel{#1}{\label{#2}}
\@esphack
}

\usepackage{scalerel}
\usepackage[usestackEOL]{stackengine}
\def\dashint{\,\ThisStyle{\ensurestackMath{%
  \stackinset{c}{.2\LMpt}{c}{.5\LMpt}{\SavedStyle-}{\SavedStyle\phantom{\int}}}%
  \setbox0=\hbox{$\SavedStyle\int\,$}\kern-\wd0}\int}

\usepackage{enumitem}

\usepackage{comment}

% --- Mikko's macros ---
\DeclareRobustCommand{\SkipTocEntry}[5]{}

\newcommand{\mR}{\mathbb{R}}   % Formatting for R
   % Formatting for C
   % Formatting for Z
   % Formatting for N
\newcommand{\abs}[1]{\lvert #1 \rvert}  % Formatting for the absolute value
  % Formatting for the norm
  % Formatting for the inner product

\newcommand{\p}{\partial}

\makeatother

\usepackage{babel}
\providecommand{\corollaryname}{Corollary}
\providecommand{\definitionname}{Definition}
\providecommand{\lemmaname}{Lemma}
\providecommand{\propositionname}{Proposition}
\providecommand{\remarkname}{Remark}
\providecommand{\theoremname}{Theorem}
\providecommand{\examplename}{Example}

% ####### auxiliary packages #######
% \usepackage[mathlines]{lineno}
% \linenumbers
% \usepackage{draftwatermark}
% \SetWatermarkText{DRAFT}
% \SetWatermarkScale{5}
% \SetWatermarkColor[gray]{0.9}

% ####### auxiliary packages #######

\begin{document}
\title[A minimization problem with free boundary]{A minimization problem with free boundary and its application to inverse scattering problems}

\begin{sloppypar}

\begin{abstract}
We study a minimization problem with free boundary, resulting in hybrid quadrature domains for the Helmholtz equation, as well as some application to inverse scattering problem.  
\end{abstract}

\subjclass[2020]{35J05; 35J15; 35J20; 35R30; 35R35. }
\keywords{quadrature domain; inverse scattering problem; Helmholtz equation;
acoustic equation; free boundary}
\author{Pu-Zhao Kow}
\address{Department of Mathematical Sciences, National Chengchi University, No. 64, Sec. 2, ZhiNan Rd., Wenshan District, 116302 Taipei, Taiwan}
\email{pzkow@g.nccu.edu.tw}
\author{Mikko Salo}
\address{Department of Mathematics and Statistics, P.O. Box 35 (MaD), FI-40014 University of Jyv\"{a}skyl\"{a}, Finland.}
\email{mikko.j.salo@jyu.fi}
\author{Henrik Shahgholian}
\address{Department of Mathematics, KTH Royal Institute of Technology, SE-10044 Stockholm, Sweden. }
\email{henriksh@kth.se}

\maketitle

\addtocontents{toc}{\SkipTocEntry}

\section*{Notation index}

% \henrik{Take out the page nr, so we do not get the same problem as earlier.}
% \puzhao{We can keep the page number in arXiv version. I agree with you to take out the page number in journal version.}

\bigskip
% ##### arXiv version (begin) #####
\noindent \ref{index:Lebesgue-measure}, page~\pageref{index:Lebesgue-measure}\\
\noindent \ref{index:Hausdorff-measure}, page~\pageref{index:Hausdorff-measure}\\
\noindent \ref{index:constraint-K}, page~\pageref{index:constraint-K}\\
\noindent \ref{index:functional-J}, page~\pageref{index:functional-J}\\
\noindent \ref{index:quadrature-domain}, page~\pageref{index:quadrature-domain}\\
\noindent \ref{index:mes-boundary}, page~\pageref{index:mes-boundary}\\
\noindent \ref{index:red-boundary}, page~\pageref{index:red-boundary}
% ###### arXiv version (end) ######

% % ##### journal version (begin) #####
% \noindent \ref{index:Lebesgue-measure}\\
% \noindent \ref{index:Hausdorff-measure}\\
% \noindent \ref{index:mes-boundary}\\
% \noindent \ref{index:red-boundary}
% % ###### journal version (end) ######

\tableofcontents{}

\section{Introduction}

Motivated by questions in inverse scattering theory, the article \cite{KLSS22QuadratureDomain} introduced the notion of quadrature domains for the Helmholtz operator $\Delta + k^2$ with $k > 0$, also called $k$-quadrature domains. 
Given any $\mu \in \mathscr{E}'(\mathbb{R}^{n})$, a bounded open set $D \subset \mathbb{R}^n$ is called a \emph{$k$-quadrature domain with respect to $\mu$}
if  $\mu \in \mathscr{E}'(D)$ and  
\begin{equation}
\int_{D}w(x)\,dx=\langle\mu,w\rangle, \label{eq:classical-QD-first}
\end{equation}
for all $w\in L^{1}(D)$ satisfying $(\Delta+k^{2})w=0$ in $D$. The case $k=0$ corresponds to classical quadrature domains for harmonic functions. As a consequence of a mean value theorem for the Helmholtz equation (which goes back to H.\ Weber, %\cite{Web1868,Web1869},
see e.g.\ \cite[Proposition~A.6]{KLSS22QuadratureDomain} or \cite[p.~289]{CH89MethodsMathematicsPhysicsII}), balls are always $k$-quadrature domains with $\mu$ being a multiple of the Dirac delta function. The work \cite{KLSS22QuadratureDomain} gave further examples of $k$-quadrature domains including cardioid type domains in the plane, implemented a partial balayage procedure to construct such domains, and showed that such domains may be non-scattering domains for certain incident waves. The results were based on the following PDE characterization (see \cite[Proposition 2.1]{KLSS22QuadratureDomain}): a bounded open set $D \subset \mathbb{R}^n$ is a $k$-quadrature domain for $\mu \in \mathscr{E}'(D)$ if and only if there is $u \in \mathscr{D}'(\mathbb{R}^n)$ satisfying  (an obstacle-like free boundary problem)
\begin{equation}
\begin{cases}
(\Delta+k^{2})u=\chi_D-\mu & \text{in }\mathbb{R}^{n},\\
u=\abs{\nabla u} = 0 & \text{in }\mathbb{R}^{n}\setminus D.
\end{cases}\label{eq:Schiffer-first}
\end{equation}

In this work we study $k$-quadrature domains in the presence of densities both on $D$ and $\partial D$. Let $\mu \in \mathscr{E}'(\mathbb{R}^{n})$. If we only have one density $h \geq 0$ on $D$, we may look for bounded domains $D$ for which $\mu \in \mathscr{E}'(D)$ and 
\[
\int_D w(x) h(x) \,dx = \langle \mu, w \rangle
\]
for all $w \in L^1(D)$ solving $(\Delta+k^2)w = 0$ in $D$. Such a set $D$ could be called a weighted $k$-quadrature domain. More generally, if we also have a density $g \geq 0$ on $\partial D$, we consider the following Bernoulli type free boundary  problem generalizing \eqref{eq:Schiffer-first}:
\begin{equation}
\begin{cases}
(\Delta+k^{2})u=h -\mu & \text{in }D,\\
u=0 & \text{on }\partial  D,\\
|\nabla u|=g & \text{on }\partial D,
\end{cases}\label{eq:Schiffer-second}
\end{equation}
where the Bernoulli condition $|\nabla  u |=g$
is in a very weak sense; 
% ##### arXiv version (begin) #####
see Proposition~{\rm \ref{prop:PDE2}} or \cite[Theorem~2.3]{GS96FreeBoundaryPotential}.
% ##### arXiv version (end) #####
% ##### journal version (begin) #####
% see Appendix in arXiv version or \cite[Theorem~2.3]{GS96FreeBoundaryPotential}.
% ##### journal version (end) #####
Given any $\mu \in \mathscr{E}'(\mathbb{R}^{n})$, a bounded domain $D$ for which $\mu \in \mathscr{E}'(D)$ and \eqref{eq:Schiffer-second} has a solution $u$ will be called a \emph{hybrid $k$-quadrature domain}. 
The main theme of this paper is to study such domains. We will establish the existence of hybrid $k$-quadrature domains for suitable $\mu$ via a minimization problem. We will also give examples of such domains with real-analytic boundary, and show that hybrid $k$-quadrature domains may be non-scattering domains in the presence of certain boundary sources. We will closely follow \cite{GS96FreeBoundaryPotential} which studied the case $k=0$. It turns out that many of our results can be reduced to the situation in \cite{GS96FreeBoundaryPotential}, but certain parts will require modifications. Even though part of the treatment is very similar to \cite{GS96FreeBoundaryPotential}, we will try to give enough details so that also readers who are not experts on this topic can follow the presentation.

\addtocontents{toc}{\SkipTocEntry}
\subsection{Minimization problem}

Let $\Omega\subset\mathbb{R}^{n}$ be an open set in $\mathbb{R}^{n}$ (with $n\ge2$). Let $C_{c}^{\infty}(\Omega)$ consist of $C^{\infty}(\mathbb{R}^{n})$
functions which are supported in $\Omega$, and denote by $H_{0}^{1}(\Omega)$ the completion of $C_{c}^{\infty}(\Omega)$ with respect to the $H^{1}(\Omega)$-norm. We define the set
% ##### footnote (begin) #####
\footnote{In particular, the inequality $u \ge 0$ in the definition of $\mathbb{K}(\Omega)$ can be interpreted in a.e. pointwise sense, see e.g.\ \cite[Definition~II.5.1]{KS00IntroductionVariationalInequalities}.}
% ###### footnote (end) ###### 
\labeltext{constraint set $\mathbb{K}(\Omega)$}{index:constraint-K}
\[
\mathbb{K}(\Omega):=\begin{Bmatrix}\begin{array}{l|l}
u\in H_{0}^{1}(\Omega) & u\ge0\end{array}\end{Bmatrix},
\]
and for each $u\in H_{0}^{1}(\Omega)$ we define  
\[
\{u>0\}=\begin{Bmatrix}\begin{array}{l|l}
x_{0}\in \Omega & \begin{array}{l}
\text{there exists non-negative }\phi\in C_{c}^{\infty}(\Omega)\\
\text{with }\phi(x_{0})>0\text{ such that }u\ge\phi\text{ in } \Omega.
\end{array}\end{array}\end{Bmatrix}
\]

Let $\lambda\in\mathbb{R}$. For given functions $f,g\in L^{\infty}(\mathbb{R}^{n})$
with $g\ge0$, we define the functional \labeltext{functional $\mathcal{J}_{f,g,\lambda,\Omega}$}{index:functional-J}
\begin{equation} \label{jfunctional}
\mathcal{J}_{f,g,\lambda,\Omega}(u):=\int_{\Omega}(|\nabla u(x)|^{2}-\lambda|u(x)|^{2}-2f(x)u(x)+g^{2}(x)\chi_{\{u>0\}})\,dx,
\end{equation}
which is well-defined for all $u\in H_{0}^{1}(\Omega)$. The main
purpose of this paper is to study the following minimization problem:
\begin{equation}
\text{minimize }\mathcal{J}_{f,g,\lambda,\Omega}(u)\text{ subject to }u \in \mathbb{K}(\Omega).\label{eq:minimizing-problem}
\end{equation}
It is easy to see that 
\begin{equation}
\inf_{u\in\mathbb{K}(\Omega)}\mathcal{J}_{f,g,\lambda,\Omega}(u)\le\mathcal{J}_{f,g,\lambda,\Omega}(0)=0.\label{eq:non-positive-functional}
\end{equation}

We will show that there exists a minimizer of \eqref{eq:minimizing-problem}
when $\Omega$ is a bounded Lipschitz domain and $-\infty<\lambda<\lambda^{*}(\Omega)$
(Proposition~\ref{prop:existence-minimizer}). Here $\lambda^{*}(\Omega)$ is the \emph{fundamental tone} of $\Omega$, defined by 
\[
\lambda^{*}(\Omega):=\inf_{\phi\in C_{c}^{\infty}(\Omega), \phi \not\equiv 0}\frac{\|\nabla\phi\|_{L^{2}(\Omega)}^{2}}{\|\phi\|_{L^{2}(\Omega)}^{2}}.
\]
It is well-known that $\lambda^{*}(\Omega)$ is the infimum of the Dirichlet spectrum of $-\Delta$ on $\Omega$.
In addition when $\Omega$ is $C^{1}$, we will show that there 
exists a countable set $Z\subset(-\infty,\lambda^{*}(\Omega))$ such
that the minimizer of \eqref{eq:minimizing-problem} is unique for
all $\lambda\in(-\infty,\lambda^{*}(\Omega))\setminus Z$ (Proposition~\ref{prop:unique}). 
The functional $\mathcal{J}_{f,g,\lambda,\Omega}$
is  unbounded below in $\mathbb{K}(\Omega)$ when $\lambda>\lambda^{*}(\Omega)$
(Lemma~\ref{lem:boundedness}), which shows the non-existence of a global minimizer of \eqref{eq:minimizing-problem} for $\lambda>\lambda^{*}(\Omega)$.

\addtocontents{toc}{\SkipTocEntry}
\subsection{Quadrature domains via minimization}

Given two non-negative functions $h$ and $g$ in $\mathbb{R}^{n}$ ($n \geq 2$), and a positive measure $\mu$ with compact support in $\mathbb{R}^{n}$, we wish to find a bounded domain $D$ with Hausdorff $(n-1)$-dimensional boundary $\partial D$ containing ${\rm supp}\,(\mu)$ such that the potential $\Psi_{k}*\mu$ (see Definition~\ref{def:weighted-QD}) for any fundamental solution $\Psi_k$ of $-(\Delta + k^2)$ agrees outside $D$ with that of the measure
 \labeltext{$\mathscr{L}^{n} \lfloor D$ Lebesgue measure restricted to $D$}{index:Lebesgue-measure}
\labeltext{$\mathscr{H}^{n-1} \lfloor \partial D$ $(n-1)$-dimensional Hausdorff measure on $\partial D$}{index:Hausdorff-measure}
\begin{equation}
\sigma :=h\mathscr{L}^{n}\lfloor D+g\mathscr{H}^{n-1}\lfloor\partial D,\label{eq:measure-nu}
\end{equation}
where $\mathscr{L}^{n}\lfloor D$ and $\mathscr{H}^{n-1}\lfloor\partial D$ denote the Lebesgue measure restricted to $D$ and the $(n-1)$-dimensional Hausdorff measure on $\partial D$, respectively.

We now introduce a hybrid version of a
quadrature domain  in the following definition. \labeltext{Definition of hybrid $k$-quadrature domain}{index:quadrature-domain}

\begin{defn}
\label{def:weighted-QD}Let $k>0$ and let $D$ be a bounded open set in $\mathbb{R}^{n}$ with the  boundary $\partial D$ having  finite $(n-1)$-dimensional Hausdorff measure. Let $\sigma$ be the measure given by \eqref{eq:measure-nu}. The set $D\subset\mathbb{R}^{n}$ is called a \emph{hybrid $k$-quadrature domain}, corresponding
to distribution $\mu\in\mathscr{E}'(D)$ (with ${\rm supp}\,(\mu)\subset D$)
and density $(g,h)\in L^{\infty}(\partial D)\times L^{\infty}(D)$, if
\begin{equation}
\text{$\left( \Psi_{k} * (g \mathscr{H}^{n-1} \lfloor \partial D) \right)(x)$ is well-defined pointwise for all $x \in \partial D$} \label{eq:well-defined-singular-integral}
\end{equation}
and
\begin{equation}
\Psi_{k}*\mu=\Psi_{k}*\sigma\quad\text{in }\mathbb{R}^{n}\setminus D,\label{eq:weighted-QD}
\end{equation}
for all fundamental solutions $\Psi_{k}$ of the Helmholtz operator $-(\Delta + k^{2})$, that is, $-(\Delta + k^{2})\Psi_{k} = \delta_{0}$ in $\mathscr{D}'(\mathbb{R}^{n})$. 
\end{defn}

\begin{rem}
In general, the condition \eqref{eq:well-defined-singular-integral} does not follow from standard elliptic regularity results. Some extra assumptions (see e.g.\ Theorem~{\rm \ref{thm:main-quadrature}}) are required to ensure \eqref{eq:well-defined-singular-integral} holds. 
\end{rem}

\begin{rem}\label{rem:hybrid-QD-Lipschitz}
Let $\Psi_k$ be any fundamental solution for $-(\Delta+k^2)$ in $\mR^n$. Given any bounded Lipschitz domain $D$, let $\gamma: H_{\rm loc}^1(\mR^n) \to H^{1/2}(\partial D)$ be the trace operator. The adjoint $\gamma^{*}$ of $\gamma$ is defined by 
\begin{equation}
\langle \gamma^{*}\psi,\phi \rangle = \langle \psi, \gamma \phi \rangle_{\partial D} \quad \text{for all $\phi \in C_{c}^{\infty}(\mathbb{R}^{n})$.} \label{eq:adjoint-trace-operator}
\end{equation}
In particular, $\gamma^{*}$ maps $H^{-\frac{1}{2}}(\partial D)$ to $H^{-1}(\mathbb{R}^n)$.  Using \cite[Theorem~6.11]{McL00EllipticSystems}, we can define the single-layer potential by 
\begin{equation}
\mathsf{SL}: H^{-\frac{1}{2}}(\partial D) \rightarrow H_{\rm loc}^{1}(\mathbb{R}^{n}) ,\quad \mathsf{SL}(g) := \Psi_{k}*(\gamma^{*} g). \label{eq:single-layer-potential}
\end{equation}
In this case, \eqref{eq:weighted-QD} simply reads 
\begin{equation*}
\Psi_{k}*\mu = \Psi_{k}*(h\chi_{D}) + \mathsf{SL}(g). 
\end{equation*}
\end{rem}

\begin{rem}
When $D$ is a hybrid $k$-quadrature domain with $g\equiv0$, using a
$L^{1}$-density theorem \cite[Proposition~2.4]{KLSS22QuadratureDomain}
and a similar argument as in \cite[Theorem~1.2]{KLSS22QuadratureDomain},
we know that 
\[
\langle\mu,w\rangle=\int_{D}w(x)h(x)\,dx
\]
for all $w\in L^{1}(D)$ such that $(\Delta+k^{2})w=0$ in $D$. In this case, if \eqref{eq:weighted-QD} is true for one fundamental solution, then it is true for all fundamental solutions.
\end{rem}

We have the following theorem: 
\begin{thm}
[See {\rm Theorem~\ref{thm:main-quadrature-detail}} for a more detailed statement]
\label{thm:main-quadrature} Let $n \ge 2$, and assume 
$h$ and $g$ are sufficiently regular. If $\mu$ is a non-negative measure on $\mathbb{R}^{n}$
with mass concentrated near a point and $R > 0$, then for each sufficiently small
$k>0$ there exists a bounded open domain $D$ in $\mathbb{R}^{n}$ with the boundary $\partial D$ having finite $(n-1)$-dimensional Hausdorff measure such that \eqref{eq:well-defined-singular-integral} holds, which is a hybrid $k$-quadrature domain corresponding
to distribution $\mu$ and density $(g,h)$ satisfying $\overline{D}\subset B_{\beta k^{-1}}$. 
In particular when $g>0$ is H\"{o}lder continuous in $\overline{B_{R}}$, then there exists a portion $E \subset \partial D$ with $\mathscr{H}^{n-1}(\partial D \setminus E)=0$ such that $E$ is locally $C^{1,\alpha}$. 
In the case when $n=2$ we even can choose $E = \partial D$. 
\end{thm}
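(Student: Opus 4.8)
The plan is to obtain $D$ by solving the minimization problem \eqref{eq:minimizing-problem} with a carefully chosen datum $f$ built from $\mu$, and then to read off the quadrature identity from the Euler--Lagrange equation of the minimizer. Concretely, fix a large ball $\Omega = B_{\beta k^{-1}}$ (with $\beta$ chosen so that $\lambda^*(\Omega)$, which scales like $(\beta k^{-1})^{-2}$, stays above $k^2$ once $k$ is small — this is the role of the smallness of $k$), and let $\Psi_k$ be a fundamental solution of $-(\Delta+k^2)$. Since $\mu$ has mass concentrated near a point, the Newtonian-type potential $V := \Psi_k * \mu$ is superharmonic-like near that point and one sets $f := h + (\Delta+k^2)(\text{something})$; more precisely one takes the minimizer $u$ of $\mathcal{J}_{f,g,\lambda,\Omega}$ with $\lambda = k^2$ and $f$ chosen so that $u + (\text{potential of }\mu)$ is the function that vanishes to second order off $D := \{u>0\}$. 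Proposition~\ref{prop:existence-minimizer} gives existence of the minimizer for $\lambda = k^2 < \lambda^*(\Omega)$, and the regularity theory developed in the paper (the analogue of \cite{GS96FreeBoundaryPotential}) gives that $u$ solves the Bernoulli problem \eqref{eq:Schiffer-second} in the weak sense of Proposition~\ref{prop:PDE2}.

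Next I would translate the free boundary problem \eqref{eq:Schiffer-second} into the potential-theoretic identity \eqref{eq:weighted-QD}. This is the standard dictionary: if $u$ solves $(\Delta+k^2)u = h - \mu$ in $D$, $u = 0$ on $\partial D$, and $|\nabla u| = g$ on $\partial D$ in the weak sense, then extending $u$ by zero outside $D$ one computes in $\mathscr{D}'(\mathbb{R}^n)$ that
\[
(\Delta+k^2)u = (h\chi_D - \mu) + g\,\mathscr{H}^{n-1}\lfloor\partial D = h\chi_D + g\,\mathscr{H}^{n-1}\lfloor\partial D - \mu = \sigma - \mu,
\]
where the boundary term arises from the jump of the normal derivative across $\partial D$ (this is exactly where the weak Bernoulli condition and the $C^{1,\alpha}$ regularity of a full-measure portion $E$ of $\partial D$ are needed, to justify the integration by parts and identify the distributional gradient jump with $g\,\mathscr{H}^{n-1}\lfloor\partial D$). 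Convolving with $\Psi_k$ and using $-(\Delta+k^2)\Psi_k = \delta_0$ gives $-u = \Psi_k*\mu - \Psi_k*\sigma$; since $u \equiv 0$ on $\mathbb{R}^n\setminus D$ this is precisely \eqref{eq:weighted-QD}. The independence of the fundamental solution is automatic because any two differ by a solution of $(\Delta+k^2)w=0$ whose convolution with the compactly supported distribution $\mu - \sigma$ — which by the above has vanishing $(\Delta+k^2)$-potential outside $D$ — agrees on $\mathbb{R}^n\setminus D$, or alternatively one invokes the remark following Definition~\ref{def:weighted-QD}.

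The condition \eqref{eq:well-defined-singular-integral}, that $\Psi_k*(g\,\mathscr{H}^{n-1}\lfloor\partial D)$ be pointwise well-defined on $\partial D$, then follows from the regularity of $\partial D$: on the $C^{1,\alpha}$ portion $E$ the single-layer potential with Hölder density $g$ is classically continuous up to the boundary by standard layer-potential estimates, and since $\mathscr{H}^{n-1}(\partial D\setminus E) = 0$ the remaining part contributes nothing to the Hausdorff-measure integral. For the $C^{1,\alpha}$ regularity of $E$ itself I would quote the free-boundary regularity already established for the minimization problem (the analogue in this paper of the Alt--Caffarelli / Gustafsson--Shahgholian theory): at points of the reduced boundary where $g$ is positive and Hölder, the minimizer has nondegenerate linear growth and the free boundary is locally a $C^{1,\alpha}$ graph, with the singular set having zero $\mathscr{H}^{n-1}$ measure; in dimension $n=2$ the singular set is in fact empty, giving $E = \partial D$. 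The main obstacle is the last point: controlling the free boundary regularity near $\partial\{g>0\}$ or where $\mu$ interacts with the free boundary, and verifying that the smallness of $k$ (equivalently, keeping $\lambda = k^2$ below the fundamental tone and keeping the potential $\Psi_k*\mu$ close to the Newtonian potential on the relevant scale) is enough to inherit the $k=0$ regularity theory of \cite{GS96FreeBoundaryPotential} — this is presumably where "$\mu$ with mass concentrated near a point" and "$k$ sufficiently small" are used quantitatively, and where the detailed statement Theorem~\ref{thm:main-quadrature-detail} carries the precise hypotheses on $h$, $g$.
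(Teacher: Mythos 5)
Your broad strategy — minimize $\mathcal{J}_{f,g,k^{2},B_{\beta k^{-1}}}$ with $\lambda=k^{2}<\lambda^{*}(B_{\beta k^{-1}})$, get the Bernoulli free boundary problem via the Euler--Lagrange/regularity theory, and convert to the quadrature identity by convolving with $\Psi_k$ — is the same route the paper takes. However, there are several genuine gaps.

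\textbf{The choice of $f$.} The paper simply takes $f=\tilde{\mu}-h$ where $\tilde{\mu}$ is a bounded approximation of $\mu$, not anything built from potentials. Your description (``$f := h + (\Delta+k^2)(\text{something})$'' and ``$u + (\text{potential of }\mu)$ vanishes to second order'') is not what is done and would not make sense with the sign conventions: in the paper the minimizer $u_*$ itself satisfies $(\Delta+k^2)u_* = -\mu + h\mathscr{L}^n\lfloor D + g\mathscr{H}^{n-1}\lfloor\partial D$ with $u_*=0$ outside $D$, with no potential added.

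\textbf{Missing: $\mathrm{supp}\,(\mu)\subset D$ and $\overline{D}\subset\Omega$.} This is the technical heart of Section~7 and is entirely absent from your proposal. Both inclusions are nontrivial: the definition of a hybrid $k$-quadrature domain requires $\mu\in\mathscr{E}'(D)$, and Lemma~\ref{lem:PDE4} (and the regularity theory feeding into Proposition~\ref{prop:PDE3}) requires $\overline{\{u_*>0\}}\subset\Omega$. The paper obtains both by a two-sided radial comparison: it sandwiches $f=\mu-h$ between two radial data $\tilde{f}_0\le f\le\tilde{f}$ of the form $a\chi_{B_{r}}-b$, solves the radial ODE explicitly with Bessel functions (Lemma~\ref{lem:radially1}, Appendix~\ref{appen:computations}), and then uses the comparison principle (Proposition~\ref{prop:comparison}) to trap $\{u_*>0\}$ between two balls $B_{R_0'}\supset\mathrm{supp}\,(\mu)$ and $B_{R'}\subsetneq B_R$. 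The quantitative role of ``$\mu$ concentrated near a point'' and ``$k$ small'' is precisely to make the Bessel-function inequalities \eqref{eq:sufficient2} hold; your last paragraph gestures in this direction but gives no mechanism.

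\textbf{Missing: unbounded $\mu$.} The hypothesis allows $\mu$ to be a general non-negative measure (e.g.\ a Dirac mass), so $f=\mu-h$ need not be in $L^{\infty}$ and the minimization cannot be run directly. The paper mollifies: it replaces $\mu$ by $\mu*\phi_{2\epsilon}$ (a normalized indicator convolution), runs the construction with this bounded density, and then uses the Helmholtz mean value theorem to show the resulting $D$ is a quadrature domain for the original $\mu$ as well. This step is absent from your proposal.

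\textbf{Well-definedness of the single-layer potential.} Your argument that $\Psi_k*(g\mathscr{H}^{n-1}\lfloor\partial D)(x)$ is well-defined ``on the $C^{1,\alpha}$ portion $E$ by layer-potential estimates, and the rest has measure zero'' does not work: the condition \eqref{eq:well-defined-singular-integral} asks for well-definedness at \emph{every} $x\in\partial D$, including $x\in\partial D\setminus E$, and the fact that $\partial D\setminus E$ has zero $\mathscr{H}^{n-1}$-measure as a domain of integration says nothing about the evaluation point. The paper instead reads this off from the distributional identity $u_* = \Psi_k*(f\mathscr{L}^n\lfloor D) - \Psi_k*(g\mathscr{H}^{n-1}\lfloor\partial D)$: since $u_*\in C^{0,1}_{\rm loc}$ and $\Psi_k*(f\mathscr{L}^n\lfloor D)$ is $C^1$, the single-layer term is continuous, hence pointwise well-defined everywhere on $\partial D$ (Lemma~\ref{lem:PDE4}).
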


\begin{rem}
The hybrid $k$-quadrature domain constructed in
Theorem~\ref{thm:main-quadrature} can be represented by $D=\{u_{*}>0\}$,
where $u_{*}$ is a minimizer of $\mathcal{J}_{f,g,k^{2},B_{R}}$
in $\mathbb{K}(B_{R})$ with $f=\mu-h\chi_{D}$ when $\mu$ is bounded (for general $\mu$ we consider some suitable mollifiers). Since the minimizer is unique for $k$ outside a countable set, then so is the constructed domain, see Proposition~\ref{prop:unique}. See also Proposition~{\rm \ref{prop:main-almost-there}} for the case when $\mu$ is bounded. 
\end{rem}

\addtocontents{toc}{\SkipTocEntry}
\subsection{Real analytic quadrature domains}

We can construct examples of hybrid $k$-quadrature domains using the Cauchy-Kowalevski theorem. Let  $D$ be a bounded
domain in $\mathbb{R}^{n}$ with real-analytic boundary. Let $g$
be real analytic on a neighborhood of $\partial D$ with $g>0$ on
$\partial D$. For each $k\ge0$, there exists a bounded positive measure $\mu_{1}$ with ${\rm supp}\,(\mu_{1})\subset D$
such that $D$ is a hybrid $k$-quadrature domain corresponding to $\mu_{1}$ with  density $(g,0)$. Moreover, if $0\le k<j_{\frac{n-2}{2},1}R^{-1}$ (where $j_{\alpha,1}$ is the first positive zero of the Bessel function $J_{\alpha}$), $\overline{D}\subset B_{R}$, and if $h$ is a non-negative integrable
function near $\overline{D}$ which is real-analytic
near $\partial D$, then $D$ is a hybrid $k$-quadrature domain  corresponding
to some measure $\mu_{2}$ with density $(0,h)$. The proofs follow easily by solving suitable Cauchy problems near $\partial D$ by the Cauchy-Kowalevski theorem, and defining $\mu_1$ and $\mu_2$ in terms of the obtained solutions. 
% ##### arXiv version (begin) #####
For the details see Appendix~{\rm \ref{sec:Real-analytic-case}}.
% ##### arXiv version (end) #####
% ##### journal version (begin) #####
% For details see Appendix in arXiv version of the paper. 
% ##### journal version (end) #####

\addtocontents{toc}{\SkipTocEntry}

\subsection*{Organization}

We first discuss the application to inverse problems in Section~{\rm \ref{sec:Application-to-inverse}}. Then we prove the existence of global minimizers of $\mathcal{J}_{f,g,\lambda,\Omega}$ in $\mathbb{K}(\Omega)$ in Section~{\rm \ref{sec:Existence-of-minimizers}}. We study the relation between local minimizers and partial differential equations in Section~{\rm \ref{sec:Minimizers-and-PDE}}. Next, we study the local minimizers in Section~{\rm \ref{sec:Comparison-of-minimizers:}} and Section~{\rm \ref{sec:Properties-local-minimizers}}. With these ingredients at hand, we prove Theorem~{\rm \ref{thm:main-quadrature}} in Section~{\rm \ref{sec:Relation-with-quadrature}}. 

% ##### arXiv version (begin) #####
For reader's convenience, we add several appendices to make the paper self-contained. In Appendix~{\rm \ref{sec:BV-finite-perimeter}} we recall a few facts about functions of bounded variation and sets with finite perimeter. Appendix~{\rm \ref{appen:details}} provides detailed statements and proofs of results analogous to \cite[Section~2]{GS96FreeBoundaryPotential}. We then exhibit the detailed proof of Lemma~{\rm \ref{lem:radially1}} in Appendix~{\rm \ref{appen:computations}}. Appendix~{\rm \ref{sec:Real-analytic-case}} discusses examples of hybrid $k$-quadrature domains with real-analytic boundary. Finally, we give some remarks on null $k$-quadrature domains in Appendix~{\rm \ref{sec:Rellich}}.  
% ###### arxiv version (end) ######
% ##### journal version (begin) #####
% We also give some remarks on null $k$-quadrature domains in Appendix in arXiv version of the paper.   
% ##### journal version (end) #####

\section{\label{sec:Application-to-inverse}Applications to inverse scattering problems}

We say that a solution $u$ of $(\Delta+k_{0}^{2})u=0$ in $\mathbb{R}^{n}\setminus\overline{B_{R}}$
(for some $R>0$) is \emph{outgoing} if it satisfies the following Sommerfeld radiation condition: 
\begin{equation}
\lim_{|x|\rightarrow\infty}|x|^{\frac{n-1}{2}}(\partial_{r}u-ik_{0}u)=0,\quad\text{uniformly in all directions }\hat{x}=\frac{x}{|x|}\in\mathcal{S}^{n-1},\label{eq:Sommerfeld}
\end{equation}
where $\partial_{r}$ denotes the radial derivative. There exists a unique $u^{\infty}\in L^{2}(\mathcal{S}^{d-1})$, which is called the far-field pattern of $u$, such that  
\[
u(x) = \gamma_{n,k_{0}}\frac{e^{ik_{0}|x|}}{|x|^{\frac{n-1}{2}}}u^{\infty}(\hat{x})+\mathscr{O}(|x|^{-\frac{n+1}{2}})\quad\text{as}\;\;|x|\rightarrow\infty,
\]
uniformly in all direction $\hat{x} \in \mathcal{S}^{d-1}$, where we make the choice (as in \cite[Section 1.2.3]{Yaf10ScatteringAnalyticTheory})
\begin{equation}
\gamma_{n,k_{0}} = \frac{ e^{-\frac{(n-3)\pi i}{4}} }{2(2\pi)^{\frac{n-1}{2}}} k_{0}^{\frac{n-3}{2}}. \label{eq:choice-gamma}
\end{equation}
For $n=2$ we have $\gamma_{2,k_{0}}=\frac{e^{\frac{i\pi}{4}}}{\sqrt{8\pi k_{0}}}$, while when $n=3$ we have $\gamma_{3,k_{0}} = \frac{1}{4\pi}$.

Let $D$ be a bounded domain in $\mathbb{R}^{n}$, which represents a penetrable obstacle with contrast $\rho \in L^{\infty}(D)$ satisfying $|\rho| \ge c > 0$ a.e. near $\partial D$. When one probes the obstacle $(D,\rho)$ using an incident wave $u_{0}$ satisfying $(\Delta + k_{0}^{2})u_{0} = 0$ in $\mathbb{R}^{n}$, it produces an outgoing scattered field $u_{\rm sc}$ solving 
\[
(\Delta + k_{0}^{2} + \rho \chi_{D}) (u_{0} + u_{\rm sc}) = 0 \quad \text{in $\mathbb{R}^{n}$.}
\]
We say that the obstacle $(D,\rho)$ is non-scattering with respect to the incident field $u_{0}$ and the wave number $k_{0}$ if the far-field pattern $u_{\rm sc}^{\infty}$ of the corresponding scattered field $u^{\rm sc}$ vanishes identically. Using Rellich uniqueness theorem \cite{CK19scattering,Hormander_rellich} we know that $u_{\rm sc}^{\infty} \equiv 0$ if and only if $u_{\rm sc} = 0$ in $\mathbb{R}^{n}\setminus\overline{B_{R}}$ for some $R>0$, therefore this definition coincides with \cite[Definition~1.8]{KLSS22QuadratureDomain}. 
The following theorem extend \cite[Corollary~1.9]{KLSS22QuadratureDomain}. We remind the readers that there are some significant differences between 0-quadrature domains and $k$-quadrature domains, see Appendix~{\rm \ref{sec:Rellich}} for more details.
% The following theorem  with $h = \chi_{D}$ can be found in \cite{KLSS22QuadratureDomain}. 

\begin{thm}\label{thm:non-scattering-result1}
Let $D$ be a bounded hybrid $k$-quadrature domain, corresponding to distribution $\mu \in \mathscr{E}'(D)$ and density $(0,h)$ with $h \in L^{\infty}(D)$ and $\abs{h}\ge c > 0$ near $\partial D$. 
% We additionally assume that $h$ is Dini continuous near $\partial D$ and satisfies $h(x)\neq 0$ for all $x \in \partial D$. 
Assume that there exist a wave number $k_{0} \ge 0$ (which may differ from $k$) and
\begin{equation}
\text{a solution $u_{0}$ of $(\Delta+k_{0}^{2})u_{0}=0$ in $\mathbb{R}^{n}$ such that $u_{0}>0$ on $\partial D$.} \label{eq:positive-inciden-field}
\end{equation}
Then there exists a contrast $\rho \in L^{\infty}(D)$ satisfying $|\rho| \ge c > 0$ a.e. near $\partial D$ such that $(D,\rho)$ is non-scattering with respect to the incident field $u_{0}$ and the wave number $k_{0}$. 
\end{thm}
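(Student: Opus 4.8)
The plan is to read off the contrast from the hybrid quadrature identity. Since $g\equiv0$, the measure in \eqref{eq:measure-nu} is $\sigma=h\mathscr{L}^{n}\lfloor D$, so \eqref{eq:weighted-QD} says $\Psi_{k}*\mu=\Psi_{k}*(h\chi_{D})$ on $\mR^{n}\setminus D$ for every fundamental solution $\Psi_{k}$. Fixing one such $\Psi_{k}$ and putting $u:=\Psi_{k}*(\mu-h\chi_{D})$, I get $(\Delta+k^{2})u=h\chi_{D}-\mu$ in $\mR^{n}$ and $u\equiv0$ on $\mR^{n}\setminus D$. Since $\supp\mu$ is a compact subset of $D$, on a neighbourhood of $\partial D$ we have $u=-\Psi_{k}*(h\chi_{D})$ with $h$ bounded (indeed Dini continuous), so $u\in C^{1}$ there; combined with $u\equiv0$ on $\mR^{n}\setminus\overline D$ this gives $u=\abs{\nabla u}=0$ on $\partial D$, and $(\Delta+k^{2})u=h$ in $D$ near $\partial D$.

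Next I would build a global total field. By \eqref{eq:positive-inciden-field}, $u_{0}+u=u_{0}>0$ on the compact set $\partial D$, so there is $\varepsilon>0$ with $\{\,\mathrm{dist}(\cdot,\partial D)<\varepsilon\,\}\cap\supp\mu=\emptyset$ and $u_{0}+u>0$ on the closed collar $\{x\in\overline D:\mathrm{dist}(x,\partial D)\le\varepsilon\}$. Choose $\eta\in C^{\infty}(\mR^{n})$ with $0\le\eta\le1$, $\eta\equiv1$ on $(\mR^{n}\setminus D)\cup\{x\in D:\mathrm{dist}(x,\partial D)<\varepsilon/2\}$ and $\eta\equiv0$ on $\{x\in D:\mathrm{dist}(x,\partial D)>\varepsilon\}$, and set $v:=\eta(u_{0}+u)+(1-\eta)$ on $\mR^{n}$. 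Then $v=u_{0}$ on $\mR^{n}\setminus\overline D$, $v=u_{0}+u$ in a collar of $\partial D$ inside $D$, $v\in C^{1}(\mR^{n})\cap H^{2}_{\mathrm{loc}}(\mR^{n})$ (using $(\Delta+k^{2})u\in L^{\infty}$ away from $\supp\mu$), and $v>0$ on all of $\overline D$: on $\{\eta=1\}\cap\overline D$ — a collar where $u_{0}+u>0$ — we have $v=u_{0}+u$; on $\{0<\eta<1\}$, which lies in that collar, $v=\eta(u_{0}+u)+(1-\eta)>0$; and on $\{\eta=0\}$, $v=1$. A short computation gives $(\Delta+k^{2})v\in L^{\infty}(\mR^{n})$ with support in $\overline D$, and $(\Delta+k^{2})v=h$ in the collar where $\eta\equiv1$. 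Now define the contrast $\rho:=-(\Delta+k^{2})v/v\in L^{\infty}(D)$; near $\partial D$ it equals $-h/(u_{0}+u)$, so $\abs{\rho}=\abs{h}/\abs{v}\ge c>0$ a.e.\ there, since $\abs{h}$ is bounded below near the compact set $\partial D$ and $\abs{v}$ is bounded on $\overline D$.

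Finally I would check non-scattering. By construction $(\Delta+k^{2}+\rho\chi_{D})v=0$ in $\mR^{n}$: outside $\overline D$ one has $v=u_{0}$ and $(\Delta+k^{2})u_{0}=0$; in $D$ one has $(\Delta+k^{2})v=-\rho v$ a.e.; and $v\in C^{1}\cap H^{2}_{\mathrm{loc}}$ rules out a surface measure on $\partial D$, so the identity holds in $L^{2}_{\mathrm{loc}}$, hence distributionally. Therefore $u_{\mathrm{sc}}:=v-u_{0}$ solves $(\Delta+k^{2}+\rho\chi_{D})(u_{0}+u_{\mathrm{sc}})=0$ and vanishes on $\mR^{n}\setminus\overline D$ (hence outside some ball), so it is outgoing in the sense of \eqref{eq:Sommerfeld} and has far-field pattern $u_{\mathrm{sc}}^{\infty}\equiv0$. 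It remains to argue that $u_{\mathrm{sc}}$ is \emph{the} scattered field; taking $u_{0}$ real (so that $\rho$ is real-valued), the difference $w$ of any two outgoing solutions solves $(\Delta+k^{2}+\rho\chi_{D})w=0$ in $\mR^{n}$, and Rellich's lemma together with unique continuation from $\partial D$ into $D$ (and, if needed, into the bounded components of $\mR^{n}\setminus\overline D$) forces $w\equiv0$. Hence $(D,\rho)$ is non-scattering with respect to $u_{0}$.

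The step I expect to be the main obstacle is precisely the one forcing the cutoff $\eta$: the quadrature identity only controls $u$ on $\mR^{n}\setminus D$ and, via the regularity of $h$, in a collar of $\partial D$, whereas $\mu$ may be a genuinely singular distribution deep inside $D$. Thus $v=u_{0}+u$ cannot be used globally — the naive contrast $(\mu-h)/(u_{0}+u)$ need not be in $L^{\infty}(D)$, and $u_{0}+u$ need not be nonvanishing away from $\partial D$ — and one must graft $\eta u$ onto a harmless positive function on the part of $D$ where the quadrature structure is irrelevant, while keeping $v$ of class $C^{1}$ (no surface charge on $\partial D$), positive (so $\rho\in L^{\infty}$), and equal to $u_{0}+u$ near $\partial D$ (so that $\abs{\rho}\ge c$ there). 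A secondary technical point is well-posedness of the direct scattering problem for a merely bounded contrast, which is why one takes $\rho$ real and invokes Rellich's lemma and unique continuation.
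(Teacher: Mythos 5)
Your proof is correct and reaches the same conclusion, but it implements the argument from scratch rather than by citation; comparing the two is worthwhile. The paper's proof is two sentences: following \cite{KLSS22QuadratureDomain} one derives the local free boundary problem \eqref{eq:non-scattering-sufficient} near $\partial D$, and then one quotes \cite[Remark~2.4]{SS21NonscatteringFreeBoundary}, which produces the contrast $\rho$; that cited argument uses the $C^{1,1}$ regularity theory of \cite{ALS13Regularity}, which is why the hypothesis asks for Dini continuity of $h$. Your step one --- setting $u=\Psi_k*(\mu-h\chi_D)$, checking $(\Delta+k^2)u=h\chi_D-\mu$ and $u\equiv 0$ off $D$, and deducing $u\in W^{2,p}_{\rm loc}\subset C^1$ near $\partial D$ with $u=|\nabla u|=0$ on $\partial D$ --- is exactly what the citation to \cite{KLSS22QuadratureDomain} is standing in for. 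Your step two, the cutoff graft $v=\eta(u_0+u)+(1-\eta)$ followed by $\rho:=-(\Delta+k^2)v/v$, is in spirit identical to the paper's own Lemma~\ref{lem:technical-extension} (which the paper uses for the $g\neq 0$ case, Theorem~\ref{thm:non-scattering-result2}, but not for Theorem~\ref{thm:non-scattering-result1}, where it instead defers to \cite{SS21NonscatteringFreeBoundary}). So the route is the same; the difference is that you have unfolded the two citations into a self-contained construction, and in doing so you correctly identify the one structural point that makes it work (the cutoff must kill $u$ before $\supp\mu$, because the potential $u$ is only known to be nice in a collar of $\partial D$).

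Two small remarks. First, your argument only calls on $u\in W^{2,p}_{\rm loc}\cap C^{1,\alpha}_{\rm loc}$ near $\partial D$, which follows from $h\in L^\infty$ alone, whereas the paper's route goes through $C^{1,1}$ via \cite{ALS13Regularity} and therefore needs Dini continuity of $h$; you are implicitly claiming that Dini continuity can be dropped (it is still used for $|h|\ge c>0$ near $\partial D$, but plain continuity would do for that). That is a genuine simplification if it holds up; the one place one must be careful is the claim $u=|\nabla u|=0$ on $\partial D$. You obtain this from $u\in C^1$ and $u\equiv 0$ on $\mathbb{R}^n\setminus D$, which forces $\nabla u=0$ on $\overline{\mathbb{R}^n\setminus\overline D}$, and this equals $\partial D$ only when $\partial D$ has no ``interior'' boundary points (for example it fails for a slit domain). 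The paper also glosses over this, but the $C^{1,1}$/free-boundary machinery it cites is in part there to rule such pathologies out, so the apparent weakening of hypotheses may be trading one assumption for another. Second, the final paragraph on uniqueness of the scattered field is not part of the paper's proof and, as written, contains a step you cannot take: $u_0$ is given and may be genuinely complex, so you are not free to ``take $u_0$ real.'' This does not affect the theorem as stated, since the paper's definition of non-scattering only asks that the far-field pattern of the scattered field vanishes, which your construction establishes by producing an outgoing $u_{\rm sc}=v-u_0$ supported in $\overline D$.
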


\begin{rem}
Using the result in \cite{KSS_PositivitySetsHelmholtz} (see also \cite{SS21NonscatteringFreeBoundary}), we know that \eqref{eq:positive-inciden-field} holds at least when
\begin{enumerate}
\renewcommand{\labelenumi}{\theenumi}
\renewcommand{\theenumi}{(\Alph{enumi})}
\item \label{itm:A} $D$ is Lipschitz so that $\mathbb{R}^{n}\setminus\overline{D}$
is connected and $k_{0}^{2}$ is not a Dirichlet eigenvalue of $-\Delta$
in $D$; or 
\item \label{itm:B} $D$ is a compact set contained in a bounded Lipschitz domain $\Omega$ such that $\mathbb{R}^{n}\setminus\overline{\Omega}$ is connected and $\abs{\Omega} \leq \abs{B_r}$ where $r = j_{\frac{n-2}{2},1} k_{0}^{-1}$. 
\end{enumerate}
\end{rem}

\begin{proof}[Proof of Theorem~{\rm \ref{thm:non-scattering-result1}}]
Following the same ideas in \cite[Theorem~1.2 and Remark~1.3]{KLSS22QuadratureDomain}, one can show that there is a neighborhood $U$ of $\partial D$ in $\mathbb{R}^{n}$ and a distribution $u \in \mathscr{D}'(U)$ satisfying 
\begin{equation}
\begin{cases}
(\Delta+k_{0}^{2}) u = ((k_{0}^{2}-k^{2})u + h) \chi_{D} & \text{in $U$,} \\
u = |\nabla u| = 0 & \text{in $U \setminus D$.}
\end{cases} \label{eq:non-scattering-sufficient}
\end{equation}
By elliptic regularity, one has $u \in C^{1,\alpha}(U)$. The function $v_{0} = u + u_{0}$ satisfies 
\begin{equation*}
\left\{\begin{aligned}
&(\Delta + k_{0}^{2} + \rho_{0}\chi_{D})v_{0} = \left(\rho_{0}v_{0} + (k_{0}^{2}-k^{2})u + h\right)\chi_{D} \quad \text{in $U$,} \\
& \left. v_{0} \right|_{U\setminus D} = \left. u_{0} \right|_{U\setminus D}.
\end{aligned}\right.
\end{equation*}
By choosing $\rho_{0} = -\frac{(k_{0}^{2}-k^{2})u+h}{v_{0}}$ near $\partial D$, one can verify that $v_{0} \in C^{1,\alpha}(U)$ and $\abs{\rho_{0}} \ge c > 0$ near $\partial D$. Hence the theorem follows by the following Lemma~\ref{lem:technical-extension} (with $g\equiv 0$).
% Hence the theorem follows by \cite[Remark~2.4]{SS21NonscatteringFreeBoundary}, which requires that $h$ is Dini continuous since the argument utilizes the $C^{1,1}$ regularity results for the no-sign obstacle problem \cite{ALS13Regularity}.  
\end{proof}

To investigate the case when $g$ is nontrivial, we need the following technical lemma, which is a refinement of \cite[Lemma~2.3]{SS21NonscatteringFreeBoundary}.  

\begin{lem} \label{lem:technical-extension}
Let $k_{0} \ge 0$, let $D$ be a bounded open set in $\mathbb{R}^{n}$ with the boundary $\partial D$ having finite $(n-1)$-dimensional Hausdorff measure, and let $g \in L^{\infty}(\partial D)$. Given any $u_{0}$ as in \eqref{eq:positive-inciden-field}, any open neighborhood $U$ of $\partial D$ in $\mathbb{R}^{n}$, any $\rho_{0} \in L^{\infty}(U)$ with $\abs{\rho_{0}} \ge c > 0$ near $\partial D$ and any $v_{0} \in C_{\rm loc}^{0,1}(U)$ such that 
\begin{equation}
\begin{cases}
(\Delta + k_{0}^{2} + \rho_{0}\chi_{D})v_{0} = g \mathscr{H}^{n-1} \lfloor \partial D & \text{in $U$,} \\
v_{0}|_{U\setminus D} = u_{0}|_{U\setminus D},
\end{cases} \label{eq:assumption-v0}
\end{equation}
there exist $\rho \in L^{\infty}(\mathbb{R}^{n})$ and $v \in C_{\rm loc}^{0,1}(\mathbb{R}^{n})$, with $\rho = \rho_{0}$ and $v=v_{0}$ near $\partial D$, such that 
\begin{equation}
\begin{cases}
(\Delta + k_{0}^{2} + \rho \chi_{D})v = g \mathscr{H}^{n-1} \lfloor \partial D & \text{in $\mathbb{R}^{n}$,} \\
v|_{\mathbb{R}^{n}\setminus D} = u_{0}|_{\mathbb{R}^{n}\setminus D}.
\end{cases} \label{eq:extension-v}
\end{equation}
\end{lem}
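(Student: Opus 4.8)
The plan is to extend the pair $(\rho_0, v_0)$ from a neighborhood of $\partial D$ to all of $\mathbb{R}^n$ by gluing it to $u_0$ away from $D$ and patching the interior of $D$ separately, all while keeping the PDE valid. The key point is that on $U \setminus D$ we already have $v_0 = u_0$ and $(\Delta + k^2)u_0 = 0$, so the two regions where the solution is defined --- namely $U$ and $\mathbb{R}^n \setminus D$ --- overlap on $U \setminus D$ and agree there. Thus a first attempt is simply to declare $v := v_0$ on $U$ and $v := u_0$ on $\mathbb{R}^n \setminus D$. This is consistent and Lipschitz on $U \cup (\mathbb{R}^n \setminus D)$, and satisfies the desired equation there with $\rho := \rho_0$ on $U$; the only region left uncovered is the compact set $K := D \setminus U \subset D$ (more precisely $\overline{D} \setminus U$, which is a compact subset of the open set $D$ since $U \supset \partial D$). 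So everything reduces to filling in $v$ on a compact set compactly contained in $D$, where the right-hand side $g\mathscr{H}^{n-1}\lfloor\partial D$ of the equation vanishes identically (its support is in $\partial D \subset U$).

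The main step is therefore the following interior patching problem: given the Lipschitz function $v_0$ defined on the open collar $U \cap D$ (a neighborhood of $\partial D$ intersected with $D$), produce a Lipschitz function $w$ on $D$ with $w = v_0$ near $\partial D$ and with $(\Delta + k^2 + \rho\chi_D)w = 0$ in the interior region $D \setminus \overline{U'}$ for a slightly smaller collar $U'$, for some bounded measurable $\rho$. Here the trick is that we have complete freedom in choosing the contrast $\rho$ in the interior: first extend $v_0$ arbitrarily to a Lipschitz (even smooth in the interior) function $w$ on all of $D$ that coincides with $v_0$ near $\partial D$ --- this is a routine Whitney/cutoff extension, with no PDE constraint --- and then simply \emph{define} $\rho$ on $D \setminus U'$ by solving the algebraic relation $\rho := -(\Delta + k^2)w / w$ wherever $w \ne 0$, and $\rho := 0$ wherever $w = 0$. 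The obstacle is that this quotient need not be bounded near the zero set of $w$. This is exactly the difficulty handled in \cite[Lemma~2.3]{SS21NonscatteringFreeBoundary}, and the refinement we need --- and the reason we state our own lemma --- is that we must choose the interior extension $w$ so that $\Delta w + k^2 w$ vanishes to sufficiently high order (at least to match the vanishing of $w$) on the zero set of $w$, so that the quotient stays bounded. Concretely, one arranges $w$ to be identically zero on a large interior region and only interpolates in a thin transition layer near $\partial(U\cap D)$, using a cutoff; then $\{w = 0\}$ has nonempty interior and away from it $w$ is bounded below, keeping $\rho \in L^\infty$.

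I would organize the write-up as: (i) set $v := v_0$ on $U$ and $v := u_0$ on $\mathbb{R}^n \setminus \overline{U''}$ for a collar $U''$ with $\overline{U''} \subset U$ and $\partial D \subset U''$, noting consistency on the overlap $U \setminus \overline{U''}$ where both equal $u_0$ (using the second line of \eqref{eq:assumption-v0}); (ii) interpolate in the remaining region, which splits into a piece inside $D$ (the interior patching above) and a piece outside $D$ --- but outside $D$ there is nothing to do since $U \setminus D$ and $\mathbb{R}^n \setminus \overline{U''}$ together already cover $\mathbb{R}^n \setminus D$; (iii) on the interior transition layer in $D$, glue $v_0$ to a function vanishing in the deep interior via a smooth cutoff $\eta$, set $w := \eta v_0$, check $w \in C^{0,1}_{\rm loc}$, and define $\rho$ by the quotient, verifying $\rho \in L^\infty$ using that $|v_0| \ge c > 0$ only where $\eta \equiv 1$ is not needed --- rather, one keeps $\eta \equiv 1$ on the part of the layer adjacent to $U''$ so that $w = v_0$ there and the equation $(\Delta+k^2+\rho_0\chi_D)w = g\mathscr{H}^{n-1}\lfloor\partial D = 0$ is inherited, and on the part where $\eta$ transitions to $0$ one takes $\rho$ from the bounded quotient; (iv) assemble $\rho$ and $v$ globally and record that $\rho = \rho_0$, $v = v_0$ near $\partial D$ and that \eqref{eq:extension-v} holds, the first equation because the only place the right-hand side is nonzero is $\partial D \subset U''$ where nothing was modified. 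The main obstacle, as noted, is ensuring boundedness of $\rho$ across the interior zero set of the patched function, which is why the cutoff must be chosen to make $w$ vanish on an open interior region rather than merely tending to zero.
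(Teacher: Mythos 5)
Your high-level scaffolding is right: set $v := u_0$ outside $D$, keep $v := v_0$ near $\partial D$ (these are consistent on the overlap $U\setminus D$), extend $v$ into the interior of $D$, and recover $\rho$ there by dividing, $\rho := -(\Delta + k^2)v / v$. The gap lies in the one step that actually requires an idea: the choice of interior extension. Your proposal $w := \eta v_0$, with $\eta$ a cutoff equal to $1$ near $\partial D$ and $0$ in the deep interior, does \emph{not} yield a bounded $\rho$. On the transition annulus, writing $\tilde f = (\Delta + k^2)v_0$ (which is bounded in $D$ by the equation for $v_0$), you get
\[
\rho = -\frac{(\Delta+k^2)(\eta v_0)}{\eta v_0}
     = -\frac{\Delta\eta}{\eta} \;-\; 2\,\frac{\nabla\eta}{\eta}\cdot\frac{\nabla v_0}{v_0} \;-\; \frac{\tilde f}{v_0},
\]
and the first two terms blow up as you approach $\partial\{\eta>0\}$ from inside. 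This is unavoidable: if $\eta$ is continuous, vanishes on an open set and is positive on another, then $\log\eta\to-\infty$ along paths into $\{\eta=0\}$, so $|\nabla\eta|/\eta$ cannot stay bounded; a polynomial profile $(\cdot)_+^m$ gives $\Delta\eta/\eta\sim (\mathrm{dist})^{-2}$, and a merely Lipschitz cutoff would put a singular measure in $\Delta w$ that the equation does not permit. Your remark that ``away from $\{w=0\}$ the function $w$ is bounded below'' is simply false on the transition layer, and the claim that the quotient is bounded there is the missing (and wrong) step.

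The correct device --- and this is exactly where the positivity hypothesis \eqref{eq:positive-inciden-field} becomes essential, which your proposal never genuinely uses --- is to interpolate $v_0$ not to $0$ but to a positive constant. Since $v_0|_{U\setminus D}=u_0>0$ on $\partial D$ and $v_0$ is continuous, $v_0>0$ on some neighborhood $U'$ of $\partial D$; choose $\psi\in C_c^\infty(U')$ with $0\le\psi\le1$ and $\psi\equiv1$ near $\partial D$, and set
\[
v := \psi v_0 + (1-\psi)\quad\text{in } D,\qquad v := u_0\quad\text{in }\mathbb{R}^n\setminus D.
\]
As a pointwise convex combination of the two positive functions $v_0$ and $1$ (on $\operatorname{supp}\psi\cap D$, and $v\equiv1$ elsewhere in $D$), $v$ is positive on $\overline D$, hence bounded below by a positive constant by compactness. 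Then $(\Delta+k^2)v$ is bounded in $D$ (all terms involve bounded derivatives of the smooth $\psi$, the Lipschitz $v_0$, and $(\Delta+k^2)v_0=-\rho_0 v_0$), so $\rho := -(\Delta+k^2)v/v\in L^\infty(D)$, and $\rho:=\psi\rho_0$ outside $D$. The remaining verifications of \eqref{eq:extension-v} proceed region by region exactly as you sketched.
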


\begin{proof}
By \eqref{eq:positive-inciden-field}, \eqref{eq:assumption-v0} and continuity of $v_{0}$, one sees that $v_{0}$ is positive in some neighborhood $U' \subset U$ of $\partial D$. Choose $\psi \in C_{c}^{\infty}(U')$ such that $0 \le \psi \le 1$ and $\psi=1$ near $\partial D$, and define 
\[
v = \begin{cases}
v_{0}\psi + (1 - \psi) &\text{in $D$,} \\
u_{0} &\text{in $\mathbb{R}^{n}\setminus D$.}
\end{cases}
\]
Then $v \in C_{\rm loc}^{0,1}(\mathbb{R}^{n})$ is positive near $\overline{D}$ and satisfies $v = v_{0}$ near $\partial D$. We observe that the function defined by  
\begin{equation}
\rho = \left\{
\begin{aligned}
& - \frac{(\Delta+k_{0}^{2})v}{v} && \text{in $D$,} \\
&\psi \rho_{0} && \text{in $\mathbb{R}^{n} \setminus D$,}
\end{aligned}
\right. \label{eq:extension-rho1}
\end{equation}
is $L^{\infty}$ in $D$ and satisfies $\rho = \rho_{0}$ near $\partial D$, which implies that $\rho \in L^{\infty}(\mathbb{R}^{n})$. From \eqref{eq:extension-rho1} it is not difficult to see that 
\begin{subequations}
\begin{equation}
(\Delta + k_{0}^{2} + \rho \chi_{D})v = 0 = g \mathscr{H}^{n-1} \lfloor \partial D \quad \text{in $D$.} \label{eq:definition-v-prop1}
\end{equation}
Since $v = v_{0}$ and $\rho = \rho_{0}$ near $\partial D$, from  \eqref{eq:assumption-v0} we see that 
\begin{equation}
(\Delta + k_{0}^{2} + \rho \chi_{D})v = g \mathscr{H}^{n-1} \lfloor \partial D \quad \text{near $\partial D$.} \label{eq:definition-v-prop2}
\end{equation}
Since $v = u_{0}$ in $\mathbb{R}^{n} \setminus \overline{D}$, then we also have 
\begin{equation}
(\Delta + k_{0}^{2} + \rho \chi_{D})v = (\Delta + k_{0}^{2})v = 0 = g \mathscr{H}^{n-1} \lfloor \partial D \quad \text{in $\mathbb{R}^{n} \setminus \overline{D}$.} \label{eq:definition-v-prop3}
\end{equation}
\end{subequations}
By combining \eqref{eq:definition-v-prop1}, \eqref{eq:definition-v-prop2} and \eqref{eq:definition-v-prop3}, we conclude \eqref{eq:extension-v}. 
\end{proof}

We are now ready to prove the following theorem.

\begin{thm}\label{thm:non-scattering-result2}
Let $D = \{ u_{*} > 0 \}$ be the hybrid $k$-quadrature domain constructed in Theorem~{\rm \ref{thm:main-quadrature-detail}} {\rm (}or Theorem~{\rm \ref{thm:main-quadrature}}{\rm )} corresponding to density $(g,h)$. Given any $u_{0}$ as in \eqref{eq:positive-inciden-field}, there exists $\rho \in L^{\infty}(\mathbb{R}^{n})$ with $\abs{\rho} \ge c > 0$ near $\partial D$ and $u_{\rho,g} \in C_{\rm loc}^{0,1}(\mathbb{R}^{n})$ such that 
\begin{equation}
\begin{cases}
(\Delta + k_{0}^{2} + \rho \chi_{D})u_{\rho,g} = g \mathscr{H}^{n-1} \lfloor \partial D & \text{in $\mathbb{R}^{n}$,} \\
u_{\rho,g} = u_{0} & \text{in $\mathbb{R}^{n} \setminus D$.}
\end{cases} \label{eq:surface-source-generalize}
\end{equation}
\end{thm}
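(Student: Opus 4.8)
The plan is to reduce Theorem~\ref{thm:non-scattering-result2} to Lemma~\ref{lem:technical-extension}. The hybrid $k$-quadrature domain $D = \{u_* > 0\}$ comes with a minimizer $u_*$ of $\mathcal{J}_{f,g,k^2,B_R}$, and the Euler--Lagrange analysis (developed in Section~\ref{sec:Minimizers-and-PDE}, cf.\ Proposition~\ref{prop:PDE2}) produces a function that solves the Bernoulli free boundary problem \eqref{eq:Schiffer-second} inside $D$ with the surface source $g \mathscr{H}^{n-1} \lfloor \partial D$ appearing in a weak sense across $\partial D$. The key observation is that, after subtracting the single-layer-type contribution coming from $u_*$, one obtains from the quadrature identity \eqref{eq:weighted-QD} a locally Lipschitz function $v_0$ defined on a neighborhood $U$ of $\partial D$ that agrees with $u_0$ on $U \setminus D$ and satisfies $(\Delta + k^2 + \rho_0 \chi_D) v_0 = g \mathscr{H}^{n-1} \lfloor \partial D$ there for a suitable $\rho_0 \in L^\infty(U)$. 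This is exactly the hypothesis \eqref{eq:assumption-v0} of Lemma~\ref{lem:technical-extension}.

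Concretely, the steps would be: (i) From Theorem~\ref{thm:main-quadrature-detail} extract the minimizer $u_*$ and the associated potential-theoretic identity, writing $\Psi_k * \mu = \Psi_k*(h\chi_D) + \mathsf{SL}(g)$ outside $D$ as in Remark~\ref{rem:hybrid-QD-Lipschitz}. (ii) Use this to build $v_0$ near $\partial D$: on $U \setminus D$ set $v_0 = u_0$, and on $U \cap D$ define $v_0$ by correcting $u_0$ with the (locally Lipschitz) function whose $(\Delta+k^2)$-action reproduces $g \mathscr{H}^{n-1}\lfloor \partial D$ plus a bounded interior term; the $C^{1,1}_{\rm loc}$ / $C^{0,1}_{\rm loc}$ regularity up to $\partial D$ here is what Dini/Hölder continuity of $h$ and $g$ near $\partial D$ is for, paralleling the proof of Theorem~\ref{thm:non-scattering-result1}. (iii) Verify that the resulting $v_0$ lies in $C^{0,1}_{\rm loc}(U)$, equals $u_0$ on $U \setminus D$, and satisfies \eqref{eq:assumption-v0} with $\rho_0 := -(\Delta+k^2)v_0/v_0$ on $U \cap D$ — here one needs $v_0 > 0$ near $\partial D$, which follows from $u_0 > 0$ on $\partial D$ together with continuity, exactly as in Lemma~\ref{lem:technical-extension}'s proof. (iv) Apply Lemma~\ref{lem:technical-extension} to globalize: it furnishes $\rho \in L^\infty(\mathbb{R}^n)$ and $v \in C^{0,1}_{\rm loc}(\mathbb{R}^n)$ with $v = v_0$, $\rho = \rho_0$ near $\partial D$, solving \eqref{eq:extension-v}; set $u_{\rho,g} := v$, which gives \eqref{eq:surface-source-generalize}.

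The main obstacle I anticipate is step (ii): producing the locally Lipschitz $v_0$ near $\partial D$ with the correct weak equation across the boundary. The surface measure $g\mathscr{H}^{n-1}\lfloor\partial D$ is genuinely singular, so one must check that the single-layer potential $\mathsf{SL}(g)$ (well-defined pointwise by \eqref{eq:well-defined-singular-integral}, which is part of the conclusion of Theorem~\ref{thm:main-quadrature}) has a trace and one-sided normal derivatives matching what the free boundary condition $|\nabla u_*| = g$ prescribes, so that gluing the interior correction to $u_0$ across $\partial D$ leaves no spurious distributional jump beyond the prescribed $g\mathscr{H}^{n-1}\lfloor\partial D$. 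This is where the finite-perimeter structure of $\partial D$ and the $C^{1,\alpha}$ regularity of the reduced boundary $E$ (from Theorem~\ref{thm:main-quadrature}, via the partial regularity theory of Section~\ref{sec:Properties-local-minimizers}) enter, together with the Dini/Hölder assumptions on $h$ and $g$. Once $v_0$ is in hand, the rest is a direct invocation of Lemma~\ref{lem:technical-extension} and a short verification.
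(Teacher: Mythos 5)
Your overall strategy is correct: the theorem is indeed proved by constructing a local solution $v_0$ near $\partial D$ satisfying hypothesis \eqref{eq:assumption-v0}, and then invoking Lemma~\ref{lem:technical-extension}. And your step~(iv) is exactly right. But the crux is your step~(ii), which you yourself flag as the main obstacle, and there is a genuine gap there: you describe a roundabout construction of $v_0$ through the quadrature identity and layer potentials, anticipating trouble with traces, jump relations, and regularity of the reduced boundary, when in fact the paper's $v_0$ is simply $v_0 := u_* + u_0$. Since $u_*$ is the minimizer, it is already in $C^{0,1}_{\rm loc}(\Omega)$ by Proposition~\ref{prop:Lipschitz-regularity-minimizers}, vanishes on $U\setminus D$, and satisfies $(\Delta+k^2)u_* = h\mathscr{L}^n\lfloor D + g\mathscr{H}^{n-1}\lfloor\partial D$ near $\partial D$ by \eqref{eq:PDE3-main}. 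Thus $v_0 = u_*+u_0$ is automatically locally Lipschitz, equals $u_0$ outside $D$, and solves
\[
(\Delta + k^2 + \rho_0\chi_D)v_0 = (\rho_0 v_0 + h)\mathscr{L}^n\lfloor D + g\mathscr{H}^{n-1}\lfloor\partial D ,
\]
so the specific choice $\rho_0 := -h/v_0$ near $\partial D$ (which is $L^\infty$ since $v_0 \ge u_0 > 0$ on $\partial D$ and is continuous) kills the interior term and yields \eqref{eq:assumption-v0}. No layer-potential jump relations, no separate trace analysis, and no appeal to the $C^{1,\alpha}$ structure of the reduced boundary are needed at this point; the distributional identity \eqref{eq:PDE3-main} already encodes the correct behavior across $\partial D$.

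Two smaller inaccuracies in your write-up: (a) the Dini/H\"older hypotheses you invoke in step~(ii) are used for Theorem~\ref{thm:non-scattering-result1} (the $g\equiv 0$ case, where one needs $C^{1,1}$ from the no-sign obstacle problem), not here — here all you need is the Lipschitz regularity of $u_*$ coming from the minimization theory under $g^2\in W^{1,1}$; and (b) you would need to specify $\rho_0$ concretely rather than leaving it as ``a suitable $\rho_0\in L^\infty(U)$'', because the whole point of the choice $\rho_0 = -h/v_0$ is that it makes the interior volume term vanish identically, matching the right-hand side of \eqref{eq:assumption-v0}. Your proposed layer-potential route would also be harder to justify rigorously because $\partial D$ is only known to have finite $\mathscr{H}^{n-1}$-measure with a locally $C^{1,\alpha}$ reduced boundary (except when $n=2$), so the $H^{-1/2}$-jump relations of \cite[Theorem~6.11]{McL00EllipticSystems} for Lipschitz boundaries are not directly available in general.
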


\begin{proof}
There exists an open neighborhood $U$ of $\partial D$ in $\mathbb{R}^{n}$ such that 
\[
\begin{cases}
(\Delta + k_{0}^{2})u_{*} = ((k_{0}^{2}-k^{2})u_{*} + h) \mathscr{L}^{n} \lfloor D + g \mathscr{H}^{n-1}\lfloor \partial D & \text{in $U$,} \\
u_{*}|_{U \setminus D} = 0.
\end{cases}
\]
Since the function $v_{0} = u_{*} + u_{0}$ satisfies 
\[
(\Delta + k_{0}^{2} + \rho_{0}\chi_{D})v_{0} = (\rho_{0}v_{0} + (k_{0}^{2}-k^{2})u_{*} + h) \mathscr{L}^{n} \lfloor D + g \mathscr{H}^{n-1}\lfloor \partial D,
\]
by choosing $\rho_{0} = -\frac{(k_{0}^{2}-k^{2})u_{*} + h}{v_{0}}$ near $\partial D$, one can verify that $v_{0} \in C^{0,1}(U')$, $\rho_{0} \in L^{\infty}(U')$ with $|\rho_{0}| \ge c > 0$ in $U'$ and 
\begin{equation*}
\begin{cases}
(\Delta + k_{0}^{2} + \rho_{0} \chi_{D}) v_{0} = g \mathscr{H}^{n-1}\lfloor \partial D & \text{in $U'$,} \\
v_{0}|_{U'\setminus D} = u_{0}|_{U'\setminus D} ,
\end{cases} 
\end{equation*}
for some open neighborhood $U'$ of $\partial D$ in $U$. Finally, we conclude Theorem~{\rm \ref{thm:non-scattering-result2}} using Lemma~{\rm \ref{lem:technical-extension}}. 
\end{proof}

We will now discuss how Theorem \ref{thm:non-scattering-result2}  can be interpreted as a nonscattering result. It is easy to see that the function $w_{\rho,g} := u_{\rho,g} - u_{0}$ satisfies 
\[
\begin{cases}
(\Delta + k_{0}^{2})w_{\rho,g} = -\rho u_{\rho,g} \mathscr{L}^{n} \lfloor D + g \mathscr{H}^{n-1} \lfloor \partial D & \text{in $\mathbb{R}^{n}$,} \\
w_{\rho,g} = 0 & \text{in $\mathbb{R}^{n} \setminus D$.}
\end{cases}
\]
Since $D$ is bounded, then $w_{\rho,g} \in \mathscr{E}'(\mathbb{R}^{n})$. Let $\Psi_{k_{0}}$ be any fundamental solution for $-(\Delta + k_{0}^{2})$ in $\mathbb{R}^{n}$. By the properties of convolution for distributions we have 
\[
\begin{aligned}
w_{\rho,g} &= \delta_{0}*w_{\rho,g} = - (\Delta + k_{0}^{2})\Psi_{k_{0}}*w_{\rho,g} = -\Psi_{k_{0}}*(\Delta + k_{0}^{2})w_{\rho,g} \\
&= \Psi_{k_{0}}*(\rho u_{\rho,g} \chi_{D}) - \Psi_{k_{0}}*(g \mathscr{H}^{n-1} \lfloor \partial D),
\end{aligned}
\]
that is, 
\begin{equation}
u_{\rho,g} = u_{0} + \Psi_{k_{0}}*(\rho u_{\rho,g} \chi_{D}) - \Psi_{k_{0}}*(g \mathscr{H}^{n-1} \lfloor \partial D). \label{eq:convolution-fundamental}
\end{equation}
When $\partial D$ above is Lipschitz (by Theorem \ref{thm:main-quadrature-detail} this is true for example when $n=2$), the outer unit normal vector $\nu$ on $\partial D$ is $\mathscr{H}^{n-1}$-a.e. well-defined in the sense of \cite[Theorem~5.8.1]{EG15MeasureTheory}. Now let $\gamma^{*}$ be the adjoint of the trace operator on $\partial D$ as in \eqref{eq:adjoint-trace-operator}. In this case, we can write \eqref{eq:convolution-fundamental} as
\begin{equation}
u_{\rho,g} = u_{0} + \Psi_{k_{0}}*(\rho u_{\rho,g} \chi_{D}) - \mathsf{SL}\,(g), \label{eq:convolution-fundamenta2}
\end{equation}
where $\mathsf{SL}\,(g)$ is the single layer potential as in \eqref{eq:single-layer-potential}. Since $\rho u_{\rho,g} \chi_{D} \in L^{\infty}(\mathbb{R}^{n})$, then one sees that $u_{0} + \Psi_{k_{0}}*(\rho u_{\rho,g} \chi_{D}) \in C_{\rm loc}^{1}(\mathbb{R}^{n})$. Consequently, by using the jump relations of the layer potential in \cite[Theorem~6.11]{McL00EllipticSystems}, we have 
\begin{subequations}
\begin{equation}
\partial_{\nu}^{+}u_{\rho,g} - \partial_{\nu}^{-}u_{\rho,g} = g \quad \text{in $H^{-\frac{1}{2}}(\partial D)$-sense.} \label{eq:jump-relation}
\end{equation}
Here $\partial_{\nu}^{-}$ (resp.\ $\partial_{\nu}^{+}$) denotes the normal derivative from the interior (resp.\ exterior) of $D$. Obviously  $u_{\rho,g} \in C_{\rm loc}^{0,1}(\mathbb{R}^{n})$ satisfies 
\begin{align}
& (\Delta + k_{0}^{2} + \rho)u_{\rho,g} = 0 \quad \text{in $D$,} \label{eq:interior} \\
& u_{\rho,g}|_{\mathbb{R}^{n} \setminus \overline{D}} = u_{0}|_{\mathbb{R}^{n} \setminus \overline{D}}. \label{eq:exterior} 
\end{align}
\end{subequations}

By \eqref{eq:jump-relation}--\eqref{eq:exterior}, we can interpret $g \mathscr{H}^{n-1} \lfloor \partial D$ in \eqref{eq:surface-source-generalize} as a \emph{nonradiating surface source} with respect to incident field $u_{0}$ and potential $\rho \in L^{\infty}(D)$. In other words the obstacle $D$ is nonscattering with respect to both the contrast $\rho$ in $D$ and surface source $g$ on $\partial D$. We could formally also write the equation \eqref{eq:surface-source-generalize} as 
\begin{equation}
\begin{cases}
(\Delta + k_{0}^{2} + (\rho \chi_{D} + \tilde{g} \mathscr{H}^{n-1} \lfloor \partial D))u_{\rho,g} = 0 & \text{in $\mathbb{R}^{n}$,} \\
u_{\rho,g} = u_{0} & \text{in $\mathbb{R}^{n} \setminus D$.}
\end{cases} \label{eq:surface-source-generalize-second}
\end{equation}
where $\tilde{g} = g/u_{\rho,g}$ on $\partial D$, which would correspond to a \emph{nonscattering domain with singular contrast}. See also \cite{KW21CharacterizeNonradiating} for discussion about surface sources on Lipschitz surfaces.

We now discuss the case when the background medium is anisotropic inhomogeneous. Let $m > \frac{n}{2}$ be an integer, let $\rho \in C_{\rm loc}^{m-1,1}(\mathbb{R}^{n})$ and let $A \in (C_{\rm loc}^{m,1}(\mathbb{R}^{n}))_{\rm sym}^{n \times n}$ satisfy the uniform ellipticity  condition: there exists a constant $c_{0}>0$ such that 
\begin{equation}
\xi\cdot A(x)\xi \ge c_{0}|\xi|^{2} \quad \text{for all $x,\xi \in \mathbb{R}^{n}$.} \label{eq:ellipticity-condition}
\end{equation}
Let $u_{0} \in H_{\rm loc}^{1}(\mathbb{R}^{n})$ satisfy 
\begin{equation}
(\nabla\cdot A\nabla + k_{0}^{2}\rho)u_{0}=0 \quad \text{in $\mathbb{R}^{n}$.} \label{eq:anisotropic-u0}
\end{equation}
By using \cite[Theorem~8.10]{GT01Elliptic} and Sobolev embedding, one sees that 
\begin{equation*}
u_{0} \in H_{\rm loc}^{m+2}(\mathbb{R}^{n}) \subset C_{\rm loc}^{2}(\mathbb{R}^{n}). 
\end{equation*}

\begin{defn}
We say that the isotropic homogeneous penetrable obstacle $D$ (which is a bounded domain in $\mathbb{R}^{n}$) is nonscattering with respect to some external source $\mu \in \mathscr{E}'(D)$ and the incident field $u_{0}$ as in \eqref{eq:anisotropic-u0}, if there exists a $u^{\rm to}$, which is in $H_{\rm loc}^{2}$ near $\mathbb{R}^{n}\setminus D$, such that 
\begin{equation}
( \nabla\cdot \tilde{A} \nabla + k_{0}^{2} \tilde{\rho} )u^{\rm to} = -\mu \text{ in $\mathbb{R}^{n}$}, \quad u^{\rm to}|_{\mathbb{R}^{n}\setminus D} = u_{0}|_{\mathbb{R}^{n}\setminus D}, \label{eq:anisotropic-nonscattering}
\end{equation}
where $\tilde{A} := A\chi_{\mathbb{R}^{n}\setminus D} + {\rm Id} \chi_{D}$ and $\tilde{\rho} := \rho \chi_{\mathbb{R}^{n}\setminus D} + 1 \chi_{D}$. 
\end{defn}

By writing $u^{\rm sc} := u^{\rm to} - u_{0}$ in $\mathbb{R}^{n}$, one observes  that, in the case when $D$ is a bounded Lipschitz domain in $\mathbb{R}^{n}$, \eqref{eq:anisotropic-nonscattering} is equivalent to the following transmission problem
\begin{equation}
\begin{cases}
(\Delta + k_{0}^{2}) u^{\rm sc} = -\mu + h \quad \text{in $D$,} \\
u^{\rm sc}|_{\mathbb{R}^{n} \setminus D}=0 ,\quad \partial_{\nu}^{-} u^{\rm sc}|_{\partial D} = -g,
\end{cases} \label{eq:Bernoulli-ITP}
\end{equation}
where 
\begin{equation}
g:=-\nu \cdot (A - {\rm Id}) \nabla u_{0}|_{\partial D} \in L^{\infty}(\partial D) , \quad h = -(\Delta + k_{0}^{2})u_{0} \in L^{\infty}(D). \label{eq:def-g-h}
\end{equation}
Here we used that $\partial_{\nu}^{-} u^{\rm sc}|_{\partial D} = \partial_{\nu}^{-} (u^{\rm to} - u_0)|_{\partial D} = \tilde{A} \nabla^- u^{\rm to} \cdot \nu|_{\partial D} - \p_{\nu} u_0|_{\partial D} = A \nabla^+ u_0 \cdot \nu|_{\partial D} - \p_{\nu} u_0|_{\partial D}$ since $u^{\rm to}$ is $H^2_{\rm loc}$ near $\partial D$. Based on the above observation, we now able to prove the following theorem. 

\begin{thm}
Let $m > \frac{n}{2}$ be an integer, let $\rho \in C_{\rm loc}^{m-1,1}(\mathbb{R}^{n})$, let $A \in (C_{\rm loc}^{m,1}(\mathbb{R}^{n}))_{\rm sym}^{n \times n}$ satisfy the uniform ellipticity condition \eqref{eq:ellipticity-condition}, and let $u_{0}$ be an incident field as in \eqref{eq:anisotropic-u0}. 
If $D$ is a bounded Lipschitz domain in $\mathbb{R}^{n}$ such that it is a hybrid $k$-quadrature domain corresponding to distribution $\mu \in \mathscr{E}'(D)$ and density $(g,h)$ as in \eqref{eq:def-g-h}, then there exists a total field $u^{\rm to}$ satisfying \eqref{eq:anisotropic-nonscattering}.  
\end{thm}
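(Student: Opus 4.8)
The plan is to reduce to the transmission problem \eqref{eq:Bernoulli-ITP}, whose equivalence with \eqref{eq:anisotropic-nonscattering} for bounded Lipschitz $D$ has already been recorded above. Thus it suffices to produce a scattered field $u^{\rm sc}$ solving \eqref{eq:Bernoulli-ITP} and then set $u^{\rm to}:=u^{\rm sc}+u_{0}$; since the $u^{\rm sc}$ we construct vanishes outside $D$, the total field automatically satisfies $u^{\rm to}|_{\mathbb{R}^{n}\setminus D}=u_{0}|_{\mathbb{R}^{n}\setminus D}$ and, being equal to $u_{0}$ there, inherits the required local $H^{2}$ regularity near $\mathbb{R}^{n}\setminus D$ from $u_{0}\in C_{\rm loc}^{2}(\mathbb{R}^{n})$.

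First I would fix any fundamental solution $\Psi_{k}$ of $-(\Delta+k^{2})$ and define the candidate explicitly via the fundamental solution as
\[
u^{\rm sc}:=\Psi_{k}*(\mu-\sigma)=\Psi_{k}*\mu-\Psi_{k}*(h\chi_{D})-\mathsf{SL}(g),
\]
where $\sigma=h\mathscr{L}^{n}\lfloor D+g\mathscr{H}^{n-1}\lfloor\partial D$ as in \eqref{eq:measure-nu} and $\mathsf{SL}$ is the single-layer potential of Remark~\ref{rem:hybrid-QD-Lipschitz} (here one uses $g\in L^{\infty}(\partial D)\subset H^{-1/2}(\partial D)$ and $\gamma^{*}g=g\mathscr{H}^{n-1}\lfloor\partial D$). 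The hybrid $k$-quadrature identity \eqref{eq:weighted-QD} is exactly the statement $\Psi_{k}*\mu=\Psi_{k}*\sigma$ on $\mathbb{R}^{n}\setminus D$, so $u^{\rm sc}\equiv0$ there. Because $\supp\mu$ is a compact subset of the open set $D$, the term $\Psi_{k}*\mu$ is smooth near $\partial D$; because $h\chi_{D}\in L^{\infty}$ is compactly supported, $\Psi_{k}*(h\chi_{D})\in C_{\rm loc}^{1}(\mathbb{R}^{n})$; and $\mathsf{SL}(g)$ has matching one-sided traces on $\partial D$ (this is where the Lipschitz hypothesis on $\partial D$ and the well-definedness condition \eqref{eq:well-defined-singular-integral} enter). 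Hence $u^{\rm sc}$ has a well-defined trace on $\partial D$ equal to $0$, and applying $-(\Delta+k^{2})$ gives $-(\Delta+k^{2})u^{\rm sc}=\mu-\sigma$ in $\mathbb{R}^{n}$; restricting to the open set $D$, where $\sigma$ reduces to $h$ and $\mu$ is unchanged, yields $(\Delta+k^{2})u^{\rm sc}=-\mu+h$ in $D$.

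It remains to verify the Bernoulli condition $\partial_{\nu}^{-}u^{\rm sc}|_{\partial D}=-g$. Writing $u^{\rm sc}=\Phi-\mathsf{SL}(g)$ with $\Phi:=\Psi_{k}*\mu-\Psi_{k}*(h\chi_{D})$, which is $C^{1}$ near $\partial D$ and hence has matching one-sided conormal derivatives there, the jump relations for the single-layer potential (\cite[Theorem~6.11]{McL00EllipticSystems}, valid since $\partial D$ is Lipschitz and $g\in H^{-1/2}(\partial D)$) give $\partial_{\nu}^{+}u^{\rm sc}-\partial_{\nu}^{-}u^{\rm sc}=g$ in $H^{-1/2}(\partial D)$, with exactly the sign convention already used for $u_{\rho,g}$ in this section. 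Since $u^{\rm sc}\equiv0$ on $\mathbb{R}^{n}\setminus\overline{D}$ we have $\partial_{\nu}^{+}u^{\rm sc}=0$, whence $\partial_{\nu}^{-}u^{\rm sc}=-g$. Therefore $u^{\rm sc}$ solves \eqref{eq:Bernoulli-ITP}, and by the equivalence noted above, $u^{\rm to}:=u^{\rm sc}+u_{0}$ satisfies \eqref{eq:anisotropic-nonscattering}; concretely, the two transmission conditions across $\partial D$ for the divergence-form operator reduce precisely to the definitions of $g$ and $h$ in \eqref{eq:def-g-h}, using $h=-(\Delta+k^{2})u_{0}$ in $D$ and $u^{\rm sc}\equiv0$ outside $D$.

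I expect the main obstacle to be purely the bookkeeping of the layer-potential jump: confirming that $\mathsf{SL}(g)$ is continuous across $\partial D$ and that its interior and exterior conormal derivatives are well-defined elements of $H^{-1/2}(\partial D)$ for merely $L^{\infty}$ density on a Lipschitz surface — which is precisely where the Lipschitz assumption and \eqref{eq:well-defined-singular-integral} are used — together with fixing the sign in the jump relation. Everything else is already at our disposal: the equivalence \eqref{eq:anisotropic-nonscattering}$\Leftrightarrow$\eqref{eq:Bernoulli-ITP}, the regularity $u_{0}\in C_{\rm loc}^{2}(\mathbb{R}^{n})$, and the memberships $g\in L^{\infty}(\partial D)$, $h\in L^{\infty}(D)$ coming from \eqref{eq:def-g-h}.
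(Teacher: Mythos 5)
Your proposal is correct and follows essentially the same route as the paper's proof: define $u^{\rm sc}:=\Psi_k*\mu-\Psi_k*(h\chi_D)-\mathsf{SL}(g)$, use the hybrid quadrature identity together with Remark~\ref{rem:hybrid-QD-Lipschitz} to conclude $u^{\rm sc}|_{\mathbb{R}^n\setminus D}=0$, invoke the single-layer jump relation to obtain the Neumann condition $\partial_\nu^- u^{\rm sc}=-g$, and then pass to $u^{\rm to}=u^{\rm sc}+u_0$ via the equivalence of \eqref{eq:anisotropic-nonscattering} and \eqref{eq:Bernoulli-ITP}. You simply spell out the jump-relation bookkeeping that the paper compresses into the phrase ``similarly as in \eqref{eq:jump-relation}.''
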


\begin{proof}
Let $\Psi_{k_{0}}$ be any fundamental solution for $-(\Delta + k_{0}^{2})$ in $\mathbb{R}^{n}$, and define 
\[
u^{\rm sc} := \Psi_{k_{0}}*\mu - \Psi_{k_{0}}*(h\chi_{D}) - \mathsf{SL}\,(g) \quad \text{in $\mathbb{R}^{n}$.}
\]
Since $D$ is a hybrid $k$-quadrature domain, by Remark~{\rm \ref{rem:hybrid-QD-Lipschitz}} we know that $u^{\rm sc}|_{\mathbb{R}^{n}\setminus D} = 0$. Since $u^{\rm sc} \in \mathscr{E}'(\mathbb{R}^{n})$ and $\Psi_{k_{0}}*\mu - \Psi_{k_{0}}*(h\chi_{D})$ is $C^{1}$ near $\partial D$, similarly as in \eqref{eq:jump-relation} one sees that $u^{\rm sc}$ satisfies \eqref{eq:Bernoulli-ITP}. By using the equivalence of \eqref{eq:anisotropic-nonscattering} and \eqref{eq:Bernoulli-ITP}, we conclude the theorem. 
\end{proof}

\section{\label{sec:Existence-of-minimizers}Existence of minimizers}

We first show the boundedness of the functional $\mathcal{J}_{f,g,\lambda,\Omega}$ given in \eqref{jfunctional}.
\begin{lem}
\label{lem:boundedness} Let $f,g\in L^{\infty}(\mathbb{R}^{n})$
with $g\ge0$, and let $|\Omega|<\infty$. If $-\infty<\lambda<\lambda^{*}(\Omega)$,
then $\mathcal{J}_{f,g,\lambda,\Omega}$ is coercive in $H_{0}^{1}(\Omega)$.
If $\lambda>\lambda^{*}(\Omega)$, then $\mathcal{J}_{f,g,\lambda,\Omega}$
is unbounded from below in $\mathbb{K}(\Omega)$. 
\end{lem}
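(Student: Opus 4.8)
The plan is to derive both claims from the variational characterization of the fundamental tone $\lambda^{*}(\Omega)$, together with a Poincar\'e inequality that is available because $|\Omega|<\infty$. First I would record the preliminary fact that, since $|\Omega|<\infty$, Sobolev's inequality together with H\"older's inequality (or the Faber--Krahn inequality) provides a finite constant $C_{\Omega}$ with $\|v\|_{L^{2}(\Omega)}\le C_{\Omega}\|\nabla v\|_{L^{2}(\Omega)}$ for all $v\in C_{c}^{\infty}(\Omega)$, hence for all $v\in H_{0}^{1}(\Omega)$ by density. Equivalently, $\lambda^{*}(\Omega)>0$ and $\|v\|_{L^{2}(\Omega)}^{2}\le\lambda^{*}(\Omega)^{-1}\|\nabla v\|_{L^{2}(\Omega)}^{2}$ for all $v\in H_{0}^{1}(\Omega)$; in particular $\|\cdot\|_{H^{1}(\Omega)}$ and $\|\nabla\cdot\|_{L^{2}(\Omega)}$ are equivalent norms on $H_{0}^{1}(\Omega)$, so coercivity may be tested against $\|\nabla u\|_{L^{2}(\Omega)}$.

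\emph{Coercivity for $\lambda<\lambda^{*}(\Omega)$.} Set $c_{0}:=\min\{1,\,1-\lambda/\lambda^{*}(\Omega)\}>0$. Using the above Poincar\'e inequality when $0<\lambda<\lambda^{*}(\Omega)$, and the trivial bound $-\lambda\|u\|_{L^{2}(\Omega)}^{2}\ge0$ when $\lambda\le0$, one checks in either case that $\int_{\Omega}(|\nabla u|^{2}-\lambda|u|^{2})\,dx\ge c_{0}\|\nabla u\|_{L^{2}(\Omega)}^{2}$. The linear term is estimated by Cauchy--Schwarz and Poincar\'e: $\bigl|\int_{\Omega}fu\,dx\bigr|\le\|f\|_{L^{\infty}}|\Omega|^{1/2}\|u\|_{L^{2}(\Omega)}\le\|f\|_{L^{\infty}}|\Omega|^{1/2}\lambda^{*}(\Omega)^{-1/2}\|\nabla u\|_{L^{2}(\Omega)}$, while the term $\int_{\Omega}g^{2}\chi_{\{u>0\}}\,dx\ge0$ is simply discarded (here $g\ge0$ is used). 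Hence there is a constant $C$ depending only on $n$, $|\Omega|$ and $\|f\|_{L^{\infty}}$ with $\mathcal{J}_{f,g,\lambda,\Omega}(u)\ge c_{0}\|\nabla u\|_{L^{2}(\Omega)}^{2}-C\|\nabla u\|_{L^{2}(\Omega)}$, whose right-hand side tends to $+\infty$ as $\|\nabla u\|_{L^{2}(\Omega)}\to\infty$; this gives coercivity in $H_{0}^{1}(\Omega)$.

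\emph{Unboundedness below for $\lambda>\lambda^{*}(\Omega)$.} By definition of $\lambda^{*}(\Omega)$ as an infimum there is $\psi\in C_{c}^{\infty}(\Omega)$ with $\psi\not\equiv0$ and $\|\nabla\psi\|_{L^{2}(\Omega)}^{2}<\lambda\|\psi\|_{L^{2}(\Omega)}^{2}$. Replacing $\psi$ by $w:=\abs{\psi}$, which belongs to $H_{0}^{1}(\Omega)$ with $\|\nabla\abs{\psi}\|_{L^{2}(\Omega)}=\|\nabla\psi\|_{L^{2}(\Omega)}$ and $\|\abs{\psi}\|_{L^{2}(\Omega)}=\|\psi\|_{L^{2}(\Omega)}$, we obtain $w\in\mathbb{K}(\Omega)$, $w\not\equiv0$, and $a:=\int_{\Omega}(|\nabla w|^{2}-\lambda|w|^{2})\,dx<0$. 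For every $t>0$ one has $tw\in\mathbb{K}(\Omega)$ and, directly from the definition of $\{u>0\}$ given above (by rescaling its internal test function), $\{tw>0\}=\{w>0\}$, so $\chi_{\{tw>0\}}=\chi_{\{w>0\}}$ and
\[
\mathcal{J}_{f,g,\lambda,\Omega}(tw)=a\,t^{2}-2t\int_{\Omega}fw\,dx+\int_{\Omega}g^{2}\chi_{\{w>0\}}\,dx,
\]
which tends to $-\infty$ as $t\to+\infty$ because $a<0$. Hence $\mathcal{J}_{f,g,\lambda,\Omega}$ is unbounded from below on $\mathbb{K}(\Omega)$.

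The only point requiring some care is the lower bound $\int_{\Omega}(|\nabla u|^{2}-\lambda|u|^{2})\,dx\ge c_{0}\|\nabla u\|_{L^{2}(\Omega)}^{2}$ in the first part: it relies on $\lambda^{*}(\Omega)>0$, which is precisely where the finiteness of $|\Omega|$ enters (via Sobolev/Faber--Krahn), and on distinguishing the signs of $\lambda$. The remaining ingredients---controlling the linear term, discarding the nonnegative term $g^{2}\chi_{\{u>0\}}$, and the scale invariance of $\chi_{\{u>0\}}$ used in the second part---are routine.
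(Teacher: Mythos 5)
Your proposal is correct and follows essentially the same route as the paper: for $\lambda<\lambda^{*}(\Omega)$ you use the spectral‑gap/Poincar\'e inequality to get a quadratic lower bound on the Dirichlet part, control the linear term by Cauchy--Schwarz, and drop the nonnegative $g^{2}\chi_{\{u>0\}}$ term; for $\lambda>\lambda^{*}(\Omega)$ you pick $\psi$ with negative Rayleigh quotient, pass to $|\psi|\in\mathbb{K}(\Omega)$, and scale by $t\to\infty$. The only cosmetic differences are that you absorb the linear term multiplicatively rather than via Young's inequality, and you explicitly note the scale invariance $\{tw>0\}=\{w>0\}$ (which the paper uses implicitly); in fact your version of the unboundedness step is slightly cleaner since it does not invoke the paper's incidental (and unneeded) claim that $\int_{\Omega}f|u|\,dx>0$.
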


\begin{rem}
Here we allow ${\rm supp}\,(f_{+})$ to be unbounded. 
\end{rem}

\begin{proof}
[Proof of Lemma~{\rm \ref{lem:boundedness}}] If $\lambda<\lambda^{*}(\Omega)$,
then there exists a $\gamma>0$ such that 
\[
\int_{\Omega}(|\nabla u|^{2}-\lambda|u|^{2})\,dx\ge\gamma\|u\|_{H^{1}(\Omega)}^{2}\quad\text{for all }u\in H_{0}^{1}(\Omega).
\]
Observe that 
\[
\bigg|2\int_{\Omega}fu\bigg|\le 2 \|u\|_{L^{2}(\Omega)}|\Omega|^{\frac{1}{2}}\|f\|_{L^{\infty}(\Omega)}\le\epsilon\|u\|_{L^{2}(\Omega)}^{2}+\epsilon^{-1}|\Omega|\|f\|_{L^{\infty}(\Omega)}^{2}
\]
for all $\epsilon>0$. Consequently, we have 
\begin{align*}
\mathcal{J}_{f,g,\lambda,\Omega}(u) & =\int_{\Omega}(|\nabla u|^{2}-\lambda|u|^{2})\,dx-2\int_{\Omega}fu\,dx+\int_{\Omega}g^{2}\chi_{\{u>0\}}\,dx\\
 & \ge\gamma\|u\|_{H^{1}(\Omega)}^{2}-\epsilon\|u\|_{L^{2}(\Omega)}^{2}-\epsilon^{-1}|\Omega|\|f\|_{L^{\infty}(\Omega)}^{2}.
\end{align*}
Choosing $\epsilon=\frac{\gamma}{2}$, we reach 
\begin{equation}
\mathcal{J}_{f,g,\lambda,\Omega}(u)\ge\frac{\gamma}{2}\|u\|_{H^{1}(\Omega)}^{2}-\frac{2|\Omega|\|f\|_{L^{\infty}(\Omega)}^{2}}{\gamma}\quad\text{for all }u\in H_{0}^{1}(\Omega).\label{eq:coercivity}
\end{equation}
This proves the claim for $\lambda<\lambda^{*}(\Omega)$. 

We now consider the case when $\lambda>\lambda^{*}(\Omega)$. There
exists $u\in C_{c}^{\infty}(\Omega)$ so that $\|\nabla u\|_{L^{2}(\Omega)}^{2}<\lambda\|u\|_{L^{2}(\Omega)}^{2}$.
Moreover, $|u|\in H_{0}^{1}(\Omega)$ and 
\[
\|\nabla|u|\|_{L^{2}(\Omega)}^{2}=\|\nabla u\|_{L^{2}(\Omega)}^{2}<\lambda\|u\|_{L^{2}(\Omega)}^{2}.
\]
Therefore, we know that $t|u|\in\mathbb{K}(\Omega)$ for all $t\ge0$.
Hence we know that 
\begin{align*}
\mathcal{J}_{f,g,\lambda,\Omega}(t|u|) & =t^{2}(\|\nabla|u|\|_{L^{2}(\Omega)}^{2}-\lambda\|u\|_{L^{2}(\Omega)}^{2})-2t\int_{\Omega}f|u|\,dx+\int_{\Omega}g^{2}\chi_{\{|u|>0\}}\,dx\\
 & \le t^{2}(\overbrace{\|\nabla|u|\|_{L^{2}(\Omega)}^{2}-\lambda\|u\|_{L^{2}(\Omega)}^{2}}^{<0})-2t\overbrace{\int_{\Omega}f|u|\,dx}^{>0}+\overbrace{|{\rm supp}\,(u)|\|g\|_{L^{\infty}(\Omega)}^{2}}^{<\infty},
\end{align*}
which implies 
\[
\limsup_{t\rightarrow\infty}\mathcal{J}_{f,g,\lambda,\Omega}(t|u|)=-\infty,
\]
which proves the second claim of the lemma. 
\end{proof}
\begin{rem}
If $u\in\mathbb{K}(\Omega)$, by observing that 
\begin{align*}
\mathcal{J}_{f,g,\lambda,\Omega}(u) & =\int_{\Omega}(|\nabla u|^{2}-\lambda|u|^{2})\,dx-2\int_{\Omega}f_{+}u\,dx+\overbrace{2\int_{\Omega}f_{-}u\,dx}^{\ge0}+\int_{\Omega}|g|^{2}\chi_{\{u>0\}}\,dx\\
 & \ge\gamma\|u\|_{H^{1}(\Omega)}^{2}-\epsilon\|u\|_{L^{2}(\Omega)}^{2}-\epsilon^{-1}|\Omega|\|f_{+}\|_{L^{\infty}(\Omega)}^{2}.
\end{align*}
we can obtain 
\begin{equation}
\mathcal{J}_{f,g,\lambda,\Omega}(u)\ge\frac{\gamma}{2}\|u\|_{H^{1}(\Omega)}^{2}-\frac{2|\Omega|\|f_{+}\|_{L^{\infty}(\Omega)}^{2}}{\gamma}\quad\text{for all }u\in\mathbb{K}(\Omega).\label{eq:coercivity-refined}
\end{equation}
Note that \eqref{eq:coercivity-refined} is a refinement of \eqref{eq:coercivity}
for functions in $\mathbb{K}(\Omega)$.
\end{rem}

Using Lemma~\ref{lem:boundedness} and following the proof of \cite[Theorem~1 of Section~8.2]{Eva98PDE},
we have the following lemma: 
\begin{lem}
\label{lem:LSC} Let $f,g\in L^{\infty}(\mathbb{R}^{n})$ with $g\ge0$.
Assume that $\Omega$ is bounded with Lipschitz boundary and $-\infty<\lambda<\lambda^{*}(\Omega)$.
Then $\mathcal{J}_{f,g,\lambda,\Omega}$ is weakly lower semi-continuous
on $H_{0}^{1}(\Omega)$, that is, 
\begin{equation}
\mathcal{J}_{f,g,\lambda,\Omega}(u)\le\liminf_{j\rightarrow\infty}\mathcal{J}_{f,g,\lambda,\Omega}(u_{j})\label{eq:LSC}
\end{equation}
whenever $\{u_{j}\}_{j=1}^{\infty}\cup\{u\}\subset H_{0}^{1}(\Omega)$
satisfies 
\[
\begin{cases}
u_{j}\rightarrow u & \text{weakly in }L^{2}(\Omega),\\
\nabla u_{j}\rightarrow\nabla u & \text{weakly in }L^{2}(\Omega).
\end{cases}
\]
\end{lem}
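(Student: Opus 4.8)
The plan is to prove weak lower semi-continuity of $\mathcal{J}_{f,g,\lambda,\Omega}$ by splitting the functional into three pieces and treating each separately: the Dirichlet-type part $\int_\Omega(|\nabla u|^2 - \lambda|u|^2)\,dx$, the linear part $-2\int_\Omega fu\,dx$, and the free-boundary part $\int_\Omega g^2\chi_{\{u>0\}}\,dx$. Throughout, let $\{u_j\}$ and $u$ be as in the hypotheses, so $u_j \rightharpoonup u$ and $\nabla u_j \rightharpoonup \nabla u$ weakly in $L^2(\Omega)$.

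First I would handle the linear term: since $f \in L^\infty(\Omega)$ and $|\Omega| < \infty$, we have $f \in L^2(\Omega)$, so $u \mapsto \int_\Omega fu\,dx$ is a bounded linear functional on $L^2(\Omega)$ and hence weakly continuous; thus $-2\int_\Omega fu_j\,dx \to -2\int_\Omega fu\,dx$. Next, for the quadratic gradient term, I would use the standard fact that the norm $\|\cdot\|_{L^2}$ is weakly lower semi-continuous (it is convex and strongly continuous), so $\int_\Omega |\nabla u|^2\,dx \le \liminf_j \int_\Omega|\nabla u_j|^2\,dx$. For the term $-\lambda\int_\Omega|u|^2\,dx$, I would exploit that $\Omega$ is bounded with Lipschitz boundary so that the embedding $H^1_0(\Omega) \hookrightarrow L^2(\Omega)$ is compact (Rellich--Kondrachov); since $\{u_j\}$ is bounded in $H^1_0(\Omega)$ by the weak convergences, a subsequence converges strongly in $L^2(\Omega)$, and the weak limit forces $u_j \to u$ strongly in $L^2(\Omega)$ along the whole sequence, whence $\int_\Omega|u_j|^2\,dx \to \int_\Omega|u|^2\,dx$. (Alternatively, one can follow \cite[Theorem~1 of Section~8.2]{Eva98PDE} directly: write $|\nabla u_j|^2 \ge |\nabla u|^2 + 2\nabla u\cdot(\nabla u_j - \nabla u)$ and pass to the limit using weak convergence of $\nabla u_j$, then combine with the lower order terms. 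Either route gives $\int_\Omega(|\nabla u|^2 - \lambda|u|^2)\,dx \le \liminf_j \int_\Omega(|\nabla u_j|^2 - \lambda|u_j|^2)\,dx$.)

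The main obstacle is the free-boundary term $\int_\Omega g^2\chi_{\{u>0\}}\,dx$, since $u \mapsto \chi_{\{u>0\}}$ is neither continuous nor convex. Here I would pass to a subsequence realizing the $\liminf$ and, using strong $L^2$-convergence (and hence a further subsequence converging a.e.\ in $\Omega$), argue that on the set $\{u > 0\}$ one has $u_j(x) > 0$ for $j$ large at a.e.\ point, so $\chi_{\{u>0\}} \le \liminf_j \chi_{\{u_j>0\}}$ a.e.; since $g^2 \ge 0$, Fatou's lemma then gives $\int_\Omega g^2\chi_{\{u>0\}}\,dx \le \liminf_j \int_\Omega g^2\chi_{\{u_j>0\}}\,dx$. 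One subtlety: the set $\{u>0\}$ here is defined in the special sense given in the excerpt (via domination by nonnegative test functions), so I would need to check that this set agrees (up to a null set) with the a.e.-pointwise positivity set for $u \in H^1_0(\Omega)$; with that identification in hand the a.e.\ convergence argument applies verbatim. Finally, adding the three inequalities (the linear term contributes an equality) and noting that $\liminf$ is superadditive yields \eqref{eq:LSC}.

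One book-keeping point worth flagging: the decomposition requires that along the chosen subsequence all three liminfs are compatible; since the linear part converges (full sequence) and the other two parts are handled by liminf inequalities, for any subsequence $\{u_{j_\ell}\}$ with $\mathcal{J}(u_{j_\ell}) \to \liminf_j \mathcal{J}(u_j)$ we extract a further subsequence along which $u \to u$ a.e., and then $\mathcal{J}(u) \le \lim_\ell \mathcal{J}(u_{j_\ell})$ follows by combining the pieces. This completes the proof.
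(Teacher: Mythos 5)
Your treatment of the quadratic, linear, and gradient terms is exactly the standard route that the paper invokes by pointing to Evans: weak continuity for $\int_\Omega fu\,dx$, convexity and weak lower semicontinuity for $\int_\Omega|\nabla u|^2\,dx$, and Rellich--Kondrachov compactness to get strong $L^2(\Omega)$ convergence (hence convergence of $-\lambda\int_\Omega|u|^2\,dx$). Likewise the Fatou / a.e.-convergence argument for the term $\int_\Omega g^2\chi_{\{u>0\}}\,dx$ is the classical Alt--Caffarelli step, and your bookkeeping at the end (pass to a subsequence realizing the $\liminf$, extract a further a.e.-convergent subsequence, combine) is the correct way to assemble the three inequalities. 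On all of this you match what the paper has in mind.

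The subtlety you flag is, however, genuinely where the content of the lemma lies and should not be labelled a routine verification. The paper's $\{u>0\}$ is exactly the set of $x_0$ for which $\operatorname{essinf}_{B_r(x_0)}u>0$ for some $r>0$; this is an open set contained in the a.e.-pointwise positivity set, but for a general nonnegative $u\in H_0^1(\Omega)$ the containment can be strict on a set of positive measure (the zero set of $u$ may be topologically dense while having small Lebesgue measure, e.g.\ via a countable union of tiny holes of rapidly decreasing capacity in dimension $n\ge 2$). Your a.e.-convergence argument establishes the inequality $\chi_{\{u>0\,\text{a.e.}\}}\le\liminf_j\chi_{\{u_j>0\,\text{a.e.}\}}$, which controls neither side of the needed inequality $\chi_{\{u>0\}}\le\liminf_j\chi_{\{u_j>0\}}$ for the paper's indicators: $\chi_{\{u>0\}}\le\chi_{\{u>0\,\text{a.e.}\}}$ goes the helpful direction on the left, but $\chi_{\{u_j>0\}}\le\chi_{\{u_j>0\,\text{a.e.}\}}$ goes the \emph{wrong} direction on the right. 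To close the argument you must either establish the null-set identification you propose (which requires a careful capacity/quasi-continuity argument, not a one-liner), or replace the Fatou step with a direct essential-infimum argument showing that $x_0\in\{u>0\}$ persists along a suitable subsequence of $\{u_j\}$. The paper itself glosses over this by citing Evans, whose framework covers integrands lower semicontinuous in the pointwise value $u(x)$ rather than in the global object $\{u>0\}$, so you are filling in a step the authors leave implicit; as written your proof leaves that step open.
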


\begin{rem}
Here we remind the readers that the proof of Lemma~\ref{lem:LSC}
involves the compact embedding $H^{1}(\Omega)\hookrightarrow L^{2}(\Omega)$, which follows from the Rellich-Kondrachov theorem as long as there is a bounded extension operator from $H^1(\Omega)$ to $H^1(\mathbb{R}^n)$ which is true e.g.\ for Lipschitz domains.%\cite[Theorem~9.16]{Bre11PDE}. 
\end{rem}

Using Lemma~\ref{lem:LSC} and following the proof of \cite[Section~8.2]{Eva98PDE},
we have the following proposition: 
\begin{prop}
\label{prop:existence-minimizer} Let $f,g\in L^{\infty}(\mathbb{R}^{n})$
with $g\ge0$. 
Assume further that $\Omega$ is bounded open set  with Lipschitz boundary
and $-\infty<\lambda<\lambda^{*}(\Omega)$. Then there exists a global minimizer $u_{*}\in\mathbb{K}(\Omega)$
of the functional $\mathcal{J}_{f,g,\lambda,\Omega}$ in $\mathbb{K}(\Omega)$, that
is, 
\[
\mathcal{J}_{f,g,\lambda,\Omega}(u_{*})=\min_{u\in\mathbb{K}(\Omega)}\mathcal{J}_{f,g,\lambda,\Omega}(u).
\]
\end{prop}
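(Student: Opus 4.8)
The plan is to use the direct method of the calculus of variations. First I would establish that the infimum $m := \inf_{u \in \mathbb{K}(\Omega)} \mathcal{J}_{f,g,\lambda,\Omega}(u)$ is finite: by \eqref{eq:non-positive-functional} we have $m \le 0$, and by the coercivity estimate \eqref{eq:coercivity} (or its refinement \eqref{eq:coercivity-refined}) in Lemma~\ref{lem:boundedness}, which applies since $-\infty < \lambda < \lambda^*(\Omega)$, we have $\mathcal{J}_{f,g,\lambda,\Omega}(u) \ge -\frac{2|\Omega| \|f\|_{L^\infty(\Omega)}^2}{\gamma} > -\infty$ uniformly in $u \in H_0^1(\Omega)$, so $m$ is a finite real number.

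Next I would take a minimizing sequence $\{u_j\}_{j=1}^\infty \subset \mathbb{K}(\Omega)$ with $\mathcal{J}_{f,g,\lambda,\Omega}(u_j) \to m$. Again by \eqref{eq:coercivity}, for $j$ large the bound $\mathcal{J}_{f,g,\lambda,\Omega}(u_j) \le m + 1$ forces $\frac{\gamma}{2}\|u_j\|_{H^1(\Omega)}^2 \le m + 1 + \frac{2|\Omega|\|f\|_{L^\infty(\Omega)}^2}{\gamma}$, so $\{u_j\}$ is bounded in $H_0^1(\Omega)$. Since $H_0^1(\Omega)$ is a Hilbert space, hence reflexive, I can extract a subsequence (not relabeled) converging weakly in $H_0^1(\Omega)$ to some $u_* \in H_0^1(\Omega)$; in particular $u_j \rightharpoonup u_*$ weakly in $L^2(\Omega)$ and $\nabla u_j \rightharpoonup \nabla u_*$ weakly in $L^2(\Omega)$. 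I would then check that the weak limit stays in the constraint set: $\mathbb{K}(\Omega) = \{u \in H_0^1(\Omega) : u \ge 0\}$ is convex and strongly closed in $H_0^1(\Omega)$ (the condition $u \ge 0$ a.e.\ passes to strong $L^2$ limits), hence weakly closed by Mazur's lemma, so $u_* \in \mathbb{K}(\Omega)$.

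Finally, applying the weak lower semicontinuity from Lemma~\ref{lem:LSC} (valid since $\Omega$ is bounded with Lipschitz boundary and $\lambda < \lambda^*(\Omega)$) to the sequence $\{u_j\}$ and limit $u_*$ gives
\[
\mathcal{J}_{f,g,\lambda,\Omega}(u_*) \le \liminf_{j \to \infty} \mathcal{J}_{f,g,\lambda,\Omega}(u_j) = m.
\]
Since $u_* \in \mathbb{K}(\Omega)$, the reverse inequality $\mathcal{J}_{f,g,\lambda,\Omega}(u_*) \ge m$ holds by definition of the infimum, so $\mathcal{J}_{f,g,\lambda,\Omega}(u_*) = m = \min_{u \in \mathbb{K}(\Omega)} \mathcal{J}_{f,g,\lambda,\Omega}(u)$, which is the assertion.

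The only subtle point — and the reason Lemma~\ref{lem:LSC} is stated separately — is the weak lower semicontinuity of the term $\int_\Omega g^2 \chi_{\{u>0\}}\,dx$, which is not continuous under weak convergence in general; this is where one uses that along a minimizing sequence one may pass to a subsequence converging a.e., so that $\chi_{\{u_j > 0\}} \to 1$ pointwise a.e.\ on $\{u_* > 0\}$ and Fatou's lemma yields $\int_\Omega g^2 \chi_{\{u_* > 0\}} \le \liminf_j \int_\Omega g^2 \chi_{\{u_j > 0\}}$ (the gradient term is weakly lower semicontinuous by convexity, and the remaining terms $-\lambda|u|^2 - 2fu$ are continuous under strong $L^2$ convergence, which holds here via Rellich–Kondrachov). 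Since all of this is packaged into Lemma~\ref{lem:LSC}, the proof of the proposition itself is the short abstract argument above; the expected main obstacle, namely handling the indicator term, has already been dealt with.
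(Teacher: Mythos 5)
Your proof is correct and follows essentially the same route as the paper, which itself is the standard direct method: coercivity from Lemma~\ref{lem:boundedness} gives a bounded minimizing sequence, one extracts a weakly convergent subsequence, checks that $\mathbb{K}(\Omega)$ is weakly closed, and invokes the weak lower semicontinuity of Lemma~\ref{lem:LSC}. The paper states exactly this (``Using Lemma~\ref{lem:LSC} and following the proof of \cite[Section~8.2]{Eva98PDE}''), so you have simply written out what the paper leaves implicit.
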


\begin{rem}\label{rem:compact} 
Let $u_{0} \in \mathbb{K}(\Omega)$ be any global minimizer of $\mathcal{J}_{f,g,\lambda,\Omega}$ in $\mathbb{K}(\Omega)$. Using \eqref{eq:coercivity-refined}, we see that 
\[
0 = \mathcal{J}_{f,g,\lambda,\Omega}(0) \ge \mathcal{J}_{f,g,\lambda,\Omega}(u_{0}) \ge \frac{\gamma}{2} \| u_{0} \|_{H^{1}(\Omega)}^{2} - \frac{2|\Omega| \| f_{+} \|_{L^{\infty}(\Omega)}^{2}}{\gamma},
\]
which shows that $\| u_{0} \|_{H^{1}(\Omega)} \le C$ for some constant $C$ independent of $u_{0}$. 
In particular, the set of minimizers of $\mathcal{J}_{f,g,\lambda,\Omega}$
in $\mathbb{K}(\Omega)$ is compact in $L^{2}(\Omega)$. 
\end{rem}

\begin{rem}\label{rem:nontrivial1} 
From \eqref{eq:coercivity-refined}, we know that if $f \le 0$ in $\Omega$, then the minimum is zero and attained only by $u=0$. 
\end{rem}

\begin{rem}\label{rem:nontrivial2} 
If the set $\{f>0\}\cap\{g=0\}$ has non-empty
interior in $\Omega$, then $\mathcal{J}_{f,g,\lambda,\Omega}(t\phi)$
is negative for any non-trivial $\phi\in C_{c}^{\infty}(\{f>0\}\cap\{g=0\}\cap\Omega)$
with $t>0$ sufficiently small. Consequently, we have 
\[
\inf_{u\in\mathbb{K}(\Omega)}\mathcal{J}_{f,g,\lambda,\Omega}(u)<0,
\]
and then all minimizers are non-trivial. 
\end{rem}

\section{\label{sec:Minimizers-and-PDE}The Euler-Lagrange equation}

In order to generalize some of our results, we introduce the following
definition: 
\begin{defn}
Let $\Omega$ be an open set in $\mathbb{R}^{n}$ and let $\lambda\in\mathbb{R}$.
A function $u_{*}\in\mathbb{K}(\Omega)$ is called a \emph{local minimizer}
of $\mathcal{J}_{f,g,\lambda,\Omega}$ in $\mathbb{K}(\Omega)$ if
there exists $\epsilon>0$ such that 
\[
\mathcal{J}_{f,g,\lambda,\Omega}(u_{*})\le\mathcal{J}_{f,g,\lambda,\Omega}(u)
\]
for all $u\in\mathbb{K}(\Omega)$ with 
\begin{equation}
\int_{\Omega}(|\nabla(u-u_{*})|^{2}+|\chi_{\{u>0\}}-\chi_{\{u_{*}>0\}}|)\,dx<\epsilon.\label{eq:local-minimizer-metric}
\end{equation}
\end{defn}

Clearly each (global) minimizer is also a local minimizer. We first prove
the following proposition, which is an extension of \cite[Lemma~2.2]{GS96FreeBoundaryPotential}. 
In Proposition~\ref{prop:PDE2} we give an extension of \cite[Theorem~2.3]{GS96FreeBoundaryPotential}. 

\begin{prop}
\label{prop:PDE1} Let $f,g\in L^{\infty}(\mathbb{R}^{n})$ be such
that $g\ge0$. Let $\Omega$ be an open set in $\mathbb{R}^{n}$,
and let $\lambda\in\mathbb{R}$. If $u_{*}\in\mathbb{K}(\Omega)$
is a local minimizer of $\mathcal{J}_{f,g,\lambda,\Omega}$ in $\mathbb{K}(\Omega)$,
then\footnote{When $\lambda\ge0$, \eqref{eq:PDE1a} implies that $(\Delta+\lambda)u_{*}\ge-f_{+}$ in $\Omega$.} 
\begin{subequations}
\begin{align}
\Delta u_{*} & \ge-(f+\lambda u_{*})_{+}\quad\text{in }\Omega,\label{eq:PDE1a}\\
\Delta u_{*} & =-(f+\lambda u_{*})\quad\text{in }\{u_{*}>0\},\label{eq:PDE1b}\\
\Delta u_{*} & \le-(f+\lambda u_{*})\quad\text{in }\Omega\setminus{\rm supp}\,(g).\label{eq:PDE1c}
\end{align}
\end{subequations}
\end{prop}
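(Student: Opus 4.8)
To prove Proposition~\ref{prop:PDE1}, the plan is to test the local-minimality inequality $\mathcal{J}(u_*)\le\mathcal{J}(v)$, where $\mathcal{J}=\mathcal{J}_{f,g,\lambda,\Omega}$, against three different families of admissible competitors $v$, one tailored to each of \eqref{eq:PDE1a}--\eqref{eq:PDE1c}, following the scheme of \cite[Lemma~2.2]{GS96FreeBoundaryPotential}. The only new feature relative to the case $\lambda=0$ treated there is the term $-\lambda\int_\Omega|u|^2$: under each of the three perturbations it contributes either an $O(t^2)$ quantity or a term that merges with $\int_\Omega fu$ to produce the source $f+\lambda u_*$, so no new idea is needed for it. Throughout I will use the standard facts that $v_+\in H_0^1(\Omega)$ with $\nabla(v_+)=\chi_{\{v>0\}}\nabla v$ a.e., that $\nabla v=0$ a.e.\ on $\{v=0\}$ for $v\in H_0^1(\Omega)$, and dominated convergence, and I will prove the three assertions in the order \eqref{eq:PDE1b}, \eqref{eq:PDE1a}, \eqref{eq:PDE1c}.

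For \eqref{eq:PDE1b} I would fix $\phi\in C_c^\infty(\{u_*>0\})$ of \emph{either} sign. Since $\{u_*>0\}$ is open and $u_*$ is bounded below by a positive constant on each of its compact subsets, for $|t|$ small the competitor $v=u_*+t\phi$ lies in $\mathbb{K}(\Omega)$, satisfies $\{v>0\}=\{u_*>0\}$ (so the $g^{2}\chi$ term of $\mathcal{J}$ is unchanged) and is admissible in the sense of \eqref{eq:local-minimizer-metric}. Then $t\mapsto\mathcal{J}(u_*+t\phi)$ is a quadratic polynomial in $t$ with a local minimum at $t=0$, so its linear coefficient vanishes, which is exactly $\int_\Omega\nabla u_*\cdot\nabla\phi=\int_\Omega(f+\lambda u_*)\phi$, i.e.\ \eqref{eq:PDE1b}.

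For \eqref{eq:PDE1a} I would use a one-sided \emph{downward} perturbation. Fix $0\le\phi\in C_c^\infty(\Omega)$, and for $t>0$ put $v=(u_*-t\phi)_+\in\mathbb{K}(\Omega)$, $w=u_*-v=\min(t\phi,u_*)$, so $0\le w\le t\phi$ and $\{v>0\}\subseteq\{u_*>0\}$. A direct expansion gives
\[
\mathcal{J}(v)-\mathcal{J}(u_*)=P_t-\int_{\{0<u_*\le t\phi\}}|\nabla u_*|^2-\int_\Omega g^2\bigl(\chi_{\{u_*>0\}}-\chi_{\{v>0\}}\bigr),
\]
where
\[
P_t:=-2t\int_{\{u_*>t\phi\}}\nabla u_*\cdot\nabla\phi+t^2\int_{\{u_*>t\phi\}}|\nabla\phi|^2+2\int_\Omega(f+\lambda u_*)w-\lambda\int_\Omega w^2,
\]
and the two subtracted terms are $\ge0$. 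Since $\int_\Omega|\nabla(v-u_*)|^2$ and $|\{u_*>0\}\setminus\{v>0\}|$ both tend to $0$ as $t\to0^+$ (here the fact $\nabla u_*=0$ a.e.\ on $\{u_*=0\}$ is used), the competitor $v$ is admissible for small $t>0$, so $0\le\mathcal{J}(v)-\mathcal{J}(u_*)\le P_t$. Dividing by $t$ and letting $t\to0^+$ (dominated convergence, $w/t\to\phi\chi_{\{u_*>0\}}$) yields $\int_\Omega\nabla u_*\cdot\nabla\phi\le\int_\Omega(f+\lambda u_*)\chi_{\{u_*>0\}}\phi\le\int_\Omega(f+\lambda u_*)_+\phi$, and since $0\le\phi$ is arbitrary this is \eqref{eq:PDE1a}.

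For \eqref{eq:PDE1c} I would use an \emph{upward} perturbation, but localized. Given $x_0\in\Omega\setminus\supp(g)$, choose $r>0$ with $B_r(x_0)\subset\Omega\setminus\supp(g)$ and $|B_r(x_0)|<\epsilon$, where $\epsilon$ is the constant from the definition of local minimizer. For $0\le\phi\in C_c^\infty(B_r(x_0))$ and $t>0$ small, $v=u_*+t\phi\in\mathbb{K}(\Omega)$ is admissible, because $\chi_{\{v>0\}}-\chi_{\{u_*>0\}}$ is supported in $B_r(x_0)$ and $\int_\Omega|\nabla(t\phi)|^2\to0$; moreover $\{v>0\}\setminus\{u_*>0\}\subseteq\{\phi>0\}$, on which $g\equiv0$, so again the $g^{2}\chi$ term of $\mathcal{J}$ is unchanged. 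Expanding $\mathcal{J}(v)-\mathcal{J}(u_*)$, dividing by $t$ and letting $t\to0^+$ gives $\int_\Omega\nabla u_*\cdot\nabla\phi\ge\int_\Omega(f+\lambda u_*)\phi$ for all such $\phi$; since this holds near every point of $\Omega\setminus\supp(g)$, a partition-of-unity argument upgrades it to $-\Delta u_*\ge f+\lambda u_*$ in $\mathscr{D}'(\Omega\setminus\supp(g))$, which is \eqref{eq:PDE1c}. The step I expect to be the main obstacle is verifying admissibility in the metric \eqref{eq:local-minimizer-metric}: an unrestricted upward perturbation $u_*+t\phi$ changes $\chi_{\{u_*>0\}}$ by an amount bounded below independently of $t$, which forces the two devices above — obtaining \eqref{eq:PDE1a} from the downward competitor $(u_*-t\phi)_+$, and \eqref{eq:PDE1c} only from competitors supported on small balls that are then patched together.
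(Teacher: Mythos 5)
Your proof is correct and rests on the same competitors and expansions as the paper's own argument (modelled on \cite[Lemma~2.2]{GS96FreeBoundaryPotential}): $(u_*-t\phi)_+$ with $0\le\phi\in C_c^\infty(\Omega)$ for \eqref{eq:PDE1a}, $u_*\pm t\phi$ with $\supp\phi\subset\{u_*>0\}$ for \eqref{eq:PDE1b}, and an upward perturbation $u_*+t\phi$ with $\supp\phi\cap\supp g=\emptyset$ for \eqref{eq:PDE1c}; the extra $-\lambda\int_\Omega w^2$ term in your $P_t$ is the same $O(t^2)$ remainder the paper keeps under its overbraces. One point where you are genuinely more careful than the printed proof is \eqref{eq:PDE1c}: an unlocalized upward perturbation $u_*+t\phi$ changes $\chi_{\{u_*>0\}}$ by a set of measure at least $|\{\phi>0\}\setminus\{u_*>0\}|$ for \emph{every} $t>0$, so the competitor $\tilde v_\epsilon=u_*+\epsilon\phi$ is not automatically admissible under the local-minimality constraint \eqref{eq:local-minimizer-metric}, yet the paper simply invokes \eqref{eq:local-min-epsilon} for it. Your device of restricting $\phi$ to balls $B_r(x_0)\subset\Omega\setminus\supp g$ of small measure and then patching with a nonnegative partition of unity closes that gap (and is harmless in the paper's main use case, where $u_*$ is a global minimizer).
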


\begin{proof}
Let $0\le\phi\in C_{c}^{\infty}(\Omega)$ and for each $\epsilon>0$
we define $v_{\epsilon}:=(u_{*}-\epsilon\phi)_{+}$. Since $u_{*}\in\mathbb{K}(\Omega)$,
we know that 
\[
v_{\epsilon}\in\mathbb{K}(\Omega)\quad\text{and}\quad0\le v_{\epsilon}\le u_{*}\text{ in }\Omega.
\]
Since $u_{*}\in\mathbb{K}(\Omega)$ is a local minimizer, then 
\begin{equation}
\mathcal{J}_{f,g,\lambda,\Omega}(u_{*})\le\mathcal{J}_{f,g,\lambda,\Omega}(v_{\epsilon})\quad\text{for all sufficiently small }\epsilon>0.\label{eq:local-min-epsilon}
\end{equation}

\medskip

\noindent We observe that 
\begin{align*}
 & \quad\mathcal{J}_{f,g,\lambda,\Omega}(v_{\epsilon})-\mathcal{J}_{f,g,\lambda,\Omega}(u_{*})%\\& 
 =\int_{\Omega}(|\nabla v_{\epsilon}|^{2}-\lambda|v_{\epsilon}|^{2})\,dx-\int_{\Omega}(|\nabla u_{*}|^{2}-\lambda|u_{*}|^{2})\,dx
 \\& 
 \quad-2\int_{\Omega}f(v_{\epsilon}-u_{*})\,dx+\int_{\Omega}g^{2}(\chi_{\{v_{\epsilon}>0\}}-\chi_{\{u_{*}>0\}})\,dx\\
 & =\int_{\{v_{\epsilon}>0\}}|\nabla(u_{*}-\epsilon\phi)|^{2}\,dx-\int_{\Omega}|\nabla u_{*}|^{2}\,dx
 %\\&
 \quad-\lambda\bigg(\int_{\{v_{\epsilon}>0\}}|u_{*}-\epsilon\phi|^{2}\,dx-\int_{\Omega}|u_{*}|^{2}\,dx\bigg)\\
 & \quad+2\epsilon\int_{\{v_{\epsilon}>0\}}f\phi\,dx+2\int_{\{v_{\epsilon}=0\}}fu_{*}\,dx-\int_{\{v_{\epsilon}=0\}\cap\{u_{*}>0\}}|g|^{2}\,dx\\
 & =-2\epsilon\int_{\{v_{\epsilon}>0\}}\nabla u_{*}\cdot\nabla\phi\,dx+\epsilon^{2}\int_{\{v_{\epsilon}>0\}}|\nabla\phi|^{2}\,dx\overbrace{-\int_{\{v_{\epsilon}=0\}}|\nabla u_{*}|^{2}\,dx}^{\le\,0}\\
 & \quad+\overbrace{2\epsilon\int_{\{v_{\epsilon}>0\}}(f+\lambda u_{*})\phi\,dx}^{\le\,2\epsilon\int_{\{v_{\epsilon}>0\}}(f+\lambda u_{*})_{+}\phi\,dx}+\overbrace{2\int_{\{v_{\epsilon}=0\}}(f+\lambda u_{*})u_{*}\,dx}^{\le\,2\int_{\{v_{\epsilon}=0\}}(f+\lambda u_{*})_{+}u_{*}\,dx}\\
 & \quad\overbrace{-\lambda\epsilon^{2}\int_{\{v_{\epsilon}>0\}}|\phi|^{2}\,dx-\int_{\{v_{\epsilon}=0\}\cap\{u_{*}>0\}}|g|^{2}\,dx}^{\le\,0}\\
 & \le-2\epsilon\int_{\{v_{\epsilon}>0\}}\nabla u_{*}\cdot\nabla\phi\,dx+\epsilon^{2}\int_{\{v_{\epsilon}>0\}}|\nabla\phi|^{2}\,dx\\
 & \quad+2\epsilon\int_{\{v_{\epsilon}>0\}}(f+\lambda u_{*})_{+}\phi\,dx+\overbrace{2\int_{\{v_{\epsilon}=0\}}(f+\lambda u_{*})_{+}u_{*}\,dx}^{\le\,2\epsilon\int_{\{v_{\epsilon}=0\}}(f+\lambda u_{*})_{+}\phi\,dx}\\
 & \le-2\epsilon\int_{\{v_{\epsilon}>0\}}\nabla u_{*}\cdot\nabla\phi\,dx+2\epsilon\int_{\Omega}(f+\lambda u_{*})_{+}\phi\,dx+\epsilon^{2}\int_{\{v_{\epsilon}>0\}}|\nabla\phi|^{2}\,dx.
\end{align*}
This implies that 
\[
\limsup_{\epsilon\searrow0}\frac{1}{2\epsilon}(\mathcal{J}_{f,g,\lambda,\Omega}(v_{\epsilon})-\mathcal{J}_{f,g,\lambda,\Omega}(u_{*}))\le-\int_{\Omega}\nabla u_{*}\cdot\nabla\phi\,dx+\int_{\Omega}(f+\lambda u_{*})_{+}\phi\,dx.
\]
Combining this inequality with \eqref{eq:local-min-epsilon}, we conclude
\eqref{eq:PDE1a}. 

\medskip

\noindent If ${\rm supp}\,(\phi)\subset\{u_{*}>0\}$,
we define $v_{\epsilon}^{\pm}=u_{*}\pm\epsilon\phi\in\mathbb{K}(\Omega)$ for $\epsilon > 0$ small.
Note that $\{v_{\epsilon}^{\pm}>0\}=\{u_{*}>0\}$ for 
small $\epsilon>0$. Therefore, for small $\epsilon>0$,
we have 
\begin{align*}
 & \quad\mathcal{J}_{f,g,\lambda,\Omega}(v_{\epsilon}^{\pm})-\mathcal{J}_{f,g,\lambda,\Omega}(u_{*})\\
 & =\int_{\{u_{*}>0\}}(|\nabla v_{\epsilon}^{\pm}|^{2}-\lambda|v_{\epsilon}^{\pm}|^{2})\,dx-\int_{\{u_{*}>0\}}(|\nabla u_{*}|^{2}-\lambda|u_{*}|^{2})\,dx-2\int_{\{u_{*}>0\}}f(v_{\epsilon}^{\pm}-u_{*})\,dx\\
 & =\pm2\epsilon\bigg(\int_{\{u_{*}>0\}}\nabla u_{*}\cdot\nabla\phi\,dx-\lambda\int_{\{u_{*}>0\}}u_{*}\phi\,dx\bigg)\mp2\epsilon\int_{\{u_{*}>0\}}f\phi\,dx\\
 & \quad+\epsilon^{2}\bigg(\int_{\{u_{*}>0\}}|\nabla\phi|^{2}\,dx-\lambda\int_{\{u_{*}>0\}}|\phi|^{2}\,dx\bigg).
\end{align*}
Using \eqref{eq:local-min-epsilon} and dividing both sides on the
inequality above by $\epsilon$ and letting $\epsilon\rightarrow0$,
we conclude \eqref{eq:PDE1b}. 

\medskip

\noindent If ${\rm supp}\,(\phi)\cap{\rm supp}\,(g)=\emptyset$,
taking $\tilde{v}_{\epsilon}=u_{*}+\epsilon\phi$, we have 
\begin{align*}
 & \quad\mathcal{J}_{f,g,\lambda,\Omega}(\tilde{v}_{\epsilon})-\mathcal{J}_{f,g,\lambda,\Omega}(u_{*})\\
 & =\int_{\Omega}(|\nabla\tilde{v}_{\epsilon}|^{2}-\lambda|\tilde{v}_{\epsilon}|^{2})\,dx-\int_{\Omega}(|\nabla u_{*}|^{2}-\lambda|u_{*}|^{2})\,dx-2\int_{\Omega}f(\tilde{v}_{\epsilon}-u_{*})\,dx\\
 & =2\epsilon\bigg(\int_{\Omega\setminus{\rm supp}\,(g)}\nabla u_{*}\cdot\nabla\phi\,dx-\lambda\int_{\Omega\setminus{\rm supp}\,(g)}u_{*}\phi\,dx\bigg)-2\epsilon\int_{\Omega\setminus{\rm supp}\,(g)}f\phi\,dx\\
 & \quad+\epsilon^{2}\bigg(\int_{\Omega}|\nabla\phi|^{2}\,dx-\lambda\int_{\Omega}|\phi|^{2}\,dx\bigg).
\end{align*}
Using \eqref{eq:local-min-epsilon} and dividing both sides of the
inequality above by $\epsilon$ and letting $\epsilon\rightarrow0$,
we conclude \eqref{eq:PDE1c}. 
\end{proof}

\begin{rem}\label{rem:Lipschitz-boundary-case}
We now give some observations when $\{u_{*}>0\}$ has Lipschitz boundary. Using integration by parts on the Lipschitz domain $\{u_{*}>0\}$ and from \eqref{eq:PDE1b} we have 
\begin{equation}
\mathcal{J}_{f,g,\lambda,\Omega}(u_{*})=\int_{\Omega}(g^{2}\chi_{\{u_{*}>0\}}-fu_{*})\,dx.\label{eq:representation-local-min}
\end{equation}
If $u_{*}$ is a local minimizer of $\mathcal{J}_{f,g,\lambda,\Omega}$
in $\mathbb{K}(\Omega)$ with $\mathcal{J}_{f,g,\lambda,\Omega}(u_{*})<0$, using \eqref{eq:representation-local-min} we know that 
\[
\int_{\{u_{*}>0\}\cap\{f>0\}}fu_{*}\,dx\equiv\int_{\Omega}fu_{*}\,dx>\int_{\Omega}g^{2}\chi_{\{u_{*}>0\}}\,dx\ge0,
\]
which immediately implies $|\{u_{*}>0\}\cap\{f>0\}|>0$. 
If we additionally assume that $g\equiv0$ and $\lambda\ge0$, from \eqref{eq:PDE1c} we know that $(\Delta+\lambda)u_{*}+f\le0$ in
$\Omega$. From this, we know that 
\[
\Delta u_{*}\le-f-\lambda u_{*}\le0\quad\text{in }\{f>0\}\cap\Omega,
\]
because $u_{*}\ge0$ in $\Omega$. Using strong minimum principle
for super-solutions (as formulated in \cite[Theorem~8.19]{GT01Elliptic}),
we know that $u_{*}>0$ in $\{f>0\}\cap\Omega$. 
\end{rem}

\section{\label{sec:Comparison-of-minimizers:}Comparison of minimizers}

The main purpose of this section is to prove the $L^{\infty}$-regularity
of the minimizers. We first prove the following comparison principle analogous to \cite[Lemma 1.1]{GS96FreeBoundaryPotential}.

\begin{prop}
\label{prop:comparison} For  $f_{m},g_{m}\in L^{\infty}(\mathbb{R}^{n})$
with $g_{m}\ge0$ for each $m=1,2$, suppose
\begin{equation}
f_{1}\le f_{2},\quad g_{1}\ge g_{2}\ge0\quad\text{in }\mathbb{R}^{n}.
\label{eq:comparison}
\end{equation}
Assume further that $-\infty<\lambda_{1}\le\lambda_{2}<+\infty$ and that $\Omega_{m} \subset  \mathbb{R}^{n}$ are open sets satisfying $\Omega_{1}\subset\Omega_{2}$.
For each $u_{m}\in\mathbb{K}(\Omega_{m})$, we define $v:=\min\{u_{1},u_{2}\}$ and $w:=\max\{u_{1},u_{2}\}$. Then
$v\in\mathbb{K}(\Omega_{1})$, $w\in\mathbb{K}(\Omega_{2})$
satisfy 
\[
\mathcal{J}_{f_{1},g_{1},\lambda_{1},\Omega_{1}}(v)+\mathcal{J}_{f_{2},g_{2},\lambda_{2},\Omega_{2}}(w)\le\mathcal{J}_{f_{1},g_{1},\lambda_{1},\Omega_{1}}(u_{1})+\mathcal{J}_{f_{2},g_{2},\lambda_{2},\Omega_{2}}(u_{2}).
\]
We also have the following statements: 
\begin{enumerate}[leftmargin = 25pt]
\renewcommand{\labelenumi}{\theenumi}
\renewcommand{\theenumi}{{\rm (\arabic{enumi})}}
\item If $u_{1}$ is a (global) minimizer of $\mathcal{J}_{f_{1},g_{1},\lambda_{1},\Omega_{1}}$
in $\mathbb{K}(\Omega_{1})$, then
\[
\mathcal{J}_{f_{2},g_{2},\lambda_{2},\Omega_{2}}(w)\le\mathcal{J}_{f_{2},g_{2},\lambda_{2},\Omega_{2}}(u_{2}).
\]
\item If $u_{2}$ is a (global) minimizer of $\mathcal{J}_{f_{2},g_{2},\lambda_{2},\Omega_{2}}$
in $\mathbb{K}(\Omega_{2})$, then
\[
\mathcal{J}_{f_{1},g_{1},\lambda_{1},\Omega_{1}}(v)\le\mathcal{J}_{f_{1},g_{1},\lambda_{1},\Omega_{1}}(u_{1}).
\]
\item If each $u_{m}$ is a (global) minimizer of $\mathcal{J}_{f_{m},g_{m},\lambda_{m},\Omega_{m}}$
in $\mathbb{K}(\Omega_{m})$, respectively, then $v$ is a (global)
minimizer of $\mathcal{J}_{f_{1},g_{1},\lambda_{1},\Omega_{1}}$ in
$\mathbb{K}(\Omega_{1})$, while $w$ is a (global) minimizer of $\mathcal{J}_{f_{2},g_{2},\lambda_{2},\Omega_{2}}$
in $\mathbb{K}(\Omega_{2})$. 
\end{enumerate}
\end{prop}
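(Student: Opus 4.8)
The plan is to run the standard ``cut-and-paste'' argument, exactly as in \cite[Lemma~1.1]{GS96FreeBoundaryPotential}, adapted to the extra parameters $\lambda_{1}\le\lambda_{2}$. First I would dispose of the membership statements. Extending $u_{1}$ by zero we may view $u_{1}\in H_{0}^{1}(\Omega_{2})$ (legitimate since $\Omega_{1}\subset\Omega_{2}$), so $w=\max\{u_{1},u_{2}\}\in H_{0}^{1}(\Omega_{2})$ with $w\ge0$, i.e.\ $w\in\mathbb{K}(\Omega_{2})$; and $v=\min\{u_{1},u_{2}\}\in H^{1}$ satisfies $0\le v\le u_{1}$, which by the usual lattice/truncation properties of $H_{0}^{1}$ forces $v\in H_{0}^{1}(\Omega_{1})$, i.e.\ $v\in\mathbb{K}(\Omega_{1})$. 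Since $v=u_{1}=0$ a.e.\ on $\Omega_{2}\setminus\Omega_{1}$, from now on I would regard $u_{1},u_{2},v,w$ all as functions on $\Omega_{2}$ extended by zero, noting that then $\mathcal{J}_{f_{1},g_{1},\lambda_{1},\Omega_{1}}(u_{1})=\int_{\Omega_{2}}\mathcal{I}_{1}(u_{1})$ and $\mathcal{J}_{f_{1},g_{1},\lambda_{1},\Omega_{1}}(v)=\int_{\Omega_{2}}\mathcal{I}_{1}(v)$, where $\mathcal{I}_{m}(u):=|\nabla u|^{2}-\lambda_{m}|u|^{2}-2f_{m}u+g_{m}^{2}\chi_{\{u>0\}}$ denotes the integrand of $\mathcal{J}_{f_{m},g_{m},\lambda_{m},\Omega_{m}}$ (the parts of these integrals over $\Omega_{2}\setminus\Omega_{1}$ vanish because there $u_{1}=v=0$).

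Next I would record the pointwise rearrangement identities that hold a.e.\ on $\Omega_{2}$:
\begin{equation*}
v+w=u_{1}+u_{2},\qquad |v|^{2}+|w|^{2}=|u_{1}|^{2}+|u_{2}|^{2},\qquad 0\le v\le u_{1},
\end{equation*}
the gradient identity $|\nabla v|^{2}+|\nabla w|^{2}=|\nabla u_{1}|^{2}+|\nabla u_{2}|^{2}$, and the inclusion--exclusion identity for the positivity sets
\begin{equation*}
\chi_{\{v>0\}}+\chi_{\{w>0\}}=\chi_{\{u_{1}>0\}}+\chi_{\{u_{2}>0\}}.
\end{equation*}
The first line is elementary. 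The gradient identity follows by splitting $\Omega_{2}$ into $\{u_{1}<u_{2}\}$, $\{u_{1}=u_{2}\}$ and $\{u_{1}>u_{2}\}$: on the first and third sets the pairs $(v,\nabla v)$, $(w,\nabla w)$ are a mere relabelling of $(u_{1},\nabla u_{1})$, $(u_{2},\nabla u_{2})$, while on $\{u_{1}=u_{2}\}$ one has $\nabla u_{1}=\nabla u_{2}=\nabla v=\nabla w$ a.e.; thus $\nabla v\cdot\nabla w=\nabla u_{1}\cdot\nabla u_{2}$ a.e., and combining this with $\nabla v+\nabla w=\nabla u_{1}+\nabla u_{2}$ gives the claim. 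For the positivity sets, $\{v>0\}=\{u_{1}>0\}\cap\{u_{2}>0\}$ follows directly from the definition of $\{u>0\}$ (a positive bump below $v$ near a point lies below both $u_{1}$ and $u_{2}$, and conversely a common positive lower bump can be built from bumps below $u_{1}$ and $u_{2}$), and $\{w>0\}=\{u_{1}>0\}\cup\{u_{2}>0\}$ holds up to a Lebesgue-null set, which is all that is needed since $\mathcal{J}$ only sees $\chi_{\{u>0\}}$ a.e.

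With these identities in hand, the main inequality reduces to a termwise pointwise estimate a.e.\ on $\Omega_{2}$:
\begin{align*}
&\mathcal{I}_{1}(v)+\mathcal{I}_{2}(w)-\mathcal{I}_{1}(u_{1})-\mathcal{I}_{2}(u_{2})\\
&\quad=\big(|\nabla v|^{2}+|\nabla w|^{2}-|\nabla u_{1}|^{2}-|\nabla u_{2}|^{2}\big)+(\lambda_{2}-\lambda_{1})\big(|v|^{2}-|u_{1}|^{2}\big)\\
&\qquad-2(f_{1}-f_{2})(v-u_{1})+(g_{1}^{2}-g_{2}^{2})\big(\chi_{\{v>0\}}-\chi_{\{u_{1}>0\}}\big),
\end{align*}
where the three identities above were used to eliminate the $w$- and $u_{2}$-contributions. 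The first bracket is $0$; the second is $\le0$ since $\lambda_{1}\le\lambda_{2}$ and $0\le v\le u_{1}$; the third is $\le0$ since $f_{1}\le f_{2}$ and $v-u_{1}\le0$; the fourth is $\le0$ since $g_{1}\ge g_{2}\ge0$ and $\{v>0\}\subset\{u_{1}>0\}$. Integrating over $\Omega_{2}$ (the estimate being trivial when its right-hand side is $+\infty$) yields
\[
\mathcal{J}_{f_{1},g_{1},\lambda_{1},\Omega_{1}}(v)+\mathcal{J}_{f_{2},g_{2},\lambda_{2},\Omega_{2}}(w)\le\mathcal{J}_{f_{1},g_{1},\lambda_{1},\Omega_{1}}(u_{1})+\mathcal{J}_{f_{2},g_{2},\lambda_{2},\Omega_{2}}(u_{2}).
\]

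Finally, (1)--(3) are immediate. For (1): since $v\in\mathbb{K}(\Omega_{1})$ and $u_{1}$ is a global minimizer, $\mathcal{J}_{f_{1},g_{1},\lambda_{1},\Omega_{1}}(u_{1})\le\mathcal{J}_{f_{1},g_{1},\lambda_{1},\Omega_{1}}(v)$; subtracting this from the main inequality gives $\mathcal{J}_{f_{2},g_{2},\lambda_{2},\Omega_{2}}(w)\le\mathcal{J}_{f_{2},g_{2},\lambda_{2},\Omega_{2}}(u_{2})$. Statement (2) is the mirror image. For (3), combine (2) with the global minimality of $u_{1}$ to get $\mathcal{J}_{f_{1},g_{1},\lambda_{1},\Omega_{1}}(v)\le\mathcal{J}_{f_{1},g_{1},\lambda_{1},\Omega_{1}}(u_{1})=\inf_{u\in\mathbb{K}(\Omega_{1})}\mathcal{J}_{f_{1},g_{1},\lambda_{1},\Omega_{1}}(u)$, so $v$ is a global minimizer; symmetrically $w$ is one via (1). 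The only genuinely delicate points are the positivity-set bookkeeping — in particular verifying $\{w>0\}=\{u_{1}>0\}\cup\{u_{2}>0\}$ up to a null set for the specific definition of $\{u>0\}$ adopted here — and the standard but slightly technical fact that $0\le v\le u_{1}$ with $u_{1}\in H_{0}^{1}(\Omega_{1})$ really places $v$ in $H_{0}^{1}(\Omega_{1})$; granting these, the energy comparison is a routine termwise computation.
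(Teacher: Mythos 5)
Your proof is correct and matches the paper's argument: both rest on the rearrangement identities for $v=\min\{u_{1},u_{2}\}$, $w=\max\{u_{1},u_{2}\}$ (sum, gradient, and positivity-set identities) together with the sign conditions $f_{1}\le f_{2}$, $\lambda_{1}\le\lambda_{2}$, $g_{1}\ge g_{2}$ and the pointwise bound $0\le v\le u_{1}$, and both derive statements (1)--(3) from the main two-functional inequality exactly as you do. The only difference is presentational: the paper packages the three nongradient comparisons into a single auxiliary lemma for a nondecreasing $\Phi$ with $h_{1}\le h_{2}$ (applied with $\Phi(t)=t$, $t^{2}$ and $\chi_{\{t>0\}}$), whereas you carry out the equivalent termwise algebra directly after eliminating $w$ and $u_{2}$; the positivity-set bookkeeping is handled at the same informal level in both.
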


\begin{rem}
When $\Omega_{1}=\Omega_{2}=\Omega$, we only need to assume \eqref{eq:comparison}
in $\Omega$. 
\end{rem}

\begin{proof}[Proof of Proposition~{\rm \ref{prop:comparison}}]
We first show that, if $\Phi(t)$
is a non-decreasing function of $t\in[0,\infty)$ and $h_{1}\le h_{2}$
in $\mathbb{R}^{n}$, then 
\begin{equation}
\int_{\mathbb{R}^{n}}(h_{1}\Phi(u_{1})+h_{2}\Phi(u_{2}))\,dx\le\int_{\mathbb{R}^{n}}(h_{1}\Phi(v)+h_{2}\Phi(w))\,dx.\label{eq:comparison-observation}
\end{equation}
Indeed, by definition we have 
\begin{align*}
\int_{\mathbb{R}^{n}}h_{1}(\Phi(u_{1})-\Phi(v))\,dx & =\int_{\{u_{1}\ge u_{2}\}}h_{1}(\Phi(u_{1})-\Phi(u_{2}))\,dx\\
 & \le\int_{\{u_{1}\ge u_{2}\}}h_{2}(\Phi(u_{1})-\Phi(u_{2}))\,dx\\
 & =\int_{\mathbb{R}^{n}}h_{2}(\Phi(w)-\Phi(u_{2}))\,dx,
\end{align*}
where we used that $\Phi(u_{1})-\Phi(u_{2})\ge0$ in $\{u_{1}\ge u_{2}\}$.
Hence we conclude \eqref{eq:comparison-observation}.

\medskip

\noindent From \eqref{eq:comparison-observation}, we find that
\begin{align*}
\int_{\mathbb{R}^{n}}(f_{1}u_{1}+f_{2}u_{2})\,dx & \le\int_{\mathbb{R}^{n}}(f_{1}v+f_{2}w)\,dx,\quad(\text{choosing }h_{j}=f_{j}\text{ and }\Phi(t)=t)\\
\int_{\mathbb{R}^{n}}(k_{1}^{2}u_{1}^{2}+k_{2}^{2}u_{2}^{2})\,dx & \le\int_{\mathbb{R}^{n}}(k_{1}^{2}v^{2}+k_{2}^{2}w^{2})\,dx, \quad(\text{choosing }h_{j}=k_{j}^{2}\text{ and }\Phi(t)=t^{2}).
\end{align*}
On the other hand, choosing $h_{j}=-g_{j}^{2}$ and 
\[
\Phi(t)=\begin{cases}
0, & t\le0,\\
1, & t>0,
\end{cases}
\]
we obtain 
\[
\int_{\mathbb{R}^{n}}(g_{1}^{2}\chi_{\{u_{1}>0\}}+g_{2}^{2}\chi_{\{u_{2}>0\}})\,dx\ge\int_{\mathbb{R}^{n}}(g_{1}^{2}\chi_{\{v>0\}}+g_{2}^{2}\chi_{\{w>0\}})\,dx.
\]
By observing that 
\[
\int_{\mathbb{R}^{n}}(|\nabla u_{1}|^{2}+|\nabla u_{2}|^{2})\,dx=\int_{\mathbb{R}^{n}}(|\nabla v|^{2}+|\nabla w|^{2})\,dx,
\]
we conclude the proposition by putting these inequalities together. 
\end{proof}
We now prove the following lemma using Proposition~\ref{prop:existence-minimizer}
and Proposition~\ref{prop:comparison}: 
\begin{lem}
\label{lem:monotonicity} Let $f,g\in L^{\infty}(\mathbb{R}^{n})$
be such that $g\ge0$. Let $\Omega$ be a bounded domain with $C^{1}$
boundary. Assume that $u_{0}$ is a non-trivial (global) minimizer
of $\mathcal{J}_{f,g,\lambda_{0},\Omega}$ in $\mathbb{K}(\Omega)$
with $-\infty<\lambda_{0}<\lambda^{*}(\Omega)$. Then for each $\lambda$
with $\lambda_{0}<\lambda<\lambda^{*}(\Omega)$, there exists a non-trivial
(global) minimizer  of $\mathcal{J}_{f,g,\lambda,\Omega}$
in $\mathbb{K}(\Omega)$. 
In addition, any (global) minimizer   $u_{*}$ of  $\mathcal{J}_{f,g,\lambda,\Omega}$
in $\mathbb{K}(\Omega)$ satisfies
\begin{subequations}
\begin{align}
u_{*} & \ge u_{0}\quad\text{in }\Omega,\label{eq:monotonicity1a}\\
u_{*} & >u_{0}\quad\text{in }\{u_{0}>0\}.\label{eq:monotonicity1b}
\end{align}
\end{subequations}
\end{lem}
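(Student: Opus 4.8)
The plan is to obtain existence from Proposition~\ref{prop:existence-minimizer}, the inequality \eqref{eq:monotonicity1a} from the comparison principle Proposition~\ref{prop:comparison} by analysing its equality case, and the strict inequality \eqref{eq:monotonicity1b} from the Euler--Lagrange equation \eqref{eq:PDE1b} together with the strong minimum principle. For existence: since $\Omega$ is a bounded $C^{1}$ (hence Lipschitz) domain and $\lambda_{0}<\lambda<\lambda^{*}(\Omega)$, Proposition~\ref{prop:existence-minimizer} gives a global minimizer $u_{*}\in\mathbb{K}(\Omega)$ of $\mathcal{J}_{f,g,\lambda,\Omega}$; once \eqref{eq:monotonicity1a} is proved, non-triviality of $u_{*}$ is immediate because $u_{*}\ge u_{0}\ge0$ with $u_{0}\not\equiv0$.

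\emph{Proof of \eqref{eq:monotonicity1a}.} I would apply Proposition~\ref{prop:comparison} with $f_{1}=f_{2}=f$, $g_{1}=g_{2}=g$, $\Omega_{1}=\Omega_{2}=\Omega$, $\lambda_{1}=\lambda_{0}\le\lambda_{2}=\lambda$, $u_{1}=u_{0}$ and $u_{2}=u_{*}$, so that $v=\min\{u_{0},u_{*}\}$ and $w=\max\{u_{0},u_{*}\}$ lie in $\mathbb{K}(\Omega)$. The proposition gives
\[
\mathcal{J}_{f,g,\lambda_{0},\Omega}(v)+\mathcal{J}_{f,g,\lambda,\Omega}(w)\le\mathcal{J}_{f,g,\lambda_{0},\Omega}(u_{0})+\mathcal{J}_{f,g,\lambda,\Omega}(u_{*}),
\]
while minimality of $u_{0}$ and $u_{*}$ gives the reverse inequality, so equality holds. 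Now I would revisit the computation in the proof of Proposition~\ref{prop:comparison} in this special case: the gradient term, the linear term, and the $g^{2}\chi_{\{u>0\}}$ term each contribute an \emph{exact} equality (because $u_{0}+u_{*}=v+w$ and $\chi_{\{v>0\}}+\chi_{\{w>0\}}=\chi_{\{u_{0}>0\}}+\chi_{\{u_{*}>0\}}$ a.e.), so the only possible gap is between $-\lambda_{0}\int u_{0}^{2}-\lambda\int u_{*}^{2}$ and $-\lambda_{0}\int v^{2}-\lambda\int w^{2}$, which equals $(\lambda-\lambda_{0})\int_{\{u_{0}>u_{*}\}}(u_{0}^{2}-u_{*}^{2})\,dx$. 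Equality therefore forces this integral to vanish; since $\lambda-\lambda_{0}>0$ and $u_{0}^{2}-u_{*}^{2}>0$ a.e.\ on $\{u_{0}>u_{*}\}$ (there $u_{0}>u_{*}\ge0$), we conclude $|\{u_{0}>u_{*}\}|=0$, that is, $u_{*}\ge u_{0}$ a.e.\ in $\Omega$.

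\emph{Proof of \eqref{eq:monotonicity1b}.} From $u_{*}\ge u_{0}\ge0$ we get $\{u_{0}>0\}\subset\{u_{*}>0\}$, and by \eqref{eq:PDE1b} and interior elliptic regularity ($f+\lambda u_{*}\in L^{\infty}_{\mathrm{loc}}$) both $u_{0},u_{*}$ lie in $W^{2,p}_{\mathrm{loc}}(\{u_{0}>0\})$, in particular are continuous there. Put $w:=u_{*}-u_{0}\ge0$ on $\{u_{0}>0\}$, and subtract \eqref{eq:PDE1b} for $u_{*}$ (parameter $\lambda$) from that for $u_{0}$ (parameter $\lambda_{0}$) to get, in $\{u_{0}>0\}$,
\[
\Delta w=-\lambda u_{*}+\lambda_{0}u_{0}=-\lambda w-(\lambda-\lambda_{0})u_{0}.
\]
Writing $\lambda=\lambda^{+}-\lambda^{-}$ with $\lambda^{\pm}=\max\{\pm\lambda,0\}$, this reads $\Delta w-\lambda^{-}w=-\lambda^{+}w-(\lambda-\lambda_{0})u_{0}\le0$, an inequality whose zeroth-order coefficient $-\lambda^{-}$ is nonpositive. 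Hence the strong minimum principle for supersolutions (e.g.\ \cite[Theorem~8.19]{GT01Elliptic}, in its $W^{2,p}$ form) applies on each connected component $G$ of the open set $\{u_{0}>0\}$: either $w>0$ throughout $G$, or $w\equiv0$ on $G$; but $w\equiv0$ would force $(\lambda-\lambda_{0})u_{0}\equiv0$ on $G$, contradicting $u_{0}>0$ on $G$ and $\lambda>\lambda_{0}$. Therefore $w>0$ on $\{u_{0}>0\}$, which is \eqref{eq:monotonicity1b}.

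\emph{Main obstacle.} The two delicate points are: pinning down, inside the proof of Proposition~\ref{prop:comparison}, precisely which inequality becomes strict once $\lambda_{0}<\lambda$, so as to upgrade the equality of the sums of functionals to the pointwise bound $u_{*}\ge u_{0}$; and running a maximum principle although $\lambda$ may be positive, which is dealt with by the splitting $\lambda=\lambda^{+}-\lambda^{-}$ so that the relevant operator $\Delta-\lambda^{-}$ has a nonpositive zeroth-order term (note that no spectral hypothesis is needed here since $w\ge0$ is already known on all of $\{u_{0}>0\}$, not merely on its boundary). Everything else is a direct application of the results established earlier.
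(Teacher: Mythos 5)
Your proof is correct, and for \eqref{eq:monotonicity1a} it takes a genuinely different route from the paper. The paper instead applies Proposition~\ref{prop:comparison}(3) to conclude that $w=\max\{u_{0},u_{*}\}$ is itself a global minimizer of $\mathcal{J}_{f,g,\lambda,\Omega}$, then uses \eqref{eq:PDE1b} on both $u_{0}$ and $w$ restricted to $\{u_{0}>u_{*}\}$ (where $w=u_{0}$) to get $(\lambda-\lambda_{0})w=0$ there, forcing $|\{u_{0}>u_{*}\}|=0$. Your argument instead exploits the equality case of the comparison inequality directly: with $f_{1}=f_{2}$ and $g_{1}=g_{2}$, every term of the rearrangement estimate in Proposition~\ref{prop:comparison} is an exact identity except the one coming from $h_{j}=\lambda_{j}$, $\Phi(t)=t^{2}$, so the two--sided inequality forces $(\lambda-\lambda_{0})\int_{\{u_{0}>u_{*}\}}(u_{0}^{2}-u_{*}^{2})\,dx=0$. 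This avoids invoking the Euler--Lagrange characterization altogether and is, if anything, more self-contained, though it requires re-opening the proof of Proposition~\ref{prop:comparison}, whereas the paper only uses its statement. Your derivation of nontriviality as an immediate corollary of \eqref{eq:monotonicity1a} is also cleaner than the paper's separate argument that the infimum is strictly negative. For \eqref{eq:monotonicity1b}, your equation $\Delta w=-\lambda w-(\lambda-\lambda_{0})u_{0}$ is the correct one (the paper's displayed $\Delta v=-(\lambda-\lambda_{0})v$ appears to be a slip), and your split $\lambda=\lambda^{+}-\lambda^{-}$ correctly ensures a nonpositive zeroth-order coefficient so that \cite[Theorem~8.19]{GT01Elliptic} applies without any extra spectral assumption on $\lambda$ --- a point the paper glosses over.
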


\begin{proof}
We first show existence of nontrivial minimizer.
Using Proposition~\ref{prop:existence-minimizer},
there exists a (global) minimizer $u_{*}$ of $\mathcal{J}_{f,g,\lambda,\Omega}$
in $\mathbb{K}(\Omega)$. Note that 
\[
\inf_{u\in\mathbb{K}(\Omega)}\mathcal{J}_{f,g,\lambda,\Omega}(u)\le\mathcal{J}_{f,g,\lambda,\Omega}(u_{0})=\mathcal{J}_{f,g,\lambda_{0},\Omega}(u_{0})-(\lambda-\lambda_{0})\|u_{0}\|_{L^{2}(\Omega)}^{2}.
\]
Since $u_{0}$ is nontrivial, then $\|u_{0}\|_{L^{2}(\Omega)}^{2}>0$.
Therefore, using \eqref{eq:non-positive-functional}, we have
\[
\inf_{u\in\mathbb{K}(\Omega)}\mathcal{J}_{f,g,\lambda,\Omega}(u)<0,
\]
which shows that 0 is not a minimizer of $\mathcal{J}_{f,g,\lambda,\Omega}(u)$
in $\mathbb{K}(\Omega)$. Consequently  $u_{*}\not\equiv0$. 

\medskip

To prove  \eqref{eq:monotonicity1a},  we  let $u_{*}$
be any (global) minimizer of $\mathcal{J}_{f,g,\lambda,\Omega}(u)$
in $\mathbb{K}(\Omega)$. Using Proposition~\ref{prop:comparison} we know that $w:=\max\{u_{0},u_{*}\}$ is also a (global) minimizer
of $\mathcal{J}_{f,g,\lambda,\Omega}(u)$ in $\mathbb{K}(\Omega)$.
Using \eqref{eq:PDE1b} in Proposition~\ref{prop:PDE1}, we know
that 
\begin{align*}
\Delta u_{0} & =-f-\lambda_{0}u_{0}\quad\text{in }\{u_{0}>0\},\\
\Delta w & =-f-\lambda w\quad\text{in }\{w>0\}.
\end{align*}
Note that $u_{0}=w$ in $\{u_{0}>u_{*}\}$ and $\{u_{0}>u_{*}\}\subset\{w>0\}\cap\{u_{0}>0\}$,
then restricting the above two identities in $\{u_{0}>u_{*}\}$ yields
\[
\Delta w+\lambda w=\Delta w+\lambda_{0}w=-f\quad\text{in }\{u_{0}>u_{*}\}.
\]
Since $\lambda_{0}<\lambda$, then
\[
w=u_{0}=0\quad\text{in }\{u_{0}>u_{*}\},
\]
and hence we know that $|\{u_{0}>u_{*}\}|=0$, which concludes \eqref{eq:monotonicity1a}. 

\medskip

Finally to  prove  \eqref{eq:monotonicity1b},
We define $v=u_{*}-u_{0}\ge0$
in $B_{R}$, and note that 
\begin{align*}
\Delta v & =-(\lambda-\lambda_{0})v\le0\quad\text{in }\{u_{0}>0\},\\
v & \ge0\quad\text{on }\partial\{u_{0}>0\}.
\end{align*}
Using the strong minimum principle for super-solutions (as formulated
in \cite[Theorem~8.19]{GT01Elliptic}), we know that $u>u_{0}$ in
$\{u_{0}>0\}$, because $u\not\equiv u_{0}$, which concludes \eqref{eq:monotonicity1b}. 
\end{proof}
We now prove the the minimizer is unique for all except for countably
many $\lambda$. 
\begin{prop}
\label{prop:unique} We assume that $\Omega$ is bounded with $C^{1}$
boundary and $-\infty<\lambda<\lambda^{*}(\Omega)$. Let $f,g\in L^{\infty}(\Omega)$
with $g\ge0$. Then there exist smallest and largest (in pointwise
sense) minimizers of $\mathcal{J}_{f,g,\lambda,\Omega}$ in $\mathbb{K}(\Omega)$.
We define the functions 
\begin{align*}
m(\lambda) & :=\min \left\{ 
\|u\|_{L^{2}(\Omega)} \, \big| \, \mathcal{J}_{f,g,\lambda,\Omega}(u)={\displaystyle \inf_{v\in\mathbb{K}(\Omega)}}\mathcal{J}_{f,g,\lambda,\Omega}(v) \right\}\quad\text{for all }-\infty<\lambda<\lambda^{*}(\Omega),\\
M(\lambda) & :=\max \left\{
\|u\|_{L^{2}(\Omega)} \, \big| \, \mathcal{J}_{f,g,\lambda,\Omega}(u)={\displaystyle \inf_{v\in\mathbb{K}(\Omega)}}\mathcal{J}_{f,g,\lambda,\Omega}(v) \right\} \quad\text{for all }-\infty<\lambda<\lambda^{*}(\Omega).
\end{align*}
Then the functions 
\[
m:(-\infty,\lambda^{*}(\Omega))\rightarrow\mathbb{R},\quad M:(-\infty,\lambda^{*}(\Omega))\rightarrow\mathbb{R}
\]
are strictly increasing. Moreover, we have 
\begin{equation}
M(\lambda-\epsilon)<m(\lambda)\quad\text{for all }-\infty<\lambda<\lambda^{*}(\Omega)\text{ and }\epsilon>0.\label{eq:minimizer-bound}
\end{equation}
Consequently, there exists a countable set $Z\subset(-\infty,\lambda^{*}(\Omega))$ such that the minimizer of $\mathcal{J}_{f,g,\lambda,\Omega}$ in $\mathbb{K}(\Omega)$ is unique for all $\lambda\in(-\infty,\lambda^{*}(\Omega))\setminus Z$. 
\end{prop}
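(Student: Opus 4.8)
The plan is to first establish the existence of smallest and largest minimizers, then prove strict monotonicity of $m$ and $M$, then derive the gap estimate \eqref{eq:minimizer-bound}, and finally conclude the countability of the exceptional set by a standard monotone-function argument. For the existence of extremal minimizers: the set of minimizers of $\mathcal{J}_{f,g,\lambda,\Omega}$ in $\mathbb{K}(\Omega)$ is nonempty by Proposition~\ref{prop:existence-minimizer} and is closed under taking pointwise $\min$ and $\max$ of two of its elements by Proposition~\ref{prop:comparison}(3). By Remark~\ref{rem:compact} this set is bounded in $H^1_0(\Omega)$ and compact in $L^2(\Omega)$. Taking a minimizing sequence for $\inf \|u\|_{L^2(\Omega)}$ over this set, extracting an $L^2$-convergent (and weakly $H^1$-convergent) subsequence, and using weak lower semicontinuity of $\mathcal{J}$ (Lemma~\ref{lem:LSC}) to see the limit is still a minimizer, one obtains a minimizer realizing $m(\lambda)$; similarly for $M(\lambda)$. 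To see these are genuinely smallest/largest in the pointwise sense (not merely $L^2$-extremal), one takes the pointwise minimum of the $m(\lambda)$-minimizer with any other minimizer: this is again a minimizer with $L^2$ norm $\le m(\lambda)$, hence $= m(\lambda)$, and then equality of $L^2$ norms forces it to equal the $m(\lambda)$-minimizer a.e., so the latter lies pointwise below every minimizer; the argument for $M(\lambda)$ is symmetric.

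For strict monotonicity: fix $\lambda_0 < \lambda < \lambda^*(\Omega)$ and let $u_0$ be a minimizer for $\lambda_0$ and $u_*$ a minimizer for $\lambda$. If $u_0 \not\equiv 0$, then Lemma~\ref{lem:monotonicity} gives $u_* \ge u_0$ in $\Omega$ and $u_* > u_0$ on $\{u_0 > 0\}$, so $\|u_*\|_{L^2(\Omega)} > \|u_0\|_{L^2(\Omega)}$ whenever $u_0$ is nontrivial. Applying this with $u_0$ the $M(\lambda_0)$-minimizer (resp.\ the $m(\lambda_0)$-minimizer) and $u_*$ ranging over all minimizers for $\lambda$ yields $m(\lambda) > M(\lambda_0) \ge m(\lambda_0)$ and $M(\lambda) > M(\lambda_0)$, i.e.\ both $m$ and $M$ are strictly increasing; in the degenerate case where the $\lambda_0$-minimizer is trivial one still has $0 = m(\lambda_0) = M(\lambda_0)$ and any nontrivial minimizer for $\lambda$ (which exists by Lemma~\ref{lem:monotonicity} once there is a nontrivial minimizer for some smaller parameter, and otherwise the inequality is with $0$) gives strict increase. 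The estimate \eqref{eq:minimizer-bound}, namely $M(\lambda - \epsilon) < m(\lambda)$, is exactly the statement that the largest minimizer at $\lambda - \epsilon$ is strictly $L^2$-smaller than the smallest minimizer at $\lambda$; this follows from the same application of Lemma~\ref{lem:monotonicity} with $u_0$ the $M(\lambda-\epsilon)$-minimizer and $u_*$ the $m(\lambda)$-minimizer, together with the strict inequality on $\{u_0>0\}$ (handling separately the trivial case $M(\lambda-\epsilon)=0$, where $m(\lambda)>0$ must be checked, or where $m(\lambda)=0$ too so the claim fails unless one notes $m$ is positive for $\lambda$ large enough — more precisely \eqref{eq:minimizer-bound} is vacuously consistent when both sides vanish and the genuine content is in the range of $\lambda$ where minimizers are nontrivial).

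For the final conclusion: \eqref{eq:minimizer-bound} shows that the intervals $(M(\lambda-\epsilon), m(\lambda))$ — equivalently, that $M(\lambda) < m(\lambda')$ whenever $\lambda < \lambda'$ — force the graphs of $m$ and $M$ to be "interlaced", so $m(\lambda) = M(\lambda)$ for all but countably many $\lambda$: indeed the set $Z := \{\lambda : m(\lambda) < M(\lambda)\}$ injects into $\mathbb{Q}$ by sending $\lambda \in Z$ to a rational in the nonempty open interval $(m(\lambda), M(\lambda))$, and these intervals are pairwise disjoint because for $\lambda < \lambda'$ one has $M(\lambda) < m(\lambda')$ by \eqref{eq:minimizer-bound} (applied with $\epsilon = \lambda' - \lambda$), so $(m(\lambda),M(\lambda))$ lies entirely to the left of $(m(\lambda'),M(\lambda'))$. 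Hence $Z$ is countable, and for $\lambda \notin Z$ we have $m(\lambda) = M(\lambda)$; since the smallest and largest minimizers then have equal $L^2$ norm and the smallest lies pointwise below the largest, they coincide, and therefore the minimizer is unique. The main obstacle I anticipate is the careful bookkeeping around the possibly-trivial minimizers — in particular making sure that the strict inequalities from Lemma~\ref{lem:monotonicity} (which presuppose a nontrivial $u_0$) are invoked only where legitimate, and pinning down the precise $\lambda$-range in which $m(\lambda) > 0$ — but this is handled by Remarks~\ref{rem:nontrivial1} and~\ref{rem:nontrivial2} together with the monotonicity just established.
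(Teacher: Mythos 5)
Your proof is correct and follows the same overall route as the paper: comparison principle (Proposition~\ref{prop:comparison}) plus $L^{2}$-compactness (Remark~\ref{rem:compact}) for the extremal minimizers, Lemma~\ref{lem:monotonicity} for strict monotonicity and the gap estimate \eqref{eq:minimizer-bound}, and then a countability argument for monotone functions. Two sub-steps differ in execution: for existence of extremal minimizers, you take an $L^{2}$-norm-minimizing (resp.\ maximizing) element of the compact minimizer set and upgrade $L^{2}$-extremality to pointwise extremality by noting that $\min(u_{\min},w)\le u_{\min}$ with equal $L^{2}$-norm forces a.e.\ equality, whereas the paper takes pointwise $\sup$ and $\inf$ over a countable $L^{2}$-dense subset of the minimizer set and uses Proposition~\ref{prop:comparison} to show these are minimizers; for the final countability, you give the explicit disjoint-interval/rational-injection argument, while the paper simply cites the fact that a monotone function has at most countably many jumps. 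Both variants are correct and essentially equivalent in substance; you are also more explicit than the paper in flagging that strict increase of $m$ and $M$ (and the strict inequality in \eqref{eq:minimizer-bound}) really require a nontrivial minimizer at the smaller parameter, though this does not affect the final uniqueness conclusion in the degenerate regime where all minimizers vanish.
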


\begin{proof}
From Remark~\ref{rem:compact}, we know that the set of minimizers
is compact in $L^{2}(\Omega)$. Since $L^{2}(\Omega)$ is separable,
there exists a countable dense set in the set of minimizers. Taking
pointwise supremum, as well as pointwise infimum, in this countable
set produces two new minimizers. This proves the first part of the
proposition, and hence the functions $m$ and $M$ are well-defined. 

The strict monotonicity of $m$ and $M$ follows from Lemma~\ref{lem:monotonicity}.
Choosing $\lambda_{0}=\lambda-\epsilon$ in Lemma~\ref{lem:monotonicity},
we also know that any minimizer of $\mathcal{J}_{f,g,\lambda,\Omega}$
is larger than all minimizers of $\mathcal{J}_{f,g,\lambda-\epsilon,\Omega}$
(in particular the largest one), and we conclude \eqref{eq:minimizer-bound}.
The final claim follows from \eqref{eq:minimizer-bound} and the fact
that monotone functions are continuous except for a countable set
of jump discontinuities. 
\end{proof}
\begin{lem}
We assume that $\Omega$ is bounded with $C^{1}$ boundary. Let $f,g\in L^{\infty}(\Omega)$
with $g\ge0$, and  define 
\[
\Phi(\lambda):=\inf_{v\in\mathbb{K}(\Omega)}\mathcal{J}_{f,g,\lambda,\Omega}(v)\quad\text{for all }\lambda\in(-\infty,\lambda^{*}(\Omega)).
\]
Then $\Phi:(-\infty,\lambda^{*}(\Omega))\rightarrow(-\infty,0]$ is
concave {\rm (}and hence continuous{\rm )}. In addition, if $\Phi(\lambda_{0})<0$,
then it is strictly decreasing near $\lambda=\lambda_{0}$. 
\end{lem}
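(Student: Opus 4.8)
The plan is to establish concavity of $\Phi$ as a pointwise infimum of affine functions of $\lambda$, and then use concavity together with the sign condition $\Phi(\lambda_0) < 0$ to get strict monotonicity near $\lambda_0$. First I would observe that for each fixed $v \in \mathbb{K}(\Omega)$, the map
\[
\lambda \longmapsto \mathcal{J}_{f,g,\lambda,\Omega}(v) = \int_{\Omega}\bigl(|\nabla v|^{2} - 2fv + g^{2}\chi_{\{v>0\}}\bigr)\,dx - \lambda \int_{\Omega}|v|^{2}\,dx
\]
is an affine function of $\lambda$ with slope $-\|v\|_{L^{2}(\Omega)}^{2} \le 0$. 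Hence $\Phi(\lambda) = \inf_{v \in \mathbb{K}(\Omega)} \mathcal{J}_{f,g,\lambda,\Omega}(v)$ is a pointwise infimum of a family of affine (in particular concave) functions, and therefore concave on $(-\infty, \lambda^{*}(\Omega))$. Continuity on the open interval is then automatic, since a finite concave function on an open interval of $\mathbb{R}$ is continuous. The fact that $\Phi$ takes values in $(-\infty,0]$ is already recorded in the excerpt: the upper bound $\Phi(\lambda) \le \mathcal{J}_{f,g,\lambda,\Omega}(0) = 0$ is \eqref{eq:non-positive-functional}, and finiteness (boundedness below) for $\lambda < \lambda^{*}(\Omega)$ is the coercivity half of Lemma~\ref{lem:boundedness}.

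Next, $\Phi$ is nonincreasing: each affine piece $\lambda \mapsto \mathcal{J}_{f,g,\lambda,\Omega}(v)$ is nonincreasing (slope $\le 0$), so their infimum is nonincreasing as well. It remains to upgrade this to \emph{strict} decrease near any $\lambda_0$ with $\Phi(\lambda_0) < 0$. Here I would use that strict decrease for a concave nonincreasing function can only fail if the function is eventually constant; more precisely, if $\Phi$ were not strictly decreasing near $\lambda_0$, then by concavity $\Phi$ would have to be constant on an interval $[\lambda_0, \lambda_0 + \delta]$ (a concave function that is nonincreasing and not strictly decreasing on a subinterval must be constant to the right of the point where it stops decreasing, and similarly one controls the left side). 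So it suffices to rule out $\Phi \equiv c$ on an interval with $c = \Phi(\lambda_0) < 0$. For this I invoke the existence of a minimizer: by Proposition~\ref{prop:existence-minimizer}, for each such $\lambda$ there is $u_{\lambda} \in \mathbb{K}(\Omega)$ with $\mathcal{J}_{f,g,\lambda,\Omega}(u_{\lambda}) = \Phi(\lambda) = c < 0$. In particular $u_{\lambda}$ is nontrivial, so $\|u_{\lambda}\|_{L^{2}(\Omega)}^{2} > 0$. Then for $\lambda' > \lambda$,
\[
\Phi(\lambda') \le \mathcal{J}_{f,g,\lambda',\Omega}(u_{\lambda}) = \mathcal{J}_{f,g,\lambda,\Omega}(u_{\lambda}) - (\lambda' - \lambda)\|u_{\lambda}\|_{L^{2}(\Omega)}^{2} = c - (\lambda' - \lambda)\|u_{\lambda}\|_{L^{2}(\Omega)}^{2} < c,
\]
contradicting $\Phi \equiv c$ on $[\lambda_0, \lambda_0 + \delta]$. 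The same inequality applied starting from a point just below $\lambda_0$ handles the left-hand side, and concavity glues the one-sided strictness together, giving that $\Phi$ is strictly decreasing on a neighborhood of $\lambda_0$.

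The only mildly delicate point is the passage from "nonincreasing and concave with $\Phi(\lambda_0) < 0$" to "strictly decreasing near $\lambda_0$": one must phrase it so that the argument I sketched (comparing $\Phi(\lambda')$ against $\mathcal{J}_{f,g,\lambda',\Omega}(u_{\lambda})$ for $\lambda$ slightly less than $\lambda_0$ and $\lambda'$ slightly more) genuinely produces a two-sided statement. The clean way is: for any $\lambda_1 < \lambda_2$ both in a small neighborhood of $\lambda_0$ on which $\Phi < 0$ (such a neighborhood exists by continuity of $\Phi$), pick a minimizer $u_{\lambda_1}$ at $\lambda_1$, which is nontrivial since $\Phi(\lambda_1) < 0$, and conclude $\Phi(\lambda_2) \le \mathcal{J}_{f,g,\lambda_2,\Omega}(u_{\lambda_1}) = \Phi(\lambda_1) - (\lambda_2 - \lambda_1)\|u_{\lambda_1}\|_{L^{2}(\Omega)}^{2} < \Phi(\lambda_1)$. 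This directly gives strict monotonicity on that neighborhood without invoking concavity for the gluing at all — concavity is then only needed for the continuity claim and is a pleasant bonus. I expect the bookkeeping around "near $\lambda_0$" (choosing the neighborhood via continuity of $\Phi$, which itself follows from concavity) to be the only place requiring care.
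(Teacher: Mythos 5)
Your proof is correct and takes essentially the same route as the paper. The paper fixes $\lambda_0$, takes a minimizer $u_0$ at $\lambda_0$, and writes the same affine supporting-line inequality $\Phi(\lambda)\le\Phi(\lambda_0)-(\lambda-\lambda_0)\|u_0\|_{L^2(\Omega)}^2$, which is precisely the supergradient characterization of concavity that your ``infimum of affine functions'' viewpoint packages differently; nontriviality of $u_0$ when $\Phi(\lambda_0)<0$ then gives the strictly negative slope. Your final paragraph correctly observes that the middle detour about concave nonincreasing functions that stall is unnecessary, and the clean two-point argument (pick $\lambda_1<\lambda_2$ in a neighborhood where $\Phi<0$, use a nontrivial minimizer at $\lambda_1$) is exactly what the paper's terse ``which implies that the function is strictly decreasing near $\lambda=\lambda_0$'' is gesturing at.
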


\begin{proof}
Fix $-\infty<\lambda_{0}<\lambda^{*}(\Omega)$ and let $u_{0}\in\mathbb{K}(\Omega)$
be such that 
\[
\mathcal{J}_{f,g,\lambda_{0},\Omega}(u_{0})=\inf_{v\in\mathbb{K}(\Omega)}\mathcal{J}_{f,g,\lambda_{0},\Omega}(v)\equiv\Phi(\lambda_{0}).
\]
For each $\lambda\in(-\infty,\lambda^{*}(\Omega))$, we have 
\begin{align*}
\Phi(\lambda)=\inf_{v\in\mathbb{K}(\Omega)}\mathcal{J}_{f,g,\lambda,\Omega}(v) & \le\mathcal{J}_{f,g,\lambda,\Omega}(u_{0})=\mathcal{J}_{f,g,\lambda_{0},\Omega}(u_{0})-(\lambda-\lambda_{0})\|u_{0}\|_{L^{2}(\Omega)}^{2}\\
 & =\Phi(\lambda_{0})-(\lambda-\lambda_{0})\|u_{0}\|_{L^{2}(\Omega)}^{2},
\end{align*}
which proves the claimed concavity. In addition, if $\Phi(\lambda_{0})<0$,
then $u_{0}\not\equiv0$ (since $\mathcal{J}_{f,g,\lambda_{0},\Omega}(0)=0$),
which implies that the function is strictly decreasing near $\lambda=\lambda_{0}$. 
\end{proof}
We now prove the $L^{\infty}$-regularity of the minimizers. 
\begin{prop}
\label{prop:regularity-minimizer} We assume that $\Omega$ is bounded
with $C^{1}$ boundary and $-\infty<\lambda<\lambda^{*}(\Omega)$.
Let $f,g\in L^{\infty}(\Omega)$ with $g\ge0$. Let $v\in H_{0}^{1}(\Omega)$
be the unique solution of $(\Delta+\lambda)v=-1$ in $\Omega$. If
$u_{*}\in\mathbb{K}(\Omega)$ is a {\rm (}global{\rm )} minimizer $\mathcal{J}_{f,g,\lambda,\Omega}$
in $\mathbb{K}(\Omega)$, then 
\[
0\le u_{*}(x)\le\|f_{+}\|_{L^{\infty}(\Omega)}v(x)\quad\text{for all }x\in\Omega.
\]
\end{prop}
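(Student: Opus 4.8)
The plan is to compare the minimizer $u_*$ with the torsion-type barrier $Mv$, where $M:=\norm{f_+}_{L^\infty(\Omega)}$, by cutting $u_*$ off from below at $Mv$. Put $\zeta:=u_*-Mv\in H_0^1(\Omega)$ and $w:=\zeta_+=(u_*-Mv)_+\in H_0^1(\Omega)$; the goal is to prove $w\equiv 0$, which together with $u_*\ge 0$ (since $u_*\in\mathbb K(\Omega)$) yields the claim. As a preliminary, since $\lambda<\lambda^*(\Omega)$ the form $(\phi,\psi)\mapsto\int_\Omega(\nabla\phi\cdot\nabla\psi-\lambda\phi\psi)\,dx$ is coercive on $H_0^1(\Omega)$, so $v$ is well-defined, and testing the equation $(\Delta+\lambda)v=-1$ against $(-v)_+\in H_0^1(\Omega)$ gives $v\ge 0$ in $\Omega$ (equivalently, $v$ minimizes $\phi\mapsto\int_\Omega(|\nabla\phi|^2-\lambda\phi^2-2\phi)\,dx$ and replacing $v$ by $|v|$ does no worse). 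Because $Mv\ge 0$ we obtain the pointwise bounds $0\le w\le u_*$.

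Now I would exploit minimality. For $t\in(0,1)$ the function $u_*-tw$ lies in $\mathbb K(\Omega)$, since $u_*-tw\ge (1-t)u_*\ge 0$; moreover from $(1-t)u_*\le u_*-tw\le u_*$ one checks, directly from the definition of the set $\{u>0\}$, that $\{u_*-tw>0\}=\{u_*>0\}$, so the term $\int_\Omega g^2\chi_{\{u_*-tw>0\}}\,dx=\int_\Omega g^2\chi_{\{u_*>0\}}\,dx$ is unchanged. Writing $B:=\int_\Omega(\nabla u_*\cdot\nabla w-\lambda u_* w-fw)\,dx$ and $Q:=\int_\Omega(|\nabla w|^2-\lambda w^2)\,dx$, minimality gives
\[
0\le\mathcal J_{f,g,\lambda,\Omega}(u_*-tw)-\mathcal J_{f,g,\lambda,\Omega}(u_*)=-2tB+t^2Q\qquad\text{for all }t\in(0,1),
\]
whence, dividing by $t$ and letting $t\to 0^+$, we get $B\le 0$, that is $\int_\Omega\nabla u_*\cdot\nabla w\,dx\le\int_\Omega(\lambda u_*+f)w\,dx$. (Alternatively one could test \eqref{eq:PDE1b} against $w$, at the price of having to justify $w\in H_0^1(\{u_*>0\})$.) Next I would compute $\int_\Omega\nabla u_*\cdot\nabla w\,dx$ using the equation for $v$: since $\nabla w=\chi_{\{\zeta>0\}}\nabla\zeta$ a.e.\ one has $\int_\Omega\nabla\zeta\cdot\nabla w\,dx=\norm{\nabla w}_{L^2(\Omega)}^2$, hence
\[
\int_\Omega\nabla u_*\cdot\nabla w\,dx=\norm{\nabla w}_{L^2(\Omega)}^2+M\int_\Omega\nabla v\cdot\nabla w\,dx=\norm{\nabla w}_{L^2(\Omega)}^2+M\int_\Omega(1+\lambda v)w\,dx,
\]
the last equality being the weak formulation of $(\Delta+\lambda)v=-1$ applied to the test function $w\in H_0^1(\Omega)$. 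Also, on $\{\zeta>0\}$ we have $u_*=w+Mv$ while $w=0$ elsewhere, so $\int_\Omega u_*w\,dx=\int_\Omega(w^2+Mvw)\,dx$. Plugging these into $B\le 0$, the terms $\lambda M\int_\Omega vw\,dx$ cancel and we arrive at
\[
\norm{\nabla w}_{L^2(\Omega)}^2-\lambda\norm{w}_{L^2(\Omega)}^2\le\int_\Omega(f-M)w\,dx\le 0,
\]
the last inequality because $f\le\norm{f_+}_{L^\infty(\Omega)}=M$ a.e.\ and $w\ge 0$. Since $\lambda<\lambda^*(\Omega)$, the left-hand side is at least $(\lambda^*(\Omega)-\lambda)\norm{w}_{L^2(\Omega)}^2\ge 0$, which forces $w=0$ a.e.; thus $0\le u_*\le Mv$ a.e.\ in $\Omega$ (and hence at every point, since $v$ is continuous in $\Omega$).

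The step I expect to be the most delicate is the bookkeeping around $\{u_*>0\}$: one has to check carefully that each competitor $u_*-tw$ is admissible and that its positivity set agrees with $\{u_*>0\}$ in precisely the sense used in the definition of $\mathcal J_{f,g,\lambda,\Omega}$, so that the $g^2\chi_{\{u>0\}}$ contribution drops out and the difference of functionals reduces to the smooth quadratic $-2tB+t^2Q$. Everything else is a routine combination of the first-variation inequality from minimality, the weak equation for $v$, and the spectral gap $\lambda<\lambda^*(\Omega)$.
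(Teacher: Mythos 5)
Your proof is correct, and it takes a genuinely different route from the paper's. The paper proves this by reusing the comparison machinery it has already built: it observes that $v_0:=\|f_+\|_{L^\infty(\Omega)}\,v$ is the (unconstrained, hence also constrained, since $v\ge 0$) minimizer of $\mathcal{J}_{\|f_+\|_\infty,0,\lambda,\Omega}$; then by Proposition~\ref{prop:comparison} with $f\le\|f_+\|_\infty$ and $g\ge 0$, the function $\max\{u_*,v_0\}$ is also a minimizer of $\mathcal{J}_{\|f_+\|_\infty,0,\lambda,\Omega}$; finally, by Proposition~\ref{prop:PDE1} both $v_0$ and $\max\{u_*,v_0\}$ solve the Dirichlet problem $(\Delta+\lambda)u=-\|f_+\|_\infty$ in $\Omega$, which has a unique solution when $\lambda<\lambda^*(\Omega)$, forcing $\max\{u_*,v_0\}=v_0$. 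You instead carry out a direct first-variation argument with the one-sided competitors $u_*-tw$, $w=(u_*-Mv)_+$, exploiting the sandwich $(1-t)u_*\le u_*-tw\le u_*$ to freeze the positivity set and reduce the energy difference to a quadratic in $t$; then the weak equation for $v$ and the inequality $f\le M$ collapse the linear coefficient to $\|\nabla w\|^2-\lambda\|w\|^2+\int_\Omega(M-f)w\le 0$, and the spectral gap kills $w$. Both arguments are sound. The paper's approach is shorter \emph{given} the comparison lemma and the Euler--Lagrange inequalities, and it fits the overall architecture of the paper (and mirrors \cite{GS96FreeBoundaryPotential}); yours is more self-contained -- it bypasses Propositions~\ref{prop:comparison} and~\ref{prop:PDE1} entirely and only needs the definition of the functional, the weak maximum principle for $v$, and coercivity. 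The point you singled out as delicate -- that $\{u_*-tw>0\}=\{u_*>0\}$ in the sense of the paper's definition -- is indeed handled correctly by the sandwich and the monotonicity of the open-set definition of $\{u>0\}$. One cosmetic remark: since $u_*\in H_0^1(\Omega)$ is only defined a.e.\ at this stage (continuity of $u_*$ is proved later in the paper), the conclusion should be read as an a.e.\ inequality, as you in fact derive; the parenthetical ``hence at every point'' is not needed and slightly overreaches.
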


\begin{rem}
Using \cite[Theorem~8.15]{GT01Elliptic}, we also know that $v\in L^{\infty}(\Omega)$.
Therefore we know that $\|u_{*}\|_{L^{\infty}(\Omega)}\le C(\lambda,\Omega)\|f_{+}\|_{L^{\infty}(\Omega)}$. 
\end{rem}

\begin{proof}
[Proof of Proposition~{\rm \ref{prop:regularity-minimizer}}] In view
of Remark~\ref{rem:nontrivial1}, we only need to consider the case
when $\|f_{+}\|_{L^{\infty}(\Omega)}>0$. We define $v_{0}:=\|f_{+}\|_{L^{\infty}(\Omega)}v$, which 
is a  minimizer of $\mathcal{J}_{\|f_{+}\|_{L^{\infty}(\Omega)},0,\lambda,\Omega}$
in $\mathbb{K}(\Omega)$. Using Proposition~\ref{prop:comparison},
we know that $\max\{u_{*},v_{0}\}$ is also a minimizer of $\mathcal{J}_{\|f_{+}\|_{L^{\infty}(\Omega)},0,\lambda,\Omega}$
in $\mathbb{K}(\Omega)$. By \eqref{eq:PDE1a} and \eqref{eq:PDE1c}
in Proposition~\ref{prop:PDE1}, it follows  that both $v_{0}$ and $\max\{u_{*},v_{0}\}$
satisfy
\begin{equation}
\begin{cases}
(\Delta+\lambda)u = -\|f_{+}\|_{L^{\infty}(\Omega)} & \text{in }\Omega,\\
u = 0 & \text{on }\partial\Omega.
\end{cases}\label{eq:Euler-Lagrange}
\end{equation}
Since $-\infty<\lambda<\lambda^{*}(\Omega)$, the solution of
\eqref{eq:Euler-Lagrange} is unique. The uniqueness of solution of
\eqref{eq:Euler-Lagrange} implies 
\[
v_{0}=\max\{u_{*},v_{0}\}\quad\text{in }\Omega,
\]
which concludes the pointwise bound. 
\end{proof}

\section{\label{sec:Properties-local-minimizers}Some properties of local minimizers}

We shall  study the regularity of local minimizers, and obtain some consequences for the case when $\lambda \ge 0$. 
\begin{lem}
\label{lem:zero-freq} Let $\Omega$ be an open set in $\mathbb{R}^{n}$,
and let $\lambda \ge 0$. Let $f,g\in L^{\infty}(\Omega)$
be such that $g\ge0$. If $u_{*}$ is a local minimizer of $\mathcal{J}_{f,g,\lambda,\Omega}$
in $\mathbb{K}(\Omega)$, then it is also a local minimizer of $\mathcal{J}_{f+\lambda u_{*},g,0,\Omega}$
in $\mathbb{K}(\Omega)$. 
\end{lem}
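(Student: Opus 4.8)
The key observation is that the functionals $\mathcal{J}_{f,g,\lambda,\Omega}$ and $\mathcal{J}_{f+\lambda u_{*},g,0,\Omega}$ differ, when evaluated on a competitor $u$ close to $u_{*}$, by a term that is controlled to second order in $u - u_{*}$. Concretely, I would write out both functionals and compute the difference
\[
\mathcal{J}_{f,g,\lambda,\Omega}(u) - \mathcal{J}_{f+\lambda u_{*},g,0,\Omega}(u) = \int_{\Omega} \left( -\lambda |u|^{2} - 2fu + 2(f+\lambda u_{*})u \right) dx = \int_{\Omega} \left( -\lambda |u|^{2} + 2\lambda u_{*} u \right) dx.
\]
Adding and subtracting $\lambda |u_{*}|^{2}$, this equals $-\lambda \int_{\Omega} |u - u_{*}|^{2}\,dx + \lambda\int_{\Omega}|u_{*}|^2\,dx$. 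Since the last term is a constant independent of $u$, we get that for any competitor $u$,
\[
\mathcal{J}_{f,g,\lambda,\Omega}(u) = \mathcal{J}_{f+\lambda u_{*},g,0,\Omega}(u) - \lambda \int_{\Omega} |u - u_{*}|^{2}\,dx + (\text{const}).
\]

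**Using local minimality.** Suppose $u_{*}$ is a local minimizer of $\mathcal{J}_{f,g,\lambda,\Omega}$ with parameter $\epsilon > 0$ as in \eqref{eq:local-minimizer-metric}. I want to show $u_{*}$ is a local minimizer of $\mathcal{J}_{f+\lambda u_{*},g,0,\Omega}$. Take any $u \in \mathbb{K}(\Omega)$ with $\int_{\Omega}(|\nabla(u-u_{*})|^{2} + |\chi_{\{u>0\}} - \chi_{\{u_{*}>0\}}|)\,dx < \epsilon$. Applying the identity above both at $u$ and at $u_{*}$ (where the $|u-u_*|^2$ term vanishes), and subtracting, I obtain
\[
\mathcal{J}_{f+\lambda u_{*},g,0,\Omega}(u) - \mathcal{J}_{f+\lambda u_{*},g,0,\Omega}(u_{*}) = \left[ \mathcal{J}_{f,g,\lambda,\Omega}(u) - \mathcal{J}_{f,g,\lambda,\Omega}(u_{*}) \right] + \lambda \int_{\Omega} |u - u_{*}|^{2}\,dx.
\]
The bracketed term is $\ge 0$ by local minimality of $u_{*}$ for $\mathcal{J}_{f,g,\lambda,\Omega}$ (the constraint $\int |\nabla(u-u_*)|^2 + \cdots < \epsilon$ is exactly the one controlling it), and the remaining term $\lambda \int_{\Omega} |u-u_{*}|^{2}\,dx \ge 0$ precisely because $\lambda \ge 0$. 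Hence the left-hand side is nonnegative, which is exactly the statement that $u_{*}$ is a local minimizer of $\mathcal{J}_{f+\lambda u_{*},g,0,\Omega}$ with the same $\epsilon$.

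**Where care is needed.** The computation itself is elementary, so the only real point to check is that $f + \lambda u_{*} \in L^{\infty}(\Omega)$, so that $\mathcal{J}_{f+\lambda u_{*},g,0,\Omega}$ is an admissible functional of the form studied; this holds because $f \in L^{\infty}(\Omega)$ by hypothesis and $u_{*} \in L^{\infty}(\Omega)$ (e.g.\ by Proposition~\ref{prop:regularity-minimizer}, or more elementarily because local minimality is a statement we can localize and $u_* \in H^1_0 \subset L^{2^*}$; in the bounded-domain setting of interest $L^\infty$ regularity is available). One should also note that the $g^{2}\chi_{\{u>0\}}$ term is literally identical in the two functionals, so it plays no role and the sign condition $\lambda \ge 0$ is the only structural input beyond the algebraic identity. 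I do not anticipate a genuine obstacle here; the statement is essentially a bookkeeping lemma that lets later arguments reduce the $\lambda \ge 0$ Helmholtz case to the $\lambda = 0$ case treated in \cite{GS96FreeBoundaryPotential}.
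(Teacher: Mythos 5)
Your proof is correct and is essentially the same as the paper's: both rest on the identity $\mathcal{J}_{f+\lambda u_{*},g,0,\Omega}(v)=\mathcal{J}_{f,g,\lambda,\Omega}(v)+\lambda\int_{\Omega}(v-u_{*})^{2}\,dx-\lambda\int_{\Omega}u_{*}^{2}\,dx$ and the sign of $\lambda$. Your write-up is marginally more explicit about keeping the competitor inside the locality radius $\epsilon$ of \eqref{eq:local-minimizer-metric}, but the argument is identical in substance; the remark that $f+\lambda u_{*}\in L^{\infty}$ (so the new functional is of the admissible form) is a good point, and in the paper it is handled via Proposition~\ref{prop:regularity-minimizer} at the places where the lemma is invoked.
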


\begin{proof}
Write $\tilde{f} = f + \lambda u_{*}$. For each $v\in H_{0}^{1}(\Omega)$, we have 
\begin{equation*}
\begin{aligned}
\mathcal{J}_{\tilde{f},g,0,\Omega}(v) & =\int_{\Omega}(|\nabla v|^{2}-2\tilde{f}v+g^{2}\chi_{\{v>0\}})\,dx  =\mathcal{J}_{f,g,\lambda,\Omega}(v)+\lambda\int_{\Omega}(v^{2}-2u_{*}v)\,dx \\
& =\mathcal{J}_{f,g,\lambda,\Omega}(v)+\lambda\int_{\Omega}(v-u_{*})^{2}\,dx-\lambda\int_{\Omega}u_{*}^{2}\,dx.
\end{aligned}
\end{equation*}
Since $\lambda \ge 0$, we see that 
\begin{equation}
\mathcal{J}_{\tilde{f},g,0,\Omega}(v) \ge \mathcal{J}_{f,g,\lambda,\Omega}(u_{*}) - \lambda \int_{\Omega} u_{*}^{2} \, dx = \mathcal{J}_{\tilde{f},g,0,\Omega}(u_{*}), \label{eq:inequality-transfer}
\end{equation}
and the equality holds in \eqref{eq:inequality-transfer} if and only
if $v=u_{*}$. Hence we conclude our lemma. 
\end{proof}

With this lemma at hand, one can prove that the minimizer $u_{*}$ is Lipschitz continuous, as well as some results analogous to \cite[Sections~2 and 5]{GS96FreeBoundaryPotential}, by using the corresponding results in \cite{GS96FreeBoundaryPotential} where one just replaces $f \in L^{\infty}(\Omega)$ with $f + \lambda u_{*} \in L^{\infty}(\Omega)$ (see Proposition~{\rm \ref{prop:regularity-minimizer}}). This works since the proofs in \cite{GS96FreeBoundaryPotential} only rely on variations of $u_{*}$ locally. 
% ##### arXiv version (begin) #####
The detailed statements and proofs can be found at Appendix~\ref{appen:details}. 
% ##### arXiv version (end) #####
% ##### journal version (begin) #####
% The detailed statements and proofs can be found at the arXiv version of the paper. 
% ##### journal version (end) #####
Here we highlight some results which we will use later. The following proposition concerns the PDE characterization of the minimizer $u_{*}$. 
\begin{prop}
\label{prop:PDE3} Let $\Omega$ be a bounded open set in $\mathbb{R}^{n}$ with $C^{1}$ boundary and $0 \le \lambda<\lambda^{*}(\Omega)$. Let $f,g\in L^{\infty}(\Omega)$ be such that $g\ge0$ and $g^{2}\in W^{1,1}(\Omega)$. Suppose that $u_{*}$ is a local minimizer of $\mathcal{J}_{f,g,\lambda,\Omega}$ in $\mathbb{K}(\Omega)$. If $\partial\{g>0\}\cap\Omega\ne\emptyset$, we further assume that there exists $0<\alpha\le1$ such that $g$ is $C^{\alpha}$ near $\partial\{g>0\}\cap\Omega$ and $\mathscr{H}^{n-1+\alpha}(\partial\{g>0\}\cap\Omega)=0$. We assume that $\overline{\{u_{*}>0\}}\subset\Omega$. Then $\{u_{*}>0\}$ has locally finite perimeter in $\{g>0\}$, 
\begin{equation}
\mathscr{H}^{n-1}((\partial\{u_{*}>0\}\setminus\partial_{{\rm red}}\{u_{*}>0\})\cap\{g>0\})=0,\label{eq:boundary-and-reduced-boundary}
\end{equation}
and 
\begin{equation}
(\Delta+\lambda)u_{*}+f\mathscr{L}^{n}\lfloor\{u_{*}>0\}=g\mathscr{H}^{n-1}\lfloor\partial\{u_{*}>0\}=g\mathscr{H}^{n-1}\lfloor\partial_{{\rm red}}\{u_{*}>0\}.\label{eq:PDE3-main}
\end{equation}
\end{prop}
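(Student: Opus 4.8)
The plan is to reduce Proposition~\ref{prop:PDE3} to the case $\lambda=0$ using Lemma~\ref{lem:zero-freq}, and then invoke the $k=0$ theory developed in \cite{GS96FreeBoundaryPotential} (as reformulated in Appendix~\ref{appen:details}). First I would set $\tilde f := f + \lambda u_*$. By Proposition~\ref{prop:regularity-minimizer} (and the remark following it) we have $u_* \in L^\infty(\Omega)$, so $\tilde f \in L^\infty(\Omega)$; and since $\lambda \ge 0$, Lemma~\ref{lem:zero-freq} shows $u_*$ is a local minimizer of $\mathcal{J}_{\tilde f, g, 0, \Omega}$ in $\mathbb{K}(\Omega)$. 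Thus it suffices to prove all three conclusions for a local minimizer of a functional with $\lambda = 0$, which is exactly the setting of \cite[Theorem~2.3]{GS96FreeBoundaryPotential} (see Proposition~\ref{prop:PDE2} and the detailed statements in Appendix~\ref{appen:details}).

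Next I would verify that the hypotheses of the $\lambda=0$ results hold with $f$ replaced by $\tilde f$. The regularity hypotheses on $g$ and on $\partial\{g>0\}\cap\Omega$ ($g \in C^\alpha$ near $\partial\{g>0\}\cap\Omega$, $\mathscr H^{n-1+\alpha}(\partial\{g>0\}\cap\Omega)=0$, $g^2 \in W^{1,1}(\Omega)$) do not involve $f$ at all, so they transfer verbatim. The assumption $\overline{\{u_*>0\}}\subset\Omega$ is also unchanged. Hence the hypotheses of \cite[Theorem~2.3]{GS96FreeBoundaryPotential} are met, and it yields: $\{u_*>0\}$ has locally finite perimeter in $\{g>0\}$; the identity \eqref{eq:boundary-and-reduced-boundary}; and
\begin{equation*}
\Delta u_* + \tilde f\, \mathscr L^n \lfloor \{u_*>0\} = g\, \mathscr H^{n-1}\lfloor \partial\{u_*>0\} = g\, \mathscr H^{n-1} \lfloor \partial_{\rm red}\{u_*>0\}.
\end{equation*}
Finally, substituting $\tilde f = f + \lambda u_*$ and noting $\lambda u_* \mathscr L^n\lfloor\{u_*>0\} = \lambda u_* \mathscr L^n$ as distributions on $\Omega$ (since $u_* = 0$ a.e. on $\Omega\setminus\{u_*>0\}$, and indeed $u_*$ is continuous so $u_*\equiv 0$ on the complement of $\{u_*>0\}$ up to the relevant sets), we can move the term $\lambda u_*$ to the left to get $(\Delta+\lambda)u_* + f\,\mathscr L^n\lfloor\{u_*>0\}$, which is \eqref{eq:PDE3-main}.

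The main thing to get right is the bookkeeping in that last substitution: one must check that $\lambda u_*\,\mathscr L^n\lfloor\{u_*>0\}$ and $\lambda u_*\,\mathscr L^n$ coincide as measures (equivalently as distributions) on $\Omega$, which follows because $u_*$ vanishes a.e.\ outside its positivity set; here it is convenient to already know $u_*$ is Lipschitz continuous (from the $\lambda=0$ regularity theory via Lemma~\ref{lem:zero-freq}), so that $\{u_*>0\}$ is open and $u_* \equiv 0$ on $\overline\Omega \setminus \{u_* > 0\}$. The only genuine content beyond citing \cite{GS96FreeBoundaryPotential} is the reduction via Lemma~\ref{lem:zero-freq}, and the observation that all the structural hypotheses on $g$ are independent of the lower-order term; no new free-boundary analysis is needed. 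I do not anticipate a serious obstacle, since the whole point of Lemma~\ref{lem:zero-freq} and Appendix~\ref{appen:details} is precisely to make this reduction mechanical.
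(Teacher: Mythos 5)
Your proposal is correct and follows the same strategy the paper advertises in Section~\ref{sec:Properties-local-minimizers}: write $\tilde f = f + \lambda u_*$, note $\tilde f \in L^\infty$ by Proposition~\ref{prop:regularity-minimizer}, apply Lemma~\ref{lem:zero-freq} to view $u_*$ as a local minimizer of the $\lambda = 0$ functional, and then invoke the Gustafsson--Shahgholian theory; the final bookkeeping with $\lambda u_*\,\mathscr L^n\lfloor\{u_*>0\} = \lambda u_*\,\mathscr L^n$ is handled correctly. The only difference is presentational: where you cite \cite[Theorem~2.3]{GS96FreeBoundaryPotential} directly, the paper (for self-containedness) re-derives that theorem in Appendix~\ref{appen:details} from scratch using the same chain of ideas — the lemmas \cite[Lemma~2.4, 2.8, 2.16]{GS96FreeBoundaryPotential}, the Alt--Caffarelli blow-up apparatus \cite{AC81RegularityFreeBoundary}, and Proposition~\ref{prop:PDE2} — each stated with $f$ replaced by $\tilde f$.
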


The following proposition concerns the regularity of the reduced free boundary $\partial_{\rm red}\{u_{*}>0\}$. 

\begin{prop}\label{prop:regularity-free-boundary}
Let $\Omega$ be a bounded open set in $\mathbb{R}^{n}$ with $C^{1}$ boundary and $0 \le \lambda<\lambda^{*}(\Omega)$. Let $f$, $g$ and $u_{*}$ be functions given in Proposition~{\rm \ref{prop:PDE3}}. If there exists a ball $B_{r}(x_{0}) \subset \Omega$ such that $g$ is H\"{o}lder continuous and satisfies $g \ge {\rm constant} > 0$ in $B_{r}(x_{0})$, then $\partial_{\rm red}\{u_{*}>0\}$ is locally $C^{1,\alpha}$ in such a ball $B_{r}(x_{0})$, and in the case when $n=2$ we even have $\partial_{\rm red}\{u_{*}>0\} = \partial \{u_{*}>0\}$. 
\end{prop}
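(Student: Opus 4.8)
The plan is to reduce this to the free boundary regularity theory for Bernoulli-type functionals developed in \cite{GS96FreeBoundaryPotential}, exactly as the paragraph preceding Proposition~\ref{prop:PDE3} indicates. First I would invoke Lemma~\ref{lem:zero-freq}: since $0 \le \lambda$, the local minimizer $u_*$ of $\mathcal{J}_{f,g,\lambda,\Omega}$ is also a local minimizer of $\mathcal{J}_{\tilde f, g, 0, \Omega}$ with $\tilde f := f + \lambda u_*$. By Proposition~\ref{prop:regularity-minimizer} (applied on a slightly larger $C^1$ domain, or using that $u_* \in L^\infty$ since $\overline{\{u_*>0\}}\subset\Omega$), we have $\tilde f \in L^\infty(\Omega)$, so we are genuinely in the $k=0$ setting of \cite{GS96FreeBoundaryPotential} with a bounded right-hand side. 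This is the conceptual heart of the reduction and it is already done for us.

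Next I would localize to the ball $B_r(x_0)\subset\Omega$ where $g$ is Hölder continuous and bounded below by a positive constant. On this ball the hypotheses of Proposition~\ref{prop:PDE3} give that $\{u_*>0\}$ has finite perimeter, that $\mathscr{H}^{n-1}$-almost every boundary point of $\{u_*>0\}$ lies in the reduced boundary, and that the measure-valued equation \eqref{eq:PDE3-main} holds. In particular, in $B_r(x_0)$ the free boundary condition reads $|\nabla u_*| = g$ in the weak sense of \cite[Theorem~2.3]{GS96FreeBoundaryPotential}, with $g$ Hölder and strictly positive — precisely the nondegeneracy input required by the regularity theory of Alt–Caffarelli type. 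I would then quote the corresponding result of \cite{GS96FreeBoundaryPotential} (the analogue of \cite[Section~5]{GS96FreeBoundaryPotential}; see also Appendix~\ref{appen:details} of the present paper for the detailed transcription) to conclude that $\partial_{\rm red}\{u_*>0\}$ is locally a $C^{1,\alpha}$ hypersurface inside $B_r(x_0)$; the exponent $\alpha$ is governed by the Hölder exponent of $g$ and the dimension.

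For the two-dimensional case, the extra assertion $\partial_{\rm red}\{u_*>0\} = \partial\{u_*>0\}$ follows from the fact that in the plane the "singular" part of the free boundary $\partial\{u_*>0\}\setminus\partial_{\rm red}\{u_*>0\}$, which by \eqref{eq:boundary-and-reduced-boundary} has vanishing $\mathscr{H}^{1}$-measure, is in fact empty: this is the classical $n=2$ full-regularity phenomenon for one-phase Bernoulli problems, where the flatness improvement/monotonicity-formula obstruction that permits singular points only in higher dimensions is absent, so every boundary point is a flat point and hence a reduced-boundary point. I would cite the $n=2$ statement from \cite{GS96FreeBoundaryPotential} (again transcribed in Appendix~\ref{appen:details}) rather than reprove it.

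The main obstacle is not any single hard estimate but rather making the reduction airtight: one must check that all the structural hypotheses required by the $k=0$ theory of \cite{GS96FreeBoundaryPotential} survive the replacement $f \rightsquigarrow f+\lambda u_*$ — in particular that the needed regularity of $u_*$ (Lipschitz continuity, $L^\infty$ bounds, nondegeneracy near the free boundary) is already available from Proposition~\ref{prop:regularity-minimizer} and the Lipschitz regularity asserted in Section~\ref{sec:Properties-local-minimizers}, and that the local minimality in the metric \eqref{eq:local-minimizer-metric} is exactly the notion under which \cite{GS96FreeBoundaryPotential} proves free boundary regularity. Since $f+\lambda u_*$ is merely $L^\infty$ (not continuous), one should also confirm that the $C^{1,\alpha}$ free boundary regularity in \cite{GS96FreeBoundaryPotential} only requires $L^\infty$ right-hand side together with Hölder, positive $g$ — which it does. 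Once these bookkeeping points are verified, the conclusion is immediate from the cited results.
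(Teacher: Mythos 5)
Your proposal is correct and takes essentially the same route as the paper: reduce to the $\lambda=0$ setting via Lemma~\ref{lem:zero-freq}, note $f+\lambda u_*\in L^\infty(\Omega)$ by Proposition~\ref{prop:regularity-minimizer}, and then invoke the Alt--Caffarelli type regularity theory of \cite[Section~5]{GS96FreeBoundaryPotential} (together with its $n=2$ full-regularity refinement), exactly as the paragraph preceding Proposition~\ref{prop:PDE3} indicates. One small correction: Appendix~\ref{appen:details} transcribes only the analogues of \cite[Section~2]{GS96FreeBoundaryPotential}, not the Section~5 free boundary regularity argument, so the final step is a direct citation to \cite{GS96FreeBoundaryPotential} rather than to the appendix.
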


\begin{rem}
If $g>0$ is H\"{o}lder continuous in $\Omega$, together with \eqref{eq:boundary-and-reduced-boundary}  we then know that $\partial_{\rm red} \{u_{*}>0\}$ is locally $C^{1,\alpha}$ with $\mathscr{H}^{n-1}(\partial \{u_{*}>0\} \setminus \partial_{\rm red} \{u_{*}>0\} ) = 0$. 
\end{rem}

\section{\label{sec:Relation-with-quadrature}Relation with hybrid quadrature domains}

We now obtain the following simple lemma: 
\begin{lem}
\label{lem:PDE4} Suppose the assumptions in Proposition~{\rm \ref{prop:PDE3}} hold and write $\lambda = k^{2}$. If we further assume that $\overline{\{u_{*}>0\}}\subset\Omega$ and $f = \mu - h \chi_{\{u_{*} > 0\}} \in L^{\infty}(\Omega)$ for some $\mu \in \mathscr{E}'(\{u_{*} > 0\})$ and $h \in L^{\infty}(\{u_{*} > 0\})$, then $\left( \Psi_{k} * (g \mathscr{H}^{n-1} \lfloor \partial \{u_{*} > 0\}) \right)(x)$ is pointwise well-defined for all $x \in \partial \{u_{*} > 0\}$. Moreover, we also know that $\{u_{*} > 0 \}$ is a hybrid $k$-quadrature domain {\rm (}Definition~{\rm \ref{def:weighted-QD}}{\rm )}, corresponding to distribution $\mu$ and density
$(g,h)$. 
\end{lem}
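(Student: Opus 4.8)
The plan is to deduce Lemma~\ref{lem:PDE4} directly from Proposition~\ref{prop:PDE3}, whose hypotheses we are assuming, by rewriting the PDE \eqref{eq:PDE3-main} via convolution with a fundamental solution $\Psi_k$ of $-(\Delta+k^2)$. First I would record that, since $\lambda = k^2 \ge 0$ and $\overline{\{u_*>0\}}\subset\Omega$, Proposition~\ref{prop:PDE3} gives that $D:=\{u_*>0\}$ has finite perimeter in $\{g>0\}$, that $\mathscr{H}^{n-1}(\partial D\setminus\partial_{\rm red}D)\cap\{g>0\})=0$, and that
\[
(\Delta+k^2)u_* = -f\,\mathscr{L}^n\lfloor D + g\,\mathscr{H}^{n-1}\lfloor\partial D \quad\text{in }\Omega.
\]
Substituting $f = \mu - h\chi_D$ with $\mu\in\mathscr{E}'(D)$, $h\in L^\infty(D)$, this reads $(\Delta+k^2)u_* = \mu - h\chi_D \cdot(-1)\ldots$ — more precisely $-(\Delta+k^2)u_* = f\,\mathscr{L}^n\lfloor D - g\,\mathscr{H}^{n-1}\lfloor\partial D = \mu - h\chi_D - g\,\mathscr{H}^{n-1}\lfloor\partial D$. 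Since $u_*\in H^1_0(\Omega)$ with $\overline{D}\subset\Omega$ and $u_*=0$ outside $D$, extension by zero gives $u_*\in\mathscr{E}'(\mathbb{R}^n)$ and the identity holds in $\mathscr{D}'(\mathbb{R}^n)$ (here one uses that $u_*$ together with its gradient vanishes across $\partial D$ in the $H^1$ sense, so no spurious boundary terms appear; this is part of what Proposition~\ref{prop:PDE3} already encodes).

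Next I would convolve with $\Psi_k$. Because $u_*$ is compactly supported and $-(\Delta+k^2)\Psi_k=\delta_0$, the standard distributional identity $u_* = \delta_0 * u_* = -(\Delta+k^2)\Psi_k * u_* = \Psi_k * (-(\Delta+k^2)u_*)$ gives
\[
u_* = \Psi_k*\mu - \Psi_k*(h\chi_D) - \Psi_k*(g\,\mathscr{H}^{n-1}\lfloor\partial D).
\]
Since $u_*=0$ in $\mathbb{R}^n\setminus D$, evaluating this identity on $\mathbb{R}^n\setminus D$ yields exactly \eqref{eq:weighted-QD}, namely $\Psi_k*\mu = \Psi_k*\sigma$ on $\mathbb{R}^n\setminus D$ with $\sigma = h\mathscr{L}^n\lfloor D + g\mathscr{H}^{n-1}\lfloor\partial D$ — provided we know these convolutions are genuinely well-defined objects and not merely formal. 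Here $\Psi_k*\mu$ is a distribution (even real-analytic away from $\supp\mu$), $\Psi_k*(h\chi_D)$ is continuous since $h\chi_D\in L^\infty$ with compact support, and $\Psi_k*(g\mathscr{H}^{n-1}\lfloor\partial D)$ makes sense as a distribution since $g\mathscr{H}^{n-1}\lfloor\partial D$ is a compactly supported finite measure (as $\partial D$ has finite $\mathscr{H}^{n-1}$-measure and $g\in L^\infty(\partial D)$). This also gives Definition~\ref{def:weighted-QD} for all fundamental solutions, since any two differ by a solution of the homogeneous Helmholtz equation, but I would instead just note the identity above holds for each fixed $\Psi_k$.

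The main obstacle — and the one point requiring genuine work rather than bookkeeping — is the pointwise well-definedness \eqref{eq:well-defined-singular-integral}, i.e.\ that $\left(\Psi_k*(g\mathscr{H}^{n-1}\lfloor\partial D)\right)(x)$ is an absolutely convergent integral for every $x\in\partial D$, not just a distribution. To get this I would rearrange: from $u_* = \Psi_k*\mu - \Psi_k*(h\chi_D) - \Psi_k*(g\mathscr{H}^{n-1}\lfloor\partial D)$ we have $\Psi_k*(g\mathscr{H}^{n-1}\lfloor\partial D) = \Psi_k*\mu - \Psi_k*(h\chi_D) - u_*$. On $\partial D$, the right-hand side is explicit: $u_*$ is continuous (indeed Lipschitz, by the regularity results discussed after Lemma~\ref{lem:zero-freq}) and vanishes on $\partial D$; $\Psi_k*(h\chi_D)$ is continuous; and $\Psi_k*\mu$ is smooth near $\partial D$ because $\supp\mu\subset D$ is compactly contained in $D$, hence disjoint from $\partial D$. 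Therefore the distribution $\Psi_k*(g\mathscr{H}^{n-1}\lfloor\partial D)$ agrees near $\partial D$ with a continuous function. To upgrade ``agrees as a distribution with a continuous function'' to ``the defining singular integral converges pointwise at each $x\in\partial D$'', I would invoke the same mechanism as in Proposition~\ref{prop:PDE3}'s analogue in \cite{GS96FreeBoundaryPotential}: the kernel $\Psi_k(x-y)$ has the same local singularity $|x-y|^{2-n}$ (or $\log$ when $n=2$) as the Newtonian kernel, and the finite-perimeter/density control on $\partial D$ near $\mathscr{H}^{n-1}$-a.e.\ point together with $g\in L^\infty$ makes $\int_{\partial D}|\Psi_k(x-y)| g(y)\,d\mathscr{H}^{n-1}(y)<\infty$; combined with the continuity just obtained this pins down the value. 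I would cite \cite[Theorem~2.3]{GS96FreeBoundaryPotential} (and Proposition~\ref{prop:PDE3}) for this step rather than redo the potential-theoretic estimate. Once \eqref{eq:well-defined-singular-integral} and \eqref{eq:weighted-QD} are both in hand, $D=\{u_*>0\}$ satisfies Definition~\ref{def:weighted-QD} with distribution $\mu$ and density $(g,h)$, completing the proof.
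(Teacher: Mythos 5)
Your proof is correct and takes essentially the same route as the paper's: convolve the PDE \eqref{eq:PDE3-main} with $\Psi_k$, use $u_*=\delta_0*u_*$ to obtain $u_*=\Psi_k*(f\mathscr{L}^n\lfloor D)-\Psi_k*(g\mathscr{H}^{n-1}\lfloor\partial D)$, rearrange and use that $u_*\in C^{0,1}_{\rm loc}(\Omega)$, $\Psi_k*(h\chi_D)$ is continuous, and $\Psi_k*\mu$ is smooth near $\partial D$ to get \eqref{eq:well-defined-singular-integral}, then read off \eqref{eq:weighted-QD} from $u_*|_{\mathbb{R}^n\setminus D}=0$. Your additional paragraph worrying about upgrading ``agrees with a continuous function as a distribution'' to genuine pointwise absolute convergence of the boundary integral is more careful than the paper's terse proof, but it is filling in a detail rather than taking a different path; one could also close that gap more simply by noting that, since $g\ge0$ and the singularity of $\Psi_k$ has a fixed sign, $\Psi_k*(g\mathscr{H}^{n-1}\lfloor\partial D)$ is lower semicontinuous and hence is bounded above by the value of the continuous function it agrees with a.e., forcing the integral to be finite at every $x\in\partial D$.
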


\begin{proof}
Let $\Psi_{k} \in L_{\rm loc}^{1}(\mathbb{R}^{n}) \cap C^{\infty}(\mathbb{R}^{n}\setminus \{0\})$ be any fundamental solution of the Helmholtz operator $-(\Delta + k^{2})$ and let $D = \{ u_* > 0 \}$. By the properties of convolution for distributions and by \eqref{eq:PDE3-main} we have 
\begin{equation}
\begin{aligned}
u_{*} &= \delta_{0}*u_{*} = -(\Delta + k^{2})\Psi_{k} * u_{*} \\
&= - \Psi_{k}*(\Delta + k^{2})u_{*} = \Psi_{k}*(f \mathscr{L}^{n}\lfloor D - g \mathscr{H}^{n-1} \lfloor \partial D). 
\end{aligned} \label{eq:00-properties-convolution-distribution}
\end{equation}
By using the fact $u_{*} \in C_{\rm loc}^{0,1}(\Omega)$
% ##### arXiv version (begin) #####
(see Appendix~{\rm \ref{appen:details}})
% ###### arXiv version (end) ######
and the assumption $\overline{\{u_{*}>0\}}\subset\Omega$, from \eqref{eq:00-properties-convolution-distribution} we conclude the first result. The second result immediately follows from the observation $u_{*}=0$ in $\mathbb{R}^{n}\setminus \{ u_{*} > 0 \}$. 
\end{proof}

Finally, we want to show that there exists some $\mu$ so that 
\[
{\rm supp}\,(\mu)\subset\{u_{*}>0\}\quad\text{and}\quad\overline{\{u_{*}>0\}}\subset\Omega.
\]
We first study a particular radially symmetric case (the case when
$\lambda=0$ was considered in \cite[Lemma~1.2]{GS96FreeBoundaryPotential}): 
\begin{lem}
\label{lem:radially1} Let $\Omega=B_{R}$ {\rm (}with $R>0${\rm )} and $0<\lambda<\lambda^{*}(B_{R})\equiv j_{\frac{n-2}{2},1}^{2}R^{-2}$.
Suppose that 
\[
f=a\chi_{B_{r_{1}}}-b\quad\text{with }a>b>0\text{ and }0<r_{1}<R,
\]
and let $g$ be a radially non-decreasing function $g$ with $g=0$
in $\overline{B_{r_{1}}}$. Then there exists $u_{*}\in\mathbb{K}(B_{R})$
such that 
\[
\mathcal{J}_{f,g,\lambda,B_{R}}(u_{*})=\inf_{v\in\mathbb{K}(B_{R})}\mathcal{J}_{f,g,\lambda,B_{R}}(v)<0.
\]
Moreover, the following holds: 
\begin{enumerate}[leftmargin = 25pt]
\renewcommand{\labelenumi}{\theenumi}
\renewcommand{\theenumi}{{\rm (\arabic{enumi})}}
\item Any {\rm (}global{\rm )} minimizer $u_{*}$ of $\mathcal{J}_{f,g,\lambda,B_{R}}$ in $\mathbb{K}(B_{R})$
 is continuous, 
radially symmetric and radially non-increasing,
and satisfies 
\begin{equation}
\overline{B_{r_{1}}}\subset\{u_{*}>0\}.\label{eq:minimizer-interesting}
\end{equation}
\item If we set 
\begin{equation}
R'=\max\begin{Bmatrix}\begin{array}{l|l}
\rho\in(r_{1},R] & {\displaystyle \frac{b}{a}-\frac{r_{1}^{\frac{n}{2}}J_{\frac{n}{2}}(\sqrt{\lambda}r_{1})}{\rho^{\frac{n}{2}}J_{\frac{n}{2}}(\sqrt{\lambda}\rho)}}\le0\end{array}\end{Bmatrix}>r_{1},\label{eq:R-prime-choice}
\end{equation}
\end{enumerate}
then  $u_{*}$ has support in the ball $\overline{B_{R'}}$.
In particular  $R'<R$, whenever
\[
\frac{b}{a}>\frac{r_{1}^{\frac{n}{2}}J_{\frac{n}{2}}(\sqrt{\lambda}r_{1})}{R^{\frac{n}{2}}J_{\frac{n}{2}}(\sqrt{\lambda}R)}.
\]

\end{lem}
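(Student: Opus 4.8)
The plan is to first establish the existence of a minimizer with negative energy, and then bootstrap symmetry and support properties from the explicit structure of the radial ODE. For existence, observe that $f = a\chi_{B_{r_1}} - b$ satisfies $\{f>0\} = B_{r_1}$ and, by hypothesis, $g \equiv 0$ on $\overline{B_{r_1}}$, so $\{f>0\}\cap\{g=0\}$ contains the open ball $B_{r_1}$. By Remark~\ref{rem:nontrivial2}, $\inf_{v\in\mathbb{K}(B_R)}\mathcal{J}_{f,g,\lambda,B_R}(v) < 0$, and since $0 \le \lambda < \lambda^*(B_R)$ and $B_R$ is a bounded $C^1$ domain, Proposition~\ref{prop:existence-minimizer} yields a global minimizer $u_* \in \mathbb{K}(B_R)$, which is automatically non-trivial because the infimum is strictly negative.

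\textbf{Radial symmetry and monotonicity (item (1)).} First I would note that $u_*$ is Lipschitz continuous by the regularity discussion preceding Proposition~\ref{prop:PDE3} (details in Appendix~\ref{appen:details}), so continuity is free. For radial symmetry I would use the comparison/uniqueness machinery: since $f$ and $g$ are radial and $B_R$ is a ball, for any rotation $\mathcal{R} \in O(n)$ the function $u_* \circ \mathcal{R}$ is again a minimizer of the same functional. Applying Proposition~\ref{prop:comparison}(3) to $u_*$ and $u_*\circ\mathcal{R}$ shows that $\min\{u_*, u_*\circ\mathcal{R}\}$ and $\max\{u_*, u_*\circ\mathcal{R}\}$ are both minimizers; combined with the uniqueness of the minimizer for all but countably many $\lambda$ (Proposition~\ref{prop:unique}) this forces $u_* = u_*\circ\mathcal{R}$ for those $\lambda$, and then a limiting argument (using compactness of the minimizer set in $L^2$, Remark~\ref{rem:compact}) extends radial symmetry to all $\lambda$ in the range. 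Alternatively, and more robustly, one can apply Schwarz/polarization rearrangement directly: rearranging $u_*$ does not increase $\int|\nabla u|^2$, preserves $\int u^2$ and $\int|\{u>0\}|$, and since $f$ is radially non-increasing and $g$ radially non-decreasing with $g=0$ on $B_{r_1}$, it does not increase $-2\int f u + \int g^2\chi_{\{u>0\}}$ either; this gives a radially non-increasing minimizer, and uniqueness (outside $Z$) plus a limiting argument handles the rest. I would likely present the rearrangement argument as the cleaner route. Writing $u_*(x) = \varphi(|x|)$, equation \eqref{eq:PDE1b} of Proposition~\ref{prop:PDE1} gives $\Delta u_* = -(f + \lambda u_*)$ in $\{u_*>0\}$, i.e. on $B_{r_1}$ (where $f = a - b > 0$) one has $-\Delta u_* - \lambda u_* = a - b$; solving this radial ODE with the regularity condition at the origin gives $\varphi$ in terms of the Bessel function $J_{\frac{n-2}{2}}(\sqrt\lambda \rho)$ plus a particular solution, and in particular $u_* > 0$ on $\overline{B_{r_1}}$ (one uses here that $a > b$ so the particular solution is positive, plus the strong minimum principle as in the last part of Remark~\ref{rem:Lipschitz-boundary-case}), which is \eqref{eq:minimizer-interesting}.

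\textbf{Support bound (item (2)).} Let $\rho_0 = \sup\{\rho : \varphi(\rho)>0\}$, so $\{u_*>0\} = B_{\rho_0}$ by radial monotonicity. On the annulus $B_{\rho_0}\setminus \overline{B_{r_1}}$ one has $f = -b$, so \eqref{eq:PDE1b} reads $-\Delta u_* - \lambda u_* = -b$; matching the Cauchy data of $\varphi$ at $r_1$ (value and derivative, both determined by the inner solution) and imposing $\varphi(\rho_0) = 0$ with $\varphi > 0$ on $(r_1,\rho_0)$ and $\varphi'(\rho_0) \le 0$, a direct computation with the fundamental system $\{\rho^{-\frac{n-2}{2}}J_{\frac{n-2}{2}}(\sqrt\lambda\rho), \rho^{-\frac{n-2}{2}}Y_{\frac{n-2}{2}}(\sqrt\lambda\rho)\}$ produces exactly the transcendental condition defining $R'$ in \eqref{eq:R-prime-choice}; this detailed ODE analysis is deferred to Appendix~\ref{appen:computations} as announced. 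The key point is that $\varphi$ cannot stay positive past the first zero $\rho$ of the relevant combination, and the inequality $\frac{b}{a} - \frac{r_1^{n/2}J_{n/2}(\sqrt\lambda r_1)}{\rho^{n/2}J_{n/2}(\sqrt\lambda\rho)} \le 0$ encodes precisely when the free boundary can occur at radius $\rho$; $R'$ is the largest such $\rho$ in $(r_1,R]$, and the strict bound $R' < R$ follows by evaluating the defining inequality at $\rho = R$: if $\frac{b}{a} > \frac{r_1^{n/2}J_{n/2}(\sqrt\lambda r_1)}{R^{n/2}J_{n/2}(\sqrt\lambda R)}$ then $\rho = R$ fails the constraint, so the maximum is attained strictly inside. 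One should also verify $r_1 < R'$, which holds because near $\rho = r_1^+$ the quantity $\frac{r_1^{n/2}J_{n/2}(\sqrt\lambda r_1)}{\rho^{n/2}J_{n/2}(\sqrt\lambda\rho)} \to 1 > \frac{b}{a}$, so the defining inequality is satisfied for $\rho$ slightly above $r_1$.

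\textbf{Expected main obstacle.} The routine-but-delicate part is the explicit ODE bookkeeping in item (2): keeping track of the Cauchy data transfer across $r = r_1$, the monotonicity of the Bessel quotient, and confirming that the first sign change of $\varphi$ genuinely corresponds to the free boundary (rather than $\varphi$ dipping negative, which is excluded since $u_* \ge 0$). One must also be careful that $\sqrt\lambda \rho < j_{\frac{n-2}{2},1}$ throughout $(0,R]$ so that $J_{\frac{n-2}{2}}(\sqrt\lambda\rho) > 0$ and all the Bessel quotients in \eqref{eq:R-prime-choice} are well-defined and positive; this is exactly the hypothesis $\lambda < \lambda^*(B_R) = j_{\frac{n-2}{2},1}^2 R^{-2}$. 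The conceptual step — upgrading "a minimizer exists" to "the minimizer is radial" — is the one genuinely new ingredient relative to \cite[Lemma~1.2]{GS96FreeBoundaryPotential}, and I would handle it via rearrangement plus the uniqueness statement of Proposition~\ref{prop:unique} together with the $L^2$-compactness of the minimizer set; the $\lambda > 0$ term only helps, since it contributes $-\lambda\|u\|_{L^2}^2$, which is rearrangement-invariant.
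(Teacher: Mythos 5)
Your overall architecture matches the paper's --- existence via Remark~\ref{rem:nontrivial2} and Proposition~\ref{prop:existence-minimizer}, radial symmetry via Schwarz rearrangement (this is indeed the route the paper takes, not the rotation-plus-comparison route), and then the explicit radial ODE analysis deferred to an appendix. The Bessel-function bookkeeping in item~(2) and the observations that $R'>r_1$ and $R'<R$ are all essentially the paper's.

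The genuine gap is in how you upgrade ``there exists a radial minimizer'' to ``every global minimizer is radial.'' Both of your proposed routes rest on the uniqueness statement of Proposition~\ref{prop:unique}, which holds only for $\lambda$ outside a countable exceptional set $Z$, and then on a ``limiting argument (using compactness of the minimizer set in $L^2$).'' This does not close the case $\lambda\in Z$: at those $\lambda$ the smallest and largest minimizers genuinely differ (that is precisely what the jump discontinuities of $m(\lambda),M(\lambda)$ encode), and while monotone one-sided limits of the unique radial minimizers at nearby $\lambda_j\notin Z$ will identify the smallest and largest minimizers as radial, a priori intermediate minimizers need not arise as such limits. The paper avoids $Z$ entirely by invoking the rigidity (equality) case of the P\'olya--Szeg\H{o} inequality, \cite[Theorem~1.1]{BZ88PolyaSzego}: for \emph{any} global minimizer $u_*$, its rearrangement $u_*^{\rm rad}$ is also a minimizer with $\int|\nabla u_*^{\rm rad}|^2=\int|\nabla u_*|^2$, and since the radial solutions from Step~2 are strictly decreasing on their positivity set, Brothers--Ziemer forces $u_*(x)=u_*^{\rm rad}(x-x_0)$ for some translation $x_0$; then $x_0=0$ is deduced by applying Proposition~\ref{prop:comparison}(3) to $u_*$ and $u_*^{\rm rad}$ and observing $\max\{u_*,u_*^{\rm rad}\}$ cannot be radially decreasing about any point if $x_0\neq0$. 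You should replace the ``uniqueness outside $Z$ plus limiting argument'' step with this rigidity argument; the strict monotonicity of the radial profile, which you already compute in item~(2), is exactly what makes the Brothers--Ziemer hypothesis available.

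One smaller inaccuracy: you write that rearrangement ``preserves $\int|\{u>0\}|$'' and conclude it does not increase $\int g^2\chi_{\{u>0\}}$; preservation of the measure of the positivity set alone would not suffice --- what is actually used (and what you correctly state in the same breath) is that $g$ is radially non-decreasing, so a Hardy--Littlewood-type argument gives $\int g^2\chi_{\{u^{\rm rad}>0\}}\le\int g^2\chi_{\{u>0\}}$, which is \eqref{eq:rearrangement1b} in the paper.
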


\begin{proof}
The existence of minimizers
was established in Proposition~\ref{prop:existence-minimizer}. Since
\[
B_{r_{1}}\subset\{f>0\}\cap\{g=0\}\cap B_{R},
\]
from Remark~\ref{rem:nontrivial2}, we know that all minimizers are
nontrivial. 

\bigskip

\noindent \textbf{Step 1: (Rearrangement)} Given
any $u\in\mathbb{K}(B_{R})$,  let $u^{{\rm rad}}$ denote its
radially symmetric decreasing rearrangement, that is, 
\[
u^{{\rm rad}}(x):=\int_{0}^{\infty}\chi_{\{u>t\}^{{\rm rad}}}(x)\,dt,
\]
where $A^{{\rm rad}}=\begin{Bmatrix}\begin{array}{l|l}
x\in\mathbb{R}^{n} & \omega_{n}|x|^{n}<|A|\end{array}\end{Bmatrix}$ for any measurable set $A\subset\mathbb{R}^{n}$. Then $u^{{\rm rad}}\in\mathbb{K}(B_{R})$
and 
\begin{subequations}
\begin{align}
\int_{B_{R}}|u^{{\rm rad}}|^{2}\,dx & =\int_{B_{R}}|u|^{2}\,dx,\\
\int_{B_{R}}|\nabla u^{{\rm rad}}|^{2}\,dx & \le\int_{B_{R}}|\nabla u|^{2}\,dx,\label{eq:PolyaSzego}\\
\int_{B_{R}}fu^{{\rm rad}}\,dx & \ge\int_{B_{R}}fu\,dx,\label{eq:rearrangement1a}\\
\int_{B_{R}}|g|^{2}\chi_{\{u^{{\rm rad}}>0\}}\,dx & \le\int_{B_{R}}|g|^{2}\chi_{\{u>0\}}\,dx.\label{eq:rearrangement1b}
\end{align}
\end{subequations}
Here, \eqref{eq:PolyaSzego} is the classical P\'{o}lya-Szeg\H{o}
inequality \cite[Theorem~1.1]{BZ88PolyaSzego}, while \eqref{eq:rearrangement1a}
and \eqref{eq:rearrangement1b} follow by the fact that $f$ is non-increasing
and $g$ is non-decreasing as functions of $r=|x|$. It follows that
\begin{equation}
\mathcal{J}_{f,g,\lambda,B_{R}}(u^{{\rm rad}})\le\mathcal{J}_{f,g,\lambda,B_{R}}(u).\label{eq:restrictionJ}
\end{equation}

We define 
\[
\mathbb{K}^{{\rm rad}}(B_{R})=\begin{Bmatrix}\begin{array}{l|l}
u\in\mathbb{K}(B_{R}) & u=u^{{\rm rad}}\end{array}\end{Bmatrix}.
\]
Using \eqref{eq:restrictionJ}, there exists $u_{*}^{{\rm rad}}\in\mathbb{K}^{{\rm rad}}(B_{R})$
 such that 
\begin{equation}
\mathcal{J}_{f,g,\lambda,B_{R}}(u_{*}^{{\rm rad}})=\inf_{v\in\mathbb{K}(B_{R})}\mathcal{J}_{f,g,\lambda,B_{R}}(v).\label{eq:rad-minimizer}
\end{equation}

\bigskip

\noindent \textbf{Step 2: (Minimizers in $\mathbb{K}^{{\rm rad}}(B_{R})$)} Let $\tilde{u}\in\mathbb{K}^{{\rm rad}}(B_{R})$ be any function such
that 
\[
\mathcal{J}_{f,g,\lambda,B_{R}}(\tilde{u})=\inf_{v\in\mathbb{K}(B_{R})}\mathcal{J}_{f,g,\lambda,B_{R}}(v).
\]
From \eqref{eq:PDE1b} in Proposition~\ref{prop:PDE1}, we know that
$\tilde{u}$ satisfies the equation 
\[
(\Delta+\lambda)\tilde{u}+f=0\quad\text{in }\{\tilde{u}>0\}.
\]
\begin{subequations}
In polar coordinates, the above equation reads 
\begin{equation}
|\tilde{u}(0)|<\infty,\quad\tilde{u}''(r)+\frac{n-1}{r}\tilde{u}'(r)+\lambda\tilde{u}(r)+a\chi_{\{r<r_{1}\}}-b=0\quad\text{for }r\in(0,\rho), \label{eq:ODE-radially}
\end{equation}
with $\tilde{u}'(r) \le 0$ for all $r \in (0,\rho)$ and $\tilde{u}(r)=0$ for all $r \ge \rho$, where $\rho \in (0,R]$. In addition, one has
% ##### arXiv version (begin) #####
(see Proposition~{\rm \ref{prop:PDE2}}) 
% ###### arXiv version (end) ######
% ##### journal version (begin) #####
% (see Proposition~B.4 in arXiv version)  
% ##### journal version (end) #####
\begin{equation}
\tilde{u}'(\rho) = - g(\rho) \le 0, \label{eq:ODE-radially-derivative}
\end{equation}
\end{subequations}
and $\tilde{u}$ is the unique solution of the ODE system \eqref{eq:ODE-radially}--\eqref{eq:ODE-radially-derivative}.

We now compute an explicit formula for $\tilde{u}$. Let $u$ be the unique solution of  
\[
\left\{
\begin{aligned}
& u''(r)+\frac{n-1}{r}u'(r)+\lambda u(r)+a\chi_{\{r<r_{1}\}}-b=0\quad\text{for }r\in(\rho,\infty) \\
& u(\rho)=\tilde{u}(\rho) ,\quad u'(\rho)=\tilde{u}'(\rho).
\end{aligned}
\right.
\]
By defining $u|_{(0,\rho)}=\tilde{u}$, one sees that $u \in C_{\rm loc}^{1}(\mathbb{R})$ and 
\begin{equation}
u''(r)+\frac{n-1}{r}u'(r)+\lambda u(r)+a\chi_{\{r<r_{1}\}}-b=0\quad\text{for }r\in(0,\infty).
\label{eq:ODE-radially2}
\end{equation}
By direct computations 
% ##### arXiv version (begin) #####
(see Appendix~\ref{appen:computations} for details), 
% ###### arXiv version (end) ######
% ##### journal version (begin) #####
% (see Appendix in arXiv version for details),
% ##### journal version (end) #####
one sees that the general solution of \eqref{eq:ODE-radially2} is  
\begin{equation}
\begin{aligned}
u(r) & =\frac{b-a}{\lambda}+c_{1}r^{\frac{2-n}{2}}J_{\frac{n-2}{2}}(\sqrt{\lambda}r) \\
 & \quad+\chi_{\{r>r_{1}\}}\bigg[\frac{a}{\lambda}+\frac{a\pi r_{1}^{\frac{n}{2}}}{2\sqrt{\lambda}}r^{\frac{2-n}{2}}\left(Y_{\frac{n}{2}}(\sqrt{\lambda}r_{1})J_{\frac{n-2}{2}}(\sqrt{\lambda}r)-J_{\frac{n}{2}}(\sqrt{\lambda}r_{1})Y_{\frac{n-2}{2}}(\sqrt{\lambda}r)\right)\bigg]
\end{aligned}\label{eq:explicit-formula}
\end{equation}
with $c_{1} \in \mathbb{R}$. Since $u=\tilde{u}$ is positive and decreasing near 0, then $c_{1}>0$. 
By direct computations 
% ##### arXiv version (begin) #####
(see Appendix~\ref{appen:computations} for details), 
% ###### arXiv version (end) ######
% ##### journal version (begin) #####
% (see Appendix in arXiv version for details), 
% ###### journal version (end) ######
one sees that there exists a zero $\rho_{0} \in (0,R]$ of $u$ such that
\begin{equation}
\text{$u$ is positive and non-increasing on $(0,\rho_{0})$,} \label{eq:ODE-radially-BC}
\end{equation}
therefore $\rho_{0} = \rho$, where $\rho$ is the constant given in \eqref{eq:ODE-radially}. We now impose the boundary condition $u'(\rho)=-g(\rho)$. Using assumptions on $g$, direct computations 
% ##### arXiv version (begin) #####
(see Appendix~\ref{appen:computations} for details) 
% ###### arXiv version (end) ######
% ##### journal version (begin) #####
% (see Appendix in arXiv version for details) 
% ###### journal version (end) ######
yield 
\begin{equation}
\rho \in (r_{1},R'), \quad \text{where }R'\text{ is given in \eqref{eq:R-prime-choice}.} \label{eq:rho-special}
\end{equation}
From this we conclude that $\overline{B_{r_{1}}}\subset\{\tilde{u}>0\}$ as well as ${\rm supp}\,(\tilde{u})=\overline{B_{\rho}}\subset B_{R'}$.

\bigskip

\noindent \textbf{Step 3: (All minimizers belongs to $\mathbb{K}^{{\rm rad}}(B_{R})$)}
Let $u_{*}\in\mathbb{K}(B_{R})$ be a minimizer of $\mathcal{J}_{f,g,\lambda,B_{R}}$
in $\mathbb{K}(B_{R})$. Using \eqref{eq:restrictionJ}, then its
radially symmetric decreasing rearrangement $u_{*}^{{\rm rad}}\in\mathbb{K}^{{\rm rad}}(B_{R})$
satisfies \eqref{eq:rad-minimizer}, that is, $u_{*}^{{\rm rad}}$
is one of our radial solutions, and we have 
\[
\int_{B_{R}}|\nabla u_{*}^{{\rm rad}}|^{2}\,dx=\int_{B_{R}}|\nabla u_{*}|^{2}\,dx.
\]
Since the radial solutions are radially strictly decreasing on the
positivity set, we deduce that $u_{*}^{{\rm rad}}$ is strictly decreasing
on $(0,\rho)$ with 
\[
{\rm supp}\,(u_{*}^{{\rm rad}})=\overline{B_{\rho}}.
\]
Therefore, from \cite[Theorem~1.1]{BZ88PolyaSzego} we know that 
\[
u_{*}(x)=u_{*}^{{\rm rad}}(x-x_{0})\quad\text{for some }x_{0}.
\]
Now, by way of  contradiction, suppose that $x_{0}\neq0$. Since $u_{*}^{{\rm rad}}$ satisfies
\eqref{eq:rad-minimizer}, Proposition~\ref{prop:comparison} tells
us that so is $w=\max\{u_{*},u_{*}^{{\rm rad}}\}$, but $w$ is not
radially decreasing around some $x_{0}$, which contradicts the
minimality of $u_{*}$. 
\end{proof}
\begin{rem}
If $r_{1}=R$, from the general solution and the boundary condition
$\tilde{u}(\rho)=0$, we know that 

\[
\tilde{u}(r)=\frac{b-a}{k^{2}}\bigg(1-\frac{r^{\frac{2-n}{2}}J_{\frac{n-2}{2}}(\sqrt{\lambda}r)}{\rho^{\frac{2-n}{2}}J_{\frac{n-2}{2}}(\sqrt{\lambda}\rho)}\bigg)\chi_{\{r<\rho\}}\quad\text{for some }\rho\in[0,R].
\]
Since $\{\tilde{u}>0\}$ is a Lipschitz domain, using Remark~{\rm \ref{rem:Lipschitz-boundary-case}} we compute that 
\begin{align*}
\mathcal{J}_{f,g,\lambda,B_{R}}(\tilde{u}) & =\int_{B_{\rho}}g^{2}\,dx-(a-b)\int_{B_{\rho}}\tilde{u}\,dx\\
 & =\int_{B_{\rho}}g^{2}\,dx+\frac{(a-b)^{2}}{\lambda}\bigg(|B_{\rho}|-\frac{1}{\rho^{\frac{2-n}{2}}J_{\frac{n-2}{2}}(k\rho)}|B_{1}|\int_{0}^{\rho}J_{\frac{n-2}{2}}(kr)r^{\frac{n}{2}}\,dr\bigg)\\
 & =\int_{B_{\rho}}g^{2}\,dx+\frac{(a-b)^{2}}{\lambda}\bigg(|B_{\rho}|-\frac{1}{\rho^{\frac{2-n}{2}}J_{\frac{n-2}{2}}(k\rho)}|B_{1}|\frac{\rho^{\frac{n}{2}}J_{\frac{n}{2}}(k\rho)}{k}\bigg)\\
 & =\int_{B_{\rho}}g^{2}\,dx+\frac{(a-b)^{2}}{\lambda}|B_{\rho}|\bigg(1-\frac{J_{\frac{n}{2}}(k\rho)}{k\rho J_{\frac{n-2}{2}}(k\rho)}\bigg)\\
 & =\int_{B_{\rho}}g^{2}\,dx+\frac{(a-b)^{2}}{\lambda}|B_{\rho}|\bigg(1-\frac{\Gamma(\frac{n}{2})}{2\Gamma(1+\frac{n}{2})}\bigg)\\
 & =\int_{B_{\rho}}g^{2}\,dx+\frac{(a-b)^{2}}{\lambda}|B_{\rho}|(n-1)\frac{\Gamma(\frac{n}{2})}{2\Gamma(1+\frac{n}{2})}.
\end{align*}
Since $a>b$, from \eqref{eq:non-positive-functional} we conclude
that $\rho=0$, that is, $\tilde{u}\equiv0$ in $B_{R}$. Since all
minimizers belong to $\mathbb{K}^{{\rm rad}}(B_{R})$,
in this case each minimizer of $\mathcal{J}_{f,g,\lambda,B_{R}}$
in $\mathbb{K}(B_{R})$ must trivial. 
\end{rem}

Combining Lemma~\ref{lem:radially1} with the comparison principle
(Proposition~\ref{prop:comparison}), we have the following proposition. 
\begin{prop}
\label{prop:existence-minimizer1} Let $\Omega=B_{R}$ {\rm (}with $R>0${\rm )}
and $0<\lambda<\lambda^{*}(B_{R})\equiv j_{\frac{n-2}{2},1}^{2}R^{-2}$.
Suppose that $f=\mu-h$ with 
\begin{equation}
a_{0}\chi_{B_{r_{1}}}\le\mu(x)\le a\chi_{B_{r_{2}}},\quad b\le h(x)\le b_{0}\quad\text{for all $x\in B_{R}$} \label{eq:assumption-measure}
\end{equation}
for some constants $r_{1},r_{2},a,a_{0},b,b_{0}$ satisfying 
\begin{equation}
\begin{aligned}
& 0<b\le b_{0}<a_{0}\le a, \quad 0<r_{1}\le r_{2}<R \\
& {\displaystyle \frac{r_{1}^{\frac{n}{2}}J_{\frac{n}{2}}(\sqrt{\lambda}r_{1})}{r_{2}^{\frac{n}{2}}J_{\frac{n}{2}}(\sqrt{\lambda}r_{2})}}>\frac{b_{0}}{a_{0}}\ge\frac{b}{a}>\frac{r_{2}^{\frac{n}{2}}J_{\frac{n}{2}}(\sqrt{\lambda}r_{2})}{R^{\frac{n}{2}}J_{\frac{n}{2}}(\sqrt{\lambda}R)}. \end{aligned}\label{eq:assumption-parameters}
\end{equation}
We also assume that $g\in L^{\infty}(\mathbb{R}^{n})$ with 
\[
g=0\quad\text{in }{\rm supp}\,(\mu)\equiv\overline{B_{r_{1}}}.
\]
There exists $u_{*}$ such that 
\[
\mathcal{J}_{f,g,\lambda,B_{R}}(u_{*})=\inf_{u\in\mathbb{K}(B_{R})}\mathcal{J}_{f,g,\lambda,B_{R}}(u_{*}).
\]
Moreover, each minimizer $u_{*}$ of $\mathcal{J}_{f,g,\lambda,B_{R}}$ in
$\mathbb{K}(B_{R})$ satisfies 
\[
{\rm supp}\,(\mu)\subset B_{R_{0}'}\subset\{u_{*}>0\}\quad\text{and}\quad{\rm supp}\,(u_{*})\subset B_{R}.
\]
for some $R_{0}'>0$. 
\end{prop}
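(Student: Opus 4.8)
The plan is to deduce the statement from the explicitly solvable radial situation of Lemma~\ref{lem:radially1} by sandwiching the general $f = \mu - h$ between two radial profiles and applying the comparison principle. First I would note that existence of a minimizer is immediate from Proposition~\ref{prop:existence-minimizer} (since $B_R$ is a bounded $C^1$ domain and $0 < \lambda < \lambda^*(B_R)$), so the only real content is the two-sided inclusion. To set up the comparison, I introduce the radial comparison data: let $f_1 := a_0 \chi_{B_{r_1}} - b_0$ (the ``small'' profile) and $f_2 := a \chi_{B_{r_2}} - b$ (the ``large'' profile), so that \eqref{eq:assumption-measure} gives $f_1 \le f \le f_2$ pointwise in $B_R$. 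I would check that the parameter constraints \eqref{eq:assumption-parameters} are exactly what is needed so that both $f_1$ and $f_2$ fall under the hypotheses of Lemma~\ref{lem:radially1}: indeed $a_0 > b_0 > 0$ and $0 < r_1 < R$ for $f_1$, and $a > b > 0$ and $0 < r_2 < R$ for $f_2$; moreover the quantitative inequalities in \eqref{eq:assumption-parameters} translate, via formula \eqref{eq:R-prime-choice} applied with $(a,b,r_1)$ replaced by $(a_0,b_0,r_1)$ and by $(a,b,r_2)$ respectively, into the statements that the corresponding radii $R'_1 := R'(a_0,b_0,r_1)$ and $R'_2 := R'(a,b,r_2)$ satisfy $r_1 < R'_1$ and $r_2 < R'_2 < R$.

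Next I apply Proposition~\ref{prop:comparison} with $\Omega_1 = \Omega_2 = B_R$, $\lambda_1 = \lambda_2 = \lambda$, and the density $g$ kept fixed (the hypothesis $g = 0$ on ${\rm supp}(\mu) = \overline{B_{r_1}}$, together with radial monotonicity of $g$ coming from the rearrangement argument, makes $g$ admissible for all three functionals). Let $u_1^*$ be the minimizer of $\mathcal{J}_{f_1,g,\lambda,B_R}$ and $u_2^*$ the minimizer of $\mathcal{J}_{f_2,g,\lambda,B_R}$, both radial by Lemma~\ref{lem:radially1}. Since $f_1 \le f \le f_2$, part~(2) of Proposition~\ref{prop:comparison} (applied to the pair $f_1 \le f$) gives that any minimizer $u_*$ of $\mathcal{J}_{f,g,\lambda,B_R}$ dominates the smallest minimizer of $\mathcal{J}_{f_1,g,\lambda,B_R}$, hence $u_* \ge u_1^*$; and part~(1) (applied to the pair $f \le f_2$) gives $u_* \le$ some minimizer $u_2^*$, i.e. $u_* \le u_2^*$ pointwise. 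Consequently $\{u_1^* > 0\} \subseteq \{u_* > 0\}$ and $\{u_* > 0\} \subseteq \{u_2^* > 0\}$. From Lemma~\ref{lem:radially1} applied to $f_1$ we have $\overline{B_{r_1}} \subset \{u_1^* > 0\}$, and in fact the proof there produces a radius $R'_0 \in (r_1, R'_1]$ with $\overline{B_{R'_0}} \subset \{u_1^* > 0\}$; combined with the inclusion this yields $B_{R'_0} \subset \{u_* > 0\}$, which is the lower bound (with ${\rm supp}(\mu) = \overline{B_{r_1}} \subset B_{R'_0}$). From Lemma~\ref{lem:radially1} applied to $f_2$ we get ${\rm supp}(u_2^*) = \overline{B_\rho}$ for some $\rho < R'_2 < R$, hence ${\rm supp}(u_*) \subset \overline{B_\rho} \Subset B_R$, which is the upper bound. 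This also gives $\overline{\{u_* > 0\}} \subset B_R$, as needed for later applications.

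The main obstacle, and the place where care is required, is not the comparison mechanics but \textbf{verifying the admissibility of $g$ in all three comparison problems and tracking the exact radii}. Specifically: (i) one must make sure that the minimizers $u_1^*, u_2^*$ can be taken radial even though $g$ is only assumed bounded with $g = 0$ on $\overline{B_{r_1}}$ — this uses the rearrangement step of Lemma~\ref{lem:radially1}, which requires $g$ radially non-decreasing; since $g$ is only given on $\mathbb{R}^n$ with $g=0$ on $\overline{B_{r_1}}$, one should replace $g$ by its radial non-decreasing rearrangement (or invoke that only $g^2 \chi_{\{u>0\}}$ enters and $\{u_*>0\}$ is a ball) so that Lemma~\ref{lem:radially1} genuinely applies; (ii) one must confirm that the arithmetic in \eqref{eq:assumption-parameters} indeed produces $r_1 < R'_1$ and $r_2 < R'_2 < R$ via the monotonicity in $\rho$ of $\rho \mapsto \rho^{n/2} J_{n/2}(\sqrt\lambda \rho)$ on $(0, j_{n/2,1}/\sqrt\lambda)$ — this monotonicity, together with $\lambda < \lambda^*(B_R) = j_{(n-2)/2,1}^2 R^{-2} < j_{n/2,1}^2 R^{-2}$ ensuring we stay in the increasing regime, is what makes the threshold conditions well-posed. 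Once these bookkeeping points are settled, the proof is a direct concatenation of Lemma~\ref{lem:radially1} and Proposition~\ref{prop:comparison}.
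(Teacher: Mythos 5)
The overall sandwiching strategy you propose is the same as the paper's: trap $f$ between two explicit radial profiles, solve those by Lemma~\ref{lem:radially1}, and transfer the conclusion via the comparison principle (Proposition~\ref{prop:comparison}). Where the proof goes wrong is precisely at the point you yourself flag: you keep the density $g$ fixed in both comparison functionals $\mathcal{J}_{f_1,g,\lambda,B_R}$ and $\mathcal{J}_{f_2,g,\lambda,B_R}$, but Lemma~\ref{lem:radially1} requires $g$ to be \emph{radially non-decreasing} with $g=0$ on $\overline{B_{r_1}}$, whereas the proposition only assumes $g\in L^{\infty}$ with $g=0$ on $\overline{B_{r_1}}$. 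Without radial monotonicity, the rearrangement step of Lemma~\ref{lem:radially1} does not apply, so you cannot conclude that the comparison minimizers $u_1^{*}$ and $u_2^{*}$ are radial with the supports you describe. Neither of your two proposed repairs closes this gap: replacing $g$ by its ``radial non-decreasing rearrangement'' does not produce a function that is pointwise $\ge g$ (nor $\le g$) in general, so the hypothesis $g_1 \ge g_2$ in Proposition~\ref{prop:comparison} fails for one of the two comparisons; and appealing to the fact that only $g^{2}\chi_{\{u>0\}}$ enters is circular, since the radiality of $\{u_*>0\}$ is exactly what Lemma~\ref{lem:radially1} is supposed to deliver.

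The paper resolves this by choosing \emph{different} barriers for $g$ in the two comparisons, each radially non-decreasing and each sitting on the correct side of $g$. For the upper bound (with $\tilde f = a\chi_{B_{r_2}} - b \ge f$), it compares against $\mathcal{J}_{\tilde f,\,0,\,\lambda,B_R}$, taking the density $0 \le g$; here $0$ is trivially radially non-decreasing, so Lemma~\ref{lem:radially1} applies and gives ${\rm supp}(\max\{u,\tilde u\})\subset\overline{B_{R'}}\subset B_R$, hence ${\rm supp}(u)\subset B_R$. For the lower bound (with $\tilde f_0 = a_0\chi_{B_{r_1}} - b_0 \le f$), it compares against $\mathcal{J}_{\tilde f_0,\,\tilde g_0,\,\lambda,B_R}$, where $\tilde g_0 := \|g\|_{L^{\infty}}\chi_{\mathbb{R}^n\setminus\overline{B_{r_1}}}$; this is radially non-decreasing, vanishes on $\overline{B_{r_1}}$, and satisfies $\tilde g_0 \ge g$ (using $g=0$ on $\overline{B_{r_1}}$), so Proposition~\ref{prop:comparison} applies in the correct direction and Lemma~\ref{lem:radially1} yields $B_{R_0'}\subset\{\tilde u_0>0\}\subset\{u_*>0\}$. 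This choice of barriers is the ingredient your argument is missing; once it is inserted, the rest of your bookkeeping on the radii $R'$, $R_0'$ via \eqref{eq:R-prime-choice} and the monotonicity of $\rho\mapsto\rho^{n/2}J_{n/2}(\sqrt{\lambda}\rho)$ matches the paper's. (A minor secondary point: the pointwise ordering $u_*\ge u_1^{*}$ is obtained in the paper not from part (2) of Proposition~\ref{prop:comparison} alone, but from part (3) together with taking the extremal minimizer; your citation of ``part (2)'' is slightly off, though this is cosmetic compared to the issue above.)
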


\begin{proof}
Since $0<\lambda<j_{\frac{n-2}{2},1}^{2}R^{-2}$, then $r\mapsto r^{\frac{n}{2}}J_{\frac{n}{2}}(\sqrt{\lambda}r)$
is monotone increasing on $(0,R)$. Then we have 
\[
\frac{b_{0}}{a_{0}}>\frac{b}{a}>\frac{r_{2}^{\frac{n}{2}}J_{\frac{n}{2}}(\sqrt{\lambda}r_{2})}{R^{\frac{n}{2}}J_{\frac{n}{2}}(\sqrt{\lambda}R)}\ge\frac{r_{1}^{\frac{n}{2}}J_{\frac{n}{2}}(\sqrt{\lambda}r_{1})}{R^{\frac{n}{2}}J_{\frac{n}{2}}(\sqrt{\lambda}R)}.
\]
By \eqref{eq:assumption-measure} and \eqref{eq:assumption-parameters} we know that $\mu-h=f\le\tilde{f}=a\chi_{B_{r_{2}}}-b$.
Let $u$ and $\tilde{u}$ be the respective  minimizers of $\mathcal{J}_{f,g,\lambda,B_{R}}$
and $\mathcal{J}_{\tilde{f},0,\lambda,B_{R}}$ in $\mathbb{K}(B_{R})$.
Using Proposition~\ref{prop:comparison}, we know that
$\max\{u,\tilde{u}\}$ minimizes $\mathcal{J}_{\tilde{f},g,\lambda,B_{R}}$.
By Lemma~\ref{lem:radially1}, we know that 
\[
{\rm supp}\,(u)\subset{\rm supp}\,(\max\{u,\tilde{u}\})\subset\overline{B_{R'}}\subset B_{R},
\]
with 
\[
R'=\max\begin{Bmatrix}\begin{array}{l|l}
\rho\in(0,R] & {\displaystyle \frac{b}{a}-\frac{r_{1}^{\frac{n}{2}}J_{\frac{n}{2}}(\sqrt{\lambda}r_{2})}{\rho^{\frac{n}{2}}J_{\frac{n}{2}}(\sqrt{\lambda}\rho)}}\le0\end{array}\end{Bmatrix}>r_{2}.
\]
On the other hand, by \eqref{eq:assumption-measure} and \eqref{eq:assumption-parameters} we know that $\mu-h=f\ge\tilde{f}_{0}=a_{0}\chi_{B_{r_{1}}}-b_{0}$.
Let $u_{0}$ and $\tilde{u}_{0}$ be minimizers of $\mathcal{J}_{f,g,\lambda,B_{R}}$
and $\mathcal{J}_{\tilde{f}_{0},\tilde{g}_{0},\lambda,B_{R}}$ in
$\mathbb{K}(B_{R})$, respectively, where 
\[
\tilde{g}_{0}=\|g\|_{L^{\infty}(\mathbb{R}^{n})}\chi_{\mathbb{R}^{n}\setminus\overline{B_{r_{1}}}}.
\]
Using Proposition~\ref{prop:comparison},
we know that $\max\{u_{0},\tilde{u}_{0}\}$ minimizes $\mathcal{J}_{f,g,\lambda,B_{R}}$
in $\mathbb{K}(B_{R})$. By choosing $u_{0}$ to  be the largest (pointwise)
minimizer of $\mathcal{J}_{f,g,\lambda,B_{R}}$ in $\mathbb{K}(B_{R})$, we have 
\[
u_{0}\ge\max\{u_{0},\tilde{u}_{0}\}\quad\text{in }B_{R},
\]
which implies $u_{0}\ge\tilde{u}_{0}$ in $B_{R}$. By Lemma~\ref{lem:radially1},
we know that $\tilde{u}_{0}>0$ in $B_{R_{0}'}$ with 
\[
R_{0}'=\max\begin{Bmatrix}\begin{array}{l|l}
\rho\in(0,R] & {\displaystyle \frac{b_{0}}{a_{0}}-\frac{r_{1}^{\frac{n}{2}}J_{\frac{n}{2}}(\sqrt{\lambda}r_{1})}{\rho^{\frac{n}{2}}J_{\frac{n}{2}}(\sqrt{\lambda}\rho)}}\le0\end{array}\end{Bmatrix}>r_{1}.
\]
Since we have 
\[
{\displaystyle \frac{b_{0}}{a_{0}}-\frac{r_{1}^{\frac{n}{2}}J_{\frac{n}{2}}(\sqrt{\lambda}r_{1})}{r_{2}^{\frac{n}{2}}J_{\frac{n}{2}}(\sqrt{\lambda}r_{2})}}<0,
\]
then we have $R_{0}'>r_{2}$, which implies that 
\[
{\rm supp}\,(\mu)\equiv\overline{B_{r_{2}}}\subset B_{R_{0}'}\subset\{u_{*}>0\}.
\]
This  completes the proof of the proposition. 
\end{proof}

Combining Proposition~{\rm \ref{prop:PDE3}}, Proposition~{\rm \ref{prop:regularity-free-boundary}}, Lemma~{\rm \ref{lem:PDE4}} and Proposition~{\rm \ref{prop:existence-minimizer1}} with $\lambda=k^{2}$ and $f=\mu-h\chi_{D}$, we arrive at  the following theorem (with $D=\{u_{*}>0\}$):

\begin{prop}
\label{prop:main-almost-there} Let $0<k<j_{\frac{n-2}{2},1}R^{-1}$,
\[
a_{0}\chi_{B_{r_{1}}}\le\mu(x)\le a\chi_{B_{r_{2}}},\quad b\le h(x)\le b_{0}\quad\text{for all }x\in B_{R}
\]
for some constants $r_{1},r_{2},a,a_{0},b,b_{0}$ with $0<b\le b_{0}<a_{0}\le a$ and $0<r_{1}\le r_{2}<R$ satisfying\footnote{Since $t\mapsto t^{\frac{n}{2}}J_{\frac{n}{2}}(t)$ is strictly increasing on $[0,j_{\frac{n-2}{2},1}]$, then the second condition of \eqref{eq:sufficient2} implies $b_{0}<a_{0}$.}
\begin{equation}
{\displaystyle \frac{r_{1}^{\frac{n}{2}}J_{\frac{n}{2}}(kr_{1})}{r_{2}^{\frac{n}{2}}J_{\frac{n}{2}}(kr_{2})}}>\frac{b_{0}}{a_{0}}\ge\frac{b}{a}>\frac{r_{2}^{\frac{n}{2}}J_{\frac{n}{2}}(kr_{2})}{R^{\frac{n}{2}}J_{\frac{n}{2}}(kR)},\label{eq:sufficient2}
\end{equation}
such that $g\in L^{\infty}(B_{R})$ with $g\ge0$ and $g^{2}\in W^{1,1}(B_{R})$. If $\partial\{g>0\}\cap B_{R}\neq\emptyset$, we further assume that there exists $0<\alpha\le1$ such that $g$ is $C^{\alpha}$ near $\partial\{g>0\}\cap B_{R}$ and $\mathscr{H}^{n-1+\alpha}(\partial\{g>0\}\cap B_{R})=0$. Then there exists a bounded open domain $D$ in $\mathbb{R}^{n}$ with the boundary $\partial D$ having finite $(n-1)$-dimensional Hausdorff measure such that 
\[
\text{$\left( \Psi_{k} * (g \mathscr{H}^{n-1} \lfloor \partial D) \right)(x)$ is pointwise well-defined for all $x \in \partial D$}
\]
for all fundamental solutions $\Psi_{k}$ of the Helmholtz operator $-(\Delta + k^{2})$. The set $D$ is a hybrid $k$-quadrature domain $D$, corresponding to distribution $\mu$ and density $(g,h)$, with $\overline{D}\subset B_{R}$. 
Moreover, there exists $u_{*} \in C_{\rm loc}^{0,1}(B_{\beta k^{-1}})$ such that $D = \{ u_{*} > 0 \}$ and 
\[
(\Delta + k^{2})u_{*} = - \mu + h \mathscr{L}^{n}\lfloor D + g \mathscr{H}^{n-1}\lfloor \partial D. 
\]
If we additionally assume that $g>0$ is H\"{o}lder continuous in $\overline{B_{R}}$, then $\partial_{\rm red} D$ is locally $C^{1,\alpha'}$ with $\mathscr{H}^{n-1}(\partial D \setminus \partial_{\rm red} D) = 0$. 
In the case when $n=2$ we even have $\partial D = \partial_{\rm red} D$.  
\end{prop}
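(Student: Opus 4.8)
The plan is to collect all the pieces that have already been assembled in this section and check that their hypotheses are compatible under the stated assumptions. First I would verify that the quantitative conditions \eqref{eq:sufficient2} on $r_1,r_2,a,a_0,b,b_0$ are precisely \eqref{eq:assumption-parameters} with $\lambda=k^2$: since $0<k<j_{\frac{n-2}{2},1}R^{-1}$ guarantees $\lambda=k^2<\lambda^*(B_R)=j_{\frac{n-2}{2},1}^2R^{-2}$, and since $t\mapsto t^{\frac{n}{2}}J_{\frac{n}{2}}(t)$ is strictly increasing on $[0,j_{\frac{n-2}{2},1}]$, the chain of inequalities in \eqref{eq:sufficient2} transfers verbatim. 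Thus Proposition~\ref{prop:existence-minimizer1} applies with $f=\mu-h$ (equivalently $f=\mu-h\chi_D$ on $D$, as we shall see): it produces a minimizer $u_*$ of $\mathcal{J}_{f,g,k^2,B_R}$ in $\mathbb{K}(B_R)$ with $\supp(\mu)\subset B_{R_0'}\subset\{u_*>0\}$ and $\supp(u_*)\subset B_R$; in particular $\overline{D}\subset B_R$ where $D=\{u_*>0\}$, and $\overline{\{u_*>0\}}\subset\Omega=B_R$ as needed below. Since $\mu\ge 0$ is supported in $B_{r_2}\subset D$ we may regard $\mu\in\mathscr{E}'(D)$, and on $D$ we have $f=\mu-h=\mu-h\chi_D$, so the hypotheses of Lemma~\ref{lem:PDE4} are met with this choice of $\mu,h$.

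Next I would invoke the regularity package. The assumptions on $g$ — namely $g\in L^\infty(B_R)$, $g\ge 0$, $g^2\in W^{1,1}(B_R)$, and, when $\partial\{g>0\}\cap B_R\neq\emptyset$, that $g$ is $C^\alpha$ near $\partial\{g>0\}\cap B_R$ with $\mathscr{H}^{n-1+\alpha}(\partial\{g>0\}\cap B_R)=0$ — are exactly those of Proposition~\ref{prop:PDE3} (with $\Omega=B_R$, $\lambda=k^2$). Together with $\overline{\{u_*>0\}}\subset B_R$, Proposition~\ref{prop:PDE3} yields that $\{u_*>0\}$ has locally finite perimeter in $\{g>0\}$, that $\mathscr{H}^{n-1}((\partial D\setminus\partial_{\rm red}D)\cap\{g>0\})=0$, and the identity
\begin{equation*}
(\Delta+k^2)u_*+f\,\mathscr{L}^n\lfloor D = g\,\mathscr{H}^{n-1}\lfloor\partial D = g\,\mathscr{H}^{n-1}\lfloor\partial_{\rm red}D ,
\end{equation*}
which, since $f=\mu-h\chi_D$ on $D$, rearranges to $(\Delta+k^2)u_*=-\mu+h\,\mathscr{L}^n\lfloor D+g\,\mathscr{H}^{n-1}\lfloor\partial D$ — the displayed PDE in the statement. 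Because $\partial D$ has finite $\mathscr{H}^{n-1}$ measure (from the finite-perimeter conclusion plus the $\mathscr{H}^{n-1+\alpha}$-null hypothesis on $\partial\{g>0\}$ controlling the part where $g=0$), Lemma~\ref{lem:PDE4} then gives both that $\bigl(\Psi_k*(g\,\mathscr{H}^{n-1}\lfloor\partial D)\bigr)(x)$ is pointwise well-defined for every $x\in\partial D$ and every fundamental solution $\Psi_k$, and that $D$ is a hybrid $k$-quadrature domain corresponding to $\mu$ and density $(g,h)$, with $\overline{D}\subset B_R$. The Lipschitz regularity $u_*\in C^{0,1}_{\rm loc}(B_R)$ (hence on $B_{\beta k^{-1}}$ for the relevant $\beta$) is the one quoted from Appendix~\ref{appen:details}.

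Finally, for the improved boundary regularity under the extra hypothesis that $g>0$ is Hölder continuous on $\overline{B_R}$: then every ball $B_r(x_0)\subset B_R$ satisfies the hypotheses of Proposition~\ref{prop:regularity-free-boundary} ($g$ Hölder and $g\ge\mathrm{constant}>0$ there, the constant being $\min_{\overline{B_R}}g>0$), so $\partial_{\rm red}D$ is locally $C^{1,\alpha'}$ throughout $B_R\supset\overline{D}$; moreover in this case $\{g>0\}\supset B_R\supset\overline{D}$, so \eqref{eq:boundary-and-reduced-boundary} upgrades to $\mathscr{H}^{n-1}(\partial D\setminus\partial_{\rm red}D)=0$. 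When $n=2$, Proposition~\ref{prop:regularity-free-boundary} gives the stronger $\partial_{\rm red}D=\partial D$. I do not expect a genuine obstacle here, since every input is a cited earlier result; the only point requiring care is the bookkeeping that $f=\mu-h$ globally but $f=\mu-h\chi_D$ on $D$ so that Lemma~\ref{lem:PDE4} applies with the right $(\mu,h)$, and the verification that the $g$-hypotheses are simultaneously strong enough for Proposition~\ref{prop:PDE3} yet match Proposition~\ref{prop:regularity-free-boundary} on sub-balls — essentially checking that $\partial\{g>0\}\cap B_R=\emptyset$ when $g>0$ on $\overline{B_R}$, so the auxiliary $C^\alpha$/Hausdorff assumption is vacuous in that case.
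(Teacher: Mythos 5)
Your proof is correct and follows essentially the same route as the paper, which proves Proposition~\ref{prop:main-almost-there} simply by combining Proposition~\ref{prop:existence-minimizer1}, Proposition~\ref{prop:PDE3}, Proposition~\ref{prop:regularity-free-boundary} and Lemma~\ref{lem:PDE4} with $\lambda=k^2$ and $f=\mu-h\chi_D$. Your extra bookkeeping — noting that $f=\mu-h$ and $f=\mu-h\chi_D$ agree on $D$, which is all that \eqref{eq:PDE3-main} and Lemma~\ref{lem:PDE4} actually use, and that the auxiliary $C^\alpha$/Hausdorff hypothesis on $\partial\{g>0\}$ is vacuous when $g>0$ on $\overline{B_R}$ — is exactly the kind of verification the paper leaves implicit.
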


Finally, we want to generalize Proposition~\ref{prop:main-almost-there}
for unbounded non-negative measures $\mu$. Assume that $\mu$ satisfies  
\begin{equation}
\mu=0\text{ outside }B_{\epsilon}\label{eq:concentrate}
\end{equation}
for some parameter $\epsilon>0$. We define 
\[
\phi_{2\epsilon}:=(c_{n,k,2\epsilon}^{{\rm MVT}})^{-1}\chi_{B_{2\epsilon}}\quad\text{with}\quad c_{n,k,2\epsilon}^{{\rm MVT}}:=(2\pi)^{\frac{n}{2}}k^{-\frac{n}{2}}(2\epsilon)^{\frac{n}{2}}J_{\frac{n}{2}}(2k\epsilon).
\]
It is easy to see that 
\[
\mu*\phi_{2\epsilon}\text{ is supported in }B_{3\epsilon}\text{ and }\mu*\phi_{2\epsilon}(x)=(c_{n,k,2\epsilon}^{{\rm MVT}})^{-1}\mu(B_{2\epsilon}(x))\text{ for all }x\in B_{3\epsilon}.
\]
Thus we see that 
\[
\mu*\phi_{2\epsilon}(x)\begin{cases}
\le(c_{n,k,2\epsilon}^{{\rm MVT}})^{-1}\mu(\mathbb{R}^{n}) & \text{for all }x\in B_{3\epsilon},\\
=(c_{n,k,2\epsilon}^{{\rm MVT}})^{-1}\mu(\mathbb{R}^{n}) & \text{for all }x\in B_{\epsilon},
\end{cases}
\]
and thus 
\[
(c_{n,k,2\epsilon}^{{\rm MVT}})^{-1}\mu(\mathbb{R}^{n})\chi_{B_{\epsilon}}\le\mu*\phi_{2\epsilon}\le(c_{n,k,2\epsilon}^{{\rm MVT}})^{-1}\mu(\mathbb{R}^{n})\chi_{B_{3\epsilon}}.
\]
We choose $r_{1}=\epsilon$ and $r_{2}=3\epsilon$, as well as 
\[
a_{0}=a=(c_{n,k,2\epsilon}^{{\rm MVT}})^{-1}\mu(\mathbb{R}^{n}).
\]
Then \eqref{eq:sufficient2} is equivalent to 
\begin{subequations}
\begin{align}
\frac{(\epsilon)^{\frac{n}{2}}J_{\frac{n}{2}}(k\epsilon)}{(3\epsilon)^{\frac{n}{2}}J_{\frac{n}{2}}(3k\epsilon)} & >\frac{b_{0}}{(c_{n,k,2\epsilon}^{{\rm MVT}})^{-1}\mu(\mathbb{R}^{n})}.\label{eq:sufficient3a}\\
\frac{b}{(c_{n,k,2\epsilon}^{{\rm MVT}})^{-1}\mu(\mathbb{R}^{n})} & >\frac{(3\epsilon)^{\frac{n}{2}}J_{\frac{n}{2}}(3k\epsilon)}{R^{\frac{n}{2}}J_{\frac{n}{2}}(kR)},\label{eq:sufficient3b}
\end{align}
\end{subequations}
We can write \eqref{eq:sufficient3a} as 
\[
\mu(\mathbb{R}^{n})>3^{\frac{n}{2}}\frac{J_{\frac{n}{2}}(3k\epsilon)}{J_{\frac{n}{2}}(k\epsilon)}\frac{b_{0}}{(c_{n,k,\delta}^{{\rm MVT}})^{-1}(2\epsilon)^{n}}(2\epsilon)^{n}.
\]
Using \cite[(8.19)]{KLSS22QuadratureDomain}, we know that 
\[
\frac{\pi^{\frac{n}{2}}}{\Gamma(1+\frac{n}{2})}\ge\frac{1}{(c_{n,k,\delta}^{{\rm MVT}})^{-1}(2\epsilon)^{n}}.
\]
Hence  \eqref{eq:sufficient3a} is fulfilled provided
\begin{equation}
\mu(\mathbb{R}^{n})>C_{n}b_{0}\epsilon^{n}\quad\text{with}\quad C_{n}\ge2^{n}\frac{(3\pi)^{\frac{n}{2}}}{\Gamma(1+\frac{n}{2})}\frac{J_{\frac{n}{2}}(j_{\frac{n-2}{2},1})}{J_{\frac{n}{2}}(j_{\frac{n-2}{2},1}/3)}.\label{eq:large-mass-mu}
\end{equation}
Using the definition of $c_{n,k,2\epsilon}^{{\rm MVT}}$, we now write \eqref{eq:sufficient3b} as 
\[
k^{n}<\bigg(\frac{4\pi}{3}\bigg)^{\frac{n}{2}}\frac{1}{\mu(\mathbb{R}^{n})}b(kR)^{\frac{n}{2}}J_{\frac{n}{2}}(kR)\frac{J_{\frac{n}{2}}(2k\epsilon)}{J_{\frac{n}{2}}(3k\epsilon)}.
\]
We now fix any parameter $0<\beta<j_{\frac{n-2}{2},1}$, and we choose $R=\beta k^{-1}$. If 
\begin{equation}
k<\min\bigg\{\frac{1}{3},\bigg(C_{n,\beta}\frac{b}{\mu(\mathbb{R}^{n})}\bigg)^{\frac{1}{n}}\bigg\}\quad\text{with}\quad C_{n,\beta}=\bigg(\frac{4\pi}{3}\bigg)^{\frac{n}{2}}\beta^{\frac{n}{2}}J_{\frac{n}{2}}(\beta)\frac{J_{\frac{n}{2}}(\frac{2}{3}j_{\frac{n-2}{2},1})}{J_{\frac{n}{2}}(j_{\frac{n-2}{2},1})},\label{eq:small-frequency-assumption}
\end{equation}
then \eqref{eq:sufficient3b} holds. 
The above discussion is 
valid for $0<\epsilon<\beta$. 

Using Proposition~\ref{prop:main-almost-there} on $\mu*\phi_{2\epsilon}$,
we then know that there exists a hybrid $k$-quadrature domain $D$,
corresponding to the distribution $\mu*\phi_{2\epsilon}$ and the  density
$(g,h)$, with $\overline{D}\subset B_{R}$. Using the mean value
theorem for Helmholtz equation \cite[Appendix~A]{KLSS22QuadratureDomain},
we have 
\[
\langle\mu*\phi_{2\epsilon},w\rangle=\langle\mu,w*\phi_{2\epsilon}\rangle=\langle\mu,w\rangle
\]
for all $w$ satisfying $(\Delta+k^{2})w=0$ in
$D$. Hence  such a $D$ is indeed also a hybrid $k$-quadrature domain $D$, corresponding to distribution $\mu$ and density $(g,h)$.
We now conclude the above discussions in the following theorem: 
\begin{thm}
[See {\rm Theorem~\ref{thm:main-quadrature}}] \label{thm:main-quadrature-detail}
Fix parameters $0<b\le b_{0}$ and
$0<\beta<j_{\frac{n-2}{2},1}$. Let  $0<\epsilon<\beta$,  
\[
b\le h(x)\le b_{0}\quad\text{for all }x\in B_{\beta k^{-1}},
\]
and $g\in L^{\infty}(B_{\beta k^{-1}})$ with $g\ge0$ and $g^{2}\in W^{1,1}(B_{\beta k^{-1}})$.
If $\partial\{g>0\}\cap B_{R}\neq\emptyset$, we further assume
that there exists $0<\alpha\le1$ such that $g$ is $C^{\alpha}$
near $\partial\{g>0\}\cap B_{R}$ and $\mathscr{H}^{n-1+\alpha}(\partial\{g>0\}\cap B_{R})=0$.
If $\mu$ is a non-negative measure satisfying \eqref{eq:concentrate} and \eqref{eq:large-mass-mu},
then for each $k$ that satisfies \eqref{eq:small-frequency-assumption}
there exists a bounded open domain $D$ in $\mathbb{R}^{n}$ with the boundary $\partial D$ having finite $(n-1)$-dimensional Hausdorff measure such that 
\[
\text{$\left( \Psi_{k} * (g \mathscr{H}^{n-1} \lfloor \partial D) \right)(x)$ is pointwise well-defined for all $x \in \partial D$}
\]
for all fundamental solutions $\Psi_{k}$ of the Helmholtz operator $-(\Delta + k^{2})$. This domain $D$ is a hybrid $k$-quadrature domain corresponding
to distribution $\mu$ and density $(g,h)$ and it satisfies $\overline{D}\subset B_{\beta k^{-1}}$. 
Moreover, there exists a nonnegative function $u_{*} \in C_{\rm loc}^{0,1}(B_{\beta k^{-1}})$ such that $D = \{ u_{*} > 0 \}$ and 
\[
(\Delta + k^{2})u_{*} = - \tilde{\mu} + h \mathscr{L}^{n}\lfloor D + g \mathscr{H}^{n-1}\lfloor \partial D
\]
for some non-negative $\tilde{\mu} \in L^{\infty}(D)\cap \mathscr{E}'(D)$. 
If we additionally assume that $g>0$ is H\"{o}lder continuous in $\overline{B_{R}}$, then $\partial_{\rm red} D$ is locally $C^{1,\alpha'}$ with $\mathscr{H}^{n-1}(\partial D \setminus \partial_{\rm red} D) = 0$. 
In the case when $n=2$ we even have $\partial D = \partial_{\rm red} D$. 
\end{thm}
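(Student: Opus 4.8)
The plan is to derive Theorem~\ref{thm:main-quadrature-detail} from Proposition~\ref{prop:main-almost-there} in two steps: first replace the possibly unbounded measure $\mu$ by a bounded mollification to which Proposition~\ref{prop:main-almost-there} applies verbatim, and then transfer the resulting hybrid quadrature identity back to $\mu$ by means of the mean value property for the Helmholtz equation. Indeed, apart from the Bessel-function bookkeeping already recorded in the discussion preceding the statement, all the analytic content has been supplied above.

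First I would set $R=\beta k^{-1}$ and, with $\phi_{2\epsilon}=(c_{n,k,2\epsilon}^{{\rm MVT}})^{-1}\chi_{B_{2\epsilon}}$ as above, pass to the bounded density $\tilde{\mu}:=\mu*\phi_{2\epsilon}$. From \eqref{eq:concentrate} one has ${\rm supp}\,(\tilde\mu)\subset\overline{B_{3\epsilon}}$ and the two-sided bound $(c_{n,k,2\epsilon}^{{\rm MVT}})^{-1}\mu(\mathbb{R}^{n})\chi_{B_{\epsilon}}\le\tilde\mu\le(c_{n,k,2\epsilon}^{{\rm MVT}})^{-1}\mu(\mathbb{R}^{n})\chi_{B_{3\epsilon}}$. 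With the choices $r_{1}=\epsilon$, $r_{2}=3\epsilon$, $a_{0}=a=(c_{n,k,2\epsilon}^{{\rm MVT}})^{-1}\mu(\mathbb{R}^{n})$ the density hypothesis of Proposition~\ref{prop:main-almost-there} is met, and the chain of inequalities \eqref{eq:sufficient2} unwinds to \eqref{eq:sufficient3a}--\eqref{eq:sufficient3b}; these in turn follow from the mass condition \eqref{eq:large-mass-mu} and the frequency condition \eqref{eq:small-frequency-assumption}, using the monotonicity of $t\mapsto t^{n/2}J_{n/2}(t)$ on $[0,j_{\frac{n-2}{2},1}]$ together with the elementary lower bound on $c_{n,k,2\epsilon}^{{\rm MVT}}$ from \cite[(8.19)]{KLSS22QuadratureDomain}. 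This verification, which must be valid uniformly for $\epsilon\in(0,\beta)$, is the only genuinely non-formal step and is, in effect, the main obstacle; the remaining hypotheses on $g$ coincide with those of Proposition~\ref{prop:main-almost-there}.

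Next I would apply Proposition~\ref{prop:main-almost-there} with $\mu$ replaced by $\tilde\mu$. This produces a bounded open set $D=\{u_{*}>0\}$ with $\partial D$ of finite $(n-1)$-dimensional Hausdorff measure and $\overline{D}\subset B_{R}$, a nonnegative $u_{*}\in C_{\rm loc}^{0,1}(B_{R})$ with $(\Delta+k^{2})u_{*}=-\tilde\mu+h\mathscr{L}^{n}\lfloor D+g\mathscr{H}^{n-1}\lfloor\partial D$, the pointwise well-definedness of $\Psi_{k}*(g\mathscr{H}^{n-1}\lfloor\partial D)$ on $\partial D$, and the statement that $D$ is a hybrid $k$-quadrature domain for $\tilde\mu$ and $(g,h)$; moreover $\overline{B_{3\epsilon}}\subset B_{R_{0}'}\subset D$ for some $R_{0}'>3\epsilon$. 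The two-sided bound then gives $\tilde\mu\ge0$, $\tilde\mu\in L^{\infty}(D)$ and ${\rm supp}\,(\tilde\mu)\subset\overline{B_{3\epsilon}}\Subset D$, so $\tilde\mu\in L^{\infty}(D)\cap\mathscr{E}'(D)$. If in addition $g>0$ is H\"{o}lder continuous on $\overline{B_{R}}$, then Proposition~\ref{prop:regularity-free-boundary} and the remark following it, applied with $f=\tilde\mu-h\chi_{D}\in L^{\infty}(B_{R})$, yield that $\partial_{\rm red}D$ is locally $C^{1,\alpha'}$ with $\mathscr{H}^{n-1}(\partial D\setminus\partial_{\rm red}D)=0$, and $\partial_{\rm red}D=\partial D$ when $n=2$; these are local properties of $D$ and are thus unchanged under the replacement of $\mu$ by $\tilde\mu$.

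It remains to promote the quadrature identity from $\tilde\mu$ to $\mu$. Since ${\rm supp}\,(\mu)\subset\overline{B_{\epsilon}}$ and $\overline{B_{3\epsilon}}\subset D$, for each $y\in{\rm supp}\,(\mu)$ the closed ball $\overline{B_{2\epsilon}(y)}$ is contained in $D$. Fixing a fundamental solution $\Psi_{k}$ of $-(\Delta+k^{2})$ and an arbitrary point $x\in\mathbb{R}^{n}\setminus D$, the function $w:=\Psi_{k}(x-\cdot)$ lies in $L^{1}(D)$ and solves $(\Delta+k^{2})w=0$ in $D$, so the mean value theorem for the Helmholtz equation \cite[Appendix~A]{KLSS22QuadratureDomain} gives $(w*\phi_{2\epsilon})(y)=w(y)$ for all $y\in{\rm supp}\,(\mu)$; hence $(\Psi_{k}*\tilde\mu)(x)=\langle\tilde\mu,w\rangle=\langle\mu,w*\phi_{2\epsilon}\rangle=\langle\mu,w\rangle=(\Psi_{k}*\mu)(x)$. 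Combining this with $\Psi_{k}*\tilde\mu=\Psi_{k}*\sigma$ on $\mathbb{R}^{n}\setminus D$ shows $\Psi_{k}*\mu=\Psi_{k}*\sigma$ on $\mathbb{R}^{n}\setminus D$ for every fundamental solution, i.e. $D$ is a hybrid $k$-quadrature domain corresponding to $\mu$ and $(g,h)$, and the proof is complete.
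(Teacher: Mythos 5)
Your proposal is correct and follows the same route as the paper: mollify $\mu$ to $\tilde\mu=\mu*\phi_{2\epsilon}$, verify the hypotheses of Proposition~\ref{prop:main-almost-there} via the Bessel-function computations that reduce \eqref{eq:sufficient2} to \eqref{eq:sufficient3a}--\eqref{eq:sufficient3b} and then to \eqref{eq:large-mass-mu} and \eqref{eq:small-frequency-assumption}, apply that proposition to obtain $D=\{u_*>0\}$ and its properties, and finally transfer the quadrature identity from $\tilde\mu$ back to $\mu$ via the mean value theorem for the Helmholtz equation. The only difference is presentational: you spell out the last step explicitly for $w=\Psi_k(x-\cdot)$ and check $\overline{B_{2\epsilon}(y)}\subset D$, whereas the paper states the identity $\langle\mu*\phi_{2\epsilon},w\rangle=\langle\mu,w\rangle$ directly for all Helmholtz solutions $w$ in $D$.
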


% ##### arXiv version (begin) #####

\appendix

\section{\label{sec:BV-finite-perimeter}Functions of bounded variation and sets with finite perimeter}

We recall a few facts about functions of bounded variation and sets with finite perimeter. Here we refer to the monographs \cite{EG15MeasureTheory,Giu84MinimalSurfaces} for more details. The following definition can be found in \cite[Definition~1.6]{Giu84MinimalSurfaces}: 
\begin{defn}
Let $E$ be a Borel set and $\Omega$ an open set in $\mathbb{R}^{n}$.
We define the \emph{perimeter} of $E$ in $E_{0}$ as 
\[
\mathcal{P}(E,E_{0}):=\int_{E_{0}}|\nabla\chi_{E}|\,dx\equiv\sup_{\phi\in(C_{c}^{1}(\Omega))^{n},|\phi(x)|\le1}\int_{E_{0}}\nabla\cdot\phi\,dx.
\]
We say that $E$ is a \emph{Caccioppoli set}, if $E$ has \emph{locally
finite perimeter}, i.e. 
\[
\mathcal{P}(E,K)<\infty\quad\text{for every compact set }K\text{ in }\mathbb{R}^{n}.
\]
In other words, the function $\chi_{E}$ has locally bounded variation
in $\mathbb{R}^{n}$, see \cite[Section~5.1]{EG15MeasureTheory}. 
\end{defn}

The following definition can be found in \cite[Definition~3.3]{Giu84MinimalSurfaces}
(this concept was introduced by De Giorgi \cite{DG55ReducedBoundary},
see also \cite[Section~5.7]{EG15MeasureTheory}): 
\begin{defn}
Assuming that $E$ is a Caccioppoli set, we define the \emph{reduced
boundary} $\partial_{{\rm red}}E$ of $E$ by the set of points $x\in\mathbb{R}^{n}$
for which the followings hold: \labeltext{$\partial_{\rm red} E$ reduced boundary}{index:red-boundary}
\begin{enumerate}
\renewcommand{\labelenumi}{\theenumi}
\renewcommand{\theenumi}{{\rm (\arabic{enumi})}}
\item $\int_{B_{r}(x)}|\nabla\chi_{E}|\,dx>0$ for all $r>0$, 
\item the limit $\nu_{E}(x):=\lim_{r\rightarrow0}\nu_{E}^{r}(x)$ exists,
where 
\[
\nu_{E}^{r}(x):=-\frac{\int_{B_{r}(x)}\nabla\chi_{E}\,dx}{\int_{B_{r}(x)}|\nabla\chi_{E}|\,dx},
\]
and $|\nu_{E}(x)|=1$. 
\end{enumerate}
\end{defn}

From the Besicovitch differentiation of measures, it follows that
$\nu_{E}(x)$ exists and $|\nu_{E}(x)|=1$ for $|\nabla\chi_{E}|$-almost
all $x\in\mathbb{R}^{n}$, and furthermore that 
\[
\nabla\chi_{E}=-\nu_{E}|\nabla\chi_{E}|.
\]
Using \cite[Theorem~4.4]{Giu84MinimalSurfaces}, we indeed know that
\[
|\nabla\chi_{E}|=\mathscr{H}^{n-1}\lfloor\partial_{{\rm red}}E\quad\text{and}\quad\partial_{{\rm red}}E\text{ is a dense subset of }\partial E.
\]
Thus, we have 
\begin{equation}
\mathcal{P}(E,E_{0})=\mathscr{H}^{n-1}(E_{0}\cap\partial_{{\rm red}}E)\quad\text{and}\quad\nabla\chi_{E}=-\nu_{E}\mathscr{H}^{n-1}\lfloor\partial_{{\rm red}}E,\label{eq:characterization-Caccioppoli}
\end{equation}
and then we immediately have the following generalized Gauss-Green
theorem: 
\begin{equation}
\int_{E}\nabla\cdot\phi\,dx=\int_{\partial_{{\rm mes}}E}\varphi\cdot\nu_{E}\,d\mathscr{H}^{n-1}\quad\text{for all }\phi\in(C_{c}^{1}(\mathbb{R}^{n}))^{n},\label{eq:Gauss-Green}
\end{equation}
see also \cite[Theorem~1 in Section~5.8]{EG15MeasureTheory}. 
\begin{rem}
If $\partial E$ is a $C^{1}$ hypersurface, then $\partial_{{\rm red}}E=\partial E$
and $\nu_{E}(x)$ is the unit outward normal vector to $\partial E$
at $x$, however, if $\partial E$ is Lipschitz, $\partial_{{\rm red}}E$
in general strictly contained in $\partial E$, see \cite[Remark~3.4]{Giu84MinimalSurfaces}
details. Therefore, we also refer $\nu_{E}$ the \emph{measure theoretic
outward unit normal vector} of $E$ on $\partial_{{\rm red}}E$. 
\end{rem}

From \cite[Lemma~1 in Section~5.8]{EG15MeasureTheory}, we also know that
\begin{equation}
\partial_{{\rm red}}E\subset\partial_{{\rm mes}}E\quad\text{and}\quad\mathscr{H}^{n-1}(\partial_{{\rm mes}}E\setminus\partial_{{\rm red}}E)=0.\label{eq:reduced-boundary-measure}
\end{equation}
Combining \eqref{eq:characterization-Caccioppoli} and \eqref{eq:reduced-boundary-measure},
we then know that 
\[
E\text{ is a Caccioppoli set if and only if }\mathscr{H}^{n-1}(\partial_{{\rm mes}}E\cap K)<\infty\text{ for each compact set }K\text{ in }\mathbb{R}^{n},
\]
see also \cite[Theorem~1 in Section~5.11]{EG15MeasureTheory}. We also
recall \cite[Theorem~2 in Section~2.3]{EG15MeasureTheory} regarding Hausdorff
measure: 
\begin{lem}
\label{lem:Hausdorff-density} Let $0<s<n$. If $\mathscr{H}^{s}(E)<\infty$,
then 
\[
\frac{1}{2^{s}}\le\limsup_{r\rightarrow0}\frac{\mathscr{H}^{s}(B_{r}(x)\cap E)}{\omega_{s}r^{s}}\le1\quad\text{for }\mathscr{H}^{s}\text{-a.e. }x\in E,
\]
where $\omega_{s}=\frac{\pi^{s/2}}{\Gamma(\frac{s}{2}+1)}$. 
\end{lem}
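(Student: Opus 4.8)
This is the classical density theorem for Hausdorff measure, which is \cite[Theorem~2 in Section~2.3]{EG15MeasureTheory}; the plan is to recall the structure of its proof. Write $\Theta(x):=\limsup_{r\to0}\mathscr{H}^{s}(B_{r}(x)\cap E)/(\omega_{s}r^{s})$ for $x\in E$ (open and closed balls yield the same $\limsup$), and assume $E$ is $\mathscr{H}^{s}$-measurable, so that $\mathscr{H}^{s}\lfloor E$ is a finite Radon measure; this covers all our applications, where $E$ is a Borel set. I would prove the two one-sided bounds separately: for every $t>1$ the set $A_{t}^{+}:=\{x\in E:\Theta(x)>t\}$ satisfies $\mathscr{H}^{s}(A_{t}^{+})=0$, and for every $t<2^{-s}$ the set $A_{t}^{-}:=\{x\in E:\Theta(x)<t\}$ satisfies $\mathscr{H}^{s}(A_{t}^{-})=0$. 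Letting $t\downarrow1$ and $t\uparrow2^{-s}$ along sequences then gives the assertion. Since $A_{t}^{\pm}$ need not be measurable a priori, the whole argument is carried out at the level of the outer measure.

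For the upper bound, fix $t>1$, $\delta\in(0,1)$ and $\varepsilon>0$, and choose (outer regularity of $\mathscr{H}^{s}\lfloor E$) an open $U\supset A_{t}^{+}$ with $\mathscr{H}^{s}(E\cap U)\le\mathscr{H}^{s}(A_{t}^{+})+\varepsilon$. For each $x\in A_{t}^{+}$ there are arbitrarily small $r$ with $\overline{B_{r}(x)}\subset U$ and $\mathscr{H}^{s}(E\cap\overline{B_{r}(x)})>t\,\omega_{s}r^{s}$, so, restricting to radii $r<\delta$, these closed balls form a Vitali cover of $A_{t}^{+}$; by the Vitali covering theorem for Radon measures there is a countable disjoint subfamily $\{\overline{B_{r_{i}}(x_{i})}\}$ with $\mathscr{H}^{s}\bigl(A_{t}^{+}\setminus\bigcup_{i}\overline{B_{r_{i}}(x_{i})}\bigr)=0$. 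Then
\[
t\,\omega_{s}\sum_{i}r_{i}^{s}<\sum_{i}\mathscr{H}^{s}\bigl(E\cap\overline{B_{r_{i}}(x_{i})}\bigr)=\mathscr{H}^{s}\Bigl(E\cap\bigcup_{i}\overline{B_{r_{i}}(x_{i})}\Bigr)\le\mathscr{H}^{s}(E\cap U)\le\mathscr{H}^{s}(A_{t}^{+})+\varepsilon,
\]
while these same balls, together with a cover of the exceptional null set of total $\mathscr{H}^{s}_{2\delta}$-measure $<\varepsilon$, cover $A_{t}^{+}$ by sets of diameter $<2\delta$, so $\mathscr{H}^{s}_{2\delta}(A_{t}^{+})\le\sum_{i}\omega_{s}r_{i}^{s}+\varepsilon$. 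Combining, then sending $\delta\to0$ and $\varepsilon\to0$, gives $\mathscr{H}^{s}(A_{t}^{+})\le t^{-1}\mathscr{H}^{s}(A_{t}^{+})$; since $\mathscr{H}^{s}(A_{t}^{+})\le\mathscr{H}^{s}(E)<\infty$ and $t>1$, this forces $\mathscr{H}^{s}(A_{t}^{+})=0$.

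For the lower bound, fix $t<2^{-s}$ and, for $\delta>0$, set $A_{t,\delta}^{-}:=\{x\in A_{t}^{-}:\mathscr{H}^{s}(E\cap B_{r}(x))<t\,\omega_{s}r^{s}\text{ for all }0<r<\delta\}$, so that $A_{t}^{-}=\bigcup_{k\ge1}A_{t,1/k}^{-}$; it therefore suffices to show $\mathscr{H}^{s}(A_{t,\delta}^{-})=0$ for each fixed $\delta$. Given $\varepsilon>0$, cover $A_{t,\delta}^{-}$ by sets $C_{j}$ with $d_{j}:=\diam C_{j}<\delta$ and $\sum_{j}\omega_{s}(d_{j}/2)^{s}\le\mathscr{H}^{s}(A_{t,\delta}^{-})+\varepsilon$; discard the $C_{j}$ missing $A_{t,\delta}^{-}$ and pick $x_{j}\in C_{j}\cap A_{t,\delta}^{-}$, so $C_{j}\subset\overline{B_{d_{j}}(x_{j})}$. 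The defining property of $x_{j}$, applied to radii decreasing to $d_{j}$ (still $<\delta$), yields
\[
\mathscr{H}^{s}(A_{t,\delta}^{-}\cap C_{j})\le\mathscr{H}^{s}\bigl(E\cap\overline{B_{d_{j}}(x_{j})}\bigr)\le t\,\omega_{s}d_{j}^{s}=t\,2^{s}\,\omega_{s}(d_{j}/2)^{s}.
\]
Summing over $j$ gives $\mathscr{H}^{s}(A_{t,\delta}^{-})\le t\,2^{s}\bigl(\mathscr{H}^{s}(A_{t,\delta}^{-})+\varepsilon\bigr)$, and since $t\,2^{s}<1$ and the measure is finite, letting $\varepsilon\to0$ forces $\mathscr{H}^{s}(A_{t,\delta}^{-})=0$.

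The only genuinely delicate point is the upper-bound step: it requires $\mathscr{H}^{s}\lfloor E$ to be a Radon measure so that the Vitali covering theorem and outer regularity are available, which is precisely where the hypothesis $\mathscr{H}^{s}(E)<\infty$ is used, and the argument must be run at the level of the outer measure since $A_{t}^{+}$ is not known to be $\mathscr{H}^{s}$-measurable. The lower-bound step uses only the definition of $\mathscr{H}^{s}_{\delta}$ and needs no covering theorem.
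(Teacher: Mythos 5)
The paper does not prove this lemma at all; it is stated as a recalled fact with a direct citation to \cite[Theorem~2 in Section~2.3]{EG15MeasureTheory}, so there is no in-text argument to compare against. Your proof is a correct and complete reconstruction of the standard argument from that reference: the upper bound via outer regularity and the Vitali covering theorem for the finite Radon measure $\mathscr{H}^{s}\lfloor E$ (which is where the hypotheses that $E$ is $\mathscr{H}^{s}$-measurable and $\mathscr{H}^{s}(E)<\infty$ are used), the lower bound via the premeasure $\mathscr{H}^{s}_{\delta}$ and the factor $2^{s}$ from $d_{j}^{s}=2^{s}(d_{j}/2)^{s}$, and both halves carried out at the level of the outer measure since the density level sets $A_{t}^{\pm}$ need not be measurable a priori.
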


\section{\label{appen:details}Further properties of local minimizers}

In this appendix we provide detailed statements and proofs results analogue to \cite[Section~2]{GS96FreeBoundaryPotential}. 

The following lemma concerning the growth rate of the integral-mean of minimizers. 
\begin{lem}
\label{lem:sufficient-continuity} Let $\Omega$ be an open set in
$\mathbb{R}^{n}$ and let $\lambda \ge 0$. Let $f,g\in L^{\infty}(\Omega)$ be such that $g\ge0$.
If $u_{*}$ is a local minimizer of $\mathcal{J}_{f,g,\lambda,\Omega}$
in $\mathbb{K}(\Omega)$, then there is an $r_{0}>0$ such that for
any $B_{r}(x_{0})$ with $0<r<r_{0}$ and $\overline{B_{r}(x_{0})}\subset\Omega$,
we have 
\begin{equation}
\begin{aligned}
 & \frac{1}{r}\dashint_{\partial B_{r}(x_{0})}u_{*}\,dS>2^{n}\bigg(\frac{r}{n}\|(f+\lambda u_{*})_{-}\|_{L^{\infty}(B_{r}(x_{0}))}+\|g\|_{L^{\infty}(B_{r}(x_{0}))}\bigg)\\
 & \quad \implies B_{r}(x_{0})\subset\{u_{*}>0\}\text{ and }u_{*}\text{ is continuous in }B_{r}(x_{0}), 
\end{aligned}\label{eq:continuity-sufficient}
\end{equation}
where $\dashint_{\partial B_{r}(x_{0})}=\frac{1}{\mathscr{H}^{n-1}(\partial B_{r})}\int_{\partial B_{r}(x_{0})}$
denotes the average integral. 
\end{lem}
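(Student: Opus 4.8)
The plan is to reduce to the case $\lambda=0$ and then run a harmonic‑replacement comparison, following \cite[Section~2]{GS96FreeBoundaryPotential}. First I would apply Lemma~\ref{lem:zero-freq}: since $\lambda\ge0$, $u_{*}$ is also a local minimizer of $\mathcal{J}_{\tilde{f},g,0,\Omega}$ in $\mathbb{K}(\Omega)$ with $\tilde{f}:=f+\lambda u_{*}$, and because $\lambda\ge0$, $u_{*}\ge0$ we have $0\le\tilde{f}_{-}=(f+\lambda u_{*})_{-}\le f_{-}$, so $M:=\|(f+\lambda u_{*})_{-}\|_{L^{\infty}(B_{r}(x_{0}))}\le\|f_{-}\|_{L^{\infty}(\Omega)}$ is finite; in particular the statement is meaningful with no a priori pointwise bound on $u_{*}$, and $\|(f+\lambda u_{*})_{+}\|_{\infty}$ will never enter. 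Fixing the number $\epsilon>0$ from the definition of local minimizer, I would choose $r_{0}>0$ so small that every competitor below is an admissible perturbation of $u_{*}$ in the sense of \eqref{eq:local-minimizer-metric}; this is possible since $\int_{B_{r}(x_{0})}|\nabla u_{*}|^{2}\to0$ and $|B_{r}(x_{0})|\to0$ as $r\to0$, and harmonic replacement does not increase the Dirichlet energy. It also suffices to prove the implication with $B_{r}(x_{0})$ replaced by $B_{\rho}(x_{0})$ for every $\rho<r$, because $\rho\mapsto\dashint_{\partial B_{\rho}(x_{0})}u_{*}\,dS$ is continuous ($u_{*}\in H^{1}$) while the right side of the hypothesis is nondecreasing in $\rho$, so the hypothesis at radius $r$ propagates to all $\rho$ in some interval $(r-\delta,r)$, and $B_{r}(x_{0})=\bigcup_{\rho<r}B_{\rho}(x_{0})$; this lets me work with balls compactly contained in $\Omega$ and avoids boundary‑layer subtleties at $\partial B_{r}$.

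Next, for a fixed ball $B_{r}=B_{r}(x_{0})$ with $\overline{B_{r}}\subset\Omega$, $r<r_{0}$, and $A:=\dashint_{\partial B_{r}}u_{*}\,dS$ satisfying $\frac1r A>2^{n}(\frac rn M+N)$ where $N:=\|g\|_{L^{\infty}(B_{r})}$, I would let $h$ be the harmonic function in $B_{r}$ with trace $u_{*}|_{\partial B_{r}}$; since $A>0$, the strong maximum principle gives $h>0$ in $B_{r}$, and the Poisson kernel gives $h\ge2^{-n}A$ on $B_{r/2}(x_{0})$. Setting $w:=\max\{u_{*},h\}$ in $B_{r}$ and $w:=u_{*}$ outside, one has $w\in\mathbb{K}(\Omega)$ and $B_{r}\subset\{w>0\}$. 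Writing $G:=\{h>u_{*}\}\cap B_{r}$ and $Z:=B_{r}\setminus\{u_{*}>0\}$ (so $Z\subset G$ up to a null set) and using $\int_{B_{r}}\nabla h\cdot\nabla(w-h)\,dx=0$ (valid since $w-h=(u_{*}-h)^{+}\in H^{1}_{0}(B_{r})$), a short computation together with local minimality yields
\[
0\le\mathcal{J}_{\tilde{f},g,0,\Omega}(w)-\mathcal{J}_{\tilde{f},g,0,\Omega}(u_{*})=-\int_{G}|\nabla(u_{*}-h)|^{2}\,dx-2\int_{G}\tilde{f}(h-u_{*})\,dx+\int_{Z}g^{2}\,dx.
\]
Since $h-u_{*}\ge0$ on $G$, I would discard the nonnegative term $\int_{G}\tilde{f}_{+}(h-u_{*})\,dx$ (this is exactly why no bound on $\tilde{f}_{+}$ is required) and use $\tilde{f}_{-}\le M$ on $B_{r}$ and $\int_{Z}g^{2}\le N^{2}|Z|$ to get
\[
\int_{G}|\nabla(u_{*}-h)|^{2}\,dx\le N^{2}|Z|+2M\int_{G}(h-u_{*})\,dx.
\]

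The crux is then to combine this energy inequality with the Poincaré inequality for $(h-u_{*})^{+}\in H^{1}_{0}(B_{r})$, the Poisson‑kernel lower bound $h\ge2^{-n}A$ on $B_{r/2}$, the mean‑value identity $\int_{B_{r}}h\,dx=|B_{r}|A$, and the hypothesis $A>2^{n}r(\frac rn M+N)$, in order to force $|Z|=0$ and indeed $u_{*}=h$ a.e.\ in $B_{r}$. This is the quantitative step that produces the sharp constant $2^{n}$ and in which the boundary density $g$ (equivalently $N$) enters the threshold; an equivalent route, more in the spirit of \cite[Section~2]{GS96FreeBoundaryPotential}, is to use the representation $u_{*}=h-G_{B_{r}}*\Delta u_{*}$ together with the upper bound $\Delta u_{*}\le M\,\mathscr{L}^{n}+g\,\mathscr{H}^{n-1}\lfloor\partial\{u_{*}>0\}$ (valid for a local minimizer by \eqref{eq:PDE1b}--\eqref{eq:PDE1c} and a perimeter density bound) and Green‑function estimates, giving $u_{*}>0$ pointwise in $B_{r}$. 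Once $B_{r}\subset\{u_{*}>0\}$, on this open set $(\Delta+\lambda)u_{*}=-f$ with $f\in L^{\infty}_{\mathrm{loc}}$, so interior elliptic regularity (bootstrapping $u_{*}\in H^{1}_{\mathrm{loc}}$ to $u_{*}\in W^{2,p}_{\mathrm{loc}}$ for every finite $p$, cf.\ \cite{GT01Elliptic}) shows $u_{*}$ is continuous, in fact $C^{1,\alpha}_{\mathrm{loc}}$, in $B_{r}$. The main obstacle is precisely this quantitative step: tracking the Poisson and Poincaré constants so as to recover exactly the factor $2^{n}$ and the additive structure $\frac rn M+N$ of the threshold, and reconciling the paper's weak definition of $\{u_{*}>0\}$ with a.e.\ positivity near the sphere $\partial B_{r}$ — which is the reason for first reducing to balls $B_{\rho}(x_{0})$ with $\rho<r$.
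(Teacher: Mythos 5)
Your reduction to $\lambda=0$ via Lemma~\ref{lem:zero-freq}, the identification $M=\|(f+\lambda u_*)_-\|_{L^\infty}$, and the overall comparison strategy are all in the right spirit of the paper's proof. But there is a genuine gap at precisely the point you label ``the crux,'' and it stems from the choice of comparison function. You use the \emph{harmonic} replacement $h$ with $\Delta h=0$ in $B_r$, $h=u_*$ on $\partial B_r$. The paper instead uses the \emph{inhomogeneous} solution $v$ of $\Delta v=-\tilde f$ in $B_r$, $v=u_*$ outside $\overline{B_r}$. With the paper's $v$, the $\tilde f$-terms cancel exactly in the energy difference and one arrives at the clean bound $\int_{B_r}|\nabla(u_*-v)|^2\,dx\le |\{u_*=0\}\cap B_r|\sup_{B_r}g^2$; with your $h$, the residual term $2M\int_G(h-u_*)\,dx$ survives, and after the radial/Poincar\'e step it only yields $|Z|\lesssim M^2 r^{n+2}/(A^2/r^2-c\,N^2)$, not $|Z|=0$. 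You explicitly defer this step, but it is the step where the lemma is actually proved, and the harmonic choice does not close it. (The $\tilde f$-dependence you were trying to avoid resurfaces here; it is only the lower bound $\|\tilde f_-\|_\infty$ that the paper needs, via the one-sided estimate $\Delta v\le M$ used in the Poisson-type bound of \cite[Lemma~2.4(a)]{GS96FreeBoundaryPotential}.)

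Two further problems. First, your stated end-goal ``force $|Z|=0$ and indeed $u_*=h$ a.e.\ in $B_r$'' cannot hold: once $B_r\subset\{u_*>0\}$, the Euler--Lagrange equation \eqref{eq:PDE1b} gives $\Delta u_*=-\tilde f$ in $B_r$, which is incompatible with $\Delta h=0$ unless $\tilde f\equiv 0$ there. The paper's conclusion is $u_*=v$ a.e., which is consistent precisely because $v$ solves the same Poisson equation; continuity of $u_*$ then follows from the interior regularity of $v$ (see \eqref{eq:harmonic-liftin-regularity}), not from a separate elliptic bootstrap of $u_*$. Second, the alternative route you sketch, using the identity $u_*=h-G_{B_r}*\Delta u_*$ together with the bound $\Delta u_*\le M\,\mathscr{L}^n+g\,\mathscr{H}^{n-1}\lfloor\partial\{u_*>0\}$, is circular in the present logical order: that measure identity is Proposition~\ref{prop:PDE3}, which depends on the Lipschitz estimate of Proposition~\ref{prop:Lipschitz-regularity-minimizers}, which in turn is proved from the continuity Proposition~\ref{prop:continuity-minimizer}, which relies on the very lemma you are proving. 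To repair the proof, replace $h$ by the solution $v$ of $\Delta v=-\tilde f$ in $B_r$, $v=u_*$ outside, verify $v\ge 0$ (hence $v\in\mathbb{K}(\Omega)$) from the hypothesis via \cite[Lemma~2.4(a)]{GS96FreeBoundaryPotential}, and then carry out the energy and radial-integration estimates as in the paper.
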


\begin{proof}
Let $u_{*}$ be a local minimizer $\mathcal{J}_{f,g,\lambda,\Omega}$
in $\mathbb{K}(\Omega)$. Using Lemma~\ref{lem:zero-freq}, we know
that $u_{*}$ is also a local minimizer of $\mathcal{J}_{\tilde{f},g,0,\Omega}$
in $\mathbb{K}(\Omega)$ with $\tilde{f}=f+\lambda u_{*}$. Without
loss of generality, we may assume that $x_{0}=0$, and we write $B_{r}=B_{r}(x_{0})$. Define $v\in H^{1}(\Omega)$ by
\begin{equation}
\begin{cases}
\Delta v=-\tilde{f} & \text{in }B_{r},\\
v=u_{*} & \text{in }\Omega\setminus\overline{B_{r}}.
\end{cases}\label{eq:harmonic-lifting-v}
\end{equation}
It is easy to see that $v\in C(B_{r})$. In particular, using elliptic
regularity and Sobolev embedding, we know that 
\begin{equation}
v\in\bigcap_{1<p<\infty}W_{{\rm loc}}^{2,p}(B_{r})\subset\bigcap_{0<\alpha<1}C_{{\rm loc}}^{1,\alpha}(B_{r}).\label{eq:harmonic-liftin-regularity}
\end{equation}
It is easy to compute that 
\begin{equation}
\begin{aligned}
 & \mathcal{J}_{\tilde{f},g,0,\Omega}(u_{*})-\mathcal{J}_{\tilde{f},g,0,\Omega}(v) \\
 & \quad =\int_{\Omega}(|\nabla u_{*}|^{2}-|\nabla v|^{2})\,dx-2\int_{\Omega}\tilde{f}(u_{*}-v)\,dx+\int_{\Omega}g^{2}(\chi_{\{u_{*}>0\}}-\chi_{\{v>0\}})\,dx \\
 & \quad =\int_{\Omega}(|\nabla u_{*}|^{2}-|\nabla v|^{2})\,dx+2\int_{\Omega}\Delta v(u_{*}-v)\,dx-\int_{B_{r}}g^{2}(\chi_{\{v>0\}\cap\{u_{*}=0\}})\,dx \\
 & \quad =\int_{\Omega}(|\nabla u_{*}|^{2}-|\nabla v|^{2})\,dx-2\int_{\Omega}\nabla v\cdot\nabla u_{*}\,dx +2\int_{\Omega}|\nabla v|^{2}\,dx\\
 & \qquad -\int_{B_{r}}g^{2}(\chi_{\{v>0\}\cap\{u_{*}=0\}})\,dx \\
 & \quad =\int_{B_{r}}|\nabla(u_{*}-v)|^{2}\,dx-\int_{B_{r}}g^{2}(\chi_{\{v>0\}\cap\{u_{*}=0\}})\,dx \\
 & \quad \ge\int_{B_{r}}|\nabla(u_{*}-v)|^{2}\,dx-|\{u=0\}\cap B_{r}|\sup_{B_{r}}g^{2}.
\end{aligned}\label{eq:difference-energy}
\end{equation}
Next, we want to show that $v\in\mathbb{K}(\Omega)$. Applying \cite[Lemma~2.4(a)]{GS96FreeBoundaryPotential} to $v$, we show that 
\begin{equation}
v(x)\ge r^{n}\frac{r-|x|}{(r+|x|)^{n-1}}\bigg(\frac{1}{r^{2}}\dashint_{\partial B_{r}}u_{*}\,dS-\frac{2^{n-1}M}{n}\bigg)\quad\text{for all }x\in B_{r},\label{eq:Poisson-v}
\end{equation}
where $M=\|\tilde{f}_{-}\|_{L^{\infty}(B_{r})}$. The assumption in
\eqref{eq:continuity-sufficient} implies 
\[
\frac{1}{r^{2}}\dashint_{\partial B_{r}}u_{*}\,dS\ge\frac{2^{n}M}{n},
\]
then we have
\begin{equation}
v(x)\ge\frac{r^{n}}{2}\frac{r-|x|}{(r+|x|)^{n-1}}\frac{1}{r^{2}}\dashint_{\partial B_{r}}u_{*}\,dS\ge2^{-n}(r-|x|)\frac{1}{r}\dashint_{\partial B_{r}}u_{*}\,dS\quad\text{for all }x\in B_{r},\label{eq:Poisson0}
\end{equation}
which shows that $v\in\mathbb{K}(\Omega)$. Since $u_{*}$ is a local
minimizer, by choosing $r_{0}>0$ sufficiently small, we know that
\[
\mathcal{J}_{\tilde{f},g,0,\Omega}(u_{*})\le\mathcal{J}_{\tilde{f},g,0,\Omega}(v),
\]
and hence from \eqref{eq:difference-energy} we know that 
\begin{equation}
\int_{B_{r}}|\nabla(u_{*}-v)|^{2}\,dx\le|\{u_{*}=0\}\cap B_{r}|\sup_{B_{r}}g^{2}\quad\text{for each }0<r<r_{0}.\label{eq:Poisson0-1}
\end{equation}
To estimate the left-hand-side of \eqref{eq:Poisson0-1}, from \eqref{eq:Poisson0} we have 
\begin{equation}
\chi_{\{u_{*}=0\}}\bigg(\frac{1}{r}\dashint_{\partial B_{r}}u_{*}\,dS\bigg)^{2}(r-|x|)^{2}\le2^{2n}\chi_{\{u_{*}=0\}}|v(x)|^{2}\quad\text{for all }x\in B_{r}.\label{eq:Poisson1}
\end{equation}
For each $0\neq x\in B_{r}$, writing $\hat{x}=x/|x|$, note that
\begin{equation*}
\begin{aligned}
\chi_{\{u_{*}=0\}}|v(x)|^{2} & =\chi_{\{u_{*}=0\}}\bigg(\int_{|x|}^{r}\partial_{|z|}(u_{*}-v)(s\hat{x})\,ds\bigg)^{2}\\
 & \le\chi_{\{u_{*}=0\}}\bigg(\int_{|x|}^{r}1^{2}\,ds\bigg)\bigg(\int_{|x|}^{r}|\partial_{|z|}(u_{*}-v)(s\hat{x})|^{2}\,ds\bigg)\\
 & \le\chi_{\{u_{*}=0\}}(r-|x|)\bigg(\int_{|x|}^{r}|\nabla(u_{*}-v)(s\hat{x})|^{2}\,ds\bigg),
\end{aligned}
\end{equation*}
and hence from \eqref{eq:Poisson1} we have 
\begin{equation}
\begin{aligned}
& (r-|x|)\chi_{\{u_{*}=0\}}\bigg(\frac{1}{r}\dashint_{\partial B_{r}}u_{*}\,dS\bigg)^{2} \\
& \quad \le2^{2n}\chi_{\{u_{*}=0\}}\bigg(\int_{|x|}^{r}|\nabla(u_{*}-v)(s\hat{x})|^{2}\,ds\bigg)\quad\text{for all }0\neq x\in B_{r}.
\end{aligned} \label{eq:Poisson2}
\end{equation}
We consider such $\theta\in\mathcal{S}^{n-1}$ be such that $u_{*}(s\theta)=0$ for some $0<s<r$. For such $\theta$, we can define 
\begin{equation*}
s_{\theta}:=\inf\begin{Bmatrix}\begin{array}{l|l}
0<s<r & u_{*}(s\theta)=0\end{array}\end{Bmatrix}.
\end{equation*}
Then \eqref{eq:Poisson2} implies 
\begin{equation}
\begin{aligned}
& (r-s_{\theta})\chi_{\{u_{*}=0\}\cap B_{r}}(s_{\theta}\theta)\bigg(\frac{1}{r}\dashint_{\partial B_{r}}u_{*}\,dS\bigg)^{2} \\
& \quad \le2^{2n}\chi_{\{u_{*}=0\}\cap B_{r}}(s_{\theta}\theta)\int_{s_{\theta}}^{r}|\nabla(u_{*}-v)(s\theta)|^{2}\,ds\quad\text{for all }\theta\in\mathcal{S}^{n-1}.
\end{aligned} \label{eq:Poisson3}
\end{equation}
Integrating \eqref{eq:Poisson3} over $\theta\in\mathcal{S}^{n-1}$,
we reach 
\begin{equation}
|\{u_{*}=0\}\cap B_{r}|\bigg(\frac{1}{r}\dashint_{\partial B_{r}}u_{*}\,dS\bigg)^{2}\le2^{2n}\int_{B_{r}}|\nabla(u_{*}-v)|^{2}\,dx.\label{eq:Poisson4}
\end{equation}
Combining \eqref{eq:Poisson0-1} and
\eqref{eq:Poisson4}, we reach 
\[
|\{u_{*}=0\}\cap B_{r}|\bigg(\frac{1}{r}\dashint_{\partial B_{r}}u_{*}\,dS\bigg)^{2}\le2^{2n}|\{u=0\}\cap B_{r}|\sup_{B_{r}}g^{2}.
\]
The assumption in \eqref{eq:continuity-sufficient} implies
\[
\frac{1}{r}\dashint_{\partial B_{r}(x_{0})}u_{*}\,dS>2^{n}\sup_{B_{r}(x_{0})}g,
\]
then it is necessarily 
\begin{equation}
|\{u_{*}=0\}\cap B_{r}|=0.\label{eq:almost-disjoint}
\end{equation}
From \eqref{eq:Poisson0-1} we know that 
\[
\int_{B_{r}}|\nabla(u_{*}-v)|^{2}\,dx=0,
\]
and thus we also showed that $u_{*}=v\in C(B_{r})$. Using \eqref{eq:harmonic-liftin-regularity}
and \eqref{eq:almost-disjoint}, we conclude that $B_{r}\subset\{u_{*}>0\}$. 
\end{proof}

The following proposition concerning the continuity of the local minimizers. 
\begin{prop}\label{prop:continuity-minimizer}
Let $\Omega$ be a bounded open set in $\mathbb{R}^{n}$ with $C^{1}$
boundary and $0 \le \lambda<\lambda^{*}(\Omega)$. Let $f,g\in L^{\infty}(\Omega)$
be such that $g\ge0$. If $u_{*}$ is a local minimizer of $\mathcal{J}_{f,g,\lambda,\Omega}$
in $\mathbb{K}(\Omega)$, then $u_{*}\in C(\Omega)$ and there exists
a constant $C_{n}$ such that
\begin{equation}
u_{*}(x)\le C_{n}\mathsf{d}(x)\bigg(\|g\|_{L^{\infty}(B_{2\mathsf{d}(x)}(x))}+\|f+\lambda u_{*}\|_{L^{\infty}(B_{2\mathsf{d}(x)}(x))}\mathsf{d}(x)\bigg),\label{eq:continuous-near-free-boundary}
\end{equation}
for all $x\in\Omega$ near $\partial\{u_{*}>0\}$, where $\mathsf{d}(x)={\rm dist}\,(x,\mathbb{R}^{n}\setminus\{u_{*}>0\})$. 
\end{prop}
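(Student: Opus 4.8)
The plan is to follow the continuity argument of \cite[Section~2]{GS96FreeBoundaryPotential}, whose two active ingredients are already available here: the dichotomy of Lemma~\ref{lem:sufficient-continuity} and the almost-subharmonicity \eqref{eq:PDE1a} of $u_*$ from Proposition~\ref{prop:PDE1}. By Lemma~\ref{lem:zero-freq} we may assume $\lambda=0$ after replacing $f$ with $\tilde f:=f+\lambda u_*$, and the first thing to record is that $\tilde f\in L_{\rm loc}^{\infty}(\Omega)$, i.e.\ that $u_*\in L_{\rm loc}^{\infty}(\Omega)$: since $\lambda\ge0$ and $u_*\ge0$, \eqref{eq:PDE1a} gives $-\Delta u_*-\lambda u_*\le(f+\lambda u_*)_{+}-\lambda u_*\le\|f_{+}\|_{L^{\infty}(\Omega)}$ in $\mathcal D'(\Omega)$, so $u_*$ is a nonnegative weak subsolution of $(\Delta+\lambda)v=-\|f_{+}\|_{L^{\infty}(\Omega)}$ and local boundedness of subsolutions (\cite[Theorem~8.17]{GT01Elliptic}) applies. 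Next, on the open set $\{u_*>0\}$ equation \eqref{eq:PDE1b} reads $\Delta u_*=-(f+\lambda u_*)\in L_{\rm loc}^{\infty}(\{u_*>0\})$, so standard interior elliptic regularity yields $u_*\in C_{\rm loc}^{1,\alpha}(\{u_*>0\})$; from now on $u_*$ denotes this representative, extended by $0$ to $\Omega\setminus\{u_*>0\}$ (that this is a representative of the $H^{1}$-function is standard for minimizers, and in any case is vindicated once continuity is proved).

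The heart of the matter is \eqref{eq:continuous-near-free-boundary}. Fix $x_{0}\in\Omega$ with $d:=\mathsf{d}(x_{0})={\rm dist}\,(x_{0},\mathbb{R}^{n}\setminus\{u_*>0\})$ small enough that $\overline{B_{2d}(x_{0})}\subset\Omega$ and $2d<r_{0}$, where $r_{0}$ is the radius from Lemma~\ref{lem:sufficient-continuity} (if $d=0$ there is nothing to prove since then $u_*(x_{0})=0$). The closed set $\mathbb{R}^{n}\setminus\{u_*>0\}$ contains a point at distance $d<2d$ from $x_{0}$, hence $B_{2d}(x_{0})\not\subset\{u_*>0\}$, and the contrapositive of Lemma~\ref{lem:sufficient-continuity} with $r=2d$ gives
\[
\dashint_{\partial B_{2d}(x_{0})}u_*\,dS\le 2^{n+1}d\Big(\frac{2d}{n}\|(f+\lambda u_*)_{-}\|_{L^{\infty}(B_{2d}(x_{0}))}+\|g\|_{L^{\infty}(B_{2d}(x_{0}))}\Big).
\]
On the other hand $B_{d}(x_{0})\subset\{u_*>0\}$, so $u_*$ is continuous at $x_{0}$; setting $M:=\|(f+\lambda u_*)_{+}\|_{L^{\infty}(B_{2d}(x_{0}))}$, estimate \eqref{eq:PDE1a} shows that $w:=u_*+\frac{M}{2n}|x-x_{0}|^{2}$ is subharmonic near $\overline{B_{2d}(x_{0})}$, so the sub-mean-value inequality over $\partial B_{2d}(x_{0})$ (applied to $w$ on slightly larger concentric balls, with the outer radius of the $L^{\infty}$-norms shrunk to $2d$) yields $u_*(x_{0})=w(x_{0})\le\dashint_{\partial B_{2d}(x_{0})}u_*\,dS+\frac{M}{2n}(2d)^{2}$. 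Combining the two inequalities and absorbing dimensional constants produces \eqref{eq:continuous-near-free-boundary}.

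Continuity on all of $\Omega$ then follows. At points of $\{u_*>0\}$ and at interior points of $\Omega\setminus\{u_*>0\}$ continuity is clear from the previous paragraph; at a free-boundary point $x_{0}\in\partial\{u_*>0\}\cap\Omega$ one applies \eqref{eq:continuous-near-free-boundary} at nearby points $y\in\{u_*>0\}$, where $\mathsf{d}(y)\le|y-x_{0}|\to0$ while $\|g\|_{L^{\infty}}$ and $\|f+\lambda u_*\|_{L^{\infty}}$ stay bounded over a fixed neighborhood, to get $u_*(y)\to0=u_*(x_{0})$. I expect the only real difficulty to be purely technical: justifying the mean-value comparison in the core step — the admissible range of radii, the passage to the sphere of radius $2d$, and the sub-mean-value property for the $H^{1}$ function $w$ — but \eqref{eq:PDE1a} supplies exactly the almost-subharmonicity that makes this work, so no new idea beyond Lemma~\ref{lem:sufficient-continuity} is needed.
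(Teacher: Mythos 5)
Your proof is correct and follows the paper's argument closely: reduce to $\lambda = 0$ via Lemma~\ref{lem:zero-freq}, use the almost-subharmonicity \eqref{eq:PDE1a} to obtain a sub-mean-value estimate, and play it against the dichotomy of Lemma~\ref{lem:sufficient-continuity} at radius $r = 2\mathsf{d}(x)$. The only minor variation is that you derive local boundedness of $u_*$ (hence of $\tilde{f}=f+\lambda u_*$) directly from \eqref{eq:PDE1a} via local boundedness of subsolutions, where the paper instead invokes Proposition~\ref{prop:regularity-minimizer}; your route is if anything slightly more careful, since that proposition is stated for global rather than local minimizers.
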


\begin{rem}
The assumptions on $\Omega$ is to ensure that $u_{*}\in L^{\infty}(\Omega)$,
see Proposition~\ref{prop:regularity-minimizer}. From this, we know
that 
\[
\|f+\lambda u_{*}\|_{L^{\infty}(\Omega)}\le C(\lambda,\Omega)\|f\|_{L^{\infty}(\Omega)}.
\]
\end{rem}

\begin{proof}
[Proof of Proposition~{\rm \ref{prop:continuity-minimizer}}]
Let $u_{*}$ be a local minimizer $\mathcal{J}_{f,g,\lambda,\Omega}$
in $\mathbb{K}(\Omega)$. Using Lemma~\ref{lem:zero-freq}, we know
that $u_{*}$ is also a local minimizer of $\mathcal{J}_{\tilde{f},g,0,\Omega}$
in $\mathbb{K}(\Omega)$ with $\tilde{f}=f+\lambda u_{*}$. Using
Proposition~\ref{prop:regularity-minimizer}, we know that $\tilde{f}$
is bounded. 

We first showing that $\{u_{*}>0\}$ is an open set and $u_{*}$ is continuous in it. Let $x\in\{u_{*}>0\}$. Using \eqref{eq:PDE1a}
of Proposition~\ref{prop:PDE1}, we know that 
\[
\Delta u_{*}\ge-\tilde{f}_{+}\ge-\|\tilde{f}_{+}\|_{L^{\infty}(B_{r}(x))}\quad\text{in }B_{r}(x)
\]
for all $r>0$ whenever $B_{r}(x)\subset\Omega$. Therefore, using
\cite[Lemma~2.4(b)]{GS96FreeBoundaryPotential} we know that 
\begin{equation}
u_{*}(y+x)\le r^{n}\frac{r+|y|}{(r-|y|)^{n-1}}\bigg(\frac{1}{r^{2}}\dashint_{\partial B_{r}(x)}u_{*}\,dS+\frac{\|\tilde{f}_{+}\|_{L^{\infty}(B_{r}(x))}}{2n}\bigg)\quad\text{for all }y\in B_{r}(0).\label{eq:Poisson5}
\end{equation}
Choosing $y=0$ in \eqref{eq:Poisson5}, we have 
\[
u_{*}(x)\le\dashint_{\partial B_{r}(x)}u_{*}\,dS+r^{2}\frac{\|\tilde{f}_{+}\|_{L^{\infty}(B_{r}(x))}}{2n}\quad\text{for all }r>0\text{ with }B_{r}(x)\subset\Omega,
\]
that is, 
\begin{equation}
\frac{1}{r}\dashint_{\partial B_{r}(x)}u_{*}\,dS\ge\frac{1}{r}u_{*}(x)-r\frac{\|\tilde{f}_{+}\|_{L^{\infty}(B_{r}(x))}}{2n}\quad\text{for all }r>0\text{ with }B_{r}(x)\subset\Omega.\label{eq:Poisson6}
\end{equation}
Since $u_{*}(x_{0})>0$, then we can choose $r_{0}>0$ sufficiently
small such that 
\begin{equation}
\frac{1}{r}u_{*}(x)-r\frac{\|\tilde{f}_{+}\|_{L^{\infty}(B_{r}(x))}}{2n}>2^{n}\bigg(\frac{r}{n}\|\tilde{f}_{-}\|_{L^{\infty}(B_{r}(x))}+\|g\|_{L^{\infty}(B_{r}(x))}\bigg)\quad\text{for all }0<r\le r_{0},\label{eq:sufficient-condition}
\end{equation}
and hence \eqref{eq:continuity-sufficient} satisfies for all $0<r<r_{0}$.
Therefore Lemma~\ref{lem:sufficient-continuity} implies that $B_{r_{0}}(x)\subset\{u_{*}>0\}$
and $u_{*}$ is continuous in $B_{r_{0}}(x)$, which shows that 
\begin{equation}
\{u_{*}>0\}\text{ is an open set and }u_{*}\text{ is continuous in }\{u_{*}>0\}.\label{eq:continuous-interior}
\end{equation}
To prove \eqref{eq:continuous-near-free-boundary}, we only need to show \eqref{eq:continuous-near-free-boundary} for
$x\in\{u_{*}>0\}$. Clearly, \eqref{eq:sufficient-condition} cannot
holds when $r=2\mathsf{d}(x)$, otherwise using the same argument
will shows that $B_{2\mathsf{d}(x)}(x)\subset\{u_{*}>0\}$, which
contradicts with the fact that $B_{\mathsf{d}(x)}(x)$ touches $\partial\{u_{*}>0\}$.
Thus, we have 
\begin{equation*}
\frac{1}{2\mathsf{d}(x)}u_{*}(x)-2\mathsf{d}(x)\frac{\|\tilde{f}_{+}\|_{L^{\infty}(B_{2\mathsf{d}(x)}(x))}}{2n} 
\le 2^{n}\bigg(\frac{2\mathsf{d}(x)}{n}\|\tilde{f}_{-}\|_{L^{\infty}(B_{2\mathsf{d}(x)}(x))}+\|g\|_{L^{\infty}(B_{2\mathsf{d}(x)}(x))}\bigg) 
\end{equation*}
for all $x\in\{u_{*}>0\}$ near $\partial\{u_{*}>0\}$, which implies \eqref{eq:continuous-near-free-boundary}. Combining
\eqref{eq:continuous-interior} and \eqref{eq:continuous-near-free-boundary},
we know that $u_{*}\in C(\Omega)$. 
\end{proof}

Using  Sard's theorem and the coarea formula one can show  that $|\nabla u_{*}|=g$ in some suitable sense. This is stated in the  next proposition. 

\begin{prop}
\label{prop:PDE2} Let $f,g\in L^{\infty}(\mathbb{R}^{n})$ be such
that $g\ge0$. Let $\Omega$ be an open set in $\mathbb{R}^{n}$ and let $\lambda \ge 0$. Let $u_{*}\in\mathbb{K}(\Omega)$
be a local minimizer of $\mathcal{J}_{f,g,\lambda,\Omega}$ in $\mathbb{K}(\Omega)$.
If $g^{2}\in W^{1,1}(\Omega)$, then (for a suitable sequence of $\epsilon_j$)
\[
\lim_{\epsilon_j\searrow0}\int_{\partial\{u>\epsilon_j\}}(|\nabla u_{*}|^{2}-g^{2})\eta\cdot\nu_{\epsilon_j}\,dS=0,
\]
for all $\eta\in(C_{c}^{\infty}(B_{R}))^{n}$. Here, $\nu_{\epsilon_j}$
denotes the outward normal vector of $\partial\{u_{*}>\epsilon_j\}$. 
\end{prop}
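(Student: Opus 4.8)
The plan is to exploit the Euler--Lagrange information from Proposition~\ref{prop:PDE1} together with a domain-variation (inner variation) argument. Recall from \eqref{eq:PDE1b} that $(\Delta+\lambda)u_*+f=0$ in $\{u_*>0\}$, so $u_*$ is smooth there, and from Proposition~\ref{prop:continuity-minimizer} (or Lemma~\ref{lem:zero-freq} plus the results in Appendix~\ref{appen:details}) $u_*$ is Lipschitz. First I would fix a vector field $\eta\in(C_c^\infty(B_R))^n$ and, for small $t$, consider the flow $\Phi_t(x)=x+t\eta(x)$, a diffeomorphism of $B_R$ for $|t|$ small. Define the competitor $u_t:=u_*\circ\Phi_t^{-1}\in\mathbb{K}(\Omega)$. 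Since $\{u_t>0\}=\Phi_t(\{u_*>0\})$, one checks the perturbation \eqref{eq:local-minimizer-metric} is $O(t)$, so for small $t$ local minimality gives $\mathcal J_{f,g,\lambda,\Omega}(u_t)\ge\mathcal J_{f,g,\lambda,\Omega}(u_*)$.

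Next I would expand $\mathcal J_{f,g,\lambda,\Omega}(u_t)$ to first order in $t$ via the change of variables $x=\Phi_t(y)$, which introduces the Jacobian factor $\det D\Phi_t=1+t\,\mathrm{div}\,\eta+O(t^2)$ and transforms gradients by $(D\Phi_t)^{-T}$. The gradient term and the $g^2\chi_{\{u_*>0\}}$ term both produce boundary-type contributions; the $\lambda|u|^2$ and $fu$ terms, after the change of variables, contribute only interior terms that are $C^1$ in $t$ (using that $f,g^2$ etc. are $L^\infty$, actually here $g^2\in W^{1,1}$ is what lets us differentiate the $g^2$ integral — see below). Setting $\frac{d}{dt}\big|_{t=0}\mathcal J_{f,g,\lambda,\Omega}(u_t)=0$ yields the classical Rellich--Pohozaev / domain-variation identity of Bernoulli type, schematically
\[
\int_\Omega\Big(|\nabla u_*|^2\,\mathrm{div}\,\eta-2\nabla u_*\cdot D\eta\,\nabla u_*\Big)\,dx+\int_\Omega g^2\,\mathrm{div}(\eta\,\chi_{\{u_*>0\}})\,dx+(\text{terms with }f,\lambda u_*)=0.
\]
The key analytic subtlety is making sense of $\int_\Omega g^2\,\mathrm{div}(\eta\chi_{\{u_*>0\}})$: this is where $g^2\in W^{1,1}(\Omega)$ enters, since then $g^2\chi_{\{u_*>0\}}$ has distributional derivative given by $\chi_{\{u_*>0\}}\nabla g^2$ plus a surface term, and one approximates $\chi_{\{u_*>0\}}$ by $\chi_{\{u_*>\epsilon\}}$ — whose boundary $\partial\{u_*>\epsilon\}$ is, for a.e.\ $\epsilon$ (Sard's theorem applied to the smooth function $u_*$ in its positivity set), a smooth hypersurface with outward normal $\nu_\epsilon=-\nabla u_*/|\nabla u_*|$.

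Concretely, I would run the following: on the open set $\{u_*>0\}$, use that $(\Delta+\lambda)u_*=-f$ and integrate by parts on the smooth domain $\{u_*>\epsilon\}$ to rewrite the interior Pohozaev integrand as a boundary integral over $\partial\{u_*>\epsilon\}$ of $|\nabla u_*|^2\,\eta\cdot\nu_\epsilon$ (the $f$ and $\lambda u_*$ pieces combine with the bulk of the change-of-variables expansion and contribute only $\epsilon$-independent interior terms plus $o(1)$ as $\epsilon\to0$, using $u_*\in C^{0,1}$, $f,u_*\in L^\infty$); and rewrite the $g^2$ term, via the coarea formula $\int g^2 = \int_0^\infty\!\!\int_{\partial\{u_*>s\}} g^2/|\nabla u_*|\,dS\,ds$, as $\int_{\partial\{u_*>\epsilon\}}g^2\,\eta\cdot\nu_\epsilon\,dS$ up to an $o(1)$ error as $\epsilon\to0$ along a suitable sequence $\epsilon_j$ (here $g^2\in W^{1,1}$ controls the error). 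Combining, the vanishing first variation becomes
\[
\lim_{\epsilon_j\searrow0}\int_{\partial\{u_*>\epsilon_j\}}\big(|\nabla u_*|^2-g^2\big)\,\eta\cdot\nu_{\epsilon_j}\,dS=0,
\]
which is exactly the claim.

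The main obstacle I expect is the careful justification of the $\epsilon\to0$ limits along a subsequence: one must show that the interior remainder terms (those not of the form $|\nabla u_*|^2\eta\cdot\nu$ or $g^2\eta\cdot\nu$) genuinely vanish, which requires the coarea formula to pull the $W^{1,1}$-regularity of $g^2$ and the Lipschitz bound on $u_*$ against the level-set geometry, plus Sard's theorem to guarantee $\partial\{u_*>\epsilon_j\}$ is a regular hypersurface for a.e.\ $\epsilon_j$. The actual first-variation computation with $\Phi_t$ is routine but bookkeeping-heavy; following the model of \cite[Theorem~2.3]{GS96FreeBoundaryPotential} and using Lemma~\ref{lem:zero-freq} to reduce to $\lambda=0$ (absorbing $\lambda u_*$ into $f$) will keep it manageable.
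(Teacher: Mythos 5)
Your proposal is correct and follows essentially the same route as the paper's proof: the inner (domain) variation $u_t = u_*\circ(\mathrm{Id}+t\eta)^{-1}$, first-order expansion of $\mathcal{J}_{f,g,\lambda,\Omega}(u_t)$ in $t$ via change of variables, and then the divergence theorem applied on the regular level sets $\{u_*>\epsilon_j\}$ with $\epsilon_j\searrow 0$ (Sard's theorem guaranteeing a.e.\ level sets are smooth). The paper works the expansion out directly for general $\lambda$ rather than invoking Lemma~\ref{lem:zero-freq}, and it obtains the $g^2$ boundary term directly from the Jacobian and $g^2\circ\tau_\epsilon$ expansions rather than via the coarea formula as you suggest, but these are cosmetic differences within the same argument.
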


\begin{proof}
Let $\eta\in(C_{c}^{\infty}(\Omega))^{n}$ and $\epsilon>0$ small.
We define $u_{\epsilon}\in\mathbb{K}(\Omega)$ by $u_{\epsilon}(\tau_{\epsilon}(x))=u_{*}(x)$,
where $\tau_{\epsilon}(x)=x+\epsilon\eta(x)$. From \eqref{eq:PDE1b}
in Proposition~\ref{prop:PDE1}, it is easy to see that $u_{*}\in C^{1}(\{u_{*}>0\})$.
Note that 
\begin{equation*}
\begin{aligned}
0 & \le\mathcal{J}_{f,g,\lambda,\Omega}(u_{\epsilon})-\mathcal{J}_{f,g,\lambda,\Omega}(u_{*})\\
 & \quad =\int_{\{u_{*}>0\}}\bigg(|\nabla u_{*}|^{2}|\nabla\tau_{\epsilon}|^{-2}-\lambda|u_{*}|^{2}-2(f\circ\tau_{\epsilon})u_{*}+g^{2}\circ\tau_{\epsilon}\bigg)\det(\nabla\tau_{\epsilon})\,dx\\
 & \qquad-\int_{\{u_{*}>0\}}(|\nabla u_{*}|^{2}-\lambda|u_{*}|^{2}-2fu_{*}+g^{2})\,dx\\
 & \quad =\epsilon\int_{\{u_{*}>0\}}(|\nabla u_{*}|^{2}-g^{2})\nabla\cdot\eta\,dx\\
 & \qquad+\epsilon\int_{\{u_{*}>0\}}(-2\nabla u_{*}\cdot(\nabla\eta)\nabla u_{*}+\nabla(g^{2})\cdot\eta)\,dx+o(\epsilon),
\end{aligned}
\end{equation*}
where 
\begin{equation*}
\lim_{\epsilon\searrow0}\frac{o(\epsilon)}{\epsilon}=0.
\end{equation*}
We denote $\otimes:\mathbb{R}^{n}\times\mathbb{R}^{n}\rightarrow\mathbb{R}^{n\times n}$ be the juxtaposition operator defined by $a\otimes b:=ab^{T}$ for all $a,b\in\mathbb{R}^{n}$. Dividing both sides on the inequality above by $\epsilon$ and letting $\epsilon\rightarrow0$, we obtain
\begin{equation*}
\begin{aligned}
0 & =\int_{\{u_{*}>0\}}\nabla\cdot((|\nabla u_{*}|^{2}+g^{2})\eta-2(\eta\otimes\nabla u_{*})\nabla u_{*}\,dx\\
 & =\lim_{\epsilon\searrow0}\int_{\{u_{*}>\epsilon\}}\nabla\cdot((|\nabla u_{*}|^{2}+g^{2})\eta-2(\eta\otimes\nabla u_{*})\nabla u_{*})\,dx\\
 & =\lim_{\epsilon\searrow0}\int_{\partial\{u_{*}>\epsilon\}}\bigg((|\nabla u_{*}|^{2}+g^{2})\eta\cdot\nu_{\epsilon}-2\overbrace{((\eta\otimes\nabla u_{*})\nabla u_{*})\cdot\nu_{\epsilon}}^{=|\nabla u_{*}|^{2}\eta\cdot\nu_{\epsilon}}\bigg)\,dS\\
 & =\lim_{\epsilon\searrow0}\int_{\partial\{u_{*}>\epsilon\}}(-|\nabla u_{*}|^{2}+g^{2})\eta\cdot\nu_{\epsilon}\,dS,
\end{aligned}
\end{equation*}
which conclude our proposition. 
\end{proof}

We now show that the local minimizer is Lipschitz. 
\begin{prop}
\label{prop:Lipschitz-regularity-minimizers} Let $\Omega$ be a bounded open set in $\mathbb{R}^{n}$ with $C^{1}$ boundary and $0 \le \lambda<\lambda^{*}(\Omega)$. Let $f,g\in L^{\infty}(\Omega)$ be such that be such that $g\ge0$ and $g^{2}\in W^{1,1}(\Omega)$. If $u_{*}$ is a local minimizer of $\mathcal{J}_{f,g,\lambda,\Omega}$ in $\mathbb{K}(\Omega)$, then $u_{*}\in C_{{\rm loc}}^{0,1}(\Omega)$ and there exists a constant
$C_{n}$ such that 
\begin{equation}
|\nabla u_{*}(x)|\le C_{n}\bigg(\|g\|_{L^{\infty}(B_{2\mathsf{d}(x)}(x))}+\|f+\lambda u_{*}\|_{L^{\infty}(B_{2\mathsf{d}(x)}(x))}\mathsf{d}(x)\bigg),\label{eq:Lipschitz-near-free-boundary}
\end{equation}
for all $x\in\Omega$ near $\partial\{u_{*}>0\}$. 
\end{prop}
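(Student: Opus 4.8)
The plan is to prove the pointwise gradient bound \eqref{eq:Lipschitz-near-free-boundary} by an interior estimate inside $\{u_*>0\}$, feeding in the linear growth estimate \eqref{eq:continuous-near-free-boundary} already established in Proposition~\ref{prop:continuity-minimizer}; the local Lipschitz continuity $u_*\in C_{\rm loc}^{0,1}(\Omega)$ then follows by combining this with interior elliptic regularity. As a preliminary reduction, by Lemma~\ref{lem:zero-freq} the function $u_*$ is also a local minimizer of $\mathcal{J}_{\tilde f,g,0,\Omega}$ with $\tilde f:=f+\lambda u_*$, and $\tilde f\in L^{\infty}(\Omega)$ by Proposition~\ref{prop:regularity-minimizer}; moreover $u_*\in C(\Omega)$ by Proposition~\ref{prop:continuity-minimizer}. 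Since \eqref{eq:Lipschitz-near-free-boundary} concerns points near the free boundary, it suffices to take $x\in\{u_*>0\}$ with $d:=\mathsf{d}(x)={\rm dist}(x,\mathbb{R}^{n}\setminus\{u_*>0\})$ small (and $B_{2d}(x)\subset\Omega$), so that $B_d(x)\subset\{u_*>0\}$ and, by \eqref{eq:PDE1b}, $\Delta u_*=-\tilde f$ in $B_d(x)$.

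Next I would split off the inhomogeneity on the ball $B_d(x)$. Let $p$ be the solution of the Dirichlet problem $\Delta p=-\tilde f$ in $B_d(x)$, $p=0$ on $\partial B_d(x)$; by the maximum principle (comparison with an explicit radial barrier) one has $\|p\|_{L^{\infty}(B_d(x))}\le C_n d^{2}\|\tilde f\|_{L^{\infty}(B_d(x))}$, and then standard bounds for the Green's function of the ball give $|\nabla p(x)|\le C_n d\|\tilde f\|_{L^{\infty}(B_d(x))}$. The function $\mathsf{h}:=u_*-p$ is harmonic in $B_d(x)$ with boundary values $\mathsf{h}|_{\partial B_d(x)}=u_*|_{\partial B_d(x)}\ge 0$, hence $\mathsf{h}\ge 0$ in $B_d(x)$ by the maximum principle. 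The key point is now to invoke the gradient estimate for \emph{nonnegative} harmonic functions — which, via the mean value property applied to $\partial_i\mathsf{h}$, controls the gradient at the \emph{center} by the \emph{value} at the center: $|\nabla\mathsf{h}(x)|\le \frac{C_n}{d}\mathsf{h}(x)\le \frac{C_n}{d}\big(u_*(x)+\|p\|_{L^{\infty}(B_d(x))}\big)$. It is precisely this that lets us avoid taking a supremum of $u_*$ over a ball, and thereby keep the radius in the final bound equal to $2d$.

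Finally I would assemble the estimate. Combining the two gradient bounds,
\[
|\nabla u_*(x)|\le |\nabla\mathsf{h}(x)|+|\nabla p(x)|\le \frac{C_n}{d}\,u_*(x)+C_n\Big(\tfrac{1}{d}\|p\|_{L^{\infty}(B_d(x))}+d\|\tilde f\|_{L^{\infty}(B_d(x))}\Big)\le \frac{C_n}{d}\,u_*(x)+C_n d\|\tilde f\|_{L^{\infty}(B_d(x))},
\]
and then substituting the linear growth bound \eqref{eq:continuous-near-free-boundary} at $x$, namely $u_*(x)\le C_n d\big(\|g\|_{L^{\infty}(B_{2d}(x))}+d\|\tilde f\|_{L^{\infty}(B_{2d}(x))}\big)$, together with $B_d(x)\subset B_{2d}(x)$, yields exactly \eqref{eq:Lipschitz-near-free-boundary}. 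To conclude $u_*\in C_{\rm loc}^{0,1}(\Omega)$ I would argue that $|\nabla u_*|$ is locally bounded: on $\Omega\setminus\overline{\{u_*>0\}}$ we have $u_*\equiv 0$; inside $\{u_*>0\}$, at points far from the free boundary $\Delta u_*=-\tilde f\in L_{\rm loc}^{\infty}$ gives $u_*\in C_{\rm loc}^{1,\alpha}(\{u_*>0\})$ by interior elliptic regularity, while near $\partial\{u_*>0\}$ the estimate \eqref{eq:Lipschitz-near-free-boundary} keeps $|\nabla u_*|$ bounded up to the free boundary; since $u_*\in C(\Omega)$, a standard gluing argument turns this into local Lipschitz continuity on all of $\Omega$. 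The whole argument parallels the corresponding step in \cite{GS96FreeBoundaryPotential}; I expect the only mildly delicate points to be threading the nonnegativity of $u_*$ correctly through the decomposition $u_*=\mathsf{h}+p$ so that the nonnegative-harmonic gradient estimate applies, and, for the $C_{\rm loc}^{0,1}$ conclusion, the routine patching of the interior and near-free-boundary regimes.
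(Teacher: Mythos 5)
Your proof is correct and reaches the same bound, but organizes the key gradient estimate differently from the paper. The paper appeals directly to \cite[Lemma~2.4(d)]{GS96FreeBoundaryPotential}, which controls $|\nabla u_*(x)|$ by $\frac{1}{\mathsf{d}(x)}\dashint_{\partial B_{\mathsf{d}(x)}(x)}u_*\,dS$ plus a term of order $\mathsf{d}(x)\|f+\lambda u_*\|_{L^\infty}$, and then bounds this sphere average by re-invoking the contrapositive of Lemma~\ref{lem:sufficient-continuity} at radius $\mathsf{d}(x)$. You instead decompose $u_*=\mathsf{h}+p$ on $B_{\mathsf{d}(x)}(x)$ with $\mathsf{h}\ge0$ harmonic and $p$ solving the Poisson problem with zero boundary data, use the centre gradient bound $|\nabla\mathsf{h}(x)|\le\frac{n}{\mathsf{d}(x)}\mathsf{h}(x)$, and substitute the pointwise linear-growth estimate \eqref{eq:continuous-near-free-boundary} already proved in Proposition~\ref{prop:continuity-minimizer}. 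The content is equivalent: your decomposition amounts to a proof of the GS96 lemma with the sphere average replaced by the centre value via the mean value property, and \eqref{eq:continuous-near-free-boundary} was itself extracted from the same contrapositive of Lemma~\ref{lem:sufficient-continuity} (at radius $2\mathsf{d}(x)$). Your version is somewhat more self-contained, and, as you correctly observe, reading off the bound at the \emph{centre} — rather than through a supremum of $u_*$ over a ball — is precisely what keeps the radius in the final estimate at $2\mathsf{d}(x)$. One small difference: the paper also says a word about $x\in\partial\{u_*>0\}$ (where it asserts $|\nabla u_*(x)|=g(x)$), but this is not needed for the $C^{0,1}_{\rm loc}$ conclusion since you cover both open complements of the free boundary and $u_*\in C(\Omega)$; the gluing step you sketch is standard.
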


\begin{proof}
It is easy to see that $|\nabla u_{*}(x)|=0$ for all $x\in\Omega\setminus\overline{\{u_{*}>0\}}$.
Since $g^{2}\in W^{1,1}(\Omega)$ and $g\in C(\Omega)$, then we know
that $|\nabla u_{*}(x)|=g(x)=\|g\|_{L^{\infty}(B_{\mathsf{d}(x)}(x))}$
for all $x\in\partial\{u_{*}>0\}$. It is remains to show \eqref{eq:Lipschitz-near-free-boundary}
for $x\in\{u_{*}>0\}$. 

Since $B_{\mathsf{d}(x)}(x)\subset\{u_{*}>0\}$, using \eqref{eq:PDE1b}
of Proposition~\ref{prop:PDE1}, we have $|\Delta u_{*}|=|\tilde{f}|\le\|\tilde{f}\|_{L^{\infty}(B_{\mathsf{d}(x)}(x))}$
in $B_{\mathsf{d}(x)}(x)$. Therefore, using \cite[Lemma~2.4(d)]{GS96FreeBoundaryPotential}
we know that 
\begin{equation}
|\nabla u_{*}(x)|\le n\bigg(\frac{1}{\mathsf{d}(x)}\dashint_{\partial B_{\mathsf{d}(x)}(x)}u_{*}\,dS+\frac{\|\tilde{f}\|_{L^{\infty}(B_{\mathsf{d}(x)}(x))}}{n+1}\mathsf{d}(x)\bigg).\label{eq:Poisson7}
\end{equation}
Since $B_{\mathsf{d}(x)+\epsilon}(x)\not\subset\{u_{*}>0\}$, using
Lemma~\ref{lem:sufficient-continuity} we know that \footnote{If $f,g\in C(\Omega)$, we can improve \eqref{eq:Lipschitz-near-free-boundary}
by replacing $2\mathsf{d}$ by $\mathsf{d}$.}
\begin{equation*}
\begin{aligned}
\frac{1}{\mathsf{d}(x)+\epsilon}\dashint_{\partial B_{\mathsf{d}(x)+\epsilon}(x)}u_{*}\,dS & \le2^{n}\bigg(\frac{\mathsf{d}(x)+\epsilon}{n}\|\tilde{f}_{-}\|_{L^{\infty}(B_{\mathsf{d}(x)+\epsilon}(x))}+\|g\|_{L^{\infty}(B_{\mathsf{d}(x)+\epsilon}(x))}\bigg)\\
 & \le2^{n}\bigg(\frac{\mathsf{d}(x)+\epsilon}{n}\|\tilde{f}_{-}\|_{L^{\infty}(B_{2\mathsf{d}(x)}(x))}+\|g\|_{L^{\infty}(B_{2\mathsf{d}(x)}(x))}\bigg)
\end{aligned}
\end{equation*}
for all sufficiently small $\epsilon>0$. Using the continuity of
$u_{*}$, taking $\epsilon\rightarrow0_{+}$ yields 
\begin{equation}
\frac{1}{\mathsf{d}(x)}\dashint_{\partial B_{\mathsf{d}(x)}(x)}u_{*}\,dS\le2^{n}\bigg(\frac{\mathsf{d}(x)}{n}\|\tilde{f}_{-}\|_{L^{\infty}(B_{2\mathsf{d}(x)}(x))}+\|g\|_{L^{\infty}(B_{2\mathsf{d}(x)}(x))}\bigg).\label{eq:contrapositive}
\end{equation}
Combining \eqref{eq:Poisson7} and \eqref{eq:contrapositive}, we
conclude \eqref{eq:Lipschitz-near-free-boundary} holds for all $x\in\{u_{*}>0\}$
near $\partial\{u_{*}>0\}$. 
\end{proof}

The following lemma gives a sufficient condition in terms of mean average to ensure the local vanishing property of local minimizer. 

\begin{lem}
\label{lem:vanishing-sufficient-condition} Let $\Omega$ be a bounded open set in $\mathbb{R}^{n}$ with $C^{1}$ boundary and $0 \le \lambda<\lambda^{*}(\Omega)$. Let $f,g\in L^{\infty}(\Omega)$ be such that $g\ge0$. Suppose that $u_{*}$ is a local minimizer of $\mathcal{J}_{f,g,\lambda,\Omega}$ in $\mathbb{K}(\Omega)$. If there exists an open set $\Omega'$ with $\overline{\Omega'}\subset\Omega$ such that 
\[
g\ge c>0\quad\text{in }\Omega',
\]
then there exists a constant $C>0$ such that for any sufficiently small ball $B_{r}(x_{0})\subset\Omega'$ we have 
\begin{equation}
\frac{1}{r}\dashint_{\partial B_{r}(x_{0})}u_{*}\le C\implies u_{*}=0\text{ in }B_{\frac{r}{4}}(x_{0}).\label{eq:vanishing-sufficient-condition}
\end{equation}
The constant $C$ depends only on ${\displaystyle \inf_{B_{r}(x_{0})}}g$,
$r\|(f+\lambda u_{*})_{+}\|_{L^{\infty}(B_{r}(x_{0}))}$ and $n$.
Moreover, the constant $C$ is positive whenever ${\displaystyle \inf_{B_{r}(x_{0})}}g>0$
and $r\|(f+\lambda u_{*})_{+}\|_{L^{\infty}(B_{r}(x_{0}))}$ is sufficiently
small. 
\end{lem}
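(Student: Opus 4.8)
The plan is to reduce to the case $\lambda=0$ and then run a ``dig--out'' competitor argument, turning local minimality into a quantitative decay, along a dyadic family of balls centred at $x_{0}$, for the density of the positivity set.

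\textbf{Step 1 (reduction and an a priori bound).} By Lemma~\ref{lem:zero-freq}, $u_{*}$ is also a local minimizer of $\mathcal{J}_{\tilde{f},g,0,\Omega}$ with $\tilde{f}:=f+\lambda u_{*}\in L^{\infty}(\Omega)$ (Proposition~\ref{prop:regularity-minimizer}); note $\|\tilde{f}_{+}\|_{L^{\infty}(B_{\rho}(x_{0}))}=\|(f+\lambda u_{*})_{+}\|_{L^{\infty}(B_{\rho}(x_{0}))}$ since $\lambda u_{*}\ge0$. From \eqref{eq:PDE1b} one has $\Delta u_{*}=-\tilde{f}$ in the open set $\{u_{*}>0\}$, from \eqref{eq:PDE1a} one has $\Delta u_{*}\ge-\|\tilde{f}_{+}\|_{L^{\infty}(B_{r}(x_{0}))}$ in $B_{r}(x_{0})$, and $u_{*}$ is continuous (Proposition~\ref{prop:continuity-minimizer}). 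Writing $N:=\|\tilde{f}_{+}\|_{L^{\infty}(B_{r}(x_{0}))}$ and $\theta:=\frac{1}{r}\dashint_{\partial B_{r}(x_{0})}u_{*}+rN$, I would use the sub--mean--value property of $u_{*}+\frac{N}{2n}|\cdot-x_{0}|^{2}$ together with a Fubini/coarea step (to pass from a solid average on $B_{r}(x_{0})$ to a favourable spherical average) to get $\sup_{B_{3r/4}(x_{0})}u_{*}\le C_{n}r\theta$; this is how the smallness hypothesis first enters.

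\textbf{Step 2 (the dig--out competitor).} For $\rho\le r$ pick $\eta_{\rho}\in C_{c}^{\infty}(\mathbb{R}^{n})$ with $0\le\eta_{\rho}\le1$, $\{\eta_{\rho}=0\}=\overline{B_{\rho/4}(x_{0})}$, $\eta_{\rho}\equiv1$ off $B_{3\rho/8}(x_{0})$, $|\nabla\eta_{\rho}|\le C/\rho$, and set $w:=\eta_{\rho}u_{*}\in\mathbb{K}(\Omega)$, an admissible competitor for $\rho<r_{0}$. Since $\{w>0\}=\{u_{*}>0\}\setminus\overline{B_{\rho/4}(x_{0})}$, the $g^{2}$--term contributes exactly $-\int_{\{u_{*}>0\}\cap B_{\rho/4}(x_{0})}g^{2}$. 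Writing $\phi:=w-u_{*}=(\eta_{\rho}-1)u_{*}\in H_{0}^{1}(\{u_{*}>0\})$ (using $u_{*}\in C(\Omega)$ and $u_{*}=0$ on $\partial\{u_{*}>0\}$) and integrating by parts against $\Delta u_{*}=-\tilde{f}$ on $\{u_{*}>0\}$ gives $\int\nabla u_{*}\cdot\nabla\phi=\int\tilde{f}\phi$, so the gradient and linear contributions to $\mathcal{J}_{\tilde{f},g,0,\Omega}(w)-\mathcal{J}_{\tilde{f},g,0,\Omega}(u_{*})$ collapse to $\int|\nabla\phi|^{2}$. Thus $\mathcal{J}(u_{*})\le\mathcal{J}(w)$ becomes
\[
\Big(\inf_{B_{r}(x_{0})}g\Big)^{2}\,|\{u_{*}>0\}\cap B_{\rho/4}(x_{0})|\le\int_{\Omega}|\nabla((1-\eta_{\rho})u_{*})|^{2}\,dx .
\]
Bounding the right side by $C_{n}\big(\rho^{-2}\!\int_{B_{\rho/2}(x_{0})}u_{*}^{2}+\int_{B_{\rho/2}(x_{0})}|\nabla u_{*}|^{2}\big)$ and controlling the last term by a Caccioppoli inequality (testing $\Delta u_{*}=-\tilde{f}$ on $\{u_{*}>0\}$ with $\zeta^{2}u_{*}$; the boundary measure drops since $u_{*}=0$ on $\partial\{u_{*}>0\}$, and $\int\zeta^{2}\tilde{f}u_{*}\le\int\zeta^{2}\tilde{f}_{+}u_{*}$), together with Step~1, yields
\[
c^{2}\,|\{u_{*}>0\}\cap B_{\rho/4}(x_{0})|\le C_{n}\,\kappa_{\rho}(\kappa_{\rho}+\rho N)\,|\{u_{*}>0\}\cap B_{\rho/2}(x_{0})| ,
\]
where $c:=\inf_{B_{r}(x_{0})}g$ and $\kappa_{\rho}:=\rho^{-1}\sup_{B_{3\rho/4}(x_{0})}u_{*}$.

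\textbf{Step 3 (iteration and conclusion).} Apply Step~2 along $\rho_{j}:=r2^{-j}$ and set $d_{j}:=|B_{\rho_{j}}(x_{0})|^{-1}|\{u_{*}>0\}\cap B_{\rho_{j}}(x_{0})|\in[0,1]$ and $\kappa_{j}:=\rho_{j}^{-1}\sup_{B_{3\rho_{j}/4}(x_{0})}u_{*}$. Step~2 gives $d_{j+2}\le C_{n}c^{-2}\kappa_{j}(\kappa_{j}+\rho_{j}N)d_{j+1}$; the a priori bound of Step~1 at scale $\rho_{j}$, combined with a coarea choice of a radius $s\in(\rho_{j+1},\rho_{j})$ on which $\mathscr{H}^{n-1}(\partial B_{s}(x_{0})\cap\{u_{*}>0\})$ is small, gives $\kappa_{j+1}\le C_{n}(\kappa_{j}d_{j}+\rho_{j}N)$. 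The base case is $\kappa_{0}\le C_{n}\theta$ and $d_{0}\le1$. A simultaneous induction on $j$ shows that if $\theta$ is below a constant multiple of $c$ — which one arranges by taking the threshold $C$ for $\frac{1}{r}\dashint_{\partial B_{r}(x_{0})}u_{*}$ small depending only on $c$, $rN$ and $n$, and requiring $rN$ small — then $\kappa_{j}\le C_{n}\theta$ for all $j$ while $d_{j}\to0$ geometrically. Since $u_{*}$ is continuous, $d_{j}\to0$ forces $u_{*}(x_{0})=0$. Finally, for any $z\in B_{r/4}(x_{0})$ the same argument at centre $z$ and scale $r/2$ applies, because $\frac{2}{r}\dashint_{\partial B_{r/2}(z)}u_{*}\le\frac{2}{r}\sup_{B_{3r/4}(x_{0})}u_{*}\le C_{n}\theta$ is again below the threshold and $\tfrac{r}{2}\|\tilde f_+\|_{L^\infty(B_{r/2}(z))}\le rN$; hence $u_{*}\equiv0$ on $B_{r/4}(x_{0})$, and the constant $C$ has the stated dependence and is positive as soon as $\inf_{B_{r}(x_{0})}g>0$ and $rN$ is small enough.

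\textbf{Main obstacle.} The delicate point is Step~3: calibrating the coupled recursion for the scaled supremum $\kappa_{j}$ and the density $d_{j}$ so that the smallness of the \emph{single} quantity $\frac{1}{r}\dashint_{\partial B_{r}(x_{0})}u_{*}$ propagates through all scales — in particular, keeping $\kappa_{j}$ (which in general could grow like $2^{j}$) bounded by feeding back the geometric decay of $d_{j}$, while $d_{j}$'s decay itself needs $\kappa_{j}$ bounded, so the induction must close the two estimates simultaneously. This, the bookkeeping of the radii $\rho_{j},\rho_{j}/2,\rho_{j}/4$ and the coarea radius $s$, and the distributional integration by parts producing the clean identity $\int|\nabla\phi|^{2}$ (which relies on $u_{*}\in C(\Omega)$ and \eqref{eq:PDE1b}), are where the real work lies; the remaining ingredients are the standard dig--out competitor scheme and Caccioppoli estimate.
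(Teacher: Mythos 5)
Your proposal takes a genuinely different route from the paper. The paper proves the vanishing by a barrier comparison: it introduces the auxiliary functional $\tilde{\mathcal{J}}_{r}(v)=\int_{B_{r/2}}\bigl(|\nabla v|^{2}-2Mv+m^{2}\chi_{\{v>0\}}\bigr)$ with $m=\inf_{B_{r}}g$, $M=\|\tilde f_{+}\|_{L^{\infty}(B_{r})}$, invokes the conclusion of \cite[Lemma 2.8]{GS96FreeBoundaryPotential} that for $r/2<nm/M$ and sufficiently small boundary value $\beta$ the \emph{largest} minimizer $v_{\beta}$ of $\tilde{\mathcal{J}}_{r}$ over $\mathbb{K}_{\beta}=\{v\in H^{1}(B_{r/2}):v\ge 0,\ v|_{\partial B_{r/2}}=\beta\}$ vanishes on $B_{r/4}$, and then uses Proposition~\ref{prop:comparison} together with the sup bound (your Step~1) to conclude $u_{*}\le v_{\beta_{0}}$ and hence $u_{*}\equiv 0$ on $B_{r/4}$. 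That argument is short, yields the dead core on all of $B_{r/4}$ in one stroke, and isolates all the delicate analysis inside a cited radial lemma; its drawback is that it leans on that external result. Your argument instead builds a ``dig-out'' competitor $w=\eta_{\rho}u_{*}$ and iterates a density-decay estimate à la Alt--Caffarelli, which is more self-contained in spirit.

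The place where your argument is not yet a proof is precisely Step~3, as you flag yourself. You claim ``$\kappa_{j}\le C_{n}\theta$ for all $j$'' from the coupled recursion
\[
d_{j+2}\le A\,\kappa_{j}(\kappa_{j}+\rho_{j}N)\,d_{j},\qquad
\kappa_{j+1}\le C_{n}(\kappa_{j}d_{j}+\rho_{j}N),
\]
but this is not immediate: for the first few indices $d_{j}$ is only bounded by $1$, so $\kappa_{j}$ can grow by a factor $C_{n}$ per step, and the density recursion then sees $\kappa_{j}^{2}$ in its coefficient, which works against the decay. The recursion does close if one is careful --- one shows $\kappa_{j}\lesssim C_{n}^{j}\theta$ for $j\le j_{0}(n)$ steps, arranges $\theta$ so small (depending on $c$, $rN$, $n$) that $d_{j_{0}},d_{j_{0}+1}<1/(2C_{n})$, after which $\kappa_{j+1}\le\tfrac12\kappa_{j}+C_{n}\rho_{j}N$ and hence $\kappa_{j}$ is uniformly bounded, and then the density decays geometrically --- but this bookkeeping has to be carried out; as written it is asserted rather than proved. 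Two smaller points worth making explicit: the coarea selection of a good slice $s\in(\rho_{j+1},\rho_{j})$ must be chosen with $s\ge\rho_{j+1}$ so that $B_{3s/4}\supset B_{3\rho_{j+1}/4}$ and the Poisson estimate applied on $B_{s}$ actually controls $\kappa_{j+1}$; and your Caccioppoli bound should be on $B_{3\rho/4}$ rather than $B_{\rho/2}$ to leave room for the cutoff, which shifts the index in the density recursion by one (you get $d_{j+2}$ in terms of $d_{j}$, not $d_{j+1}$). Finally, note that your argument proves $u_{*}(x_{0})=0$ at the centre and then needs the separate propagation step to cover $B_{r/4}(x_{0})$, so the final threshold must be degraded by a factor to accommodate the shifted centre; the paper's barrier comparison avoids this second pass entirely.
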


\begin{rem}
\label{rem:distinguishability} If $g\ge c>0$ in a neighborhood of
a point $x_{0}\in\partial\{u_{*}>0\}$, then 
\begin{equation}
u_{*}(x)\ge C\,{\rm dist}\,(x,\mathbb{R}^{n}\setminus\{u_{*}>0\}).\label{eq:distinguishability-near-boundary}
\end{equation}
for some constant $C>0$. We only need to show \eqref{eq:distinguishability-near-boundary}
for $x\in\{u_{*}>0\}$ near $x_{0}$. Writing $r={\rm dist}\,(x,\mathbb{R}^{n}\setminus\{u_{*}>0\})$.
In particular, from the contra-positive statement of \eqref{eq:vanishing-sufficient-condition}
we know that 
\[
\frac{1}{r}\dashint_{\partial B_{r}(x)}u_{*}>C_{1}
\]
for some constant $C_{1}>0$. Using \eqref{eq:PDE1b} in Proposition~\ref{prop:PDE1},
we have 
\[
\Delta u_{*}=-\tilde{f}\le C_{2}:=\|\tilde{f}\|_{L^{\infty}(B_{r}(x))}\quad\text{in }B_{r}(x)\subset\{u_{*}>0\}
\]
with $\tilde{f}=f+\lambda u_{*}$. Using \cite[Lemma~2.4(a)]{GS96FreeBoundaryPotential},
we have 
\[
u_{*}(x)\ge\dashint_{\partial B_{r}}u-\frac{2^{n-1}C_{2}}{n}r^{2}\ge C_{1}r-\frac{2^{n-1}C_{2}}{n}r^{2}\ge Cr,
\]
provided $r={\rm dist}\,(x,\mathbb{R}^{n}\setminus\{u_{*}>0\})$ sufficiently
small. 
\end{rem}

\begin{proof}
[Proof of Lemma~{\rm \ref{lem:vanishing-sufficient-condition}}] 
Without loss of generality, we may assume that $x_{0}=0$.
From \eqref{eq:Poisson5}, we have 
\begin{equation}
u_{*}(y)\le C_{1}\dashint_{\partial B_{r}}u_{*}\,dS+C_{2}r^{2}\|\tilde{f}_{+}\|_{L^{\infty}(B_{r})}\quad\text{for all }y\in B_{\frac{r}{2}}\label{eq:Poisson5-1}
\end{equation}
for some absolute constants $C_{1}$ and $C_{2}$, with $\tilde{f}=f+\lambda u_{*}$.
We define $m:=\inf_{B_{r}}g$, $M:=\|\tilde{f}_{+}\|_{L^{\infty}(B_{r})}$
and 
\begin{equation*}
\begin{aligned}
\mathcal{J}_{r}(v) & :=\int_{B_{\frac{r}{2}}}(|\nabla v|^{2}-2\tilde{f}v+g^{2}\chi_{\{v>0\}})\,dx,\\
\tilde{\mathcal{J}}_{r}(v) & :=\int_{B_{\frac{r}{2}}}(|\nabla v|^{2}-2Mv+m^{2}\chi_{\{v>0\}})\,dx.
\end{aligned}
\end{equation*}
Now given a constant
$\beta>0$ we  consider the problem of minimizing $\tilde{\mathcal{J}}_{r}$
over 
\[
\mathbb{K}_{\beta}=\begin{Bmatrix}\begin{array}{l|l}
v\in H^{1}(B_{\frac{r}{2}}) & v\ge0\text{ in }B_{\frac{r}{2}}\text{ and }v=\beta\text{ on }\partial B_{\frac{r}{2}}\end{array}\end{Bmatrix}.
\]
Following similar   arguments as in \cite[Conclusion of Lemma~2.8]{GS96FreeBoundaryPotential},
we know that there exists a sufficiently small constant $\beta_{0}=\beta_{0}(r,m,M)$
such that if 
\[
0<\frac{r}{2}<\frac{nm}{M}\quad\text{and}\quad0<\beta\le\beta_{0},
\]
then the largest minimizer $v_{\beta}$ of $\tilde{\mathcal{J}}_{r}$
in $\mathbb{K}_{\beta}$ satisfies 
\begin{equation}
v_{\beta}=0\quad\text{in }B_{\frac{r}{4}}.\label{eq:v-beta-vanishing}
\end{equation}
From \cite[(2.13)]{GS96FreeBoundaryPotential}, we have 
\[
\beta_{0}(r,m,M)=r\beta_{0}(1,m,rM).
\]
Next we let $v_{\beta_{0}}$ be the largest minimizer
of $\tilde{\mathcal{J}}_{r}$ in $\mathbb{K}_{\beta_{0}}$, and let
\[
w=\begin{cases}
\min\{u_{*},v_{\beta_{0}}\} & \text{in }B_{\frac{r}{2}},\\
u_{*} & \text{outside }B_{\frac{r}{2}}.
\end{cases}
\]
If 
\[
C_{1}\frac{1}{r}\dashint_{\partial B_{r}(x_{0})}u_{*}<2\beta_{0}(1,m,rM),
\]
then for each sufficiently small $r$ we have (because $\beta_{0}(1,m,0)>0$
when $m>0$ and $\beta_{0}(1,m,\cdot)$ is continuous)
\[
C_{1}\frac{1}{r}\dashint_{\partial B_{r}(x_{0})}u_{*}+C_{2}rM<\beta_{0}(1,m,rM).
\]
Consequently, from \eqref{eq:Poisson5-1} we have 
\[
u_{*}<\beta_{0}(r,m,M)\equiv r\beta_{0}(1,m,rM)\quad\text{on }\partial B_{\frac{r}{2}}.
\]
Since we consider $r>0$ is small, then we know that $w\in\mathbb{K}(\Omega)$
and it is close to $u_{*}$ in sense of \eqref{eq:local-minimizer-metric}.
Since $u_{*}$ is a local minimizer of $\mathcal{J}_{f,g,\lambda,\Omega}$
in $\mathbb{K}(\Omega)$, then by Lemma~\ref{lem:zero-freq} we know
that it is also a local minimizer of $\mathcal{J}_{\tilde{f},g,0,\Omega}$
in $\mathbb{K}(\Omega)$. Hence we know that 
\begin{equation}
\mathcal{J}_{r}(u_{*})\le\mathcal{J}_{r}(w)\equiv\mathcal{J}_{r}(\min\{u_{*},v_{\beta_{0}}\}).\label{eq:vanishing-sufficient1}
\end{equation}
Since $v_{\beta_{0}}$ is a minimizer of $\tilde{\mathcal{J}}_{r}$
in $\mathbb{K}_{\beta_{0}}$, then 
\begin{equation}
\tilde{\mathcal{J}}_{r}(v_{\beta_{0}})\le\tilde{\mathcal{J}}_{r}(\max\{u_{*},v_{\beta_{0}}\}).\label{eq:vanishing-sufficient2}
\end{equation}
Combining \eqref{eq:vanishing-sufficient1} and \eqref{eq:vanishing-sufficient2},
we have 
\begin{equation}
\mathcal{J}_{r}(u_{*})+\tilde{\mathcal{J}}_{r}(v_{\beta_{0}})\le\mathcal{J}_{r}(\min\{u_{*},v_{\beta_{0}}\})+\tilde{\mathcal{J}}_{r}(\max\{u_{*},v_{\beta_{0}}\}).\label{eq:vanishing-sufficient3}
\end{equation}
Following the proof of Proposition~\ref{prop:comparison}, we can
show that 
\begin{equation}
\mathcal{J}_{r}(\min\{u_{1},u_{2}\})+\tilde{\mathcal{J}}_{r}(\max\{u_{1},u_{2}\})\le\mathcal{J}_{r}(u_{1})+\tilde{\mathcal{J}}_{r}(u_{2})\label{eq:comparison-small}
\end{equation}
for any $u_{1},u_{2}\in H^{1}(B_{\frac{r}{2}})$ with $u_{1}\ge0$
and $u_{2}\ge0$. Combining \eqref{eq:vanishing-sufficient3} and
\eqref{eq:comparison-small}, we obtain 
\[
\mathcal{J}_{r}(u_{*})+\tilde{\mathcal{J}}_{r}(v_{\beta_{0}})=\mathcal{J}_{r}(\min\{u_{*},v_{\beta_{0}}\})+\tilde{\mathcal{J}}_{r}(\max\{u_{*},v_{\beta_{0}}\}).
\]
Since $v_{\beta_{0}}$ be the largest minimizer of $\tilde{\mathcal{J}}_{r}$
in $\mathbb{K}_{\beta_{0}}$, then $\max\{u_{*},v_{\beta_{0}}\}\le v_{\beta_{0}}$,
which implies $0\le u_{*}\le v_{\beta_{0}}$ in $B_{\frac{r}{2}}$.
Combining this with \eqref{eq:v-beta-vanishing}, we conclude \eqref{eq:vanishing-sufficient-condition}
with $C=2C_{1}^{-1}\beta_{0}(1,m,rM)$. 
\end{proof}

The following lemma concerns the density of the boundary of  $\{u_{*}>0\}$. 

\begin{lem}
\label{lem:density} Let $\Omega$ be a bounded open set in $\mathbb{R}^{n}$ with $C^{1}$ boundary and $0 \le \lambda<\lambda^{*}(\Omega)$. Let $f,g\in L^{\infty}(\Omega)$ be such that $g\ge0$ and $g^{2}\in H^{1,1}(\Omega)$. Suppose that $u_{*}$ is a local minimizer of $\mathcal{J}_{f,g,\lambda,\Omega}$ in $\mathbb{K}(\Omega)$. If $g\ge c>0$ in a neighborhood of a point $x_{0}\in\partial\{u_{*}>0\}$, then there exist constants $c_{1}$ and $c_{2}$ such that 
\begin{equation}
0<c_{1}\le\frac{|B_{r}(x_{0})\cap\{u_{*}>0\}|}{|B_{r}(x_{0})|}\le c_{2}<1\quad\text{for all sufficiently small }r>0.\label{eq:density-boundary-point}
\end{equation}
\end{lem}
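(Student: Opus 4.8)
The plan is to establish the two inequalities in \eqref{eq:density-boundary-point} separately, each by a comparison argument that exploits the Lipschitz regularity of $u_*$ (Proposition~\ref{prop:Lipschitz-regularity-minimizers}) together with the non-degeneracy encoded in Lemma~\ref{lem:vanishing-sufficient-condition}. Throughout we work in a ball $B_{r_0}(x_0)$ on which $g\ge c>0$, and we may assume $x_0=0$ and that $r$ is small enough that all the qualitative statements of the previous propositions apply; recall also that by Proposition~\ref{prop:regularity-minimizer} the quantity $\|f+\lambda u_*\|_{L^\infty}$ is finite, so $r\,\|(f+\lambda u_*)_+\|_{L^\infty(B_r)}$ is as small as we like for $r$ small.

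\emph{Lower bound.} Since $0\in\partial\{u_*>0\}$, there are points of $\{u_*>0\}$ arbitrarily close to $0$. Pick $y\in B_{r/4}(0)\cap\{u_*>0\}$ with $\mathsf d(y):={\rm dist}(y,\mathbb R^n\setminus\{u_*>0\})$ comparable to $r$ — more precisely, by moving along a segment from $0$ into $\{u_*>0\}$ one finds $y$ with, say, $\mathsf d(y)\ge r/8$ after shrinking constants; alternatively, use that $\{u_*>0\}\cap B_{r}$ is nonempty and open and pick an interior point far from the complement. On the ball $B_{\mathsf d(y)}(y)\subset\{u_*>0\}$ we use the interior non-degeneracy from Remark~\ref{rem:distinguishability} (valid because $g\ge c>0$ near $0$): it gives $u_*(y)\ge C\,\mathsf d(y)$ for a fixed $C>0$. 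Combined with the Lipschitz bound $|\nabla u_*|\le C_n(\|g\|_{L^\infty}+\|f+\lambda u_*\|_{L^\infty}\mathsf d)\le C'$ on $B_r$, this forces $u_*\ge c'\,r$ on a ball of radius $c''r$ around $y$, hence $|\{u_*>0\}\cap B_r(0)|\gtrsim r^n$, which is exactly the lower bound $c_1\le |B_r\cap\{u_*>0\}|/|B_r|$. (An equivalent route: apply the contrapositive of \eqref{eq:vanishing-sufficient-condition} at a suitably chosen center to get $\frac1\rho\dashint_{\partial B_\rho}u_*>C$, then integrate.)

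\emph{Upper bound.} For the reverse inequality we show $|\{u_*=0\}\cap B_r(0)|\gtrsim r^n$. Suppose instead that $|\{u_*>0\}\cap B_r|/|B_r|$ were close to $1$. Since $u_*$ is Lipschitz with $u_*(0)=0$ (as $0\in\partial\{u_*>0\}$ and $u_*\in C(\Omega)$ vanishes on $\mathbb R^n\setminus\{u_*>0\}$), we have $u_*\le C_n\,\mathrm{Lip}\cdot r$ on $B_r$ and more usefully the continuity estimate \eqref{eq:continuous-near-free-boundary} gives good control of $u_*$ near the free boundary; this will let us bound $\frac1r\dashint_{\partial B_r}u_*$. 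The key is to choose a scale at which $\frac1r\dashint_{\partial B_r(x_0)}u_*\le C$ with $C$ the threshold constant of Lemma~\ref{lem:vanishing-sufficient-condition}, from which that lemma yields $u_*\equiv0$ on $B_{r/4}$, contradicting $0\in\partial\{u_*>0\}$. To arrange the needed smallness, one uses a dichotomy/iteration on dyadic radii: if at scale $r$ the mean $\frac1r\dashint_{\partial B_r}u_*$ is not below the threshold, then by the lower-bound non-degeneracy just proved $u_*$ is bounded below on a definite portion of $B_r$, forcing $|\{u_*>0\}\cap B_r|$ to occupy a definite fraction; running this together with the Lipschitz upper bound on $u_*$ and a standard comparison of the measures of $\{u_*>0\}$ at scales $r$ and $2r$ produces the uniform upper density bound $c_2<1$. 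Concretely this is the argument of \cite[Lemma~2.9]{GS96FreeBoundaryPotential} applied with $f$ replaced by $f+\lambda u_*$, legitimate since $u_*$ is also a local minimizer of $\mathcal J_{f+\lambda u_*,g,0,\Omega}$ by Lemma~\ref{lem:zero-freq}.

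\emph{Main obstacle.} The lower bound is essentially immediate from Remark~\ref{rem:distinguishability} plus the Lipschitz bound. The real work is the upper bound: one must produce a scale $\rho\sim r$ (or a point $x$ near $0$ and radius $\rho\sim r$) at which the mean of $u_*$ over $\partial B_\rho$ falls below the threshold $C$ of Lemma~\ref{lem:vanishing-sufficient-condition}, because if no such scale existed, non-degeneracy would keep $u_*$ bounded below on a fixed fraction of every small ball and this must be shown to be incompatible with $0$ being a free boundary point together with the Lipschitz growth $u_*=O(\rho)$. Balancing the non-degeneracy constant against the Lipschitz constant, and checking that $\beta_0(1,m,rM)$ in Lemma~\ref{lem:vanishing-sufficient-condition} stays bounded below as $r\to0$ (it does, by continuity of $\beta_0(1,m,\cdot)$ and $\beta_0(1,m,0)>0$), is where one has to be careful; but all the needed estimates are already assembled in Lemma~\ref{lem:sufficient-continuity}, Remark~\ref{rem:distinguishability}, Lemma~\ref{lem:vanishing-sufficient-condition} and Proposition~\ref{prop:Lipschitz-regularity-minimizers}, so the argument reduces to the corresponding one in \cite{GS96FreeBoundaryPotential} with the harmless substitution $f\mapsto f+\lambda u_*$.
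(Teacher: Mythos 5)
Your lower-bound sketch is workable in its \emph{alternative} form (contrapositive of Lemma~\ref{lem:vanishing-sufficient-condition} to get $\frac{1}{r}\dashint_{\partial B_{r/2}} u_* \ge c$, hence a point $y\in\partial B_{r/2}$ with $u_*(y)\ge cr$, then \eqref{eq:Poisson6} plus Lemma~\ref{lem:sufficient-continuity} to get $B_{\kappa r}(y)\subset\{u_*>0\}$, which is the paper's route); but the primary route you give (``moving along a segment from $0$ into $\{u_*>0\}$ one finds $y$ with $\mathsf d(y)\ge r/8$'') is not justified --- without non-degeneracy, the positivity set could a priori be an arbitrarily thin tendril near $x_0$, so you cannot simply ``pick a point far from the complement.''

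The real gap is in the upper bound. Your plan is a contradiction scheme: find a scale at which $\frac{1}{r}\dashint_{\partial B_r}u_*$ falls below the threshold of Lemma~\ref{lem:vanishing-sufficient-condition} and conclude $u_*\equiv 0$ on $B_{r/4}$, contradicting $x_0\in\partial\{u_*>0\}$. But such a scale never exists --- indeed the paper proves $\dashint_{\partial B_r}u_*\ge cr$ at \emph{every} small $r$ as part of the non-degeneracy, so the vanishing criterion never triggers and the contradiction never arrives; the subsequent ``dichotomy on dyadic radii'' plus ``comparison at scales $r$ and $2r$'' only re-derives the lower bound and does not close. What you are missing is the comparison of $u_*$ with its replacement $v$ solving $\Delta v=-\tilde f$ in $B_r$, $v=u_*$ on $\partial B_r$ (from the proof of Lemma~\ref{lem:sufficient-continuity}). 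The minimality estimate \eqref{eq:Poisson0-1},
\[
\int_{B_r}|\nabla(u_*-v)|^2\,dx \le |\{u_*=0\}\cap B_r|\,\sup_{B_r}g^2,
\]
combined with Poincar\'e, gives $|\{u_*=0\}\cap B_r|\ge \frac{c}{r^2}\int_{B_r}|u_*-v|^2$. One then shows $v-u_*\ge c'r$ on $B_{\kappa r}$: the sub-mean-value inequality \eqref{eq:Poisson-v} bounds $v$ from below by $\dashint_{\partial B_r}u_*\gtrsim r$ on $B_{\kappa r}$, while the Lipschitz bound (Proposition~\ref{prop:Lipschitz-regularity-minimizers}) with $u_*(x_0)=0$ forces $u_*\lesssim\kappa r$ there. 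That yields $|\{u_*=0\}\cap B_r|\gtrsim \kappa^n|B_r|$, i.e.\ $c_2<1$. None of this appears in your write-up, and your stated route would not produce it.
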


\begin{rem}
\label{rem:boundary-measure} For each (Lebesgue) measurable set $E\subset\mathbb{R}^{n}$,
it is well-known that 
\[
\lim_{r\rightarrow0}\frac{|B_{r}(x_{0})\cap E|}{|B_{r}(x_{0})|}=\begin{cases}
1 & \text{for a.e. }x\in E,\\
0 & \text{for a.e. }x\in\mathbb{R}^{n}\setminus E.
\end{cases}
\]
Therefore, the \emph{measure theoretic boundary} $\partial_{{\rm mes}}E$
of $E$ is defined to be the set of points $x\in\mathbb{R}^{n}$ such
that both $E$ and $\mathbb{R}^{n}\setminus E$ have positive upper
Lebesgue density at $x$. In particular, $\partial_{{\rm mes}}E\subset\partial E$.
As an immediate corollary of Lemma~\ref{lem:density}, we know that
\labeltext{$\partial_{\rm mes} E$ measure theoretic boundary}{index:mes-boundary}
\[
\partial\{u_{*}>0\}\cap\{g>0\}=\partial_{{\rm mes}}\{u_{*}>0\}\cap\{g>0\}.
\]
\end{rem}

\begin{proof}
[Proof of Lemma~{\rm \ref{lem:density}}] Without loss of generality,
we may assume that $x_{0}=0$. Using Remark~\ref{rem:distinguishability},
we know that there exists a point $y\in\partial B_{\frac{r}{2}}$
and a constant $c>0$ such that with $u_{*}(y)\ge cr$. Using \eqref{eq:Poisson6}
on $B_{\kappa r}(y)\subset\Omega$ provided $\kappa>0$ is small,
we have 
\[
\frac{1}{\kappa r}\dashint_{\partial B_{\kappa r}(y)}u_{*}\,dS\ge\frac{1}{\kappa r}u_{*}(y)-\kappa r\frac{\|\tilde{f}_{+}\|_{L^{\infty}(B_{\kappa r}(y))}}{2n}\ge\frac{c}{\kappa}.
\]
Using Lemma~\ref{lem:sufficient-continuity}, we know that $B_{\kappa r}(y)\subset\{u_{*}>0\}\cap B_{r}$
for sufficiently small $\kappa>0$, and hence 
\begin{equation}
\frac{|B_{r}\cap\{u_{*}>0\}|}{|B_{r}|}\ge\frac{|B_{\kappa r}(y)|}{|B_{r}|}=\frac{|B_{\kappa r}|}{|B_{r}|}=\kappa^{n},\label{eq:density-boudary-point-lower-bound}
\end{equation}
which prove the lower bound of \eqref{eq:density-boundary-point}
with $c_{1}=\kappa^{n}$. 

Combining \eqref{eq:density-boudary-point-lower-bound} with Lemma~\ref{lem:vanishing-sufficient-condition},
we know that 
\begin{equation}
\dashint_{\partial B_{r}}u_{*}\ge cr.\label{eq:u-star-lower-bound}
\end{equation}
Now let $v$ be as in \eqref{eq:harmonic-lifting-v}. From \eqref{eq:Poisson0-1}
and Poincar\'{e} inequality, we have 
\begin{equation}
|B_{r}\cap\{u_{*}=0\}|\ge c\int_{B_{r}}|\nabla(u_{*}-v)|^{2}\,dx\ge\frac{c}{r^{2}}\int_{B_{r}}|u_{*}-v|^{2}\,dx\label{eq:density-boundary-point-upper-bound}
\end{equation}
for each sufficiently small $r>0$. By restricting \eqref{eq:Poisson-v}
on $B_{\kappa r}$, we obtain 
\begin{equation}
v(y)\ge\frac{(1-\kappa)}{(1+\kappa)^{n-1}}\bigg(\dashint_{\partial B_{r}}u_{*}\,dS-Cr^{2}\bigg)\quad\text{for all }y\in B_{\kappa r}.\label{eq:v-lower-bound}
\end{equation}
Using Proposition~\ref{prop:Lipschitz-regularity-minimizers} (which
required $g^{2}\in W^{1,1}(\Omega)$), we know that $u_{*}$ is Lipschitz,
hence we know that 
\begin{equation}
u_{*}(y)\le C\kappa r\quad\text{for all }y\in B_{\kappa r},\label{eq:u-star-lipschitz}
\end{equation}
because $0\in\partial\{u_{*}>0\}$. Combining \eqref{eq:u-star-lipschitz}
with \eqref{eq:v-lower-bound} as well as \eqref{eq:u-star-lower-bound},
for all sufficiently small $r>0$ we have 
\[
(v-u_{*})(y)\ge\frac{(1-\kappa)}{(1+\kappa)^{n-1}}\dashint_{\partial B_{r}}u_{*}\,dS-C\kappa r-Cr^{2}\ge(c-C\kappa)r\ge c'r\quad\text{for all }y\in B_{\kappa r},
\]
provided $\kappa>0$ sufficiently small, hence 
\[
|(v-u_{*})|=v-u_{*}\ge c'r\quad\text{in }B_{\kappa r}.
\]
Therefore from \eqref{eq:density-boundary-point-upper-bound}, we
have 
\[
|B_{r}\cap\{u_{*}=0\}|\ge\frac{c}{r^{2}}\int_{B_{\kappa r}}|u_{*}-v|^{2}\,dx\ge\frac{c}{r^{2}}|B_{\kappa r}|(c')^{2}r^{2}=c''|B_{r}|\kappa^{n},
\]
and consequently 
\[
\frac{|B_{r}\cap\{u_{*}>0\}|}{|B_{r}|}=1-\frac{|B_{r}\cap\{u_{*}=0\}|}{|B_{r}|}\le1-\frac{c''|B_{r}|\kappa^{n}}{|B_{r}|}=1-c''\kappa^{n},
\]
which prove the upper bound of \eqref{eq:density-boundary-point}
with $c_{2}=1-c''\kappa^{n}$. 
\end{proof}

\begin{proof}[Proof of Proposition~{\rm \ref{prop:PDE3}}]
Here we only prove the result when $\partial\{g>0\}\cap\Omega\ne\emptyset$,
as the case when $\partial\{g>0\}\cap\Omega=\emptyset$ can be easily
prove using the same idea by omitting some paragraphs. 

\bigskip

\noindent \textbf{Step 1: Initialization.} Using Proposition~\ref{prop:regularity-minimizer},
we know that $\tilde{f}=f+\lambda u_{*}\in L^{\infty}(\Omega)$. Using
\eqref{eq:PDE1a} in Proposition~\ref{prop:PDE1}, we know that $\Delta u_{*}$ is a signed Radon measure in $\Omega$ and $\Delta u_{*}\ge-\tilde{f}\ge-\|\tilde{f}\|_{L^{\infty}(\Omega)}$
in $\Omega$. Then we see that 
\[
\Delta\bigg(u_{*}+\frac{\|\tilde{f}\|_{L^{\infty}(\Omega)}}{2n}|x|^{2}\bigg)=\Delta u_{*}+\|\tilde{f}\|_{L^{\infty}(\Omega)}\ge0\quad\text{in }\Omega.
\]
Since $u_{*}\ge0$ in $\Omega$, using \cite[Lemma~2.16]{GS96FreeBoundaryPotential}, we know that $\Delta u_{*}\ge0$ in $\Omega\setminus\{u_{*}>0\}$. We now define
\[
\rho:=\Delta u_{*}+\tilde{f}\chi_{\{u_{*}>0\}}.
\]
Using \eqref{eq:PDE1b} in Proposition~\ref{prop:PDE1}, we know
that 
\begin{equation}
\rho=0\quad\text{in }\{u_{*}>0\}.\label{eq:lambda0-in-free}
\end{equation}
Clearly, $\rho=0$ in the open set $\Omega\setminus\overline{\{u_{*}>0\}}$,
therefore, we know that 
\[
\rho:=\Delta u_{*}+\tilde{f}\chi_{\{u_{*}>0\}}\text{ is a non-negative Radon measure supported on }\partial\{u_{*}>0\}.
\]
From \eqref{eq:PDE1c}, we further know that 
\begin{equation}
\begin{aligned}
& \rho:=\Delta u_{*}+\tilde{f}\chi_{\{u_{*}>0\}}\text{ is a non-negative Radon measure} \\
& \text{supported on }\partial\{u_{*}>0\}\cap\{g>0\}.
\end{aligned}\label{eq:lambda-positivity}
\end{equation}
For each $x_{0}\in\Omega$, we estimate 
\begin{equation}
\bigg|\int_{B_{r}(x_{0})}\Delta u_{*}\,dx\bigg|\le\int_{\partial B_{r}(x_{0})}|\nabla u_{*}|\,dS\le Cr^{n-1}\sup_{\partial B_{r}(x)}|\nabla u_{*}|\le Cr^{n-1} \label{eq:Upper-bound}
\end{equation}
for all sufficiently small $r>0$, where the last inequality follows from \eqref{eq:Lipschitz-near-free-boundary}
in Proposition~\ref{prop:Lipschitz-regularity-minimizers} and the
assumption $\overline{\{u_{*}>0\}}\subset\Omega$. This shows that
$\Delta u_{*}$, as well as $\lambda$, is absolutely continuous with
respect to $\mathscr{H}^{n-1}$. 

\bigskip

\noindent \textbf{Step 2: Showing that $\rho=0$ on $\partial\{u_{*}>0\}\setminus\{g>0\}$.}
For each $x_{0}\in\partial\{u_{*}>0\}\setminus\{g>0\}$, using \eqref{eq:Lipschitz-near-free-boundary}
in Proposition~\ref{prop:Lipschitz-regularity-minimizers}, we have
\footnote{In particular when $g\equiv0$ in $\Omega$ (i.e. $G\cap\Omega=\emptyset$),
we even can choose $\alpha=1$.}
\begin{equation}
\bigg|\int_{B_{r}(x_{0})}\Delta u_{*}\,dx\bigg|\le Cr^{n-1+\alpha}\quad\text{for all sufficiently small }r>0,\label{eq:laplace-u-estimate2}
\end{equation}
which shows that $\Delta u_{*}$, as well as $\rho$, is absolutely
continuous with respect to $\mathscr{H}^{n-1+\alpha}$ on $\partial\{u_{*}>0\}\setminus\{g>0\}$.
Using the assumption $\mathscr{H}^{n-1+\alpha}(\partial\{g>0\}\cap\Omega)=0$,
we know that 
\begin{equation}
\rho=\Delta u_{*}=0\quad\text{on }\partial\{u_{*}>0\}\cap\partial\{g>0\}.\label{eq:lambda0-case1}
\end{equation}

On the other hand, using \eqref{eq:PDE1a} and \eqref{eq:PDE1c} in
Proposition~\ref{prop:PDE1}, we know that 
\[
-(f+\lambda u_{*})_{+}\le\Delta u_{*}\le-(f+\lambda u_{*})\quad\text{in }\Omega\setminus\overline{\{g>0\}},
\]
and thus $\Delta u_{*}\in L^{\infty}(\Omega\setminus\overline{\{g>0\}})$.
Using \cite[Lemma~A.4 of Chapter~II]{KS00IntroductionVariationalInequalities},
we know that 
\begin{equation}
\rho=\Delta u_{*}=0\quad\text{on }\partial\{u_{*}>0\}\setminus\overline{\{g>0\}}.\label{eq:lambda0-case2}
\end{equation}
Combining \eqref{eq:lambda0-case1} and \eqref{eq:lambda0-case2},
we know that 
\begin{equation}
\rho=0\quad\text{on }\partial\{u_{*}>0\}\setminus\{g>0\}.\label{eq:lambda0-case-combine}
\end{equation}
Next, we want to study the behavior of $\rho$ on $\partial\{u_{*}>0\}\cap\{g>0\}$. 

\bigskip

\noindent \textbf{Step 3: Proving $\{u_{*}>0\}$ has locally finite perimeter
in $\{g>0\}$ and \eqref{eq:boundary-and-reduced-boundary}.} We first
show that there exists a constant $C>0$, depending on $\inf_{B_{r}(x_{0})}g$,
such that 
\begin{equation}
\int_{B_{r}(x_{0})}\Delta u_{*}\,dx\ge Cr^{n-1}\label{eq:Lower-bound}
\end{equation}
for all sufficiently small $r>0$ and for all $x_{0}\in\partial\{u_{*}>0\}\cap\{g>0\}$. 

Let $\Phi_{y}$ be the (positive) Green function for $-\Delta$ in
$B_{r}(x_{0})$ with pole $y\in B_{r}(x_{0})$, that is, 
\[
\begin{cases}
\Delta\Phi_{y}=-\delta_{y} & \text{in }B_{r}(x_{0}),\\
\Phi_{y}\ge0 & \text{in }B_{r}(x_{0}),\\
\Phi_{y}=0 & \text{on }\partial B_{r}(x_{0}).
\end{cases}
\]
Using integration by parts, we can easily see that 
\[
\int_{B_{r}(x_{0})}\Phi_{y}\Delta u_{*}\,dx=-u_{*}(y)+\int_{\partial B_{r}(x_{0})}u_{*}\partial_{-\nu}\Phi_{y}\,dS.
\]
Using Lemma~\ref{lem:sufficient-continuity}, for each sufficiently
small $\kappa>0$, there is a point $y\in\partial B_{\kappa r}(x_{0})$
with 
\[
u_{*}(y)\ge c\kappa r>0,
\]
and since $u_{*}$ is Lipschitz, we have 
\[
u_{*}(y)\le C\kappa r\quad\text{and}\quad u_{*}>0\text{ in }B_{c(\kappa)r}(y)
\]
for some constant $c(\kappa)$. Hence we have 
\begin{equation}
\int_{B_{r}(x_{0})}\Phi_{y}\Delta u_{*}\,dx\ge-u_{*}(y)+c\dashint_{\partial B_{r}(x_{0})}u_{*}\,dS\ge-C\kappa r+cr\ge c'r,\label{eq:Lower-bound-intermediate1}
\end{equation}
which can be done by possibly replacing a smaller $\kappa>0$. 

On the other hand, using \eqref{eq:lambda0-in-free} we know that
$\rho=0$ in $B_{c(\kappa)r}(y)\subset\{u_{*}>0\}$, hence we have
\begin{equation}
\begin{aligned}
& \int_{B_{r}(x_{0})}\Phi_{y}\Delta u_{*}\,dx \\
& \quad =\int_{B_{r}(x_{0})\setminus B_{c(\kappa)r}(y)}\Phi_{y}\lambda\,dx-\int_{B_{r}(x_{0})}\Phi_{y}\tilde{f}\chi_{\{u_{*}>0\}}\,dx \\
 & \quad =\int_{B_{r}(x_{0})\setminus B_{c(\kappa)r}(y)}\Phi_{y}\Delta u_{*}\,dx-\int_{B_{c(\kappa)r}(y)}\Phi_{y}\tilde{f}\,dx \\
 & \quad \le\|\Phi_{y}\|_{L^{\infty}(B_{r}(x_{0})\setminus B_{c(\kappa)r}(y))}\int_{B_{r}(x_{0})}\Delta u_{*}\,dx+\|\tilde{f}\|_{L^{\infty}(\Omega)}\int_{B_{c(\kappa)r}(y)}\Phi_{y}\,dx \\
 & \quad \le C(\kappa)r^{2-n}\int_{B_{r}(x_{0})}\Delta u_{*}\,dx+C(\kappa)r^{2}
\end{aligned}\label{eq:Lower-bound-intermediate2}
\end{equation}
for all sufficiently small $r>0$. Combining \eqref{eq:Lower-bound-intermediate1}
and \eqref{eq:Lower-bound-intermediate2}, we conclude \eqref{eq:Lower-bound}. 

Let $K$ be any compact set in $\partial\{u_{*}>0\}\cap\{g>0\}$.
We now cover $K$ by the balls $B_{r}(x_{0})$ given in \eqref{eq:Lower-bound},
and we know that 
\begin{equation}
\mathscr{H}^{n-1}(K)\le C\int_{K}\Delta u_{*}\,dx\overset{\eqref{eq:Upper-bound}}{<}\infty,
\end{equation}
which shows that $\{u_{*}>0\}$ has locally finite perimeter in $\{g>0\}$.
Combining with Remark~\ref{rem:boundary-measure} and \eqref{eq:reduced-boundary-measure},
we conclude \eqref{eq:boundary-and-reduced-boundary}. 

\bigskip 

\noindent \textbf{Step 4: Sketching the proof of \eqref{eq:PDE3-main}.} Combining
\eqref{eq:boundary-and-reduced-boundary} and \eqref{eq:lambda-positivity},
we see that 
\[
\Delta u_{*}+\tilde{f}\chi_{\{u_{*}>0\}}=h\mathscr{H}^{n-1}\lfloor\partial_{{\rm red}}\{u_{*}>0\}
\]
for some Borel function $h\ge0$ on $\partial_{{\rm red}}\Omega\cap\{g>0\}$.
It just remains to show 
\begin{equation}
h(x_{0})=g(x_{0})\quad\text{for }\mathscr{H}^{n-1}\text{-a.e. }x_{0}\in\partial_{{\rm red}}\{u_{*}>0\}\cap\{g>0\}.\label{eq:to-show-technical1}
\end{equation}
Despite the ideas are virtually same as in \cite[4.7--5.5]{AC81RegularityFreeBoundary},
here we still present the details for reader's convenience. It is
enough to prove \eqref{eq:to-show-technical1} for those $x_{0}\in\partial_{{\rm red}}\{u_{*}>0\}\cap\{g>0\}$
which satisfies 
\begin{subequations}
\begin{align}
\limsup_{r\rightarrow\infty}\frac{\mathscr{H}^{n-1}(B_{r}(x_{0})\cap\partial\{u_{*}>0\})}{\omega_{n-1}r^{n-1}} & \le1\quad\text{(see Lemma \ref{lem:Hausdorff-density})},\label{eq:to-show-technical1-reduction1a}\\
\limsup_{r\rightarrow\infty}\dashint_{\partial\{u_{*}>0\}\cap B_{r}(x)}|h(x)-h(x_{0})|\,dS(x) & =0,\label{eq:to-show-technical1-reduction1b}
\end{align}
\end{subequations}
see \cite[Remark~4.9]{AC81RegularityFreeBoundary}. Without loss of
generalization, we assume that 
\[
x_{0}=0\quad\text{and}\quad\nu_{\{u_{*}>0\}}(0)=e_{n}=(0,\cdots,0,1).
\]

Define the blow-up sequences 
\begin{align*}
& u_{n}(x)=nu_{*}(n^{-1}x),\quad\tilde{f}_{n}(x)=n^{-1}\tilde{f}(n^{-1}x)\\
& g(x)=g(n^{-1}x),\quad h_{n}(x)=h(n^{-1}x),\quad\Omega_{n}=\{u_{n}>0\}.
\end{align*}
Note that $u,f,g$ are scaled according to \cite[Remark~2.7 (with $\alpha = -1$)]{GS96FreeBoundaryPotential}.
It is also easy to see that 
\begin{subequations}
\begin{align}
|\tilde{f}_{n}| & \le C/n,\label{eq:approx-f}\\
\int_{B_{1}}|g(n^{-1}x)-g(0)|\,dx & \rightarrow0,\label{eq:approx-g}\\
\int_{\partial\Omega_{n}\cap B_{1}}|h(n^{-1}x)-h(0)|\,dS(x) & \rightarrow0,\label{eq:approx-h}
\end{align}
\end{subequations}
as $n\rightarrow\infty$. We define the half-space $H:=\{x_{n}<0\}$.
Using \cite[Theorem~1 in Section~5.7.2]{EG15MeasureTheory}, we know that
\begin{equation}
|(\Omega_{n}\triangle H)\cap B_{1}|\rightarrow0\quad\text{as }n\rightarrow\infty,\label{eq:difference-set}
\end{equation}
where $\triangle$ denotes the symmetric difference between the sets.
Using Proposition~\ref{prop:Lipschitz-regularity-minimizers} (together
with the assumption $\overline{\{u_{*}>0\}}\subset\Omega$) and Remark~\ref{rem:distinguishability},
we know that $|\nabla u_{n}|\le C$. It follows that there exists
a Lipschitz continuous limit function $u_{0}\ge0$ such that, for
a subsequence, 
\begin{align*}
u_{n} & \rightarrow u_{0}\quad\text{uniformly in }B_{1},\\
\nabla u_{n} & \rightarrow\nabla u_{0}\quad L^{\infty}(B_{1})\text{-weak}*.
\end{align*}
Using Remark~\ref{rem:distinguishability}, we know that $u_{n}(x)\ge C\,{\rm dist}\,(x,\mathbb{R}^{n}\setminus\Omega_{n})$.
Using \eqref{eq:PDE1b} in Proposition~\ref{prop:PDE1} and \eqref{eq:approx-f},
we know that 
\[
|\Delta u_{n}|=|\tilde{f}_{n}|\le C/n\quad\text{in }\Omega_{n}.
\]
Therefore we know that $u_{0}$ is harmonic in $\Omega_{0}$ and in
particular $\Omega_{n}\cap B_{1}\rightarrow\Omega_{0}\cap B_{1}$
in measure. Therefore, from \eqref{eq:difference-set} we have $|\Omega_{0}\triangle H|=0$.
Since $\Omega_{0}$ is open, then we conclude that 
\[
\Omega_{0}\subset H\quad\text{and}\quad|H\setminus\Omega_{0}|=0.
\]
Using \cite[Theorem~4.8]{AC81RegularityFreeBoundary} together with
\eqref{eq:to-show-technical1-reduction1a} and \eqref{eq:to-show-technical1-reduction1b},
we know that 
\[
\Omega_{0}=H\quad\text{and}\quad u_{0}(x)=h(0)(-x_{n})_{+}.
\]
On the other hand, using \cite[Lemma~5.4]{AC81RegularityFreeBoundary},
we know that $u_{0}$ is a global minimum of 
\[
\mathcal{J}_{0}(v):=\int_{B}(|\nabla v|^{2}+g(0)^{2}\chi_{\{v>0\}})\quad\text{among all }v\in\mathbb{K}(B_{1}).
\]
Therefore, using Proposition~\ref{prop:PDE2}, we conclude $h(0)=g(0)$,
and we complete the proof. 
\end{proof}

\section{\label{appen:computations}Computations related to Bessel functions}

The main purpose of this appendix is to exhibit the details of computation for Lemma~{\rm \ref{lem:radially1}}. 

\begin{proof}[Computations of \eqref{eq:explicit-formula}]

Since the solution space of $u''(r)+\frac{n-1}{r}u'(r)+\lambda u(r)=0$
for $r>0$ is spanned by 
\[
r^{\frac{2-n}{2}}J_{\frac{n-2}{2}}(\sqrt{\lambda}r)\quad\text{and}\quad r^{\frac{2-n}{2}}Y_{\frac{n-2}{2}}(\sqrt{\lambda}r),
\]
then the solution of \eqref{eq:ODE-radially} must satisfies 
\begin{align*}
u(r) & =\frac{b-a}{\lambda}+c_{1}r^{\frac{2-n}{2}}J_{\frac{n-2}{2}}(\sqrt{\lambda}r)\quad\text{for }r\in(0,r_{1}),\\
u(r) & =\frac{b}{\lambda}+c_{2}r^{\frac{2-n}{2}}J_{\frac{n-2}{2}}(\sqrt{\lambda}r)+c_{3}r^{\frac{2-n}{2}}Y_{\frac{n-2}{2}}(\sqrt{\lambda}r)\quad\text{for }r_{1}<r.
\end{align*}
Then we know that 
\begin{align*}
u'(r) & =-c_{1}\sqrt{\lambda}r^{\frac{2-n}{2}}J_{\frac{n}{2}}(\sqrt{\lambda}r)\quad\text{for }r\in(0,r_{1}),\\
u'(r) & =-c_{2}\sqrt{\lambda}r^{\frac{2-n}{2}}J_{\frac{n}{2}}(\sqrt{\lambda}r)-c_{3}\sqrt{\lambda}r^{\frac{2-n}{2}}Y_{\frac{n}{2}}(\sqrt{\lambda}r)\quad\text{for }r_{1}<r.
\end{align*}
Since $u$ is $C^{1}$ at $r_{1}$, and since $\lambda>0$ and $r>0$,
we have 
\begin{align*}
(c_{2}-c_{1})J_{\frac{n-2}{2}}(\sqrt{\lambda}r_{1})+c_{3}Y_{\frac{n-2}{2}}(\sqrt{\lambda}r_{1}) & =-\frac{ar_{1}^{\frac{n-2}{2}}}{\lambda},\\
(c_{2}-c_{1})J_{\frac{n}{2}}(\sqrt{\lambda}r_{1})+c_{3}Y_{\frac{n}{2}}(\sqrt{\lambda}r_{1}) & =0,
\end{align*}
that is, 
\[
\begin{pmatrix}\begin{array}{cc}
J_{\frac{n-2}{2}}(\sqrt{\lambda}r_{1}) & Y_{\frac{n-2}{2}}(\sqrt{\lambda}r_{1})\\
J_{\frac{n}{2}}(\sqrt{\lambda}r_{1}) & Y_{\frac{n}{2}}(\sqrt{\lambda}r_{1})
\end{array}\end{pmatrix}\begin{pmatrix}c_{2}-c_{1}\\
c_{3}
\end{pmatrix}=-\frac{ar_{1}^{\frac{n-2}{2}}}{\lambda}\begin{pmatrix}1\\
0
\end{pmatrix}.
\]
Note that 
\[
\det\begin{pmatrix}\begin{array}{cc}
J_{\frac{n-2}{2}}(\sqrt{\lambda}r_{1}) & Y_{\frac{n-2}{2}}(\sqrt{\lambda}r_{1})\\
J_{\frac{n}{2}}(\sqrt{\lambda}r_{1}) & Y_{\frac{n}{2}}(\sqrt{\lambda}r_{1})
\end{array}\end{pmatrix}=-\frac{2}{\pi\sqrt{\lambda}r_{1}},
\]
then 
\begin{align*}
\begin{pmatrix}c_{2}-c_{1}\\
c_{3}
\end{pmatrix} & =\frac{\pi\sqrt{\lambda}r_{1}}{2}\begin{pmatrix}\begin{array}{cc}
Y_{\frac{n}{2}}(\sqrt{\lambda}r_{1}) & -Y_{\frac{n-2}{2}}(\sqrt{\lambda}r_{1})\\
-J_{\frac{n}{2}}(\sqrt{\lambda}r_{1}) & J_{\frac{n-2}{2}}(\sqrt{\lambda}r_{1})
\end{array}\end{pmatrix}\frac{ar_{1}^{\frac{n-2}{2}}}{\lambda}\begin{pmatrix}1\\
0
\end{pmatrix}\\
 & =\frac{a\pi r_{1}^{\frac{n}{2}}}{2\sqrt{\lambda}}\begin{pmatrix}Y_{\frac{n}{2}}(\sqrt{\lambda}r_{1})\\
-J_{\frac{n}{2}}(\sqrt{\lambda}r_{1})
\end{pmatrix}.
\end{align*}
Hence we know that 
\begin{equation*}
\begin{aligned}
u(r) & =\chi_{\{r<r_{1}\}}\bigg[\frac{b-a}{\lambda}+c_{1}r^{\frac{2-n}{2}}J_{\frac{n-2}{2}}(\sqrt{\lambda}r)\bigg] \\
 & \quad+\chi_{\{r>r_{1}\}}\bigg[\frac{b}{\lambda}+c_{2}r^{\frac{2-n}{2}}J_{\frac{n-2}{2}}(\sqrt{\lambda}r)+c_{3}r^{\frac{2-n}{2}}Y_{\frac{n-2}{2}}(\sqrt{\lambda}r)\bigg] \\
 & =\chi_{\{r<r_{1}\}}\bigg[\frac{b-a}{\lambda}+c_{1}r^{\frac{2-n}{2}}J_{\frac{n-2}{2}}(\sqrt{\lambda}r)\bigg] \\
 & \quad+\chi_{\{r>r_{1}\}}\bigg[\frac{b}{\lambda}+\bigg(c_{1}+\frac{a\pi r_{1}^{\frac{n}{2}}}{2\sqrt{\lambda}}Y_{\frac{n}{2}}(\sqrt{\lambda}r_{1})\bigg)r^{\frac{2-n}{2}}J_{\frac{n-2}{2}}(\sqrt{\lambda}r) \\
 & \qquad -\frac{a\pi r_{1}^{\frac{n}{2}}}{2\sqrt{\lambda}}J_{\frac{n}{2}}(\sqrt{\lambda}r_{1})r^{\frac{2-n}{2}}Y_{\frac{n-2}{2}}(\sqrt{\lambda}r)\bigg] \\
 & =\frac{b-a}{\lambda}+c_{1}r^{\frac{2-n}{2}}J_{\frac{n-2}{2}}(\sqrt{\lambda}r) \\
 & \quad+\chi_{\{r>r_{1}\}}\bigg[\frac{a}{\lambda}+\frac{a\pi r_{1}^{\frac{n}{2}}}{2\sqrt{\lambda}}r^{\frac{2-n}{2}}\bigg(Y_{\frac{n}{2}}(\sqrt{\lambda}r_{1})J_{\frac{n-2}{2}}(\sqrt{\lambda}r)-J_{\frac{n}{2}}(\sqrt{\lambda}r_{1})Y_{\frac{n-2}{2}}(\sqrt{\lambda}r)\bigg)\bigg],
\end{aligned}
\end{equation*}
which conclude \eqref{eq:explicit-formula} and 
\begin{equation}
\begin{aligned}
u'(r) & =-c_{1}\sqrt{\lambda}r^{\frac{2-n}{2}}J_{\frac{n}{2}}(\sqrt{\lambda}r) \\
 & \quad-\chi_{\{r>r_{1}\}}\frac{a\pi r_{1}^{\frac{n}{2}}}{2}r^{\frac{2-n}{2}}\left(Y_{\frac{n}{2}}(\sqrt{\lambda}r_{1})J_{\frac{n}{2}}(\sqrt{\lambda}r)-J_{\frac{n}{2}}(\sqrt{\lambda}r_{1})Y_{\frac{n}{2}}(\sqrt{\lambda}r)\right).
\end{aligned}\label{eq:explicit-formula-derivative}
\end{equation}
\end{proof}

\begin{proof}[Proof of \eqref{eq:ODE-radially-BC}]
Since $J_{\frac{n}{2}}(\sqrt{\lambda}r)$ is non-negative on $(0,R)$
by the assumption on $\lambda$, we deduce that $u'$ has constant
sign on $(0,r_{1})$. We see that 
\begin{equation}
u'(r)=\overbrace{r^{\frac{2-n}{2}}J_{\frac{n}{2}}(\sqrt{\lambda}r)}^{>0}\bigg[-c_{1}\sqrt{\lambda}-\frac{a\pi r_{1}^{\frac{n}{2}}}{2}\bigg(Y_{\frac{n}{2}}(\sqrt{\lambda}r_{1})-J_{\frac{n}{2}}(\sqrt{\lambda}r_{1})\frac{Y_{\frac{n}{2}}(\sqrt{\lambda}r)}{J_{\frac{n}{2}}(\sqrt{\lambda}r)}\bigg)\bigg] \label{eq:explicit-formula-derivative2}
\end{equation}
for all $r > r_{1}$. Since 
\[
\frac{\partial}{\partial r}\bigg(\frac{Y_{\frac{n}{2}}(\sqrt{\lambda}r)}{J_{\frac{n}{2}}(\sqrt{\lambda}r)}\bigg)=\frac{2}{\pi rJ_{\frac{n}{2}}(\sqrt{\lambda}r)^{2}}>0\quad\text{for }r\in(0,R)\subset(0,j_{\frac{n-2}{2},1}\lambda^{-\frac{1}{2}}),
\]
we deduce that there is at most one point $r'\in(r_{1},R)$ where
$u'(r')=0$ and $u'$ is negative on $(r_{1},r')$ and positive on
$(r',R)$ (not excluding the possibility that $r'=r_{1}$ or $r=R$).
This implies that $u$ can at most have two zeros in $(0,R]$. Since
$u$ has at least one zero in $(0,R]$, then we have the following
cases: 
\begin{enumerate}[leftmargin = 25pt]
\renewcommand{\labelenumi}{\theenumi}
\renewcommand{\theenumi}{{\rm (\arabic{enumi})}}
\item If $u$ has exactly one zero $0<\rho_{0}\le R$, then $u'(\rho_{0})\le0$. 
\item If $u$ has exactly two zeros $0<\rho_{1}<\rho_{2}\le R$, then $u'(\rho_{1})\le0$
and $u'(\rho_{2})\ge0$. In this case, we choose $\rho_{0}=\rho_{1}$. 
\end{enumerate}
In either case, we conclude \eqref{eq:ODE-radially-BC}. 
\end{proof}

\begin{proof}[Proof of \eqref{eq:rho-special}]
Plugging
$u(\rho)=0$ in \eqref{eq:explicit-formula}, we have 
\begin{equation*}
\begin{aligned}
0 & =\frac{b-a}{\lambda}+c_{1}\rho^{\frac{2-n}{2}}J_{\frac{n-2}{2}}(\sqrt{\lambda}\rho)\\
 & \quad+\chi_{\{r>r_{1}\}}\bigg[\frac{a}{\lambda}+\frac{a\pi r_{1}^{\frac{n}{2}}}{2\sqrt{\lambda}}\rho^{\frac{2-n}{2}}\bigg(Y_{\frac{n}{2}}(\sqrt{\lambda}r_{1})J_{\frac{n-2}{2}}(\sqrt{\lambda}\rho)-J_{\frac{n}{2}}(\sqrt{\lambda}r_{1})Y_{\frac{n-2}{2}}(\sqrt{\lambda}\rho)\bigg)\bigg].
\end{aligned}
\end{equation*}
We now show that $\rho>r_{1}$. Suppose the contrary, that $\rho\le r_{1}$.
From \eqref{eq:explicit-formula} we have 
\[
c_{1}=\frac{a-b}{\lambda}\frac{1}{\rho^{\frac{2-n}{2}}J_{\frac{n-2}{2}}(\sqrt{\lambda}\rho)}>0.
\]
From \eqref{eq:explicit-formula-derivative} and \eqref{eq:ODE-radially-BC},
we know that 
\begin{equation}
-g(\rho)=u'(\rho)=-\frac{a-b}{\sqrt{\lambda}}\frac{J_{\frac{n}{2}}(\sqrt{\lambda}\rho)}{J_{\frac{n-2}{2}}(\sqrt{\lambda}\rho)}<0,\label{eq:derivativ1}
\end{equation}
this contradicts  the assumption $g=0$ in $\overline{B_{r_{1}}}$.
Therefore,  $\rho>r_{1}$, and hence we know that $\{\tilde{u} > 0\}\supset\overline{B_{r_{1}}}$. 

From \eqref{eq:explicit-formula} we have 
\begin{align*}
0 & =\frac{b}{\lambda}+c_{1}\rho^{\frac{2-n}{2}}J_{\frac{n-2}{2}}(\sqrt{\lambda}\rho)\\
 & \quad+\frac{a\pi r_{1}^{\frac{n}{2}}}{2\sqrt{\lambda}}\rho^{\frac{2-n}{2}}\bigg(Y_{\frac{n}{2}}(\sqrt{\lambda}r_{1})J_{\frac{n-2}{2}}(\sqrt{\lambda}\rho)-J_{\frac{n}{2}}(\sqrt{\lambda}r_{1})Y_{\frac{n-2}{2}}(\sqrt{\lambda}\rho)\bigg)\\
 & =\frac{b}{\lambda}+\rho^{\frac{2-n}{2}}J_{\frac{n-2}{2}}(\sqrt{\lambda}\rho)\bigg[c_{1}+\frac{a\pi r_{1}^{\frac{n}{2}}}{2\sqrt{\lambda}}Y_{\frac{n}{2}}(\sqrt{\lambda}r_{1})\bigg]-\frac{a\pi r_{1}^{\frac{n}{2}}}{2\sqrt{\lambda}}\rho^{\frac{2-n}{2}}J_{\frac{n}{2}}(\sqrt{\lambda}r_{1})Y_{\frac{n-2}{2}}(\sqrt{\lambda}\rho),
\end{align*}
which implies 
\[
-c_{1}\sqrt{\lambda}=\frac{b}{\sqrt{\lambda}}\frac{1}{\rho^{\frac{2-n}{2}}J_{\frac{n-2}{2}}(\sqrt{\lambda}\rho)}-\frac{a\pi r_{1}^{\frac{n}{2}}}{2}J_{\frac{n}{2}}(\sqrt{\lambda}r_{1})\frac{Y_{\frac{n-2}{2}}(\sqrt{\lambda}\rho)}{J_{\frac{n-2}{2}}(\sqrt{\lambda}\rho)}+\frac{a\pi r_{1}^{\frac{n}{2}}}{2}Y_{\frac{n}{2}}(\sqrt{\lambda}r_{1}).
\]
From \eqref{eq:explicit-formula-derivative2} we have 
\begin{align*}
u'(\rho) & =\rho^{\frac{2-n}{2}}J_{\frac{n}{2}}(\sqrt{\lambda}\rho)\bigg[\frac{b}{\sqrt{\lambda}}\frac{1}{\rho^{\frac{2-n}{2}}J_{\frac{n-2}{2}}(\sqrt{\lambda}\rho)}-\frac{a\pi r_{1}^{\frac{n}{2}}}{2}J_{\frac{n}{2}}(\sqrt{\lambda}r_{1})\overbrace{\bigg(\frac{Y_{\frac{n-2}{2}}(\sqrt{\lambda}\rho)}{J_{\frac{n-2}{2}}(\sqrt{\lambda}\rho)}-\frac{Y_{\frac{n}{2}}(\sqrt{\lambda}\rho)}{J_{\frac{n}{2}}(\sqrt{\lambda}\rho)}\bigg)}^{=\,\frac{2}{\pi\sqrt{\lambda}\rho J_{\frac{n-2}{2}}(\sqrt{\lambda}\rho)J_{\frac{n}{2}}(\sqrt{\lambda}\rho)}}\bigg]\\
 & =\rho^{\frac{2-n}{2}}J_{\frac{n}{2}}(\sqrt{\lambda}\rho)\bigg[\frac{b}{\sqrt{\lambda}}\frac{1}{\rho^{\frac{2-n}{2}}J_{\frac{n-2}{2}}(\sqrt{\lambda}\rho)}-\frac{a\pi r_{1}^{\frac{n}{2}}}{2}J_{\frac{n}{2}}(\sqrt{\lambda}r_{1})\frac{2}{\pi\sqrt{\lambda}\rho J_{\frac{n-2}{2}}(\sqrt{\lambda}\rho)J_{\frac{n}{2}}(\sqrt{\lambda}\rho)}\bigg]\\
 & =\frac{b\rho^{\frac{n}{2}}J_{\frac{n}{2}}(\sqrt{\lambda}\rho)-ar_{1}^{\frac{n}{2}}J_{\frac{n}{2}}(\sqrt{\lambda}r_{1})}{\rho^{\frac{n}{2}}\sqrt{\lambda}J_{\frac{n-2}{2}}(\sqrt{\lambda}\rho)}.
\end{align*}
Since $\rho^{\frac{n}{2}}\sqrt{\lambda}J_{\frac{n-2}{2}}(\sqrt{\lambda}\rho)>0$,
then the sign of $u'(\rho)$ is the same as that of 
\[
\phi(\rho)\equiv\frac{b}{a}-\frac{r_{1}^{\frac{n}{2}}J_{\frac{n}{2}}(\sqrt{\lambda}r_{1})}{\rho^{\frac{n}{2}}J_{\frac{n}{2}}(\sqrt{\lambda}\rho)}.
\]
Note that $\phi:[r_{1},R]\rightarrow\mathbb{R}$ is monotone increasing,
and $\phi(r_{1})=\frac{b-a}{a}<0$. Let $R'>r_{1}$ as in \eqref{eq:R-prime-choice},
then $\phi(R')=0$ as well as $u'(R')=0$. Since $u$ is monotone
decreasing on $(0,R')$, then we conclude \eqref{eq:rho-special}.
\end{proof}

\section{\label{sec:Real-analytic-case}Examples of hybrid \texorpdfstring{$k$}{k}-quadrature domains with real-analytic boundary}

In this appendix we construct some examples of hybrid $k$-quadrature domains using the Cauchy-Kowalevski theorem. We extend \cite[Theorems~3.1 and 3.3]{GSS97QuadratureDomains}
in the following theorems:

\begin{thm}
\label{thm:main-CK1} Let  $D$ be a bounded
domain in $\mathbb{R}^{n}$ with real-analytic boundary. Let $g$
be real analytic on a neighborhood of $\partial D$ with $g>0$ on
$\partial D$. For each $k\ge0$, there exists a bounded positive measure $\mu_{1}$ with ${\rm supp}\,(\mu_{1})\subset D$ such that $D$ is a hybrid $k$-quadrature domain corresponding to $\mu_{1}$ with  density $(g,0)$. 
\end{thm}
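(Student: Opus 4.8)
The plan is to produce the measure $\mu_1$ by solving a Cauchy problem for the Helmholtz operator $-(\Delta+k^2)$ in a collar neighborhood of $\partial D$ on the \emph{outside} of $D$, and then transplanting the resulting solution into a global distribution whose singular support lies inside $D$. First I would fix a tubular neighborhood $N$ of $\partial D$ and real-analytic coordinates near $\partial D$; since $\partial D$ and $g$ are real analytic, the Cauchy--Kowalevski theorem gives a real-analytic function $w$, defined in a (possibly smaller) neighborhood $N' \subset N$ of $\partial D$, solving
\begin{equation*}
(\Delta + k^2) w = 0 \text{ in } N', \qquad w|_{\partial D} = 0, \qquad \partial_\nu w|_{\partial D} = -g,
\end{equation*}
where $\nu$ is the outward unit normal; here the Cauchy data $(0,-g)$ are non-characteristic for the elliptic operator so the theorem applies, and the solution is unique by Holmgren. (The sign is chosen so that $|\nabla w| = g$ on $\partial D$ and the jump in the normal derivative matches the single-layer potential $g\,\mathscr H^{n-1}\lfloor\partial D$, exactly as in Proposition~\ref{prop:PDE2} and Remark~\ref{rem:hybrid-QD-Lipschitz}.) Shrinking $N'$, we may assume $w<0$ throughout $N' \setminus \overline D$ (since $w$ vanishes on $\partial D$ with inward-pointing gradient), which will matter only if one also wants positivity of $\mu_1$.

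Next I would assemble a global distribution $u$ on $\mathbb R^n$. Let $\chi \in C_c^\infty(N')$ with $\chi \equiv 1$ near $\partial D$, and set
\begin{equation*}
u := \begin{cases} \chi\, w & \text{in } \mathbb R^n \setminus D,\\[2pt] 0 & \text{in } D, \end{cases}
\end{equation*}
extended by $0$ outside $N'$. Since $w$ and $\nabla w$ vanish on $\partial D$, the function $u$ is $C^1$ across $\partial D$ (indeed $C^{1,1}$, and better), so $u \in C_c^{1,1}(\mathbb R^n) \subset \mathscr E'(\mathbb R^n)$, with $u = |\nabla u| = 0$ in $D$ and $u$ agreeing with $\chi w$ near $\partial D$. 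Define $\mu_1 := -(\Delta + k^2)u \in \mathscr E'(\mathbb R^n)$. By construction $(\Delta+k^2)u = 0$ in a neighborhood of $\partial D$ (where $u = w$), and $(\Delta+k^2)u = 0$ in $D$ (where $u = 0$); hence $\supp(\mu_1) \subset N' \setminus \overline{\{\chi \equiv 1\}} \subset \mathbb R^n \setminus \overline D$ at first glance — but this is backwards, so the key correction is to reflect the construction: one must instead solve the Cauchy problem on the \emph{inside} collar of $\partial D$ and cut off so that the support of $\mu_1$ is pushed into the interior of $D$. Concretely, choose the collar $N' \cap D$, let $\chi$ be supported in $D$ with $\chi \equiv 1$ on a smaller collar of $\partial D$, put $u = \chi w$ in $D$ and $u = 0$ in $\mathbb R^n \setminus D$; then $\supp\mu_1 \subset \{\chi \not\equiv 1\} \cap D$, a compact subset of $D$, so $\mu_1 \in \mathscr E'(D)$ with $\supp(\mu_1)\subset D$.

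With $u \in \mathscr E'(\mathbb R^n)$, $u = |\nabla u| = 0$ outside $D$, and $(\Delta+k^2)u = -\mu_1$ globally, the usual convolution identity with any fundamental solution $\Psi_k$ of $-(\Delta+k^2)$ gives, as in the proof of Lemma~\ref{lem:PDE4},
\begin{equation*}
u = \delta_0 * u = -\big((\Delta+k^2)\Psi_k\big)*u = -\Psi_k * (\Delta+k^2)u = \Psi_k * \mu_1 - \Psi_k*(g\,\mathscr H^{n-1}\lfloor\partial D),
\end{equation*}
where the last equality uses that the distribution $(\Delta+k^2)u$, computed across $\partial D$, equals $-\mu_1 + g\,\mathscr H^{n-1}\lfloor\partial D$ by the jump relation for the normal derivative (Green's formula / Remark~\ref{rem:hybrid-QD-Lipschitz}, using $u|_{\partial D}=0$ and $\partial_\nu^- u - \partial_\nu^+ u = -\partial_\nu w = g$). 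Restricting to $\mathbb R^n \setminus D$ where $u \equiv 0$ yields $\Psi_k * \mu_1 = \Psi_k*(g\,\mathscr H^{n-1}\lfloor\partial D)$ there, i.e. \eqref{eq:weighted-QD} with $h=0$, $\sigma = g\,\mathscr H^{n-1}\lfloor\partial D$; and since $\partial D$ is real-analytic (hence Lipschitz) and $g \in L^\infty(\partial D)$, the single-layer potential $\Psi_k*(g\,\mathscr H^{n-1}\lfloor\partial D)$ is pointwise well-defined on $\partial D$, so \eqref{eq:well-defined-singular-integral} holds. Finally, positivity of $\mu_1$: by the Hopf lemma $\partial_\nu w < 0$ is consistent with $w>0$ just inside $D$, and on the region where $\chi$ transitions one computes $(\Delta+k^2)u = (\Delta+k^2)(\chi w) = w\Delta\chi + 2\nabla\chi\cdot\nabla w$; this need not have a sign in general, so to get a \emph{positive} $\mu_1$ one instead chooses $\Psi_k$-harmonic data adapted so that $w$ extends to a solution vanishing on a whole interior level set — equivalently, one applies the construction in \cite[Theorem~3.1]{GSS97QuadratureDomains} verbatim with $\Delta$ replaced by $\Delta+k^2$, the only change being that Bessel-type fundamental solutions replace the Newtonian kernel and $j_{(n-2)/2,1}$-type thresholds are not needed when $g$ alone drives the construction. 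The main obstacle, and the step requiring the most care, is ensuring $\mu_1 \ge 0$: the cutoff term $w\Delta\chi + 2\nabla\chi\cdot\nabla w$ must be absorbed, which is handled exactly as in \cite{GSS97QuadratureDomains} by first solving the Cauchy problem globally enough (on a full interior neighborhood, not merely a thin collar) so that $w > 0$ on the inner boundary of the collar and the cutoff can be taken in a region where $w$ and its relevant derivatives are controlled; I would cite \cite[Theorem~3.1]{GSS97QuadratureDomains} for the technical packaging and indicate that the substitution $\Delta \rightsquigarrow \Delta+k^2$ does not affect the argument since both operators have real-analytic coefficients and the Cauchy--Kowalevski/Holmgren machinery is insensitive to the zeroth-order term.
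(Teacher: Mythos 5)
Your proposal correctly identifies the two ingredients --- a Cauchy--Kowalevski solution of $(\Delta+k^2)v=0$ near $\partial D$ with data $v=0$, $\partial_\nu v=-g$, and the construction of \cite{GSS97QuadratureDomains} --- but your actual construction (cut $v$ off inside $D$ and set $\mu_1 := -(\Delta+k^2)(\chi v)$) is not the one the paper uses, and as you yourself observe it does not yield a positive measure: the transition terms $v\Delta\chi+2\nabla\chi\cdot\nabla v$ have no sign. Your proposed repair --- ``absorb the cutoff term as in \cite{GSS97QuadratureDomains}'' --- misdescribes the reference: no cutoff appears there at all. The paper instead applies \cite[Proposition~3.2]{GSS97QuadratureDomains} to the Cauchy--Kowalevski solution to obtain a region $W\subset D$ bounded by $\partial D$ and an interior level set $\partial D'=\{v=\epsilon\}$ (for some small $\epsilon>0$, not the zero level set your final paragraph suggests), with $v>0$ on $W$ and $\partial_\nu v<0$ on $\partial D'$, and then runs Green's identity on $W$: for any Helmholtz solution $w\in H^1(D)$,
\[
\int_{\partial D}wg\,dS=\epsilon k^{2}\int_{D'}w\,dx-\int_{\partial D'}w\,\partial_{\nu}v\,dS,
\]
which reads off $\mu_1=\epsilon k^{2}\mathscr{L}^{n}\lfloor D'-\partial_{\nu}v\,\mathscr{H}^{n-1}\lfloor\partial D'$. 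This measure is positive, bounded, and compactly supported in $D$, with no cutoff to repair.

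The displayed identity also shows that your claim that the substitution $\Delta\rightsquigarrow\Delta+k^{2}$ ``does not affect the argument'' is off in a small but genuine way: since $\Delta w=-k^{2}w\ne 0$ for the test function, Green's identity picks up the bulk term $\epsilon k^{2}\mathscr{L}^{n}\lfloor D'$, which is absent in \cite{GSS97QuadratureDomains} for $k=0$, and one must observe that $k^2\ge 0$ to keep $\mu_1\ge 0$. That is precisely the step your proposal never reaches, because the cutoff detour replaced the Green's identity computation with an appeal to a reference that does not handle cutoffs.
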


\begin{thm}
\label{thm:main-CK2} Let $0\le k<j_{\frac{n-2}{2},1}R^{-1}$,
and  $D$ be a bounded domain such that $\overline{D}\subset B_{R}$
and $\partial D$ is real-analytic. Let $h$ be a non-negative integrable
function on a neighborhood of $\overline{D}$, which is real-analytic
on some neighborhood of $\partial D$. There exists a bounded positive 
measure $\mu_{2}$ with ${\rm supp}\,(\mu_{2})\subset D$ such that
$D$ is a hybrid $k$-quadrature domain  corresponding
to $\mu_{2}$ with density $(0,h)$. 
\end{thm}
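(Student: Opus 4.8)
The plan is to reduce Theorem~\ref{thm:main-CK2} to a Cauchy problem near $\partial D$ solved via the Cauchy-Kowalevski theorem, exactly in the spirit of \cite[Theorem~3.3]{GSS97QuadratureDomains}, with the harmonic case replaced by the Helmholtz operator $\Delta+k^2$. First I would fix a real-analytic defining function for $\partial D$ and work in a small tubular neighborhood $U$ of $\partial D$ where both $\partial D$ and $h$ are real-analytic. Since $\partial D$ is a non-characteristic real-analytic hypersurface for the (real-analytic-coefficient) operator $\Delta+k^2$, the Cauchy-Kowalevski theorem produces a real-analytic solution $v$ in a (possibly smaller) neighborhood $U'$ of $\partial D$ of the Cauchy problem
\begin{equation*}
(\Delta+k^2)v = h \quad\text{in }U', \qquad v|_{\partial D} = 0, \qquad \partial_\nu v|_{\partial D} = 0.
\end{equation*}
Here the zero Cauchy data encode precisely the matching condition $u = |\nabla u| = 0$ on $\partial D$ that appears in \eqref{eq:Schiffer-second}/\eqref{eq:Schiffer-first} with $g\equiv 0$.

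Next I would glue: extend $v$ from a one-sided neighborhood $U'\cap D$ to all of $D$. Define $u_* := v$ near $\partial D$ inside $D$, cut off smoothly to get a function on $\overline D$ that still vanishes to first order on $\partial D$, and extend by $0$ outside $D$; call the result $u \in \mathscr{D}'(\mathbb R^n)$ (in fact $u$ is $C^1$ across $\partial D$ since the Cauchy data vanish). Then set
\begin{equation*}
\mu_2 := h\chi_D - (\Delta+k^2)u \quad\text{in }\mathscr{D}'(\mathbb R^n).
\end{equation*}
By construction $(\Delta+k^2)u = h\chi_D - \mu_2$ in $\mathbb R^n$ and $u = |\nabla u| = 0$ on $\mathbb R^n\setminus D$, so $\supp(\mu_2)\subset D$ is compact; moreover $\mu_2$ is a distribution of order at most one, which one checks to be represented by an integrable (indeed $L^\infty_{\mathrm{loc}}$) function away from a compact core, hence a bounded measure. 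By the PDE characterization of (hybrid) $k$-quadrature domains (Proposition~\ref{prop:PDE1}/Lemma~\ref{lem:PDE4}, or directly convolving with a fundamental solution $\Psi_k$ as in \eqref{eq:00-properties-convolution-distribution}), this yields $\Psi_k*\mu_2 = \Psi_k*(h\mathscr L^n\lfloor D)$ in $\mathbb R^n\setminus D$, i.e.\ $D$ is a hybrid $k$-quadrature domain corresponding to $\mu_2$ with density $(0,h)$.

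The one genuinely new point compared with \cite{GSS97QuadratureDomains} is \textbf{positivity} of $\mu_2$, and this is where the hypothesis $0\le k < j_{\frac{n-2}{2},1}R^{-1}$ enters. The strategy is a comparison/maximum-principle argument: the Cauchy data being $0$, the local solution $v$ grows quadratically off $\partial D$, and on the relevant scale $\Delta+k^2$ behaves like a small perturbation of $\Delta$ as long as $k$ is below the first Dirichlet eigenvalue threshold of the enclosing ball $B_R$, so that $-(\Delta+k^2)$ still satisfies a maximum principle on $D\subset B_R$. Concretely, one shows $u\ge 0$ on $D$ (this is where $h\ge 0$ and the eigenvalue bound are used, via the strong maximum principle for $-(\Delta+k^2)$ with $\lambda=k^2<\lambda^*(B_R)$ as in Proposition~\ref{prop:regularity-minimizer} and Remark~\ref{rem:Lipschitz-boundary-case}), whence the distributional inequality $-(\Delta+k^2)u \le 0$ on a neighborhood of $\partial D$ inside $D$ combined with the cutoff contributes only a term supported in the interior; choosing the cutoff and invoking $h\ge 0$ gives $\mu_2\ge 0$ on $\mathbb R^n$. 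I expect the main obstacle to be exactly this positivity bookkeeping: controlling the sign of the distribution created by the cutoff in the region where $v$ has been modified, and verifying that the eigenvalue condition $k<j_{\frac{n-2}{2},1}R^{-1}$ (equivalently $k^2<\lambda^*(B_R)$) suffices for the maximum principle to propagate positivity from the boundary layer into all of $D$ — the rest is a routine transcription of the $k=0$ argument.
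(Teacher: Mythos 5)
Your proposal takes a genuinely different route from the paper. The paper does not attack Theorem~\ref{thm:main-CK2} directly with Cauchy-Kowalevski and a smooth cutoff. Instead it solves the \emph{global} Dirichlet problem $(\Delta+k^2)v=h$ in $D$, $v=0$ on $\partial D$, notes that integration by parts converts the volume quadrature identity into a boundary one, $\int_D w\,h\,dx=\int_{\partial D}w\,\partial_\nu v\,dS$, then uses the strong maximum principle together with a Hopf-lemma argument (applied to $u_0^{-1}v$, with $u_0$ a positive Helmholtz solution with boundary value one) to show that $\partial_\nu v$ is strictly positive and real-analytic on $\partial D$, and finally invokes Theorem~\ref{thm:main-CK1} with $g=\partial_\nu v$. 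Positivity of $\mu_2$ is then automatic because the measure constructed in Theorem~\ref{thm:main-CK1} has the explicit form $\epsilon k^2\,\mathscr L^n\lfloor D'-\partial_\nu u\,\mathscr H^{n-1}\lfloor\partial D'$ with $\partial_\nu u<0$, i.e.\ a sum of two manifestly non-negative pieces. The eigenvalue hypothesis $k^2<\lambda^*(B_R)$ enters through this maximum-principle/Hopf step, not in the way you sketch.

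The gap in your version is precisely the point you flag as a ``main obstacle'' but then wave off as routine: positivity of $\mu_2$ through the cutoff. With $u=\psi v$ ($\psi\equiv1$ near $\partial D$, $\psi\equiv0$ deep inside, transitioning on an annulus), one computes
\[
\mu_2 = h\chi_D-(\Delta+k^2)(\psi v)=(1-\psi)h\chi_D-2\nabla\psi\cdot\nabla v - v\,\Delta\psi,
\]
and the last two terms are of \emph{the same order as} $(1-\psi)h$ (both scale like $h$ in the transition region, since $v\sim\tfrac12 h\,d^2$ and $\nabla v\sim h\,d$ there while $|\nabla\psi|\sim(d_2-d_1)^{-1}$, $|\Delta\psi|\sim(d_2-d_1)^{-2}$). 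Their sign is not controlled by a maximum principle: $v$ only solves the Helmholtz equation in a thin tube, not all of $D$, so there is no global comparison argument available, and in the transition zone the sign of $-2\nabla\psi\cdot\nabla v - v\Delta\psi$ depends delicately on the profile of $\psi$. (To leading order one needs convexity of $d\mapsto d^2(1-\psi(d))$, which forces constraints on the transition profile and widths, plus control of the curvature and higher-order Taylor corrections.) Your appeal to ``the strong maximum principle ... as in Proposition~\ref{prop:regularity-minimizer} and Remark~\ref{rem:Lipschitz-boundary-case}'' is not on target: those statements concern minimizers of $\mathcal J$, not the Cauchy-Kowalevski solution, and even if one shows $u\ge 0$, that does not by itself make the cutoff terms non-negative. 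The paper's reduction to Theorem~\ref{thm:main-CK1} exists precisely to sidestep this bookkeeping by replacing the smooth cutoff with a level-set domain $W$ whose boundary contributes a single, sign-definite surface term. If you want to pursue your direct route you would need to supply that positivity argument in full; as written, the step is a genuine gap, not a routine transcription of the $k=0$ case.
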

The main purpose of this section is to prove Theorems~\ref{thm:main-CK1}
and \ref{thm:main-CK2}. 
\begin{proof}
[Proof of Theorem~{\rm \ref{thm:main-CK1}}] Using the Cauchy-Kowalevski
theorem, there exists a neighborhood $\mathcal{N}$ of $\partial D$
and a real-analytic function $u$ satisfying 
\[
\begin{cases}
(\Delta+k^{2})u=0 & \text{in }\mathcal{N},\\
u=0 & \text{on }\partial D,\\
\partial_{\nu}u=-g & \text{on }\partial D.
\end{cases}
\]
Following \cite[Proposition~3.2]{GSS97QuadratureDomains},  there exists an open set $W\subset D\cap\mathcal{N}$ and a constant
$\epsilon>0$ such that 
\begin{enumerate}[leftmargin = 25pt]
\renewcommand{\labelenumi}{\theenumi}
\renewcommand{\theenumi}{{\rm (\arabic{enumi})}}
\item $u$ is positive on $W$, 
\item $\partial D\subset\partial W$, 
\item $u(x)=\epsilon$ for all $x\in\partial W\setminus\partial D$, 
\item $\partial_{\nu}u<0$ on $\partial W\setminus\partial D$. In particular, $\nabla u$ vanishes nowhere on $\partial W\setminus\partial D$,
so $\partial W\setminus\partial D$ is real-analytic.
\end{enumerate}
For each $w\in H^{1}(D)$ with $(\Delta+k^{2})w=0$ in $D$, we have 
\[
0=\int_{W}(w\Delta u-u\Delta w)\,dx=\int_{\partial W}(w\partial_{\nu}u-u\partial_{\nu}w)\,dS,
\]
where $dS(x)=d\mathscr{H}^{n-1}(x)$ is the surface measure, and hence
\[
\int_{\partial D}w\partial_{\nu}u\,dS-\int_{\partial W\setminus\partial D}w\partial_{\nu}u\,dS=\int_{\partial D}u\partial_{\nu}w\,dS-\int_{\partial W\setminus\partial D}u\partial_{\nu}w\,dS.
\]
Let $D'$ be the region with $\partial D'=\partial W\setminus\partial D$
(and thus $\overline{D'}\subset D$), then we see that 
\begin{align*}
&-\int_{\partial D}wg\,dS-\int_{\partial D'}w\partial_{\nu}u\,dS \\
& \quad =-\epsilon\int_{\partial W\setminus\partial D}\partial_{\nu}w\,dS =\epsilon\int_{D'}\Delta w\,dx=-\epsilon k^{2}\int_{D}\chi_{D'}w\,dx,
\end{align*}
that is, 
\[
\int_{\partial D}wg\,dS=\epsilon k^{2}\int_{D}\chi_{D'}w\,dx-\int_{\partial D'}w\partial_{\nu}u\,dS.
\]
Finally, we define the measure $\mu_{1}:=\epsilon k^{2}\mathscr{L}^{n}\lfloor D'-\partial_{\nu}u\mathscr{H}^{n-1}\lfloor\partial D'$.
Note that ${\rm supp}\,(\mu_{1})\subset D$ and $\mu_{1}$ is positive
bounded, hence we know that 
\begin{equation}
\langle \mu_{1},w \rangle = \int_{\partial D} gw \,dS \label{eq:QD-before}
\end{equation}
for all $w \in H^{1}(D)$ with $(\Delta + k^{2})w=0$ in $D$. For each $y \in \mathbb{R}^{n} \setminus \overline{D}$, choosing $w(x) = \Psi_{k}(x,y)$, where $\Psi$ be any fundamental solution of $-(\Delta + k^{2})$ in \eqref{eq:QD-before}, we reach 
\begin{equation}
\Psi_{k} * \mu_{1} = \Psi_{k} * (g \mathscr{H}^{n-1}\lfloor \partial D) \quad \text{in $\mathbb{R}^{n} \setminus \overline{D}$.} \label{eq:QD-before1}
\end{equation}
Since $\partial D$ is real analytic, then the right-hand-side of \eqref{eq:QD-before1} is the single layer potential. Therefore, by using the continuity of single layer potential \cite[Theorem~6.11]{McL00EllipticSystems}, and since $\mu_{1}$ is bounded, we conclude our theorem. 
\end{proof}

\begin{proof}[Proof of Theorem~{\rm \ref{thm:main-CK2}}] 
If $h$ vanishes identically on a neighborhood of $\partial D$, we have nothing to prove. We now assume that this is not the case. 

Let $v\in H_{0}^{1}(D)$ be the unique solution of 
\[
\begin{cases}
(\Delta+k^{2})v=h & \text{in }D,\\
v=0 & \text{on }\partial D.
\end{cases}
\]
Since $h\ge0$, using the strong maximum principle for Helmholtz operator (see~\cite[Appendix]{KLSS22QuadratureDomain}),
we know that $v(x)<0$ for all $x\in D$. By analyticity theorem for elliptic equations, $v$ extends real-analytically to some neighborhood of $\partial D$. Using integration by parts, we have 
\begin{equation}
\begin{aligned}
& \quad \langle\sigma_{2},w\rangle=\int_{D}wh\,dx=\int_{D}(w\Delta v-v\Delta w)\,dx \\
& =\int_{\partial D}(w\partial_{\nu}v-v\partial_{\nu}w)\,dS=\int_{\partial D}w\partial_{\nu}v\,dS,
\end{aligned}\label{eq:main-CK2-equation1}
\end{equation}
where $\sigma_{2}$ is the measure given as in \eqref{eq:measure-nu}. 

Again using the strong maximum principle for Helmholtz operator, we know that the unique solution $u_{0}$ of  
\[
\begin{cases}
(\Delta + k^{2})u_{0} = 0 &\text{in $D$,} \\
u_{0} = 1 &\text{on $\partial D$,}
\end{cases}
\]
must satisfies $u_{0} > 0$ in $\overline{D}$. By observing that 
\[
(\Delta +k^{2})v = u_{0}^{-1} \nabla \cdot u_{0}^{2} \nabla (u_{0}^{-1}v) \quad \text{in $D$,}
\]
we can apply the Hopf' maximum principle (see e.g.\ \cite[Lemma~3.4]{GT01Elliptic}) on $u_{0}^{-1}v$ to ensure $\partial_{\nu}(u_{0}^{-1}v)>0$ on $\partial D$. Since $v = 0$ on $\partial D$, then  
\[
\partial_{\nu}v = u_{0} \partial_{\nu}(u_{0}^{-1}v) > 0 \quad \text{on $\partial D$.}
\]
Since $\partial D$ is real analytic, then locally $\partial D$ has a representation as $\begin{Bmatrix}\begin{array}{l|l} y & \varphi(y)=0\end{array}\end{Bmatrix}$ for some real-analytic $\varphi$ with non-vanishing gradient. We may choose $\varphi$ to be positive outside $D$, so 
\[
\partial_{\nu}v=\nabla v\cdot\frac{\nabla\varphi}{|\nabla\varphi|}\quad\text{on }\partial D,
\]
that is, $\partial_{\nu}v$ can be extended real-analytically and strictly positive near $\partial D$. Using Theorem~\ref{thm:main-CK1}, there exists a positive bounded measure $\mu_{2}$ with ${\rm supp}\,(\mu_{2})\subset D$ such that 
\begin{equation}
\langle\mu_{2},w\rangle=\int_{\partial D}w\partial_{\nu}v\,dS.\label{eq:main-CK2-equation2}
\end{equation}
Combining \eqref{eq:main-CK2-equation1} and \eqref{eq:main-CK2-equation2}, we conclude our theorem. 
\end{proof}

\section{Some remarks on null \texorpdfstring{$k$}{k}-quadrature domains} \label{sec:Rellich}

In this appendix we give some remarks on null $k$-quadrature domains. They are defined by Definition~{\rm \ref{def:weighted-QD}} with $g \equiv 0$, $h \equiv 1$ and $\mu \equiv 0$. It was confirmed in \cite{EFW22NullQDClassification,ESW20NullQDClassification} that the null 0-quadrature domains in $\mathbb{R}^{n}$ with $n \ge 2$ must be either 
\begin{itemize}
\item half space
\item complement of an ellipsoid, 
\item complement of a paraboloid, or 
\item complement of a cylinder with an ellipsoid or a paraboloid as base,
\end{itemize}
see also \cite{FS86nullQD,Kar08nullQD,KM12nullQD,Sak81nullQD} for some classical works. 
% Hence, the null 0-quadrature domain must be the complement of a Lipschitz convex set. 

In addition, it is worth mentioning that in the 2-dimensional case, starting from null 0-quadrature domains, we can always construct quadrature domains of positive measure \cite[Theorem~11.5]{Sak82QuadratureDomains}. This motivates us to study null $k$-quadrature domains for $k>0$. We also give some remarks showing that they are quite different.

As a consequence of mean value theorem for Helmholtz operator $-(\Delta + k^{2})$ (see e.g. \cite[Appendix]{KLSS22QuadratureDomain}), it is not difficult to see that a ball is a null $k$-quadrature domain (i.e.\ $\mu \equiv 0$) if and only if its radius $r$ satisfying $J_{\frac{n}{2}}(kr) = 0$, where $J_{\alpha}$ denotes the Bessel function of the first kind.

Following the ideas in \cite{BFHT16Rellich} one can show the following theorem. 

\begin{thm}\label{thm:trivial-unbounded-null-D}
Let $n\ge2$ be an integer, $k>0$ and $\theta\in(0,\frac{\pi}{2})$. We consider the conical domain 
\begin{equation}
\Sigma_{\theta}:=\begin{Bmatrix}\begin{array}{l|l}
(x,y)\in\mathbb{R}^{n-1}\times\mathbb{R} & y>-|x|\tan\theta\end{array}\end{Bmatrix}, \quad \text{see \cite[Figure~1]{BFHT16Rellich}}. \label{eq:conical-domain}
\end{equation}
If $w\in L^{1}(\Sigma_{\theta})$ satisfies $(\Delta+k^{2})w=0$ in $\Sigma_{\theta}$, then $w\equiv0$ in $\Sigma_{\theta}$. 
\end{thm}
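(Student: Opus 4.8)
The plan is to follow the strategy of \cite{BFHT16Rellich}, which proves this kind of Liouville/Rellich-type statement for cones by a careful spherical-harmonic decomposition combined with asymptotic analysis of the radial ODEs at infinity. First I would write $\Sigma_\theta$ in polar coordinates $(r,\omega)$ with $r=|(x,y)|>0$ and $\omega\in\mathcal{S}^{n-1}$, so that $\Sigma_\theta$ becomes the truncated cone $\{r>0,\ \omega\in\Gamma_\theta\}$ where $\Gamma_\theta\subset\mathcal{S}^{n-1}$ is the spherical cap determined by $y>-|x|\tan\theta$. One expands $w(r,\omega)=\sum_{j} w_j(r)\,Y_j(\omega)$, where $\{Y_j\}$ is an orthonormal basis of $L^2(\Gamma_\theta)$ consisting of Dirichlet eigenfunctions of the Laplace–Beltrami operator on $\Gamma_\theta$ with eigenvalues $0<\mu_1\le\mu_2\le\cdots$. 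However, since no boundary condition on $\partial\Sigma_\theta$ is imposed, one cannot directly use Dirichlet eigenfunctions; instead, as in \cite{BFHT16Rellich}, I would first reduce to the case where $w$ also vanishes on $\partial\Sigma_\theta$ by a reflection/restriction argument, or more robustly, work with the pairing of $w$ against test functions supported in $\overline{\Sigma_\theta}$ and exploit that the Helmholtz equation forces local smoothness of $w$ up to (and across, by even reflection in suitable models) the conical boundary. The cleanest route is to note that any $L^1$ solution of $(\Delta+k^2)w=0$ in the open cone is real-analytic there, and to analyze its growth at infinity.

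Next I would carry out the radial ODE analysis. Each coefficient $w_j$ (formally) solves
\[
w_j''(r)+\frac{n-1}{r}w_j'(r)+\Bigl(k^2-\frac{\nu_j^2-(\frac{n-2}{2})^2}{r^2}\Bigr)w_j(r)=0,
\]
whose solutions are $r^{(2-n)/2}$ times Bessel functions $J_{\nu_j}(kr)$ and $Y_{\nu_j}(kr)$ for suitable indices $\nu_j$. The key point is the interplay between the $L^1(\Sigma_\theta)$ hypothesis and the asymptotics $J_{\nu_j}(kr),Y_{\nu_j}(kr)\sim r^{-1/2}$ as $r\to\infty$: a nonzero combination of these behaves like $r^{(1-n)/2}$ at infinity, and integrating $|w_j(r)|\,r^{n-1}$ against $dr$ over $(R_0,\infty)$ gives a divergent integral of order $\int^\infty r^{(n-1)/2}\,dr$. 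This is the heart of the argument — the $L^1$ norm over the (infinite) cone controls, via Fubini and the orthogonality of the $Y_j$ on $\Gamma_\theta$, each radial piece, forcing $w_j\equiv 0$ for every $j$, hence $w\equiv 0$. One must handle the near-origin behavior to ensure the solution is the regular one there (no $Y_{\nu_j}$ singularity), which follows from $w\in L^1_{\mathrm{loc}}$ near $0$ together with analyticity of $w$ in the open cone; and one must justify termwise manipulation (Parseval on $\Gamma_\theta$, dominated convergence) using the $L^1$ bound and standard elliptic interior estimates.

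The main obstacle I anticipate is the boundary behavior of $w$ on $\partial\Sigma_\theta$: since $w$ is only assumed to solve the equation in the open cone with no prescribed boundary data, the spherical-harmonic expansion on the cap $\Gamma_\theta$ is not automatically legitimate, and the radial coefficients $w_j$ need not satisfy clean ODEs unless one first establishes enough regularity/structure near the lateral boundary. The resolution, following \cite{BFHT16Rellich}, is to avoid expanding in eigenfunctions of $\Gamma_\theta$ and instead use the global structure: extend or compare $w$ with solutions in a full neighborhood, use that the opening angle $\theta\in(0,\frac{\pi}{2})$ makes $\Sigma_\theta$ contain arbitrarily large balls and also is contained in a half-space after rotation, and invoke a unique continuation / Rellich argument in that half-space together with the $L^1$-decay forced at infinity. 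Once the far-field decay $\int_{\Sigma_\theta}|w|<\infty$ is combined with the standard fact that nontrivial $L^1$ Helmholtz solutions cannot decay fast enough in a set of positive solid angle, the conclusion $w\equiv 0$ follows. The remaining work is bookkeeping: verifying the precise Bessel asymptotics, the Fubini step, and that the regular branch at the origin is selected.
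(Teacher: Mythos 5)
Your first two paragraphs lay out a spherical-harmonics-plus-Bessel strategy, and you yourself correctly observe that it collapses: with no boundary condition prescribed on $\partial\Sigma_\theta$, there is no eigenfunction basis on the spherical cap and the radial coefficients $w_j$ do not satisfy self-contained Bessel equations. So that route is a dead end, and it is not the route the paper takes. The problem is that the ``resolution'' you offer in the final paragraph is too vague to constitute a proof and contains a flaw. You write that $\Sigma_\theta$ ``is contained in a half-space after rotation'' and invoke ``the standard fact that nontrivial $L^1$ Helmholtz solutions cannot decay fast enough in a set of positive solid angle.'' The first claim has the geometry backwards: for $\theta\in(0,\tfrac{\pi}{2})$ the cone $\Sigma_\theta=\{y>-|x|\tan\theta\}$ is strictly larger than the half-space $\{y>0\}$, and the useful observation is that $\Sigma_\theta$ \emph{contains} several half-spaces, including two tilted ones whose boundaries lie along the two sides of the cone. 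The second ``standard fact'' is essentially a restatement of the theorem itself, so as written the argument is circular.

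What actually drives the paper's proof (for $n=2$; $n>2$ is analogous) is a concrete Fourier-analytic mechanism that you should make explicit. First, after translating so the cone contains $\mathbb{R}\times(-\epsilon,\infty)$, one takes the 1D Fourier transform in $x$: for an $L^1$ solution in a half-plane, the ODE $\partial_y^2\widehat v+(k^2-\xi^2)\widehat v=0$ has oscillatory solutions for $|\xi|\le k$, and these are incompatible with $\int_{-\epsilon}^\infty\|\widehat v(\cdot,y)\|_{L^\infty}\,dy<\infty$; hence $\widehat v(\xi,0)=0$ for $|\xi|\le k$, and for $|\xi|>k$ one gets the evanescent representation with the explicit bound $|\widehat v(\xi,0)|\lesssim\sqrt{\xi^2-k^2}\,e^{-\epsilon\sqrt{\xi^2-k^2}}$. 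Second, the cone also contains two rotated half-planes $\Pi_\theta^\pm$; the analogous representations there, restricted to the ray $\{\pm x\ge 0,\,y=0\}$, give $v(x,0)$ as a Fourier--Laplace-type integral on each half-line. Splitting $\widehat v(\xi,0)$ into the contributions from $x>0$ and $x<0$, carrying out the $x$-integral, and using the exponential decay of $\hat\varphi_\pm$ shows that $\widehat v(\cdot,0)$ extends analytically to a complex neighborhood of the real axis. An analytic function on a neighborhood of $\mathbb R$ that vanishes on $[-k,k]$ vanishes identically, so $v(\cdot,0)\equiv 0$, and then the half-plane representation forces $v\equiv 0$. This is the content your sketch is missing: the precise evanescence/analyticity dichotomy, rather than a qualitative decay heuristic. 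If you want to keep the spirit of your write-up, replace paragraphs one and two entirely, state the half-plane Fourier lemma, then do the two-rotated-half-planes analyticity argument; the Bessel asymptotics are not needed anywhere.
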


Therefore, the notion of ``null $k$-quadrature domains'' for $k>0$ makes no sense for general unbounded sets. Hence it is natural to ask about the classification of \emph{bounded} null $k$-quadrature domain. As pointed out in our previous work \cite{KLSS22QuadratureDomain}, this problem is equivalent to the well-known \href{https://www.scilag.net/problem/G-180522.1}{Pompeiu's problem} \cite{Pom29PompeiuProblem,Zalcman1992}, which is also equivalent to an obstacle type free boundary problem \cite{Wil76PompeiuProblem,Wil81PompeiuProblem}.
The unanswered question is, whether any bounded (Lipschitz) null $k$-quadrature domain must be a ball. We also remark that \eqref{eq:Schiffer-second} is related to \href{https://www.scilag.net/problem/P-180522.1}{Schiffer's problem}, which asks whether the existence of a nontrivial solution $u$ of 
\[
\begin{cases}
(\Delta+k^{2})u = 0 & \text{in }D,\\
u=0 & \text{on }\partial D,\\
|\nabla u|=1 & \text{on }\partial D.
\end{cases}
\]
implies that such a (Lipschitz) bounded domain $D$ must be a ball. We refer to  \cite{BK82Pompeiu} for the Pompeiu problem for convex domains in $\mathbb{R}^{2}$, where several equivalent formulations (including the equivalence with Morera problem) as well as some partial results are given. See also \cite{Den12SchifferConjecture} for the case of domains in $\mathbb{R}^{2}$ which are strictly convex, and \cite{Ebe932DPompeiu} for domains in $\mathbb{R}^{2}$ under different assumptions.

It remains to prove Theorem~{\rm \ref{thm:trivial-unbounded-null-D}}.

\addtocontents{toc}{\SkipTocEntry}
\subsection{The case when \texorpdfstring{$n=2$}{n=2}}

We denote by $\mathscr{F}_{1}$ the 1-dimensional Fourier transform given by 
\[
\mathscr{F}_{1}\varphi(\xi):=\frac{1}{\sqrt{2\pi}}\int_{\mathbb{R}}\varphi(x)e^{-ix\xi}\,dx\quad\text{for all }\xi\in\mathbb{R},
\]
which is clearly well-defined for $\varphi\in L^{1}(\mathbb{R})$ and can be extended for tempered distributions $\varphi\in\mathscr{S}'(\mathbb{R})$. 
\begin{lem}\label{lem:representation-half-space} 
Given any $\epsilon>0$ and $k>0$, let $v\in L^{1}(\mathbb{R}\times(-\epsilon,\infty))$ satisfy $(\Delta+k^{2})v=0$ in $\mathbb{R}\times(-\epsilon,\infty)$. Then
\[
v(x,y)=\frac{1}{\sqrt{2\pi}}\int_{|\xi|>k}e^{ix\xi}(\mathscr{F}_{1}v)(\xi,0)e^{-y\sqrt{\xi^{2}-k^{2}}}\,d\xi
\]
for all $(x,y)\in\mathbb{R}\times[0,\infty)$ with 
\begin{align*}
|(\mathscr{F}_{1}v)(\xi,0)| & \le \frac{1}{\sqrt{2\pi}} \|v\|_{L^{1}(\mathbb{R}\times(-\epsilon,\infty))}\sqrt{\xi^{2}-k^{2}}e^{-\epsilon\sqrt{\xi^{2}-k^{2}}} &\text{for all } |\xi|> k,\\
(\mathscr{F}_{1}v)(\xi,0) & =0 &\text{for all }|\xi|\le k.
\end{align*}
 
\end{lem}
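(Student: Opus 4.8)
The plan is to prove Lemma~\ref{lem:representation-half-space} by a Fourier-analytic argument in the $x$-variable, exploiting that $v \in L^1$ forces the Fourier transform $(\mathscr{F}_1 v)(\cdot, y)$ to be a genuine (continuous) function and that the equation $(\Delta + k^2)v = 0$ becomes an ODE in $y$ for each frequency $\xi$. First I would fix $y$ and note that $x \mapsto v(x,y)$ lies in $L^1(\mathbb{R})$ for a.e.\ $y > -\epsilon$ by Fubini, so $\hat{v}(\xi, y) := (\mathscr{F}_1 v)(\xi, y)$ is well-defined, continuous and bounded in $\xi$ with $\|\hat v(\cdot,y)\|_{L^\infty} \le \frac{1}{\sqrt{2\pi}}\|v(\cdot,y)\|_{L^1(\mathbb{R})}$. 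Applying $\mathscr{F}_1$ to $(\Delta+k^2)v = 0$ in the $x$-variable gives, in the distributional sense in $y$,
\[
\partial_y^2 \hat{v}(\xi,y) = (\xi^2 - k^2)\hat{v}(\xi,y) \quad \text{for } y \in (-\epsilon, \infty).
\]
For $|\xi| > k$ this is a constant-coefficient ODE whose solutions are spanned by $e^{\pm y\sqrt{\xi^2-k^2}}$; for $|\xi| < k$ the solutions are oscillatory $e^{\pm iy\sqrt{k^2-\xi^2}}$; and $|\xi| = k$ gives affine functions of $y$.

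The key step is to rule out all contributions that fail to decay as $y \to +\infty$. The integrability hypothesis $v \in L^1(\mathbb{R}\times(-\epsilon,\infty))$ forces $\int_{-\epsilon}^\infty |\hat v(\xi,y)|\,dy \le \frac{1}{\sqrt{2\pi}}\|v\|_{L^1}$ (again Fubini, now with Tonelli on $|v|$), uniformly in $\xi$; in particular $y \mapsto \hat v(\xi,y)$ is integrable on $(-\epsilon,\infty)$ for a.e.\ $\xi$. An $L^1$-in-$y$ solution of $\partial_y^2 w = (\xi^2-k^2) w$ on a half-line must be the purely decaying exponential when $|\xi| > k$ (the growing mode and the oscillatory constant-amplitude modes are not integrable), must vanish identically when $|\xi| \le k$ (neither oscillatory modes nor nonzero affine functions are $L^1$ on $(-\epsilon,\infty)$). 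Hence $\hat v(\xi,y) = 0$ for $|\xi| \le k$, and for $|\xi| > k$ we get $\hat v(\xi, y) = \hat v(\xi, 0) e^{-y\sqrt{\xi^2-k^2}}$ for $y \ge 0$ (matching at $y=0$; one should note continuity of $y \mapsto \hat v(\xi,y)$ so that the value at $0$ makes sense, which follows from the ODE once we know the solution is a decaying exponential on all of $(-\epsilon,\infty)$, hence real-analytic there). The bound on $|\hat v(\xi,0)|$ then comes from $\int_{-\epsilon}^\infty |\hat v(\xi,y)|\,dy = |\hat v(\xi,0)| \int_{-\epsilon}^\infty e^{-y\sqrt{\xi^2-k^2}}\,dy = |\hat v(\xi,0)| \frac{e^{\epsilon\sqrt{\xi^2-k^2}}}{\sqrt{\xi^2-k^2}} \le \frac{1}{\sqrt{2\pi}}\|v\|_{L^1(\mathbb{R}\times(-\epsilon,\infty))}$, which rearranges to exactly the claimed estimate.

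Finally I would reconstruct $v$ by Fourier inversion: since $\xi \mapsto \hat v(\xi,0) e^{-y\sqrt{\xi^2-k^2}}$ is, for each fixed $y \ge 0$, supported in $\{|\xi|>k\}$ and satisfies $|\hat v(\xi,0)e^{-y\sqrt{\xi^2-k^2}}| \le \frac{1}{\sqrt{2\pi}}\|v\|_{L^1}\sqrt{\xi^2-k^2}\,e^{-(\epsilon+y)\sqrt{\xi^2-k^2}}$, which is integrable in $\xi$ over $\{|\xi|>k\}$ for $y > -\epsilon$ (and in particular for $y \ge 0$), the inversion integral
\[
v(x,y) = \frac{1}{\sqrt{2\pi}} \int_{|\xi|>k} e^{ix\xi} (\mathscr{F}_1 v)(\xi,0)\, e^{-y\sqrt{\xi^2-k^2}}\,d\xi
\]
converges absolutely and represents $v$; one justifies the inversion first for a.e.\ $y$ via the $L^1$-theory and then observes that the right-hand side is continuous in $(x,y) \in \mathbb{R}\times[0,\infty)$ by dominated convergence, so the identity extends to all such $(x,y)$.

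The main obstacle I anticipate is the regularity bookkeeping needed to pass from ``$v \in L^1$ and $(\Delta+k^2)v=0$ distributionally'' to ``$\hat v(\xi,\cdot)$ is, for a.e.\ fixed $\xi$, a classical $C^2$ (indeed real-analytic) solution of the scalar ODE to which the above elementary decay arguments apply.'' This requires care: one must interpret $\partial_y^2 \hat v = (\xi^2-k^2)\hat v$ as an equality of distributions on $(-\epsilon,\infty)$ for each fixed $\xi$ (obtained by testing the PDE against products $\phi(x)\psi(y)$ and using Fubini), invoke elliptic/ODE regularity to upgrade $\hat v(\xi,\cdot)$ to a smooth solution, and only then use integrability in $y$ to kill the non-decaying modes. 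Handling the $\xi$-null-set uniformly — i.e.\ showing the representation and the bound hold for \emph{all} $|\xi|>k$, not just a.e.\ — is done at the end by continuity of both sides in $\xi$, using the established pointwise bound on $\hat v(\cdot,0)$.
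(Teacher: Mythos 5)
Your argument is correct and follows essentially the same route as the paper's proof: take the one-dimensional Fourier transform in $x$, reduce $(\Delta+k^2)v=0$ to the ODE $\partial_y^2\hat v=(\xi^2-k^2)\hat v$ for a.e.\ $\xi$, use the Fubini/Tonelli bound $\int_{-\epsilon}^\infty\|\hat v(\cdot,y)\|_{L^\infty}\,dy\le\frac{1}{\sqrt{2\pi}}\|v\|_{L^1}$ to kill the growing and oscillatory modes (hence $\hat v(\xi,\cdot)\equiv 0$ for $|\xi|\le k$ and only the decaying exponential survives for $|\xi|>k$), and then extract the pointwise bound and invert. The regularity bookkeeping you flag at the end (distributional ODE for a.e.\ fixed $\xi$, ODE elliptic regularity, and the $\xi$-null-set) is indeed the only delicate point, and the paper's version of the proof suppresses it in exactly the same places.
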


\begin{proof}
Since $v\in L^{1}(\mathbb{R}\times(-\epsilon,\infty))$, by Fubini's theorem we have $v(\cdot,y)\in L^{1}(\mathbb{R})$ for a.e. $y\in(-\epsilon,\infty)$, then $\mathscr{F}_{1}v(\cdot,y)$ is continuous and 
\begin{equation}
\int_{-\epsilon}^{\infty}\|\mathscr{F}_{1}v(\cdot,y)\|_{L^{\infty}(\mathbb{R})}\,dy \le \frac{1}{\sqrt{2\pi}} \int_{-\epsilon}^{\infty}\|v(\cdot,y)\|_{L^{1}(\mathbb{R})}\,dy = \frac{1}{\sqrt{2\pi}} \|v\|_{L^{1}(\mathbb{R}\times(-\epsilon,\infty))}.\label{eq:Rellich-Linfty}
\end{equation}
By applying the Fourier transform $\mathscr{F}_{1}$ on $(\Delta+k^{2})v=0$ in $\mathbb{R}\times(-\epsilon,\infty)$, then for a.e. $\xi\in\mathbb{R}$ we have 
\begin{equation}
\partial_{y}^{2}(\mathscr{F}_{1}v)(\xi,y)+(k^{2}-\xi^{2})(\mathscr{F}_{1}v)(\xi,y)\quad\text{in }\mathscr{D}'(-\epsilon,\infty).\label{eq:Rellich-ODE}
\end{equation}
Since $\mathscr{F}_{1}v(\cdot,y)\in L^{\infty}(\mathbb{R})$, then the general solution of the ODE \eqref{eq:Rellich-ODE} is given by
\begin{equation}
(\mathscr{F}_{1}v)(\xi,y)=\begin{cases}
A(\xi)e^{-(y+\epsilon)\sqrt{\xi^{2}-k^{2}}} & \text{for }|\xi|>k,\\
B_{1}(\xi)e^{-i(y+\epsilon)\sqrt{k^{2}-\xi^{2}}}+B_{2}(\xi)e^{i(y+\epsilon)\sqrt{k^{2}-\xi^{2}}} & \text{for }|\xi|\le k,
\end{cases}\label{eq:Rellich-ODE-general-solution}
\end{equation}
for some complex-valued functions $A,B_{1}B_{2}$. For each $|\xi|<k$, we see that $(\mathscr{F}_{1}v)(\xi,\cdot)$ is periodic with respect to variable $y$. However from \eqref{eq:Rellich-Linfty} we must have 
\[
(\mathscr{F}_{1}v)(\xi,\cdot)=0\quad\text{for all }|\xi|< k.
\]
When $|\xi|=k$, we have $(\mathscr{F}_{1}v)(\xi,y)=B_{1}(\xi)+B_{2}(\xi)$. Again by \eqref{eq:Rellich-Linfty} we must have  
\[
(\mathscr{F}_{1}v)(\xi,\cdot)=0\quad\text{for all }|\xi|= k.
\]
Therefore we can write \eqref{eq:Rellich-ODE-general-solution} as
\begin{equation}
(\mathscr{F}_{1}v)(\xi,y)=A(\xi)e^{-(y+\epsilon)\sqrt{\xi^{2}-k^{2}}}\quad\text{provided}\quad A(\xi)=0\text{ for all }|\xi|\le k.\label{eq:Rellich-ODE-general-solution1}
\end{equation}
Plugging \eqref{eq:Rellich-ODE-general-solution1} into \eqref{eq:Rellich-Linfty} yields 
\[
\frac{1}{\sqrt{2\pi}} \|v\|_{L^{1}(\mathbb{R}\times(-\epsilon,\infty))}\ge|A(\xi)|\int_{-\epsilon}^{\infty}e^{-(y+\epsilon)\sqrt{\xi^{2}-k^{2}}}\,dy=\frac{|A(\xi)|}{\sqrt{\xi^{2}-k^{2}}}\quad\text{for all } |\xi| > k.
\]
Finally, using the Fourier inversion formula on \eqref{eq:Rellich-ODE-general-solution1} we conclude our lemma. 
\end{proof}
\begin{cor}\label{cor:Rellich-representation-rotation}
Given any $\ell>0$, $k>0$ and $\theta\in(0,\frac{\pi}{2})$. We consider the half-planes 
\[
\Pi_{\theta,\epsilon}^{\pm}:=\begin{Bmatrix}\begin{array}{l|l}
(x,y) & y>-\ell\mp x\tan\theta\end{array}\end{Bmatrix}\equiv\begin{Bmatrix}\begin{array}{l|l}
(x,y) & \pm x\sin\theta+y\cos\theta>-\ell\cos\theta\end{array}\end{Bmatrix}.
\]
If $w_{\pm}\in L^{1}(\Pi_{\theta}^{\pm})$ satisfies $(\Delta+k^{2})w_{\pm}=0$ in $\Pi_{\theta}^{\pm}$, then 
\[
w_{\pm}(x,y)=\frac{1}{\sqrt{2\pi}}\int_{|\xi|>k}e^{i(x\cos\theta\mp y\sin\theta)\xi}\hat{\varphi}_{\pm}(\xi)e^{-(\pm x\sin\theta+y\cos\theta)\sqrt{\xi^{2}-k^{2}}}\,d\xi
\]
for all $(x,y)\in\begin{Bmatrix}\begin{array}{l|l} (x,y) & y\ge\mp x\tan\theta\end{array}\end{Bmatrix}$, for some function $\hat{\varphi}_{\pm}$ satisfies
\begin{equation} 
\begin{aligned}
|\hat{\varphi}_{\pm}(\xi)| & \le \frac{1}{\sqrt{2\pi}} \|w_{\pm}\|_{L^{1}(\Pi_{\theta}^{\pm})}\sqrt{\xi^{2}-k^{2}}e^{-\ell\cos\theta\sqrt{\xi^{2}-k^{2}}} &\text{for all } |\xi| >k,\\
\hat{\varphi}_{\pm}(\xi) & =0 &\text{for all }|\xi|\le k.
\end{aligned}\label{eq:Rellih-densities}
\end{equation}
\end{cor}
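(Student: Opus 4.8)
The plan is to reduce Corollary~\ref{cor:Rellich-representation-rotation} to Lemma~\ref{lem:representation-half-space} by an orthogonal change of variables that straightens the tilted half-plane $\Pi_{\theta}^{\pm}$ into a standard upper half-space. Concretely, for the $+$ case I would introduce rotated coordinates $\tilde x = x\cos\theta - y\sin\theta$, $\tilde y = x\sin\theta + y\cos\theta$ (and the mirror-image substitution $\tilde x = x\cos\theta + y\sin\theta$, $\tilde y = -x\sin\theta + y\cos\theta$ for the $-$ case), so that the defining inequality $\pm x\sin\theta + y\cos\theta > -\ell\cos\theta$ becomes simply $\tilde y > -\ell\cos\theta$. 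Since the map $(x,y)\mapsto(\tilde x,\tilde y)$ is a rotation, it is an isometry of $\mathbb{R}^{2}$: it preserves both the Laplacian and the Lebesgue measure, hence the $L^{1}$ norm.

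Next I would set $v(\tilde x,\tilde y) := w_{\pm}(x,y)$ under this substitution, with $\epsilon := \ell\cos\theta > 0$. Then $v \in L^{1}(\mathbb{R}\times(-\epsilon,\infty))$ with $\|v\|_{L^{1}(\mathbb{R}\times(-\epsilon,\infty))} = \|w_{\pm}\|_{L^{1}(\Pi_{\theta}^{\pm})}$, and $(\Delta+k^{2})v = 0$ in $\mathbb{R}\times(-\epsilon,\infty)$. Applying Lemma~\ref{lem:representation-half-space} to $v$ gives the representation of $v(\tilde x,\tilde y)$ for all $\tilde y\ge0$ as an oscillatory integral over $|\xi|>k$ with weight $e^{-\tilde y\sqrt{\xi^{2}-k^{2}}}$, together with the stated pointwise bound on $(\mathscr{F}_{1} v)(\xi,0)$ and its vanishing for $|\xi|\le k$.

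Finally I would undo the substitution. Setting $\hat{\varphi}_{\pm}(\xi) := (\mathscr{F}_{1} v)(\xi,0)$ and inserting $\tilde x = x\cos\theta\mp y\sin\theta$, $\tilde y = \pm x\sin\theta + y\cos\theta$ into the representation yields exactly the claimed formula, valid on $\{y\ge\mp x\tan\theta\}$, which is precisely the image of $\{\tilde y\ge0\}$ since $\cos\theta>0$. The bound \eqref{eq:Rellih-densities} then follows from the bound for $(\mathscr{F}_{1} v)(\xi,0)$ in Lemma~\ref{lem:representation-half-space} after substituting $\|v\|_{L^{1}(\mathbb{R}\times(-\epsilon,\infty))} = \|w_{\pm}\|_{L^{1}(\Pi_{\theta}^{\pm})}$ and $\epsilon = \ell\cos\theta$.

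There is essentially no hard analytic content beyond Lemma~\ref{lem:representation-half-space}; the only point that needs care is the sign bookkeeping in the two symmetric cases $\pm$ — making sure the rotation used for the $-$ half-plane is genuinely the reflection of the one used for the $+$ half-plane, so that the exponents $x\cos\theta\mp y\sin\theta$ and $\pm x\sin\theta + y\cos\theta$ emerge with exactly the signs displayed in the statement. I would carry both cases in parallel using the $\pm/\mp$ notation to keep the argument short.
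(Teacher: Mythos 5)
Your proposal is correct and follows exactly the paper's argument: rotate coordinates so that the tilted half-plane becomes $\mathbb{R}\times(-\ell\cos\theta,\infty)$, apply Lemma~\ref{lem:representation-half-space} with $\epsilon=\ell\cos\theta$, and transport the representation and bounds back, using that rotations preserve the Laplacian and Lebesgue measure. The sign bookkeeping you flag (rotation by $\theta$ for the $+$ case, by $-\theta$ for the $-$ case) is precisely what the paper's substitution $w_{\pm}(x,y)=v_{\pm}(x\cos\theta\mp y\sin\theta,\pm x\sin\theta+y\cos\theta)$ encodes.
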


\begin{rem}\label{rem:Rellich-representation-rotation}
In particular, we have
\[
w_{\pm}(x,0)=\frac{1}{\sqrt{2\pi}}\int_{|\xi|>k}e^{i(x\cos\theta)\xi}\hat{\varphi}_{\pm}(\xi)e^{\mp x\sin\theta\sqrt{\xi^{2}-k^{2}}}\,d\xi
\]
for all $x$ with $\pm x\ge0$. 
\end{rem}

\begin{proof}[Proof of Corollary~{\rm \ref{cor:Rellich-representation-rotation}}]
Since the Laplacian is rotation invariant, then the function $v_{\pm}$ given by 
\[
w_{\pm}(x,y)=v_{\pm}(x\cos\theta\mp y\sin\theta,\pm x\sin\theta+y\cos\theta)
\]
satisfies $v_{\pm}\in L^{1}(\mathbb{R}\times(-\ell\cos\theta,\infty))$ with $(\Delta+k^{2})v_{\pm}=0$ in $\mathbb{R}\times(-\ell\cos\theta,\infty)$. Therefore using Lemma~{\rm \ref{lem:representation-half-space}} with $\epsilon=\ell\cos\theta$ yields 
\[
v_{\pm}(x,y)=\frac{1}{\sqrt{2\pi}}\int_{|\xi|>k}e^{ix\xi}(\mathscr{F}_{1}v_{\pm})(\xi,0)e^{-y\sqrt{\xi^{2}-k^{2}}}\,d\xi
\]
with 
\begin{align*}
|(\mathscr{F}_{1}v_{\pm})(\xi,0)| & \le \frac{1}{\sqrt{2\pi}} \|w_{\pm}\|_{L^{1}(\Pi_{\theta}^{\pm})}\sqrt{\xi^{2}-k^{2}}e^{-\ell\cos\theta\sqrt{\xi^{2}-k^{2}}} &\text{for all } |\xi| >k,\\
(\mathscr{F}_{1}v_{\pm})(\xi,0) & =0 &\text{for all }|\xi|\le k,
\end{align*}
because $\|v_{\pm}\|_{L^{1}(\mathbb{R}\times(-\ell\cos\theta,\infty))}=\|w_{\pm}\|_{L^{1}(\Pi_{\theta}^{\pm})}$, which conclude our corollary. 
\end{proof}

We are now ready to prove Theorem~{\rm \ref{thm:trivial-unbounded-null-D}} for the case when $n=2$.

\begin{proof}[Proof of Theorem~{\rm \ref{thm:trivial-unbounded-null-D}} when $n=2$]
For each $\ell>0$, we define 
\[
\Sigma_{\theta}^{\ell}:=\begin{Bmatrix}\begin{array}{l|l}
(x,y)\in\mathbb{R}^{n-1}\times\mathbb{R} & y+\ell>-|x|\tan\theta\end{array}\end{Bmatrix},
\]
and it is easy to see that $v(x,y)=w(x,y+\ell)$ satisfies $v\in L^{1}(\Sigma_{\theta}^{\ell})$ and $(\Delta+k^{2})v=0$ in $\Sigma_{\theta}^{\ell}$. Since $\mathbb{R}\times(-\ell,\infty)\subset\Sigma_{\theta}^{\ell}$, then $v$ admits representation as described in Lemma~{\rm \ref{lem:representation-half-space}} with $\epsilon=\ell$. Since $\mathscr{F}v(\xi,0)=0$ for all $|\xi|\le k^{2}$, to prove our theorem, we only need to show that 
\begin{equation}
\mathscr{F}_{1}v(\cdot,0)\text{ is analytic in a neighborhood of real axis.}\label{eq:Rellich-goal}
\end{equation}
Since $w(\cdot,y)\in L^{1}(\mathbb{R})$ for a.e. $y\in(0,\infty)$, then we can choose $\ell>0$ be such that 
\[
v(\cdot,0)\equiv w(\cdot,\ell)\in L^{1}(\mathbb{R}),
\]
so that we can write
\[
\mathscr{F}_{1}v(\xi,0)=\frac{1}{\sqrt{2\pi}}\int_{\mathbb{R}_{+}}v|_{\mathbb{R}_{+}}(x,0)e^{-ix\xi}\,dx+\frac{1}{\sqrt{2\pi}}\int_{\mathbb{R}_{-}}v|_{\mathbb{R}_{-}}(x,0)e^{-ix\xi}\,dx.
\]
Using Remark~{\rm \ref{rem:Rellich-representation-rotation}} with $w_{\pm}=v|_{\mathbb{R}_{\pm}}$, we have 
\begin{equation}
\mathscr{F}_{1}v(\xi,0)=\frac{1}{2\pi}\sum_{\pm}\int_{\mathbb{R}_{\pm}}\left(\int_{|\eta|>k}e^{i(x\cos\theta)\eta}\hat{\varphi}_{\pm}(\eta)e^{-|x|\sin\theta\sqrt{\eta^{2}-k^{2}}}\,d\eta\right)e^{-ix\xi}\,dx\label{eq:Rellich-integral}
\end{equation}
where $\hat{\varphi}_{\mp}$ satisfies \eqref{eq:Rellih-densities}. The integral \eqref{eq:Rellich-integral} is well-defined since 
\begin{align*}
 & \int_{\mathbb{R}_{\pm}}\left(\int_{|\eta|>k}|e^{i(x\cos\theta)\eta}\hat{\varphi}_{\pm}(\eta)e^{-|x|\sin\theta\sqrt{\eta^{2}-k^{2}}}|\,d\eta\right)|e^{-ix\xi}|\,dx\\
 & \quad =\int_{\mathbb{R}_{\pm}}\left(\int_{|\eta|>k}|\hat{\varphi}_{\pm}(\eta)|e^{-|x|\sin\theta\sqrt{\eta^{2}-k^{2}}}\,d\eta\right)\,dx\\
 & \quad \le \frac{1}{\sqrt{2\pi}} \|v\|_{L^{1}(\Sigma_{\theta}^{\ell})}\int_{|\eta|>k}\sqrt{\eta^{2}-k^{2}}e^{-\ell\cos\theta\sqrt{\eta^{2}-k^{2}}}\left(\int_{\mathbb{R}_{\pm}}e^{-|x|\sin\theta\sqrt{\eta^{2}-k^{2}}}\,dx\right)\,d\eta\\
 & \quad = \frac{1}{\sqrt{2\pi}} \frac{\|v\|_{L^{1}(\Sigma_{\theta}^{\ell})}}{\sin\theta}\int_{|\eta|>k}e^{-\ell\cos\theta\sqrt{\eta^{2}-k^{2}}}\,d\eta<\infty
\end{align*}
because $\sin\theta>0$ and $\cos\theta>0$. From this, we also able to use Fubini theorem on \eqref{eq:Rellich-integral} and we reach
\begin{align*}
\mathscr{F}_{1}v(\xi,0) & =\frac{1}{2\pi}\sum_{\pm}\int_{|\eta|>k}\left(\int_{\mathbb{R}_{\pm}}e^{i(x\cos\theta)\eta}e^{-|x|\sin\theta\sqrt{\eta^{2}-k^{2}}}e^{-ix\xi}\,dx\right)\hat{\varphi}_{\pm}(\eta)\,d\eta\\
 & =\int_{|\eta|>k}\frac{1}{2\pi}\sum_{\pm}\left(\int_{\mathbb{R}_{\pm}}e^{x(i(\eta\cos\theta-\xi)\mp\sin\theta\sqrt{\eta^{2}-k^{2}})}\,dx\right)\hat{\varphi}_{\pm}(\eta)\,d\eta =\int_{|\eta|>k}F(\eta,\xi)\,d\eta
\end{align*}
where 
\[
F(\eta,\xi)=\frac{1}{2\pi}\sum_{\pm}\frac{\hat{\varphi}_{\pm}(\eta)}{\mp i(\eta\cos\theta-\xi)+\sin\theta\sqrt{\eta^{2}-k^{2}}}.
\]
Finally, following the exactly proof in \cite[Theorem~1]{BFHT16Rellich}, we conclude \eqref{eq:Rellich-goal}, which complete the proof. 
\end{proof}

\addtocontents{toc}{\SkipTocEntry}
\subsection{The case when \texorpdfstring{$n>2$}{n>2}}

The result for $n>2$ can be proved exactly in the same way  as in \cite[Section~4]{BFHT16Rellich}, and hence its proof is omitted.

% ##### arXiv version (end) #####

\section*{Acknowledgments}

\noindent This project was finalized while the authors stayed at Institute Mittag Leffler (Sweden), during the program Geometric aspects of nonlinear PDE.
The authors would like to express their deep gratitude to Simon Larson for his help with the initial stage of this project, and for valuable discussions. 
Kow and Salo were partly supported by the Academy of Finland (Centre of Excellence in Inverse Modelling and Imaging, 312121) and by the European Research Council under Horizon 2020 (ERC CoG 770924). Shahgholian was supported by Swedish Research Council. 

\section*{Declarations}

\noindent {\bf  Data availability statement:} All data needed are contained in the manuscript.

\medskip
\noindent {\bf  Funding and/or Conflicts of interests/Competing interests:} The authors declare that there are no financial, competing or conflict of interests.

\bibliographystyle{custom}
\bibliography{ref}

\end{sloppypar}
\end{document}